\numberwithin{equation}{section}
\newcommand{\E}{\mathbb{E}}
\newcommand{\N}{\mathbb{N}}
\renewcommand{\P}{\mathbb {P}}
\newcommand{\R}{\mathbb{R}}
\newcommand{\Ecal}   {{\mathcal E }}
\newcommand{\Fcal}   {{\mathcal F }} 
\newcommand{\Gcal}   {{\mathcal G }} 
\newcommand{\Hcal}   {{\mathcal H }}
\newcommand{\Mcal}   {{\mathcal M }}
\newcommand{\Rcal}   {{\mathcal R }}
\newcommand{\Zcal}   {{\mathcal Z }}
\title{The heat equation with time-correlated random potential in $d=2$: Edwards-Wilkinson fluctuations}
\author{Sotirios Kotitsas\footnote{Email:\href{mailto:sotirios.kotitsas@dm.unipi.it}{sotirios.kotitsas@dm.unipi.it},  University of Pisa, Italy}}
\date{}
\begin{document}

\newtheorem{mythm}{Theorem}[section]
\newtheorem{myprop}{Proposition}[section]
\newtheorem{mydef}{Definition}[section]
\newtheorem{mycor}{Corollary}[section]
\newtheorem{mylem}{Lemma}[section]
\newtheorem*{myackn}{Acknowledgments}
\newtheorem{refproof*}{Proof}
\newtheorem{remark}{Remark}[section]
\setlength{\headheight}{13.59999pt}
\addtolength{\topmargin}{-1.59999pt}
\maketitle

\begin{abstract}
   We consider the stochastic PDE:
   $$\partial_tu(t,x)=\frac{1}{2}\Delta u(t,x)+{\beta}{}u(t,x)V(t,x),$$
   in dimension $d=2$, where the potential $V$ is the space and time mollification of the two-dimensional space-time white noise. We show that, after renormalizing, the fluctuations of the solution converge to the Edwards-Wilkinson limit with an explicit effective variance and constant effective diffusivity. Our main tool is a Markov chain on the space of paths, which we use to establish an extension of the Kallianpur-Robbins law \cite{Kal-Rob} to a specific regenerative process.\vskip0.3cm

\noindent \textbf{Keywords.} critical SPDEs, stochastic heat equation, Edwards-Wilkinson limit, Kallianpur-Robbins theorem, regenerative processes.
\vspace{0.3cm}
\end{abstract}

\tableofcontents
\newpage

\section{Introduction}\label{sec:Intro}
We are interested in the following  stochastic heat equation in dimension $d=2$:

\begin{equation}\label{eq:1.1}
    \partial_tu(t,x)=\frac{1}{2}\Delta u(t,x)+{\beta}u(t,x)V(t,x),\quad u(0,x)=1,
\end{equation}
where
\begin{equation}\label{noise}
    V(t,x):=\int_{\mathbb{R}^{1+2}}\phi(t-s)\psi(x-y)d\xi(s,y).
\end{equation}
Here  $\xi$ is the space-time white noise on $\mathbb{R}\times\mathbb{R}^2$ over a probability space $(\boldsymbol{\Omega},\Fcal,\mathbf{P})$, and  $\phi:\mathbb{R}\rightarrow\mathbb{R},\textbf{ }\psi:\mathbb{R}^2\rightarrow\mathbb{R}$ are positive smooth functions. We assume that $\phi$ is supported on $[0,1]$ and  that $\psi$ is symmetric and is supported on $|x|\leq1/2$. The coupling constant ${\beta}$ is tuning the strength of the random potential $V$.\par
The potential $V$ is  Gaussian and stationary. We denote by $R$ its correlation function:
$$R(s,y):=\mathbf{E}\biggr[V(s,y)V(0,0)\biggl]=\phi\star\Tilde{\phi}(s)\textbf{ }\psi\star\psi(y),$$
where $\star$ denotes the convolution of two functions, $\mathbf{E}$ is the expectation with respect to the white noise $\xi$ and $\Tilde{\phi}(s)=\phi(-s)$. The function $R$ is compactly supported and  $R(s,\cdot)=0$ for $|s|\geq1$.\par
Our goal is to understand the large space-time behavior of the solution to \eqref{eq:1.1} i.e. the behavior of the random variable $u_{\epsilon}(t,x)=u(\frac{t}{\epsilon^2},\frac{x}{\epsilon})$, as $\epsilon\rightarrow0$. More specifically, we aim to understand the fluctuations of $u_{\epsilon}(t,x)$ as a distribution: For $g\in C^{\infty}_c({\mathbb{R}^2})$ we aim to find the limiting law as $\epsilon\rightarrow0$ of
\begin{equation}\label{eq:local_average}
    \int_{\mathbb{R}^2}u_{\epsilon}(t,x)g(x)dx,
\end{equation}
after centering and re-scaling.\par
Similar problems have been considered before (see the next subsection for more details). In \cite{Gu_2018}  the authors considered the same problem in $d\geq3$, where they proved that the limiting law is Gaussian with mean 0 and with an explicit variance (see also \cite{Mukherjee2017CentralLT}, for an earlier, weaker result in this setting). More specifically, they proved that the limiting law is given by
$$\int_{\mathbb{R}^2}\mathcal{U}(t,x)g(x)dx,$$
where  $\mathcal{U}$ solves the additive stochastic heat equation with an effective diffusivity $a_{eff}({\beta})\in\mathbb{R}^{2\times2}$ (the space of $2\times2$  matrices with real coefficients) and effective variance $v_{eff}({\beta})>0$:
\begin{equation}\label{eq:1.2}
    \partial_t\mathcal{U}=\frac{1}{2}\nabla a_{eff}({\beta})\nabla\mathcal{U}+\beta v_{eff}({\beta})\xi\textit{,  }\mathcal{U}(0,x)=0.
\end{equation}
The additive stochastic heat equation is sometimes called the Edwards-Wilkinson equation.\par
Here, we extend the Edwards-Wilkinson limit for the equation \eqref{eq:1.1} to dimension $d=2$. The important difference is that $d=2$ is the critical dimension (see below for more details). For this reason, we also tune the coupling constant ${\beta}$ to go $0$ as $\epsilon\rightarrow0$. More specifically, we consider the following equation:
\begin{equation}\label{mSHEmod}
    \partial_tu(t,x)=\frac{1}{2}\Delta u(t,x)+\frac{\hat{\beta}}{\sqrt{\log\frac{1}{\epsilon}}}u(t,x)V(t,x)\textbf{,  }u(0,x)=1.
\end{equation}
Set:
$$u_{\epsilon}(t,x):=u(\frac{t}{\epsilon^2},\frac{x}{\epsilon}),$$ 
where $u$ solves \eqref{mSHEmod}. Our main result is the following:

\begin{mythm}\label{thm:Main}
Let $g\in C_c({\mathbb{R}^2})$ and:
$$\hat{\beta}_c(R)=\sqrt{2\pi/||R||_1},$$
where $||R||_1=\int_{\mathbb{R}^{1+2}}R(s,y)dyds$. There exists $\zeta^{(\epsilon)}:\mathbb{R}_{>0}\rightarrow\mathbb{R}_{>0}$ such that $\zeta^{(\epsilon)}_{t/\epsilon^2}\rightarrow+\infty$ as $\epsilon\rightarrow0$, and such that for all $\hat{\beta}<\hat{\beta}_c(R)$
$$\int_{\mathbb{R}^2}e^{-\zeta^{(\epsilon)}_{t/\epsilon^2}}u_{\epsilon}(t,x)g(x)dx\rightarrow\int_{\mathbb{R}^2}g(x)dx,$$
in probability as $\epsilon\rightarrow0$. Moreover,
\begin{equation}\label{CLT}
    \sqrt{\log\frac{1}{\epsilon}}\int_{\mathbb{R}^2}e^{-\zeta^{(\epsilon)}_{t/\epsilon^2}}(u_{\epsilon}(t,x)-\mathbf{E}[u_{\epsilon}(t,x)])g(x)dx\Rightarrow\int_{\mathbb{R}^2}\mathcal{U}(t,x)g(x)dx
\end{equation}
in distribution, where  $\mathcal{U}$ solves the additive heat equation $(\ref{eq:1.2})$ with effective variance $v_{eff}(\hat{\beta})>0$ given by the formula:
\begin{equation}\label{eq:effvar}
    v_{eff}(\hat{\beta})^2=\biggl(1-\frac{\hat{\beta}^2}{\hat{\beta}_c(R)^2}\biggr)^{-1}\cdot||R||^2_1,
\end{equation}
and with trivial effective diffusivity $a_{eff}(\hat{\beta})=I_{2\times2}$, where $I_{2\times2}$ is the $2$ by $2$ identity matrix.
\end{mythm}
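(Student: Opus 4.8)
The natural strategy is the classical Feynman–Kac / polymer approach: write $u_\epsilon(t,x)$ as an expectation over Brownian paths of an exponential functional of the noise, expand in a Wiener chaos series, and identify the limit term by term. Concretely, let $B$ be a Brownian motion started at $x/\epsilon$; then $u(t/\epsilon^2,x/\epsilon)=E_B\!\left[\exp\!\left(\tfrac{\hat\beta}{\sqrt{\log1/\epsilon}}\int_0^{t/\epsilon^2}V(s,B_s)\,ds-\tfrac{\hat\beta^2}{2\log1/\epsilon}\cdot\tfrac{t}{\epsilon^2}R(0,0)\right)\right]$ (the last term removing the Itô/variance correction), and the diffusive scaling makes $\epsilon B_{\cdot/\epsilon^2}$ a Brownian motion again, with the potential seen on microscopic scale $\epsilon$. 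Taking $\mathbf E$-variance and expanding $e^{(\cdot)}$ in chaoses $I_k$, the $k$-th chaos contributes a $k$-fold Gaussian integral weighted by $k$ Brownian-bridge transition densities evaluated at the mollified correlation $R$; after integrating against $g$ and scaling by $\sqrt{\log1/\epsilon}$, one expects only the \emph{first} chaos $I_1$ to survive in the limit, producing the additive noise $\beta v_{eff}\xi$, while the higher chaoses either vanish or resum into the variance-renormalizing factor $e^{-\zeta^{(\epsilon)}}$ and the prefactor $(1-\hat\beta^2/\hat\beta_c^2)^{-1}$ in \eqref{eq:effvar}. The $\zeta^{(\epsilon)}$ is exactly the (divergent) $\log\mathbf E[u_\epsilon]$-type normalization whose rate is governed by how often the path self-intersects at scale $\epsilon$.

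**Where the $\log$ and the critical $\hat\beta_c$ come from.** The key input — and the reason $d=2$ is critical — is that the expected number of "$\epsilon$-close returns" of a planar Brownian path over a time horizon $t/\epsilon^2$ grows like $\log(1/\epsilon)$, by the Kallianpur–Robbins law \cite{Kal-Rob}. This is precisely why $\hat\beta$ must be scaled by $\sqrt{\log1/\epsilon}$: each "collision" carries a factor $\hat\beta^2/\log(1/\epsilon)\cdot\|R\|_1$, so the total weight of $n$-fold returns behaves like $(\hat\beta^2\|R\|_1/(2\pi))^n$ (the $2\pi$ being the constant in the Kallianpur–Robbins asymptotics for the planar heat kernel), which is summable iff $\hat\beta<\hat\beta_c(R)=\sqrt{2\pi/\|R\|_1}$, and whose geometric sum is $(1-\hat\beta^2/\hat\beta_c^2)^{-1}$. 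The real technical heart of the paper, flagged in the abstract, is that the potential is \emph{time-correlated} (not white in time), so one cannot directly use a renewal/local-time computation; instead one introduces the Markov chain on path space recording successive "regeneration times" of the path relative to the correlation range of $V$, proves a Kallianpur–Robbins-type law for that regenerative functional, and thereby recovers the same $\log(1/\epsilon)$ asymptotics with the correct constant $\|R\|_1$.

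**Assembling the limit theorem.** With the chaos expansion and the regenerative Kallianpur–Robbins estimate in hand, I would (i) prove the law-of-large-numbers statement — $e^{-\zeta^{(\epsilon)}_{t/\epsilon^2}}\!\int u_\epsilon(t,x)g(x)dx\to\int g$ in probability — by showing the centered, unnormalized fluctuation has variance $o(1)$, i.e. of order $1/\log(1/\epsilon)$; (ii) for \eqref{CLT}, multiply by $\sqrt{\log1/\epsilon}$, so the first-chaos term $I_1$ has an $O(1)$ variance, compute its limiting covariance structure and check it matches that of $\int\mathcal U(t,x)g(x)dx$ with $\mathcal U$ solving \eqref{eq:1.2} (the effective diffusivity being trivially $I_{2\times2}$ because Brownian motion is already its own homogenized limit — there is no environment-dependent drift to correct), extracting $v_{eff}$ from the resummed collision series; (iii) upgrade convergence of the first chaos to convergence in distribution of the whole object by an $L^2$ martingale/chaos CLT, controlling the tail $\sum_{k\ge2}I_k$ uniformly in $\epsilon$ via the same geometric bound and showing it contributes $o(1)$ after the $\sqrt{\log1/\epsilon}$ scaling.

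**Main obstacle.** The hardest step is (ii)–(iii)'s quantitative control of the higher chaoses in the \emph{time-correlated} regime: proving that the $n$-fold return series converges with the \emph{precise} constant $\|R\|_1^n\cdot(2\pi\log1/\epsilon)^{-n}\cdot(\text{combinatorial factor})$, uniformly enough to both identify $v_{eff}$ and kill the remainder, when $R$ has a genuine (though bounded) time span. This is where the regenerative-process machinery and the careful Markov-chain-on-paths analysis are indispensable — a naive local-time computation, valid for white-in-time noise, is simply not available here, and making the regeneration argument yield matching upper and lower bounds with the sharp constant is the crux of the whole paper.
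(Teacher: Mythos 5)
Your outline correctly identifies the key heuristics (Feynman--Kac, the Kallianpur--Robbins $\log(1/\epsilon)$ asymptotics as the source of $\hat\beta_c(R)=\sqrt{2\pi/\|R\|_1}$, the geometric resummation giving $(1-\hat\beta^2/\hat\beta_c^2)^{-1}$, and the need for a regenerative/Markov-chain structure to handle time correlations), but there are two concrete gaps between your plan and a proof. First, your normalization is not the right one: for time-correlated $V$ there is no It\^o correction of the form $\tfrac{\hat\beta^2}{2\log(1/\epsilon)}\cdot\tfrac{t}{\epsilon^2}R(0,0)$. The correct correction is $\zeta^{(\epsilon)}_T=\log E_B[\exp(\tfrac{\hat\beta^2}{2\log(1/\epsilon)}\int_{[0,T]^2}R(s-u,B_s-B_u)\,ds\,du)]$, i.e.\ $e^{-\zeta^{(\epsilon)}_{t/\epsilon^2}}\mathbf{E}[u_\epsilon]=1$, and --- more importantly --- this self-interaction term does not merely renormalize the mean: it exponentially tilts the Wiener measure to $\hat{\mathbb{P}}^{(\epsilon)}_T$, and every path expectation in the second-moment and variance computations must be taken under this tilted law. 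The entire Markov chain on $\Omega_1$, its Doeblin coupling, and the regenerative decomposition exist precisely to analyze functionals under $\hat{\mathbb{P}}^{(\epsilon)}_T$; your plan treats the paths as ordinary Brownian motions after a deterministic subtraction, which loses this structure. (The tilting also happens to vanish as $\epsilon\to0$ here, which is why $a_{eff}=I_{2\times2}$, but that is a conclusion one must prove, not an input.)

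Second, your step (iii) --- upgrading to a CLT via an $L^2$ martingale/chaos CLT with uniform control of $\sum_{k\ge2}I_k$ --- is exactly the step that fails in the full subcritical regime if done naively: verifying Lindeberg (or a fourth-moment condition) by direct moment estimates restricts you to small $\hat\beta$, since only second moments are available up to $\hat\beta_c$. The paper instead works with the Clark--Ocone stochastic-integral representation of the centered fluctuation, splits $[-1,t/\epsilon^2]$ into short and long blocks, localizes the martingale $M^\epsilon_{t,x,B}$ on the long blocks so the block contributions $\mathcal{X}_j$ become genuinely independent, and then verifies Lindeberg via Gaussian hypercontractivity ($\|\mathcal{X}_j(\hat\beta)\|_p\lesssim\|\mathcal{X}_j((p-1)\hat\beta)\|_2$), exploiting that $\hat\beta$ is strictly below $\hat\beta_c(R)$. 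Your proposal also leaves the central analytic estimate --- the convergence of $\frac{1}{\log(1/\epsilon)}\int_{[0,t/\epsilon^2]^2}R(s-u,\omega^1(s)-\omega^2(u))\,ds\,du$ to an exponential law for two independent regenerative paths, with matching moment upper bounds via a Khas'minskii-type lemma --- entirely at the level of "this is the crux," so as written the proposal is a plausible roadmap rather than a proof.
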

The choice of flat initial data is for simplicity, and our results can be generalized to more general initial conditions by using similar arguments.\par
The (first-order) asymptotics of $\zeta^{(\epsilon)}_{t/\epsilon^2}$, {as  $\epsilon\rightarrow0$}, are:
$$\zeta^{(\epsilon)}_{t/\epsilon^2}=(C_1+o(1))\frac{t}{\epsilon^2\log \frac{1}{\epsilon}}+o(1),$$
where $C_1$ is an explicit constant and $o(1)$ indicates a function of $\epsilon$ that goes to $0$ as $\epsilon\rightarrow0$ (see \textbf{Appendix \ref{sec:app2}}).

\subsection{Comments and related results}
Equation \eqref{eq:1.1} is a regularization of the multiplicative stochastic heat equation (mSHE),  formally written as
\begin{equation*}
    \partial_tu(t,x)=\frac{1}{2}\Delta u(t,x)+{\beta} u(t,x)\xi(t,x),
\end{equation*}
where $x\in\mathbb{R}^d$, and $\xi(t,x)$ is the space-time white noise i.e. a  mean zero Gaussian distribution with covariance
$$\mathbf{E}[\xi(s,y)\xi(t,x)]=\delta(s-t)\delta(x-y),$$
where $\delta$ is the Dirac delta function. Again,  ${\beta}>0$ is the coupling constant. This equation has a long history and is related to the KPZ equation (via the Cole-Hopf transform) and to directed polymers (via the Feynman-Kac formula). In dimension $1$ we can give a direct meaning to this equation,  by using the notions of the It\^o-Walsh integral and the mild solution (see for example \cite{Dalang2008AMO}). However, the noise $\xi$ becomes increasingly irregular as the dimension increases, and this theory does not work when $d\geq2$. In particular,    the product $u(t,x)\xi(t,x)$ does not make sense in $d\geq2$.\par
The mSHE is a singular SPDE, a term usually reserved for SPDEs that do not make direct sense, usually due to some non-linearity in the equation, and we have to interpret them in a specific way. This class contains many interesting equations from mathematical physics like the KPZ equation, the $\Phi^4-$model, the Allen-Cahn equation, and the stochastic Navier-Stokes equations. Making direct sense of these equations is a challenging mathematical problem.\par
Some progress in studying these types of equations has been made in recent years, most famously with the theories of regularity structures \cite{c28fc3fc-a79f-3fe7-abb0-cce0ee5be155},\cite{Hairer_2014}, paracontrolled distributions \cite{Gubinelli_2016} and renormalization group \cite{renormalization,duch2022flow}. These theories focus on sub-critical singular SPDEs, which, loosely speaking, are those SPDEs that on the small scales the cause of the problem (e.g. the non-linearity) is formally small, and the equation should behave as a nicer SPDE (see \cite{AFST_2017_6_26_4_847_0} for this semi-formal definition). The property of being sub-critical usually depends on the dimension of the underlying space and the specific form of the equation. For example, mSHE is sub-critical in $d=1$, critical for $d=2$, and supercritical for $d\geq3$. Indeed if we take $u^{\lambda}(t,x)=u(\lambda^2t,\lambda x)$, where $u$ formally solves the mSHE, by the scaling properties of the space-time white noise  $u^{\lambda}(t,x)$ is equal in distribution to the solution of
\begin{equation*}
    \partial_tu(t,x)=\frac{1}{2}\Delta u(t,x)+\lambda^{\frac{2-d}{2}}{\beta} u(t,x)\xi(t,x).
\end{equation*}
which demonstrates exactly this principle as $\lambda\rightarrow0$.\par
For $d\geq2$ the usual way to make sense of this equation is by regularizing the noise and studying the corresponding solution as we remove the regularization. For example, we can mollify the noise in space by an approximation of the identity. We define:
$$\xi_{\epsilon}(t,x)=\psi_{\epsilon}\star'\xi(t,x)$$
where $\star'$ denotes convolution in space, $\psi\in C_c^{\infty}(\mathbb{R}^d)$ and $\psi_{\epsilon}(x)=\epsilon^{-d}\psi(x/\epsilon)$. We then consider the corresponding solution $u^{\epsilon}$ with initial data equal\footnote{This is just for simplicity.} to 1. This solution now exists (again see \cite{Dalang2008AMO}) for every $\epsilon>0$. The interest now lies in  the various limits of this random variable as $\epsilon\rightarrow0$.\par
In general, as $\epsilon\rightarrow0$, we also need to tune the coupling constant to get nontrivial limits. More specifically, we need to take $\beta_{\epsilon}=\hat{\beta}\cdot\epsilon^{(d-2)/2}$. With this choice $u^{\epsilon}(t,x)$ is equal in distribution to $u(\frac{t}{\epsilon^2},\frac{x}{\epsilon})$ where $u$ solves \eqref{eq:1.1} with $\beta=\hat{\beta}$ and $V$ white in time (i.e. $\phi=\delta$).\par
In $d=1$ the solution $u^{\epsilon}$ converges in $L^2$ to the mSHE without mollification, which is well defined. In higher dimensions the situation is different. For $d\geq3$ it has been shown that there is a phase transition in $\hat{\beta}$ where, for $\hat{\beta}$ below a specific threshold $\hat{\beta}_c$ (called the weak disorder regime), the solution $u^{\epsilon}(t,x)$ has a nontrivial positive limit in distribution for every $(t,x)$. For $\hat{\beta}\geq\hat{\beta}_c$ the solution converges to zero (see \cite{Weak_Strong_Mukh_Sha_Zei}). There is even a smaller regime  $\hat{\beta}<\hat{\beta}_{L^2}$, called the $L^2$ regime, where the solution $u^{\epsilon}(t,x)$ has a finite second moment as $\epsilon\rightarrow0$. In $d=2$ we need to introduce a further modification to the equation by tuning the coupling constant as $\beta_{\epsilon}=\hat{\beta}/\sqrt{\log\frac{1}{\epsilon}}$. In that case, a similar phase transition has been identified (see \cite{a2c66a1e-c941-34a5-824d-2b7be2b94dd1}) where the solution $u^{\epsilon}(t,x)$, for fixed $(t,x)$, converges in distribution to a nontrivial limit\footnote{In particular to a log-normal random variable.} for $\hat{\beta}$ below a critical value (called the subcritical regime) and to zero otherwise. It should be noted that these critical values $\hat{\beta}_c$ depend on the mollifier that we used. For example, in $d=2$, $\hat{\beta}_c=\sqrt{2\pi}/||\psi||_1$. \par 
The solution $u^{\epsilon}$ can be studied as a random distribution, in particular after we test it against a test function. In this case, the works \cite{a2c66a1e-c941-34a5-824d-2b7be2b94dd1} for $d=2$ and \cite{Gu2019FluctuationsOA}, \cite{10.1214/21-AIHP1173}, \cite{NakajimaShuta2022Loln} 
for $d\geq3$, showed that we have a limit theorem and Gaussian fluctuations in the subcritical/$L^2$ regime respectively. Regarding the fluctuations, it was proved that we fall into the Edwards-Wilkinson universality class.
This means that
\begin{equation*}\label{largeflu}
    \frac{\hat{\beta}}{\beta_{\epsilon}}\int_{\mathbb{R}^2}(u^{\epsilon}(t,x)-\mathbf{E}[u^{\epsilon}(t,x)])g(x)dx
\end{equation*}
converges in distribution to
$$\int_{\mathbb{R}^2}\mathcal{U}(t,x)g(x)dx,$$
where $\mathcal{U}(t,x)$ solves equation \eqref{eq:1.2}. In both cases ($d=2$ or $d\geq3$), the effective diffusivity is trivial, $a_{eff}=I_{2\times2}$. For $d\geq3$ we point out that the Edwards-Wilkinson limit has been proved in the full $L^2$ regime in \cite{10.1214/21-AIHP1173}, \cite{NakajimaShuta2022Loln} while in \cite{Gu2019FluctuationsOA} it was proved for $\hat{\beta}$ small enough.\par
The regularization we study here is via mollification of the noise in both space and time. We focus on studying the large-scale behavior of the solution with the noise regularized at a fixed scale. The added difficulty to this approach is that, since we do not use the It\^o-Walsh integral, we destroy the martingale structure of the solution and create time correlations that add up exponentially. This is the reason for the exponential correction term appearing in $\textbf{Theorem \ref{thm:Main}}$ (which appears $d\geq3$ as well). This term appears because the mean of $u_{\epsilon}(t,x)$ blows up exponentially as $\epsilon\rightarrow0$ and the  term $e^{-\zeta^{(\epsilon)}_{t/\epsilon^2}}$ is introduced exactly so that the mean remains bounded as $\epsilon\rightarrow0$.\par
This regularization has been studied in other dimensions as well. In \cite{c0a84c4c-9371-3683-aa99-89bc719e4a36} the authors study the mSHE with space-time mollification has been done in $d=1$, and show that the corresponding solution converges to the solution of the mSHE without mollification in every $L^p$ space. There is also the more general result of \cite{Wonkzakai} where the authors show that solutions of a general class of  SPDEs with space-time mollification converge to the corresponding SPDEs without mollification after renormalization.\par
In $d\geq3$, in \cite{Mukherjee2017CentralLT}, the author proves a homogenization result for the equation, by showing that the local average \eqref{eq:local_average} converges in probability to the corresponding local average of a heat equation with an effective diffusivity. In  \cite{Gu_2018}, the authors extend this result and prove the Edwards-Wilkinson limit as described above. The interesting aspect of both of these works is that they get an added effective diffusivity term i.e. $a_{eff}\neq I_{d\times d}$. We also refer to \cite{dunlap2021random}, where the authors were able to study the pointwise statistics of the solution and express the effective diffusivity and effective variance in terms of objects from stochastic homogenization theory. It should be that the results in \cite{Gu_2018, dunlap2021random} were proved for $\hat{\beta}$ sufficiently small.\par
In $d=2$ there are no results, to our knowledge, that treat the mSHE with space-time mollification either at the level of the pointwise statistics or at the level of local fluctuations (i.e. after we test against a test function). In analogy with the case where we use only mollification in space, we scale the coupling constant as $\beta=\hat{\beta}/\sqrt{\log\frac{1}{\epsilon}}$. Again, we expect that there is a subcritical regime. This is suggested by \textbf{Theorem \ref{thm:Main}}, since for all $\hat{\beta}<\hat{\beta}_c(R)$ we have a nontrivial Central Limit Theorem (CLT) and the effective variance that we get blows up at $\hat{\beta}=\hat{\beta}_c(R)$. Given this observation, we can prove \textbf{Theorem \ref{thm:Main}} in the optimal range of $\hat{\beta}$.\par
We should also note that unlike in $d\geq3$, we get a "trivial" effective diffusivity for the Edwards-Wilkinson limit. The reason for this becomes apparent from the proof. Loosely speaking, the effective diffusivity in \cite{Mukherjee2017CentralLT, Gu_2018} is the second moment of the total path increment of a path with law absolutely continuous with respect to the Wiener measure. More specifically, its law is an exponentially tilted measure with the Wiener measure as the reference measure. These paths appear due to the Feynman-Kac formula and the exponential tilting is due to time mollification (see below for more details). It so happens that the exponential tilting depends on the coupling constant $\beta_{\epsilon}=\hat{\beta}/\sqrt{\log\frac{1}{\epsilon}}$. As $\epsilon\rightarrow0$ the tilting disappears making the total path increment look like the total path increment of a Brownian motion, which gives us the trivial effective diffusivity.\par
{Finally, we should compare \textbf{Theorem \ref{thm:Main}} with its analog in $d=2$ when we use space mollification. In the latter case, let $u_{\psi}$ solve the mSHE with only space mollification, using $\psi$ as the mollifier,  with coupling constant $\beta_{\epsilon}=\hat{\beta}/\sqrt{\log\frac{1}{\epsilon}}$. From \cite{a2c66a1e-c941-34a5-824d-2b7be2b94dd1}, the effective variance of the Edwards-Wilkinson limit is:
$$\mathcal{V}_{eff}^2(\hat{\beta})=\biggr(1-\frac{\hat{\beta}^2||\psi||_1^2}{2\pi}\biggl)^{-1}||\psi||_1^2.$$}
{If we additionally mollify  time using $\phi$, as in \eqref{noise}, then \textbf{Theorem \ref{thm:Main}} gives  an effective variance:
$$v_{eff}^2(\hat{\beta})=\biggl(1-\frac{\hat{\beta}^2 ||\psi||_1^2||\phi||_1^2}{2\pi}\biggr)^{-1}\cdot||\psi||_1^2||\phi||_1^2.$$
This shows that, when $\phi$ is a probability density
\begin{equation*}\label{stmolflu}
    \sqrt{\log\frac{1}{\epsilon}}\int_{\mathbb{R}^2}e^{-\zeta^{(\epsilon)}_{t/\epsilon^2}}(u_{\epsilon}(t,x)-\mathbf{E}[u_{\epsilon}(t,x)])g(x)dx
\end{equation*}
converges to the same limit as
\begin{equation*}\label{smolflu}
    \sqrt{\log\frac{1}{\epsilon}}\int_{\mathbb{R}^2}(u_{\psi}(t/\epsilon^2,x/\epsilon)-\mathbf{E}[u_{\psi}(t/\epsilon^2,x/\epsilon)])g(x)dx.
\end{equation*}
Observe that this does not happen in $d\geq3$. Indeed, in $d\geq3$ when $V$ is white in time the Edwards-Wilkinson limit has effective diffusivity  $a_{eff}=I_{d\times d}$, while when $V$ is correlated in time, $a_{eff}\neq I_{d\times d}$.}
\subsection{Future Directions}
The mSHE is related to the KPZ equation via the Cole-Hopf transform. If $u(t,x)$ solves \eqref{mSHEmod} then 
$$h(t,x):=\log u(t,x)$$
solves the equation
$$\partial_th(t,x)=\frac{1}{2}\Delta h(t,x)+|\nabla h(t,x)|^2+\frac{\hat{\beta}}{\sqrt{\log\frac{1}{\epsilon}}}V(t,x),\textbf{ }h(0,x)=0.$$
In \cite{10.1214/19-AOP1383}, it is proved that when $V$ is white in time the random variable $h(t/\epsilon^2,x/\epsilon)$ converges in distribution to a Gaussian random variable for all $\hat{\beta}$ in the subcritical regime. Moreover, the fluctuations converge to the Edwards-Wilkinson limit. This is also proved in \cite{Gu2018GaussianFF} for small enough $\hat{\beta}$. It would be interesting to try to combine our methods with the methods of \cite{10.1214/19-AOP1383} and of  \cite{Gu2018GaussianFF} to prove that, when $V$ is as in \eqref{noise},  $h(t/\epsilon^2,x/\epsilon)$ is asymptotically Gaussian, with fluctuations in the Edwards-Wilkinson class.\par
Finally, we mention that in $d=2$ with $V$ white in time and at the critical point, the mSHE has an interesting and nontrivial behavior. Loosely speaking, when $\hat{\beta}=\hat{\beta}_c$ and for $g\in C_c(\mathbb{R}^2)$ the random variable 
\begin{equation}\label{SHFav}
    \int_{\mathbb{R}^2}u_\epsilon(t,x) g(x)dx
\end{equation}
converges in distribution as $\epsilon\rightarrow0$ \cite{SHF}. The limiting object is called the critical $2d$ Stochastic Heat Flow. The moments of the critical $2d$ Stochastic Heat Flow admit explicit expressions \cite{SHFmoments} and it is known that it is not a Gaussian multiplicative chaos \cite{Caravenna_2023}. \textbf{Theorem \ref{thm:Main}} indicates that a critical point $\hat{\beta}_c$ exists when $V$ is correlated in time. Therefore, it would be interesting to consider \eqref{mSHEmod}, with $V$ as in \eqref{noise}, at the critical point and establish convergence in distribution for \eqref{SHFav}. If this is possible, it would also be interesting to compare this limiting object with the critical Stochastic Heat Flow obtained when $V$ is white in time.

\subsection{Idea of the proof and outline}\label{sec:2}
We use methods and arguments inspired from \cite{Gu_2018} to prove $\textbf{Theorem \ref{thm:Main}}$. More specifically, for any $g\in C_c(\mathbb{R}^2)$, we need
 two formulas from \cite{Gu_2018} for the random variable:
\begin{equation}\label{eq:2.1}
    \int_{\mathbb{R}^2}(u_{\epsilon}(t,x)-\mathbf{E}[u_{\epsilon}(t,x)])g(x)dx.
\end{equation}
These formulas are proved in \cite{Gu_2018} for $d\geq3$, and their proof carries exactly in the case $d=2$. We start with the Feynman-Kac representation of $u(t,x)$. {Since} $V(t,x)$ is a (random) smooth function, we have
\begin{equation}\label{eq:2.2}
    u(t,x)=\E_B\bigg[\exp\biggl({\frac{\hat{\beta}}{\sqrt{\log\frac{1}{\epsilon}}}}\int_0^tV(t-s,x+B_{s})ds\biggr)\biggr],
\end{equation}
where $\E_B$ is the expectation with respect to  Brownian motion starting at 0, independent from the noise. Due to the time correlations induced by the time mollification, the usual martingale structure of the solution is destroyed. In practice, this is what induces the exponential tilting of the Wiener measure. More precisely, on the space
$$\Omega_T=\{\omega\in C([0,T])\textbf{ }|\textbf{ }\omega(0)=0\},$$
equipped with the usual Borel $\sigma-$algebra, we define the following probability measure:
\begin{equation}\label{expmeasure}
    d\hat{\P}^{(\epsilon)}_T(B):=\exp\biggl(\frac{\hat{\beta}^2}{2\log\frac{1}{\epsilon}}\int_{[0,T]^2}R(s-u, B(s)-B(u))dsdu-\zeta_T^{(\epsilon)}\biggr)W_T(dB),
\end{equation}
where $W_T(dB)$ is the usual Wiener measure on $\Omega_T$, and $\zeta_T^{(\epsilon)}$ is the normalization constant
\begin{equation}\label{zeta}
    \zeta^{(\epsilon)}_T:=\log\int_{\Omega_T}\exp\biggl(\frac{\hat{\beta}^2}{2\log\frac{1}{\epsilon}}\int_{[0,T]^2}R(s-u, B(s)-B(u))dsdu\biggr)W_T(dB).
\end{equation}
We denote by $\hat{\E}_{B,T}$ the corresponding expectation. This exponential tilting already appears at the level of the moments of $u(t,x)$, as the next calculation shows:

\begin{myprop}\label{thm:prop2.1}
    If R is the correlation function of V then, for every $n\in\mathbb{N}$, we have:
    
    $$e^{-n\zeta^{(\epsilon)}_{t}}\mathbf{E}[u(t,x)^n]=\Hat{\E}_{B^1,...,B^n;t}\biggl[\exp\biggl(\frac{\hat{\beta}^2}{{\log\frac{1}{\epsilon}}}\sum_{1\leq i< j\leq n}\int_{[0,t]^2}R(s-u,B^i(s)-B^j(u))dsdu\biggr)\biggr],$$
    where $\Hat{\E}_{B^1,...,B^n;t}$ denotes the expectation under $n$ independent paths sampled from the exponentially tilted measure $\Hat{\P}^{(\epsilon)}_t$.
\end{myprop}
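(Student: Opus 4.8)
The plan is to derive the identity from the Feynman--Kac representation \eqref{eq:2.2} by a direct Gaussian computation. First I would raise \eqref{eq:2.2} to the $n$-th power, introducing $n$ i.i.d.\ Brownian motions $B^1,\dots,B^n$ starting at $0$, all independent of the noise $\xi$, so that
$$u(t,x)^n=\E_{B^1,\dots,B^n}\Bigl[\exp\Bigl(\tfrac{\hat\beta}{\sqrt{\log(1/\epsilon)}}\textstyle\sum_{i=1}^n\int_0^t V(t-s,x+B^i_s)\,ds\Bigr)\Bigr].$$
Taking the noise expectation $\mathbf E$ and exchanging it with the Brownian averages, one is reduced, for each fixed realization of the paths, to computing $\mathbf E[e^{\lambda Z}]$ where $\lambda=\hat\beta/\sqrt{\log(1/\epsilon)}$ and $Z=\sum_{i=1}^n\int_0^t V(t-s,x+B^i_s)\,ds$ is a centered Gaussian variable under $\mathbf E$; hence $\mathbf E[e^{\lambda Z}]=\exp(\tfrac{\lambda^2}{2}\mathrm{Var}(Z))$. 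Using $\mathbf E[V(t-s,x+B^i_s)V(t-r,x+B^j_r)]=R(r-s,B^i_s-B^j_r)$ together with the evenness of $R$ in each variable (which holds since $\phi\star\tilde\phi$ and $\psi\star\psi$ are even), the variance is $\sum_{i,j=1}^n\int_{[0,t]^2}R(s-r,B^i_s-B^j_r)\,ds\,dr$.

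Next I would split the double sum over $(i,j)$ into the diagonal part $i=j$ and the off-diagonal part $i\neq j$, the latter contributing $2\sum_{i<j}\int_{[0,t]^2}R(s-r,B^i_s-B^j_r)\,ds\,dr$ by symmetry. This yields
$$\mathbf E[u(t,x)^n]=\E_{B^1,\dots,B^n}\Bigl[\textstyle\prod_{i=1}^n e^{\tfrac{\hat\beta^2}{2\log(1/\epsilon)}\int_{[0,t]^2}R(s-r,B^i_s-B^i_r)\,ds\,dr}\cdot e^{\tfrac{\hat\beta^2}{\log(1/\epsilon)}\sum_{i<j}\int_{[0,t]^2}R(s-r,B^i_s-B^j_r)\,ds\,dr}\Bigr].$$
By the very definitions \eqref{expmeasure}--\eqref{zeta}, for each $i$ the diagonal factor equals $e^{\zeta^{(\epsilon)}_t}\,\tfrac{d\hat\P^{(\epsilon)}_t}{dW_t}(B^i)$, so absorbing these $n$ Radon--Nikodym densities changes the law of each $B^i$ from $W_t$ to $\hat\P^{(\epsilon)}_t$ at the cost of a prefactor $e^{n\zeta^{(\epsilon)}_t}$. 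The surviving off-diagonal exponential is exactly the integrand in the statement, and dividing by $e^{n\zeta^{(\epsilon)}_t}$ gives the claim; note that $\zeta^{(\epsilon)}_t$ is finite because $R$ is bounded and compactly supported, so $\hat\P^{(\epsilon)}_t$ is a genuine probability measure.

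The computation is, at its core, the Gaussian moment-generating-function identity plus bookkeeping, so there is no deep obstacle; the one point that must be handled with care is the exchange of the noise average $\mathbf E$ with the Brownian averages $\E_{B^i}$ (and with the time integrals). This is legitimate because $V$ is almost surely a smooth field and, for fixed paths, $Z$ is Gaussian with finite exponential moments, while $R$ is bounded with compact support; one applies Fubini--Tonelli to the nonnegative integrand after bounding the exponent deterministically by $\tfrac{\hat\beta^2}{2\log(1/\epsilon)}\,n^2\,\|R\|_\infty\,t^2$. I would state this justification but not dwell on it, since it is exactly the argument of \cite{Gu_2018} and is dimension-independent.
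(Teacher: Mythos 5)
Your proposal is correct and is essentially the paper's argument: the paper also applies the Gaussian moment-generating-function identity to the Feynman--Kac exponent for fixed paths (writing the exponent as the stochastic integral $M_{t,x,B}(\infty)$ of $\Phi_{t,x,B}$ against $\xi$ rather than working directly with $V$), identifies the diagonal and cross covariances with the self- and pair-overlap integrals of $R$, and absorbs the diagonal factors into the tilted measure $\hat\P^{(\epsilon)}_t$ at the cost of $e^{n\zeta^{(\epsilon)}_t}$. The only difference is presentational — you carry out general $n$ at once while the paper writes out $n=2$ — so no further comment is needed.
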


\begin{proof}
    For simplicity, we prove this for $n=2$. The general formula is proved similarly. By plugging into $(\ref{eq:2.2})$ the definition of $V$, \eqref{noise}, we get:
    $$u(t,x)=\E_B\biggl[\exp\biggl({\frac{\hat{\beta}}{\sqrt{\log\frac{1}{\epsilon}}}}\int_0^t\int_{\mathbb{R}}\int_{\mathbb{R}^2}\phi(t-s-s_1)\psi(x+B(s)-x_1)d\xi(s_1,x_1)\biggr)\biggr].$$
    Now we define:
    \begin{equation}\label{eq:2.3}
      \Phi_{t,x,B}(s_1,x_1):=\int_0^t\phi(t-s-s_1)\psi(x+B(s)-x_1)ds,  
    \end{equation}
    and:
    \begin{equation}\label{eq:2.4}
        M_{t,x,B}(\infty):=\int_{\mathbb{R}^{1+2}}\Phi_{t,x,B}(s_1,x_1)d\xi(s_1,x_1).
    \end{equation}
    For every Brownian path $B$, $M_{t,x,B}(\infty)$ is a mean zero Gaussian random variable with second moment equal to 
    \begin{equation}\label{eq:2.6}
       \int_{\mathbb{R}^{1+2}}\Phi_{t,x,B}(s_1,x_1)^2ds_1dx_1=\int_{[0,t]^2}R(s-u, B(s)-B(u))dsdu.
    \end{equation}
    With this notation, we can write
    $$u(t,x)=\E_B\biggl[\exp\biggl({\frac{\hat{\beta}}{\sqrt{\log\frac{1}{\epsilon}}}}M_{t,x,B}(\infty)\biggr)\biggr].$$ 
    From \eqref{zeta} and $(\ref{eq:2.6})$
    $$\mathbf{E}[u(t,x)]=\exp(\zeta_t^{(\epsilon)}).$$
    For the second moment, we have
    $$\mathbf{E}[u(t,x)^2]=\E_{B^1}\E_{B^2}\biggr[\mathbf{E}\biggr[\exp\biggl({\frac{\beta}{\sqrt{\log\frac{1}{\epsilon}}}}(M_{t,x,B^1}(\infty)+M_{t,x,B^2}(\infty))\biggr)\biggl]\biggl],$$
    which is equal to
    \begin{equation}\label{eq:2.7}
        \E_{B^1}\E_{B^2}\biggr[\exp\biggl(\frac{\hat{\beta}^2}{{2\log\frac{1}{\epsilon}}}(\mathbf{E}[M_{t,x,B^1}(\infty)^2]+\mathbf{E}[ M_{t,x,B^2}(\infty)^2]+2\mathbf{E}[M_{t,x,B^1}(\infty)M_{t,x,B^2}(\infty)])\biggr)\biggr],
    \end{equation}
    A straightforward calculation shows that
    $$\mathbf{E}[M_{t,x,B^1}(\infty)M_{t,x,B^2}(\infty)]=\int_{[0,t]^2}R(s-u,B^1(s)-B^2(u))dsdu.$$
    Plugging this in $(\ref{eq:2.7})$, we get 
    $$e^{-2\zeta_{t}^{(\epsilon)}}\mathbf{E}[u(t,x)^2]=\Hat{\E}_{B^1,B^2;t}\biggl[\exp\biggl(\frac{\hat{\beta}^2}{{\log\frac{1}{\epsilon}}}\int_{[0,t]^2}R(s-u,B^1(s)-B^2(u))dsdu\biggr)\biggr].$$
    \end{proof}
The above calculation indicates that $e^{-\zeta^{(\epsilon)}_{t/\epsilon^2}}$ is the correct exponential correction needed to keep $\mathbf{E}[u_{\epsilon}(t,x)]$ bounded. Indeed, for all $\epsilon>0$,
\begin{equation}\label{renorm}
    e^{-\zeta^{(\epsilon)}_{t/\epsilon^2}}\mathbf{E}[u_{\epsilon}(t,x)]=1.
\end{equation}
The proof of \textbf{Theorem \ref{thm:Main}} is split into two steps. The first step is to find the limiting variance of
\begin{equation}\label{eq:2.8}
   \sqrt{\log\frac{1}{\epsilon}}\int_{\mathbb{R}^2}e^{-\zeta^{(\epsilon)}_{t/\epsilon^2}}(u_{\epsilon}(t,x)-\mathbf{E}[u_{\epsilon}(t,x)])g(x)dx.
\end{equation}
{Observe that the convergence of the variance of (\ref{eq:2.8}) proves that  $\int_{\mathbb{R}^2}e^{-\zeta^{(\epsilon)}_{t/\epsilon^2}}u_{\epsilon}(t,x)g(x)dx\rightarrow\int_{\mathbb{R}^2}g(x)dx$ in $\mathbf{P}$-probability as $\epsilon\rightarrow0$}.\par
The second step is using mixing arguments to prove the central limit theorem \eqref{CLT}. These arguments allow us to approximate \eqref{eq:2.8} in $L^2$  by a sum of independent random variables. Then we conclude by applying Lindenberg's CLT.\par
As mentioned, we need two formulas from \cite{Gu_2018}. We define 
\begin{equation}\label{eq:2.4}
        M_{t,x,B}(r):=\int_{-\infty}^r\int_{\mathbb{R}^2}\Phi_{t,x,B}(s_1,x_1)d\xi(s_1,x_1).
    \end{equation}
    Observe that $(M_{t,x,B}(r))_{r\in\mathbb{R}}$ is a square-integrable continuous  martingale with quadratic variation equal to:
    \begin{equation}\label{eq:2.5}
        \langle M_{t,x,B}\rangle(r)=\int_{-\infty}^r\int_{\mathbb{R}^2}\Phi_{t,x,B}(s_1,x_1)^2ds_1dx_1.
    \end{equation}
The first formula we need is contained in the following proposition. The proof in $d=2$ is the same as in \cite{Gu_2018} (see \textbf{Lemma 2.1} from the same paper):
\begin{myprop}\label{thm:prop2.2}
    We have that:
    $$(u_{\epsilon}(t,x)-\mathbf{E}[u_{\epsilon}(t,x)])e^{-\zeta^{(\epsilon)}_{t/\epsilon^2}}=$$    
    $$\frac{\hat{\beta}}{\sqrt{\log\frac{1}{\epsilon}}}\int_{-1}^{t/\epsilon^2}\int_{\mathbb{R}^2}\Hat{\E}_{B,t/\epsilon^2}\biggl[\Phi^{\epsilon}_{t,x,B}(r,y)\exp\biggl({\frac{\hat{\beta}}{{\sqrt{\log\frac{1}{\epsilon}}}}M^{\epsilon}_{t,x,B}(r)-\frac{\hat{\beta}^2}{{2{\log\frac{1}{\epsilon}}}}\langle M^{\epsilon}_{t,x,B}\rangle(r)}\biggr)\biggr]d\xi(r,y),$$
    where $\Phi^{\epsilon}_{t,x,B}=\Phi^{}_{t/\epsilon^2,x/\epsilon,B}$ and $M^{\epsilon}_{t,x,B}=M^{}_{t/\epsilon^2,x/\epsilon,B}$, with $\Phi^{}_{t,x,B}$ and $M^{}_{t,x,B}$ defined in $(\ref{eq:2.2})$ and $(\ref{eq:2.3})$ respectively.
    \end{myprop}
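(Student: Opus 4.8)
The plan is to follow the Clark--Ocone type argument of \cite{Gu_2018} (\textbf{Lemma 2.1} there), obtained by differentiating the exponential martingale attached to the Feynman--Kac formula \eqref{eq:2.2}. Fix a Brownian path $B$ and work at the rescaled point $(t/\epsilon^2,x/\epsilon)$; set
$$\mathcal{E}_r(B):=\exp\biggl(\frac{\hat{\beta}}{\sqrt{\log\frac{1}{\epsilon}}}M^{\epsilon}_{t,x,B}(r)-\frac{\hat{\beta}^2}{2\log\frac{1}{\epsilon}}\langle M^{\epsilon}_{t,x,B}\rangle(r)\biggr).$$
Since $(M^{\epsilon}_{t,x,B}(r))_{r\in\R}$ is a continuous $L^2$ martingale whose bracket is \emph{deterministic} given $B$, the process $r\mapsto\mathcal{E}_r(B)$ is a mean-one martingale for the white-noise filtration; it equals $1$ for $r<-1$ and is constant for $r>t/\epsilon^2$, because $\Phi^{\epsilon}_{t,x,B}(s_1,\cdot)\equiv 0$ for $s_1\notin[-1,t/\epsilon^2]$ (here $\mathrm{supp}\,\phi\subseteq[0,1]$ is used). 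Also $\langle M^{\epsilon}_{t,x,B}\rangle(\infty)=\int_{[0,t/\epsilon^2]^2}R(s-u,B(s)-B(u))\,ds\,du$ by \eqref{eq:2.6}.

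First I would rewrite \eqref{eq:2.2} as $u_{\epsilon}(t,x)=\E_B\bigl[\mathcal{E}_{\infty}(B)\exp\bigl(\tfrac{\hat{\beta}^2}{2\log\frac{1}{\epsilon}}\langle M^{\epsilon}_{t,x,B}\rangle(\infty)\bigr)\bigr]$ and notice, from \eqref{expmeasure}--\eqref{zeta}, that $\exp\bigl(\tfrac{\hat{\beta}^2}{2\log\frac{1}{\epsilon}}\langle M^{\epsilon}_{t,x,B}\rangle(\infty)\bigr)=e^{\zeta^{(\epsilon)}_{t/\epsilon^2}}\tfrac{d\hat{\P}^{(\epsilon)}_{t/\epsilon^2}}{dW_{t/\epsilon^2}}(B)$; hence $e^{-\zeta^{(\epsilon)}_{t/\epsilon^2}}u_{\epsilon}(t,x)=\hat{\E}_{B,t/\epsilon^2}[\mathcal{E}_{\infty}(B)]$. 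Applying $\mathbf{E}$, using $\mathbf{E}[\mathcal{E}_{\infty}(B)\mid B]=1$ and Fubini (for fixed $\epsilon$ every quantity is integrable since $\langle M^{\epsilon}_{t,x,B}\rangle(\infty)\le 2\|R\|_\infty\,t/\epsilon^2$ is bounded) reproduces \eqref{renorm}; therefore the left-hand side of the proposition equals $\hat{\E}_{B,t/\epsilon^2}[\mathcal{E}_{\infty}(B)-1]$.

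Next, since $dM^{\epsilon}_{t,x,B}(r)=\int_{\R^2}\Phi^{\epsilon}_{t,x,B}(r,y)\,d\xi(r,y)$, It\^o's formula shows that $\mathcal{E}_{\cdot}(B)$ is the stochastic exponential $d\mathcal{E}_r(B)=\tfrac{\hat{\beta}}{\sqrt{\log\frac{1}{\epsilon}}}\,\mathcal{E}_r(B)\int_{\R^2}\Phi^{\epsilon}_{t,x,B}(r,y)\,d\xi(r,y)$, so that
$$\mathcal{E}_{\infty}(B)-1=\frac{\hat{\beta}}{\sqrt{\log\frac{1}{\epsilon}}}\int_{-1}^{t/\epsilon^2}\int_{\R^2}\mathcal{E}_r(B)\,\Phi^{\epsilon}_{t,x,B}(r,y)\,d\xi(r,y).$$
Substituting this into $\hat{\E}_{B,t/\epsilon^2}[\mathcal{E}_{\infty}(B)-1]$ and interchanging the path-expectation under $\hat{\P}^{(\epsilon)}_{t/\epsilon^2}$ with the It\^o--Walsh integral via a stochastic Fubini theorem gives the stated identity, since $\mathcal{E}_r(B)$ is precisely the exponential weight $\exp\bigl(\tfrac{\hat{\beta}}{\sqrt{\log\frac{1}{\epsilon}}}M^{\epsilon}_{t,x,B}(r)-\tfrac{\hat{\beta}^2}{2\log\frac{1}{\epsilon}}\langle M^{\epsilon}_{t,x,B}\rangle(r)\bigr)$ appearing there.

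The step that needs the most care is this stochastic Fubini. For fixed $\epsilon$ it is benign: the density $\tfrac{d\hat{\P}^{(\epsilon)}_{t/\epsilon^2}}{dW_{t/\epsilon^2}}$ is \emph{bounded} (again because $\langle M^{\epsilon}_{t,x,B}\rangle(\infty)$ is a bounded function of $B$), $\Phi^{\epsilon}_{t,x,B}$ is bounded with compact support in $(r,y)$, and $\sup_{r,B}\mathbf{E}[\mathcal{E}_r(B)^2]<\infty$; hence $(r,y,B)\mapsto\mathcal{E}_r(B)\Phi^{\epsilon}_{t,x,B}(r,y)$ lies in $L^2(\mathbf{P}\otimes W_{t/\epsilon^2};dr\,dy)$ and the exchange is justified by the standard stochastic Fubini theorem for Walsh integrals. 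The remaining ingredients — the martingale property and normalization of $\mathcal{E}_{\cdot}(B)$, its being constant outside $[-1,t/\epsilon^2]$, and the $\mathrm{supp}\,\phi$ bookkeeping for the limits of integration — are routine and run exactly as in \cite{Gu_2018}.
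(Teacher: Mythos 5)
Your proposal is correct, and all the key identifications check out: the measure change $\exp\bigl(\tfrac{\hat{\beta}^2}{2\log\frac{1}{\epsilon}}\langle M^{\epsilon}_{t,x,B}\rangle(\infty)\bigr)=e^{\zeta^{(\epsilon)}_{t/\epsilon^2}}\tfrac{d\hat{\P}^{(\epsilon)}_{t/\epsilon^2}}{dW_{t/\epsilon^2}}(B)$ that converts $\E_B$ into $\hat{\E}_{B,t/\epsilon^2}$, the support bookkeeping giving the limits $[-1,t/\epsilon^2]$, and the $L^2$ bounds (via $\langle M^{\epsilon}_{t,x,B}\rangle(\infty)\le 2\|R\|_\infty t/\epsilon^2$) that justify the stochastic Fubini for fixed $\epsilon$. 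The paper (following \textbf{Lemma 2.1} of \cite{Gu_2018}) routes the argument through the Clark--Ocone formula, $u-\mathbf{E}[u]=\int\mathbf{E}[D_{r,y}u\mid\Fcal_r]\,d\xi$, and then computes the conditional expectation of the Malliavin derivative, which reproduces exactly your weight $\mathcal{E}_r(B)\Phi^{\epsilon}_{t,x,B}(r,y)$. You instead obtain the same integrand directly from the Dol\'eans-Dade equation $d\mathcal{E}_r(B)=\tfrac{\hat{\beta}}{\sqrt{\log\frac{1}{\epsilon}}}\mathcal{E}_r(B)\,dM^{\epsilon}_{t,x,B}(r)$ applied pathwise, followed by the interchange of $\hat{\E}_{B,t/\epsilon^2}$ with the It\^o--Walsh integral. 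The two derivations are computationally identical at the core (for a Wick exponential the Clark--Ocone representation \emph{is} the stochastic-exponential SDE), but yours is slightly more elementary in that it dispenses with Malliavin calculus altogether; the Clark--Ocone route is the more systematic one, applying to arbitrary $L^2$ functionals of the noise without requiring an explicit semimartingale decomposition. Either way the burden falls on the same stochastic Fubini step, which you justify adequately.
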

This formula is proved by combining the Clark-Ocone formula\footnote{$D_{r,y}u(t,x)$ is the Malliavin derivative of $u(t,x)$ at $(r,y)$, see \cite{nualart2009malliavin}.}
$$u(t,x)-\mathbf{E}[u(t,x)]=\int_{-\infty}^t\int_{\mathbb{R}^2}\mathbf{E}[D_{r,y}u(t,x)|\Fcal_r]d\xi(r,y),$$
with the Feynman-Kac formula and similar calculations as in the previous proof.\par
\textbf{Proposition \ref{thm:prop2.2}} gives us a stochastic integral representation for $(\ref{eq:2.8})$:
$$\sqrt{\log\frac{1}{\epsilon}}\int_{\mathbb{R}^2}e^{-\zeta^{(\epsilon)}_{t/\epsilon^2}}(u_{\epsilon}(t,x)-\mathbf{E}[u_{\epsilon}(t,x)])g(x)dx=\hat{\beta}\int_{-1}^{t/\epsilon^2}\int_{\mathbb{R}^2}Z_t^{\epsilon}(r,y)d\xi(r,y),$$
where:
$$Z_t^{\epsilon}(r,y):=\int_{\mathbb{R}^2}g(x)\Hat{\E}_{B,t/\epsilon^2}\biggl[\Phi^{\epsilon}_{t,x,B}(r,y)\exp\biggl({\frac{\hat{\beta}}{{\sqrt{\log\frac{1}{\epsilon}}}}M^{\epsilon}_{t,x,B}(r)-\frac{\hat{\beta}^2}{{2{\log\frac{1}{\epsilon}}}}\langle M^{\epsilon}_{t,x,B}\rangle(r)}\biggr)\biggr]dx.$$
We are going to use this formula in the mixing arguments mentioned above to prove \textbf{Theorem \ref{thm:Main}}. The mixing arguments are implemented by modifying the martingale $M^{\epsilon}_{t,x, B}$ so that the stochastic integral above can be split into a sum of independent random variables and a 'negligible' remainder. Then, as mentioned,  Lindenberg's CLT and the information we will get from the limiting variance will allow us to conclude. \par
This stochastic integral representation of $(\ref{eq:2.8})$ is also useful for the calculation of its limiting variance, which is the more complicated step. This is the content of the final formula needed from \cite{Gu_2018}. First, we need to introduce some notation. We define:
\begin{equation}\label{eq:2.9}
    I_{\epsilon}(x_1,x_2,y,s_1,s_2,r):=\prod_{i=1}^2g(\epsilon x_i-\epsilon B^i({(t-r)/\epsilon^2-s_i})+y),
\end{equation}
\begin{equation}\label{Rphi}
  R_{\phi}(t_1,t_2):=\int_0^{\infty}\phi(s-t_1)\phi(s-t_2)ds,    
\end{equation}

\begin{align}\label{eq:2.10}
    J_{\epsilon}(M_1,M_2):=\frac{\hat{\beta}^2}{{\log\frac{1}{\epsilon}}}\int_{-1}^{M_1}\int_{-1}^{M_2}R_{\phi}(u_1,u_2)\psi\star\psi(x_1-x_2+
    \Delta B^1_{(t-r)/\epsilon^2-s_1,(t-r)/\epsilon^2+u_1}-\nonumber\\
    -\Delta B^2_{(t-r)/\epsilon^2-s_2,(t-r)/\epsilon^2+u_2})du_1du_2,
\end{align}
where  for a path $B$, we define $\Delta B_{s,u}=B(s)-B(u)$. 
\begin{myprop}\label{thm:prop2.3}
    For any $t_1<t-\epsilon^2$ we have the following formula:
    $$\mathbf{Var}\biggr(\int_{-\infty}^{t_1/\epsilon^2}\int_{\mathbb{R}^2}Z_t^{\epsilon}(r,y)d\xi(r,y)\biggl)=$$
    \begin{align}\label{eq:2.11}
        =\int_{0}^t\int_{\mathbb{R}^{6}}\int_{[0,1]^2}\Hat{\E}_{B,t/\epsilon^2}[I_{\epsilon}(x_1,x_2,s_2,s_2,r)e^{J_{\epsilon}(r/\epsilon^2,r/\epsilon^2)}]\prod_{i=1}^2\phi(s_i)\psi(x_i)d\Bar{s}d\Bar{x}dydr,
    \end{align}
    with $d\Bar{s}=ds_1ds_2$, $d\Bar{x}=dx_1dx_2$.
\end{myprop}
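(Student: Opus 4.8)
The plan is to evaluate the variance by the Walsh--It\^o isometry, then integrate out the white noise (which enters only Gaussianly), and finally rescale space and time. Since $Z^\epsilon_t(r,y)$ depends on $\xi$ only through the martingale value $M^\epsilon_{t,x,B}(r)$, it is $\mathcal{F}_r$-measurable, so $\int_{-\infty}^{t_1/\epsilon^2}\int_{\mathbb{R}^2}Z^\epsilon_t\,d\xi$ is a centered Walsh integral and
$$\mathbf{Var}\Bigl(\int_{-\infty}^{t_1/\epsilon^2}\int_{\mathbb{R}^2}Z^\epsilon_t(r,y)\,d\xi(r,y)\Bigr)=\int_{-\infty}^{t_1/\epsilon^2}\int_{\mathbb{R}^2}\mathbf{E}\bigl[Z^\epsilon_t(r,y)^2\bigr]\,dy\,dr;$$
as $Z^\epsilon_t(r,y)=0$ for $r<-1$, the $r$-integral is effectively over $[-1,t_1/\epsilon^2]$.

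For the inner expectation I expand the square with two independent tilted paths $B^1,B^2$ and pull $\mathbf{E}$ inside $\hat{\E}_{B^1,B^2}$ (legitimate, since the tilted measure does not see $\xi$ and $\Phi^\epsilon_{t,x,B},\langle M^\epsilon_{t,x,B}\rangle$ are deterministic given the path). The only $\xi$-dependence is then the centered Gaussian $M^\epsilon_{t,x_1,B^1}(r)+M^\epsilon_{t,x_2,B^2}(r)$; applying $\mathbf{E}[e^X]=e^{\mathbf{Var}(X)/2}$ and using $\mathbf{E}[M^\epsilon_{t,x_i,B^i}(r)^2]=\langle M^\epsilon_{t,x_i,B^i}\rangle(r)$, the self-interaction exponents $-\tfrac{\hat\beta^2}{2\log(1/\epsilon)}\langle M^\epsilon_{t,x_i,B^i}\rangle(r)$ cancel the diagonal contributions of the variance of the sum, leaving only the cross term:
$$\mathbf{E}[Z^\epsilon_t(r,y)^2]=\int_{\mathbb{R}^4}g(x_1)g(x_2)\,\hat{\E}_{B^1,B^2}\Bigl[\Phi^\epsilon_{t,x_1,B^1}(r,y)\,\Phi^\epsilon_{t,x_2,B^2}(r,y)\,e^{\frac{\hat\beta^2}{\log(1/\epsilon)}\mathbf{E}[M^\epsilon_{t,x_1,B^1}(r)M^\epsilon_{t,x_2,B^2}(r)]}\Bigr]dx_1\,dx_2.$$

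Next I unfold the two building blocks and change variables. By the Walsh isometry, $\mathbf{E}[M^\epsilon_{t,x_1,B^1}(r)M^\epsilon_{t,x_2,B^2}(r)]=\int_{-\infty}^r\!\int_{\mathbb{R}^2}\Phi^\epsilon_{t,x_1,B^1}(s,z)\Phi^\epsilon_{t,x_2,B^2}(s,z)\,dz\,ds$: the $z$-integral produces $\psi\star\psi$ (using that $\psi$ is symmetric) and the $s$-integral, after recognising the $\phi$-convolution, produces the kernel $R_\phi$ of \eqref{Rphi}, with the remaining time variables running in $[-1,\,\cdot\,]$ once one invokes $R_\phi(u_1,u_2)=0$ unless $u_1,u_2>-1$. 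Likewise, for $0\le r\le t_1/\epsilon^2$ a substitution in the time argument reduces the outer factor to $\Phi^\epsilon_{t,x_i,B^i}(r,y)=\int_0^1\phi(s_i)\psi\bigl(x_i/\epsilon+B^i((t/\epsilon^2-r)-s_i)-y\bigr)\,ds_i$ over the whole interval $[0,1]$; this reduction is exact precisely because $t_1<t-\epsilon^2$ forces $t_1/\epsilon^2<t/\epsilon^2-1$, so the support of $\phi$ is not clipped and the relevant Brownian times stay positive. I then perform, simultaneously, the dilation $r\mapsto r/\epsilon^2$ (so $t/\epsilon^2-r\mapsto(t-r)/\epsilon^2$), the dilation $y\mapsto y/\epsilon$, and the path-dependent recentering $x_i\mapsto\epsilon x_i-\epsilon B^i((t-r)/\epsilon^2-s_i)+y$: the Jacobians $\epsilon^4$ (from $dx_1dx_2$), $\epsilon^{-2}$ (from $dy$) and $\epsilon^{-2}$ (from $dr$) cancel; $g(x_1)g(x_2)$ becomes $I_\epsilon$; the factors $\psi(x_i/\epsilon+\cdots)$ become $\psi(x_1)\psi(x_2)$; and the displacement $\tfrac{x_1-x_2}{\epsilon}$ inside each $\psi\star\psi$ arising from $\mathbf{E}[M^\epsilon M^\epsilon]$ becomes $x_1-x_2$ together with the Brownian increments that make up the argument of $J_\epsilon$ in \eqref{eq:2.10} (matching the precise expression uses $\psi\star\psi(w)=\psi\star\psi(-w)$ and relabelling the two paths). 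Collecting everything gives \eqref{eq:2.11}, the residual contribution of $r\in[-\epsilon^2,0]$ being $O(\epsilon^2)$ and absorbed.

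The routine ingredients are the Walsh isometry and the Gaussian identity $\mathbf{E}[e^X]=e^{\mathbf{Var}(X)/2}$. The part I expect to be the main obstacle is the last one: several changes of variables are carried out at once --- the time dilation, the $y$-dilation, and the path-dependent recentering of the two spatial variables, together with the reparametrisations of the two inner time integrals that produce $R_\phi$ --- and one must check that all powers of $\epsilon$ cancel and that the supports of $\phi$ and of $\psi\star\psi$ (where the hypothesis $t_1<t-\epsilon^2$ enters) make the reductions of $\Phi^\epsilon$ and of $\mathbf{E}[M^\epsilon M^\epsilon]$ exact rather than merely approximate. This is the $d=2$ transcription of the corresponding computation in \cite{Gu_2018}.
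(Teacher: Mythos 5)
Your proposal is correct and follows essentially the same route as the paper, which itself only sketches the argument by deferring to Lemma 2.3 of \cite{Gu_2018}: It\^o's isometry for the Walsh integral, the Gaussian identity that cancels the Wick-ordering terms and leaves only the cross-covariance of the two martingales, and then the series of changes of variables (time dilation, spatial dilation, path-dependent recentering) that produces $I_\epsilon$, $R_\phi$ and $J_\epsilon$. Your explicit bookkeeping of the Jacobians and of where the hypothesis $t_1<t-\epsilon^2$ enters (so that the support of $\phi$ is not clipped) supplies exactly the details the paper omits.
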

The calculation is the same as in $d\geq3$ and involves using It\^o's isometry for the stochastic integral appearing above, and a series of changes of variables. It is done in detail in \cite{Gu_2018} (see \textbf{Lemma 2.3} in the paper). Also, a similar formula is true for a general $t_1\leq t$, with some modifications. The modifications are the following:
\begin{itemize}
    \item The domain of $s_1,s_2$ changes from $[0,1]^2$ to $[0,(t-r)/\epsilon^2]^2.$
    \item The domain of $u_1,u_2$, that appear in the definition of $J_{\epsilon}(r/\epsilon^2,r/\epsilon^2)$, changes from $[-1,r/\epsilon^2]^2$ to $[-(t-r)/\epsilon^2,r/\epsilon^2]^2.$
\end{itemize}
Since $\phi$ is supported in $[0,1]$,  $R_{\phi}(u_1,u_2)$ is zero if either $u_i$ is less than $-1$. This shows that these changes create a difference only when $t-r\leq\epsilon^2$.\par
We are going to use this formula to find the limiting variance of \eqref{eq:2.8}. To do this, we analyze the functional appearing inside the integral on the right-hand side of \eqref{eq:2.11}. In particular, we define:\footnote{Sometimes, for brevity, we will avoid writing all the variables that $I_\epsilon$ depends on.}
\begin{equation}\label{Fdefinition}
   \Fcal_{\epsilon}(r,y,M_1,M_2)=\int_{\mathbb{R}^{4}}\int_{[0,1]^2}\Hat{\E}_{B,t/\epsilon^2}[I_{\epsilon}e^{J_{\epsilon}(M_1,M_2)}]\prod_{i=1}^2\phi(s_i)\psi(x_i)d\Bar{s}d\Bar{x}. 
\end{equation}
We seek to find a limit for this object, along with appropriate bounds in $y$ to apply the dominated convergence theorem. This is done in the following proposition, which is proved in \textbf{Section \ref{sec:6}}:
\begin{myprop}\label{thm:prop2.4}
    Let $0<M_1(\epsilon),M_2(\epsilon)\leq r$ be such that $\log M_i(\epsilon)/\log\frac{1}{\epsilon}\rightarrow0$ and such that for all $\epsilon$ small enough either $M_1(\epsilon)=M_2(\epsilon)$ or $M_1(\epsilon)-M_2(\epsilon)\geq c>0$ for some $c$. Then for any $r\in(0,t), y\in\mathbb{R}^2$, $\hat{\beta}<\hat{\beta}_c(R)$ and $k>0$ we have:
    \begin{equation}\label{eq:2.12}
        |\Fcal_{\epsilon}(r,y,M_1(\epsilon)/\epsilon^2,M_2(\epsilon)/\epsilon^2)|\leq C(1\wedge|y|^{-k}),
    \end{equation}
    for some constant $C$, depending only on $k$. Furthermore, as $\epsilon\rightarrow0$ we have:
    $$\Fcal_{\epsilon}(r,y,M_1(\epsilon)/\epsilon^2,M_2(\epsilon)/\epsilon^2)\rightarrow v^2_{eff}(\hat{\beta})p_{t-r}\star g(y)^2,$$
    where $p_t(y)=\frac{1}{2\pi t}e^{-|x|^2/2t}$, is the two dimensional heat kernel, and $v_{eff}^2(\hat{\beta})$ as in \textbf{Theorem \ref{thm:Main}}.
\end{myprop}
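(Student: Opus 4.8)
\emph{Strategy and reductions.} Both assertions follow once one separates the macroscopic information carried by the endpoint factor $I_\epsilon$ from the microscopic collision weight $e^{J_\epsilon}$, and then reduces the analysis of $J_\epsilon$ to the Kallianpur--Robbins-type limit for the inter-path collision process that is the main tool of the paper. Since $R_\phi\ge 0$ and $\psi\star\psi\ge0$, $J_\epsilon(M_1,M_2)$ is nondecreasing in each argument and $M_i(\epsilon)/\epsilon^2\le t/\epsilon^2$, so every bound below may be proved with $J_\epsilon^{\max}:=J_\epsilon(t/\epsilon^2,t/\epsilon^2)$. Moreover $R_\phi(u_1,u_2)=0$ unless $|u_1-u_2|<1$, so the double $u$-integral in $J_\epsilon$ collapses, up to an $O(1)$ correction in the effective number of collisions, to a single integral along the diagonal; this makes $J_\epsilon$ an occupation-type functional of the difference path $B^1-B^2$, and it is also why the two regimes $M_1=M_2$ and $M_1-M_2\ge c$ are handled identically: the hypothesis $\log M_i(\epsilon)/\log\tfrac1\epsilon\to0$ forces $M_i(\epsilon)/\epsilon^2=\epsilon^{-2+o(1)}$, so in the separated case the $u$-ranges differ only by terms lower order in $\log\tfrac1\epsilon$.

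\emph{The bound \eqref{eq:2.12}.} For $|y|$ bounded, $|\Fcal_\epsilon|\le\|g\|_\infty^2\|\phi\|_1^2\|\psi\|_1^2\,\hat{\E}_{B,t/\epsilon^2}[e^{J_\epsilon^{\max}}]$, which is $O(1)$ by the exponential-moment estimate below. For $|y|$ large, compact support of $g$ and $\psi$ (with $\mathrm{supp}\,\psi\subset\{|x|\le\tfrac12\}$) forces, on the support of the integrand, $|\epsilon B^i((t-r)/\epsilon^2-s_i)|\ge c|y|$ for $\epsilon$ small, so $|I_\epsilon|\le\|g\|_\infty^2\prod_{i=1}^2\mathbf{1}\{|\epsilon B^i((t-r)/\epsilon^2-s_i)|\ge c|y|\}$. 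Fix $k$ and pick $p\in(1,\hat{\beta}_c(R)^2/\hat{\beta}^2)$ (possible since $\hat{\beta}<\hat{\beta}_c(R)$) with conjugate exponent $p'$; H\"older's inequality in $\hat{\E}_{B^1,B^2;t/\epsilon^2}$ together with the independence of $B^1,B^2$ gives
\[
\hat{\E}\Big[\textstyle\prod_{i}\mathbf{1}\{|\epsilon B^i(\cdots)|\ge c|y|\}\,e^{J_\epsilon^{\max}}\Big]\ \le\ \hat{\P}\big(|\epsilon B((t-r)/\epsilon^2-s)|\ge c|y|\big)^{2/p'}\,\hat{\E}\big[e^{p J_\epsilon^{\max}}\big]^{1/p},
\]
and the last factor is $O(1)$ by the main estimate below (note $p\hat{\beta}^2<\hat{\beta}_c(R)^2$). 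Since $\epsilon B((t-r)/\epsilon^2-s)$ has all moments bounded uniformly in $\epsilon$ and $s\in[0,1]$ under $\hat{\P}_{t/\epsilon^2}$ --- a routine comparison with the Wiener measure using the asymptotics of $\zeta^{(\epsilon)}_{t/\epsilon^2}$ from Appendix~\ref{sec:app2} --- one has $\hat{\P}(|\epsilon B(\cdots)|\ge c|y|)\le C_m|y|^{-m}$ for every $m$; choosing $m\ge kp'/2$ and integrating against $\prod\phi(s_i)\psi(x_i)$ yields \eqref{eq:2.12}, and combining with the bounded-$|y|$ case gives $C(1\wedge|y|^{-k})$.

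\emph{The limit.} Fix $r\in(0,t)$ and $y$. The claim is that the endpoint factor asymptotically decouples from the collision weight,
\[
\Fcal_\epsilon(r,y,M_1/\epsilon^2,M_2/\epsilon^2)\ \longrightarrow\ \big(p_{t-r}\star g(y)\big)^2\cdot\lim_{\epsilon\to0}\int_{\mathbb R^4}\!\int_{[0,1]^2}\hat{\E}_{B,t/\epsilon^2}\big[e^{J_\epsilon(M_1/\epsilon^2,M_2/\epsilon^2)}\big]\prod_{i}\phi(s_i)\psi(x_i)\,d\bar s\,d\bar x ,
\]
and the bracketed limit, combined with the $\prod\phi(s_i)\psi(x_i)$-integration, equals $v_{eff}^2(\hat{\beta})$. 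Three ingredients enter. (i) \emph{Endpoints:} under $\hat{\P}_{t/\epsilon^2}$ the diffusively rescaled pair $(\epsilon B^i(\cdot/\epsilon^2))_{i=1,2}$ converges in law to two independent standard planar Brownian motions --- the tilting in \eqref{expmeasure} is driven by $\hat{\beta}_\epsilon^2=\hat{\beta}^2/\log\tfrac1\epsilon\to0$ and, after centering by $\zeta^{(\epsilon)}$, does not affect macroscopic increments, which is the source of the trivial effective diffusivity --- so $\epsilon x_i-\epsilon B^i((t-r)/\epsilon^2-s_i)+y\Rightarrow y-W^i_{t-r}$ with $W^i_{t-r}\sim N(0,(t-r)I_{2\times2})$ and $\E[g(y-W^i_{t-r})]=p_{t-r}\star g(y)$. (ii) \emph{Decoupling:} splitting the paths at the macroscopic time $(t-r)/\epsilon^2$, the part of $J_\epsilon$ responsible for its $\Theta(1)$ limit is an occupation-type functional of the future relative displacement $B^1-B^2$, whose law is, by translation invariance of Brownian motion, asymptotically independent of the terminal-position data determining $I_\epsilon$ (and, for the same reason, of the variables $s_i,x_i$); the residual contribution of the finitely many near-tip collisions is $O(1/\log\tfrac1\epsilon)$ and is absorbed by the uniform exponential-moment bound, so $\hat{\E}[I_\epsilon e^{J_\epsilon}]=(1+o(1))\hat{\E}[I_\epsilon]\hat{\E}[e^{J_\epsilon}]$ with the first factor tending to $(p_{t-r}\star g(y))^2$. (iii) \emph{Value of the constant:} expanding $e^{J_\epsilon}=\sum_{n\ge0}\tfrac1{n!}J_\epsilon^n$, the $n$-th term corresponds to $n$ ordered inter-path collision times in $[-1,M_i/\epsilon^2]$ whose intensity is governed by the planar local central limit theorem (the difference of the two asymptotically untilted paths has heat kernel $\sim(4\pi\tau)^{-1}$ on the diagonal) with each collision carrying the factor $\hat{\beta}^2\|R\|_1/\log\tfrac1\epsilon$; since the collision count up to time $\sim\epsilon^{-2}$ grows like $\tfrac1{4\pi}\log(\epsilon^{-2})=\tfrac1{2\pi}\log\tfrac1\epsilon$, the $n$-th term tends to $(\hat{\beta}^2\|R\|_1/2\pi)^n=(\hat{\beta}^2/\hat{\beta}_c(R)^2)^n$, so the geometric series sums (for $\hat{\beta}<\hat{\beta}_c(R)$) to $(1-\hat{\beta}^2/\hat{\beta}_c(R)^2)^{-1}$, and together with the constant produced by the mollifier integrations this reproduces $v_{eff}^2(\hat{\beta})$. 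Equivalently, the Kallianpur--Robbins extension of the paper identifies the scaling limit of the collision functional as $\hat{\beta}^2\hat{\beta}_c(R)^{-2}$ times a standard exponential variable, and $\E[e^{\lambda\,\mathrm{Exp}(1)}]=(1-\lambda)^{-1}$. All interchanges of $\lim$, $\hat{\E}$ and the $(\bar s,\bar x)$-integration are justified by dominated convergence using the bounds above.

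\emph{Main obstacle.} Everything hinges on the sharp uniform exponential-moment bound
\[
\sup_{0<\epsilon<1}\ \hat{\E}_{B,t/\epsilon^2}\big[e^{p\,J_\epsilon^{\max}}\big]<\infty\qquad\text{whenever }p\,\hat{\beta}^2<\hat{\beta}_c(R)^2,
\]
together with the identification of the scaling limit of $J_\epsilon$ itself. This is the $d=2$ critical analogue of the finite-second-moment (weak-disorder) bound for $u^\epsilon$, \emph{with the optimal threshold} $\hat{\beta}_c(R)=\sqrt{2\pi/\|R\|_1}$, and it is the hard part: the single-path tilting \eqref{expmeasure} and the time correlations in $R$ prevent the successive inter-path collisions from forming a random walk with i.i.d.\ increments, so one must encode them as a Markov chain on the space of path segments between collisions, exhibit its regeneration structure, and prove the corresponding Kallianpur--Robbins law --- namely that the number of collisions up to time $\sim\epsilon^{-2}$ equals $\tfrac1{2\pi}\|R\|_1\log\tfrac1\epsilon\,(1+o(1))$ with exponential-type fluctuations. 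Obtaining the \emph{sharp} threshold (rather than a strictly smaller range of $\hat{\beta}$, as in some earlier works) is precisely where the exact logarithmic growth rate $\tfrac1{2\pi}$ --- the planar local-CLT constant --- must be exploited without loss.
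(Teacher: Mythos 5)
Your proposal follows essentially the same route as the paper's proof: H\"older's inequality plus a uniform tail estimate for the rescaled endpoint $\epsilon B_{(t-r)/\epsilon^2-s}$ gives the bound $C(1\wedge|y|^{-k})$, and the limit is obtained by decoupling $I_\epsilon$ from $e^{J_\epsilon}$, sending $\hat{\E}[I_\epsilon]\to (p_{t-r}\star g(y))^2$ and identifying $\lim\hat{\E}[e^{J_\epsilon}]=(1-\hat{\beta}^2/\hat{\beta}_c(R)^2)^{-1}$ via the regenerative Markov-chain construction and the non-directed Kallianpur--Robbins law, with the sharp uniform exponential-moment bound for $p\hat{\beta}^2<\hat{\beta}_c(R)^2$ (the paper's Khas'minskii-type lemma) as the key input --- precisely the content of Section 6. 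The only substantive difference is that where you assert the decoupling of $I_\epsilon$ and $e^{J_\epsilon}$ by ``asymptotic independence from translation invariance'' (which does not hold exactly under the tilted measure), the paper makes it an exact independence by using the Doeblin/mixing lemma to replace the Markov-chain steps straddling time $(t-r)/\epsilon^2$ by genuinely independent Wiener segments, at the price of multiplicative constants $\mathcal{A}_\epsilon,\mathcal{B}_\epsilon\to1$.
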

{\begin{remark}\label{Nikosremark}
    We put the condition
    $$M_1(\epsilon)-M_2(\epsilon)\geq c>0,$$ 
     only for convenience. In this case, the proof simplifies to the case where $M_1(\epsilon)=M_2(\epsilon)$. Indeed, if $M_1(\epsilon)-M_2(\epsilon)\geq c>0$ then, for all $\epsilon$ small enough, $M_1(\epsilon)/\epsilon^2\geq M_2(\epsilon)/\epsilon^2+1$. Moreover, since $\phi$ is supported on $[0,1]$, $R_{\phi}(u_1,u_2)=0$ when $|u_1-u_2|\geq1$. Therefore, if $M_1(\epsilon)-M_2(\epsilon)\geq c>0$
    $$J_{\epsilon}(M_1(\epsilon)/\epsilon^2,M_2(\epsilon)/\epsilon^2)=J_{\epsilon}(M_2(\epsilon)/\epsilon^2+1,M_2(\epsilon)/\epsilon^2),$$
    for all $\epsilon$ sufficiently small. This implies that 
    $$\Fcal_{\epsilon}(r,y,M_1(\epsilon)/\epsilon^2,M_2(\epsilon)/\epsilon^2)=\Fcal_{\epsilon}(r,y,M_2(\epsilon)/\epsilon^2+1,M_2(\epsilon)/\epsilon^2).$$
    Using this equality, we are going to argue in \textbf{Section \ref{sec:6}} that 
    $$|\Fcal_{\epsilon}(r,y,M_1(\epsilon)/\epsilon^2,M_2(\epsilon)/\epsilon^2)-\Fcal_{\epsilon}(r,y,M_2(\epsilon)/\epsilon^2,M_2(\epsilon)/\epsilon^2)|\rightarrow0,$$
    as $\epsilon\rightarrow0$. 
\end{remark}}
This proposition gives us the limit for the variance appearing in $(\ref{eq:2.11})$ for any $t_1<t-\epsilon^2$. We will argue in \textbf{Section \ref{sec:6}} that this can be extended to the case $t_1=t$, thus giving us the limiting variance. This is done in \textbf{Corollary \ref{cor6.1}}. Observe that the limiting variance is then equal to
$$\hat{\beta}^2v^2_{eff}(\hat{\beta})\int_0^t\int_{\mathbb{R}^2}p_{t-r}\star g(y)^2dydr,$$
which is equal to
$$\mathbf{Var}\biggl(\int_{\mathbb{R}^2}\mathcal{U}(t,x)g(x)dx\biggr),$$
with $\mathcal{U}(t,x)$  as in \textbf{Theorem \ref{thm:Main}}.\par
In $\textbf{Section \ref{sec:2)}}$ we prove $\textbf{Theorem \ref{thm:Main}}$ assuming \textbf{Proposition \ref{thm:prop2.4}} and \textbf{Corollary \ref{cor6.1}}. As mentioned, we use the mixing arguments appearing in \cite{Gu_2018}. However, their arguments can give the CLT only in a restricted region of $\hat{\beta}$. This is because they eventually have to verify Lindenberg's principle and they do this by essentially estimating the fourth moment of $(\ref{eq:2.1})$. This restricts the range of $\hat{\beta}$ since they have estimates only for the second moment. We avoid this problem by using an argument from \cite{10.1214/19-AOP1383} which uses Gaussian hypercontractivity and the fact that we are strictly inside the subcritical regime.\par
The rest of the paper is dedicated to the proof of \textbf{Proposition \ref{thm:prop2.4}}. It is clear that to prove it we have to deal with functionals of continuous paths which are distributed according to $\hat{\P}^{(\epsilon)}_T$. This is the bulk of this paper. Similar measures on the space of continuous paths have been considered before due to their connection to quantum mechanical models (see \cite{Mukherjee2017CentralLT} and the references therein). We analyze $\hat{\P}_T^{(\epsilon)}$ in a similar way as in \cite{Gu_2018, Mukherjee2017CentralLT}, by introducing a Markov chain on the space of continuous paths over $[0,1]$ and expressing the measure $\hat{\P}^{(\epsilon)}_T$ as a transition probability of this Markov chain. The way we can guess this Markov chain is pretty simple: We could try to build a path distributed according to $\hat{\P}^{(\epsilon)}_T$ by gluing together $T-$many smaller paths over $[0,1]$ distributed according to $\hat{\P}^{(\epsilon)}_1$. However, these paths will interact under $\hat{\P}^{(\epsilon)}_T$, which will lead us to an explicit probability kernel that captures this interaction. This can be used to define the Markov chain, thus giving us an explicit way to construct a path distributed according to $\hat{\P}^{(\epsilon)}_T$ (for more details see the next section).\par
In \textbf{Sections \ref{sec:3}} and \textbf{\ref{sec:4}} we study the properties of this Markov chain. This Markov chain satisfies a Doeblin condition: the transition probability measure is bounded below by a small multiple of the Wiener measure. From standard arguments, via a coupling with a sequence of Bernoulli random variables, we have a positive probability of sampling independently from the Wiener measure at each step of the Markov chain. This coupling gives the Markov chain a regeneration structure, and more specifically the path built from the Markov chain is a regenerative process \cite{Rege_pro_Tal}. As a result,  the total path increment of the path built from the Markov chain, up to a regeneration time, is a random walk in $\R^2$, a fact that plays an important role in our arguments.  Moreover, the fact that we tuned the coupling constant as:
$${\beta}_{\epsilon}=\frac{\hat{\beta}}{\sqrt{\log\frac{1}{\epsilon}}},$$
leads to some simplifications when compared to the case of $d\geq3$, for example, a stronger mixing property for the Markov chain as $\epsilon\rightarrow0$ (see $\textbf{Lemma \ref{thm:lem4.1}}$).\par

{The second moment calculations necessary to prove \textbf{Proposition \ref{thm:prop2.4}} is our main departure from the techniques in \cite{Gu_2018}. The added difficulty in our case is  best seen at the level of the moments of $u_{\epsilon}(t,x)$. From \textbf{Proposition \ref{thm:prop2.1}} we have:
$$e^{-2\zeta_{t/\epsilon^2}^{(\epsilon)}}\mathbf{E}[u_{\epsilon}(t,x)^2]=\Hat{\E}_{B^1,B^2;t/\epsilon^2}\biggl[\exp\biggl(\frac{\hat{\beta}^2}{{\log\frac{1}{\epsilon}}}\int_{[0,t/\epsilon^2]^2}R(s-u,B^1(s)-B^2(u))dsdu\biggr)\biggr].$$
The formula holds true when $d\geq3$, only with $\hat{\beta}^2/{\log\frac{1}{\epsilon}}$ replaced by an $\epsilon-$independent constant. To find the limit of $e^{-2\zeta_{t/\epsilon^2}^{(\epsilon)}}\mathbf{E}[u_{\epsilon}(t,x)^2]$ as $\epsilon\rightarrow0$, we need to consider the limiting distribution of:
\begin{equation}\label{additive}
    \int_{[0,t/\epsilon^2]^2}R(s-u,\omega^1(s)-\omega^2(u))dsdu,
\end{equation}
where $\omega^1,\omega^2$ are two independent paths built from the Markov chain introduced in \textbf{Section \ref{sec:3}}.}\par
{In $d\geq3$, this is relatively easy to do, since \eqref{additive} converges to a random variable with exponential moments. This is proved in \cite{Gu_2018} (Corollary $4.4$ in the same paper). This result is somewhat expected since $(\epsilon \omega^1_{s/\epsilon^2})_{s\leq t}$ converges to a Brownian motion with an effective diffusivity (see Proposition 4.1 in \cite{Gu_2018} for the precise statement).}\par
{In $d=2$ we expect $(\epsilon \omega^1_{s/\epsilon^2})_{s\leq t}$ to behave similarly and to converge to a Brownian motion. Since in $d=2$ the Brownian motion is recurrent, \eqref{additive}  diverges almost surely as $\epsilon\rightarrow0$. This is the reason for the logarithmic dependence of the coupling constant $\beta$. Indeed, in $\textbf{Section \ref{sec:5}}$,  we will prove that:
\begin{equation}\label{additive2}
    \frac{1}{{\log\frac{1}{\epsilon}}}\int_{[0,t/\epsilon^2]^2}R(s-u,\omega^1(s)-\omega^2(u))dsdu
\end{equation}
converges in distribution to a multiple of an exponential random variable of rate $1$.}\par
{If $R$ does not depend on time, and $\omega^1, \omega^2$ are independent Brownian motions, then this result is the Kallianpur-Robbins law for the Brownian motion \cite{Kal-Rob}. Therefore, we can think of the result of \textbf{Section \ref{sec:5}} as a 'non-directed' version of the Kallianpur-Robbins law for the regenerative process $(\omega^1,\omega^2)$. Naturally, to prove this, we use the regenerative structure of the process. This means that the process can be split into paths of random length called cycles, such that the cycles are independent and identically distributed. We exploit this structure to analyze the additive functional \eqref{additive2}, by splitting the functional as a sum over the cycles and then averaging over the law of the cycles. This will give us an additive functional of the total path increments. As mentioned, the total path increment is a random walk in $\mathbb{R}^2$.  Then, to properly analyze this additive functional of the total path increments, we will prove a 'non-directed' version of a result in \cite{Kallianpur1954TheSO} (the discrete version of the Kallianpur-Robbins law)  (see \textbf{Lemma \ref{lemm4.3}}).}\par
In $\textbf{Section \ref{sec:6}}$, we prove \textbf{Proposition \ref{thm:prop2.4}} by exploiting the mixing property of the Markov chain and the information we got in $\textbf{Section \ref{sec:5}}$ for the limiting distribution of \eqref{additive2}. The arguments we use here allow us to prove \textbf{Proposition \ref{thm:prop2.4}} in the full subcritical regime:
$$\hat{\beta}<\hat{\beta}_c(R).$$
As mentioned, this regime is optimal since the effective variance is infinite for $\hat{\beta}\geq\hat{\beta}_c(R)$. We can get an explicit form for the effective variance and the corresponding critical value because we know the limiting distribution of the additive functionals appearing in our calculations.\par
Finally, there are four appendices presenting respectively, some lemmas concerning the total path increments, the  'non-directed' version of the 'discrete' Kallianpur-Robbins law that was mentioned before, a lemma that we use to bound exponential moments of additive functionals and a result giving us the (first order) asymptotics of $\zeta_{t/\epsilon^2}^{(\epsilon)}$.

\subsection{Notation}
\begin{itemize}
    \item For a probability measure, $\mu$, on a measurable space, we write $X\sim\mu$ when any random variable, defined over any probability space, has law equal to $\mu$. We write $\E_{\mu}$ for the expectation with respect to $\mu$. Abusing notation, we will write $\E_{X}$ for the expectation with respect to the law of $X$, again defined over any probability space. Finally, $\mu_1\times\mu_2$ denotes the product measure of $\mu_1$ and $\mu_2$.
    \item $Geo(\gamma)$ denotes the geometric distribution with parameter $\gamma$ and $W_T$ the Wiener measure on $C([0,T])$.
    \item For $t\in\mathbb{R}$ we write $[t]\in\mathbb{Z}$ for the integer part of t.
    \item We use the notation $x\lesssim y$ to mean $x\leq C\cdot y$ for some constant $C$, irrelevant to the current argument. We also use the big-O notation to mean the same thing: $x=O(y)$ means $x\lesssim y$. Similarly, we say that $x_\epsilon=o(y_\epsilon)$ as $\epsilon\rightarrow0$ if $x_\epsilon/y_\epsilon\rightarrow0$ as $\epsilon\rightarrow0$. Finally, we write $x_\epsilon\sim y_\epsilon$ to indicate that $x_\epsilon/y_\epsilon\rightarrow1$ as $\epsilon\rightarrow0$ and we say that $x_\epsilon$ is asymptotic to $y_\epsilon$. 
    \item We use bold symbols to emphasize that the symbol is a vector. For two vectors $\mathbf{y},\mathbf{x}\in\mathbb{R}^d$ we denote by $\mathbf{x}^*$ the transpose of  $\mathbf{x}$ and by $\mathbf{x}\otimes\mathbf{y}=\mathbf{x}\mathbf{y}^*$. Moreover, we denote by $\langle \mathbf{x},\mathbf{y}\rangle$, their inner product. Finally, we denote by $|\mathbf{x}|$ the standard Euclidean norm of $\mathbf{x}$.
\end{itemize}

\section{Proof of Theorem \ref{thm:Main}}\label{sec:2)}
{In this section, we prove $\textbf{Theorem \ref{thm:Main}}$ assuming \textbf{Proposition \ref{thm:prop2.4}} and \textbf{Corollary \ref{cor6.1}}. From the latter, we get that the variance of 
$$\sqrt{\log\frac{1}{\epsilon}}\int_{\mathbb{R}^2}e^{-\zeta^{(\epsilon)}_{t/\epsilon^2}}(u_{\epsilon}(t,x)-\mathbf{E}[u_{\epsilon}(t,x)])g(x)dx$$
converges to a real number as $\epsilon\rightarrow0$. This observation, when combined with \eqref{renorm} and the Markov inequality, proves that 
$$\int_{\mathbb{R}^2}e^{-\zeta^{(\epsilon)}_{t/\epsilon^2}}u_{\epsilon}(t,x)g(x)dx\rightarrow\int_{\mathbb{R}^2}g(x)dx,$$ 
in $\mathbf{P}$-probability, as $\epsilon\rightarrow0$.}\par
To prove the central limit theorem in \textbf{Theorem \ref{thm:Main}}, we recall that, from \textbf{Proposition \ref{thm:prop2.2}}, we have
$$\sqrt{\log\frac{1}{\epsilon}}\int_{\mathbb{R}^2}e^{-\zeta^{(\epsilon)}_{t/\epsilon^2}}(u_{\epsilon}(t,x)-\mathbf{E}[u_{\epsilon}(t,x)])g(x)dx=\hat{\beta}\int_{-1}^{t/\epsilon^2}\int_{\mathbb{R}^2}Z_t^{\epsilon}(r,y)d\xi(r,y),$$
where
\begin{equation}\label{eq7.1}
    Z_t^{\epsilon}(r,y):=\int_{\mathbb{R}^2}g(x)\Hat{\E}_{B,t}\biggl[\Phi^{\epsilon}_{t,x,B}(r,y)\exp\biggl({\frac{\hat{\beta}}{{\sqrt{\log\frac{1}{\epsilon}}}}M^{\epsilon}_{t,x,B}(r)-\frac{\hat{\beta}^2}{{2{\log\frac{1}{\epsilon}}}}\langle M^{\epsilon}_{t,x,B}\rangle(r)}\biggr)\biggr]dx.
\end{equation}
We will split $[-1, t/\epsilon^2]$ into 'short' and 'long' intervals and then modify the martingale $M^{\epsilon}_{t,x, B}(r)$ over the long intervals. This will be done so that the contributions from the short intervals are negligible and the contributions over the long intervals are independent. Then, \textbf{Proposition \ref{thm:prop2.4}} and Lindenberg's criterion, will give the full central limit theorem.\par
More specifically, for $0<a<\lambda<2$,  we split $[-1,t/\epsilon^2]$ into successive intervals of length $\epsilon^{-a}$ (the short intervals) and of length $\epsilon^{-\lambda}$ (the long intervals):
$$[-1,t/\epsilon^2]=[-1,\epsilon^{-a}]\cup(\epsilon^{-a},\epsilon^{-a}+\epsilon^{-\lambda}]\cup...\cup(t_{\epsilon},t/\epsilon^2],$$
where $t/\epsilon^2-t_{\epsilon}=O(\epsilon^{-\lambda})$. For technical reasons, we choose $a$ and $\lambda$ to depend on $\epsilon$. We choose
$$a=2-\log\log\frac{1}{\epsilon}/\log\frac{1}{\epsilon}\textit{ and }\lambda=2-\log\log\frac{1}{\epsilon}/2\log\frac{1}{\epsilon}.$$ 
We denote by $(I_{a,j})$ the collection of all short intervals (where we also include the interval containing $t/\epsilon^2$ to this collection) and by $(I_{\lambda,j})$ the collection of all long intervals. The modification of ${M}^{\epsilon}_{t,x,B}$ is defined as
\begin{equation}\label{mod}
    \Tilde{M}^{\epsilon}_{t,x,B}(r):=\int_{-\infty}^r\int_{\mathbb{R}^2}\biggl(\int_0^{r_{\epsilon}}\phi(t/\epsilon^2-s_1-s)\psi(x/\epsilon+B({s_1})-y)ds_1\biggr)d\xi(s,y),
\end{equation}
where $r_{\epsilon}=t/\epsilon^2-r+1/2\epsilon^{a}$ and $r\in I_{\lambda,j}$, for some $j$ (note that in this case, $r\geq\epsilon^{-a}$, and therefore $r_\epsilon<t/\epsilon^2$). Since $r_\epsilon$ depends on $r$, $\tilde{M}^\epsilon_{t,x,B}$ is not a martingale. Nevertheless, we abuse notation and we define
$$\langle\Tilde{M}^{\epsilon}_{t,x,B}\rangle(r)=\int_{-\infty}^r\int_{\mathbb{R}^2}\biggr(\int_0^{r_{\epsilon}}\phi(t/\epsilon^2-s_1-s)\psi(x/\epsilon+B({s_1})-y)ds_1\biggl)^2 ds_1dyds.$$
Also, for $r\in I_{\lambda,j}$,  define $\Tilde{Z}_{t}^{\epsilon}(r,y)$ by $(\ref{eq7.1})$, with $\Tilde{M}^{\epsilon}_{t,x,B}$ replacing ${M}^{\epsilon}_{t,x,B}$. Finally, define
\begin{equation}\label{eq7.2}
    \mathcal{X}_{j}(\hat{\beta})=\int_{I_{\lambda,j}}\int_{\mathbb{R}^2}\Tilde{Z}^{\epsilon}_t(r,y)d\xi(r,y).
\end{equation}
From \cite{Gu_2018}, the random variables $\mathcal{X}_{j}(\hat{\beta})$ are independent. {Indeed,  the integrand in \eqref{mod} vanishes when $s\leq r-\epsilon^{-a}$, and $\epsilon$ is small enough. That is because $t/\epsilon^2-s_1-s\geq1/2\epsilon^{a}>1$ when $s\leq r-\epsilon^{-a}$ and $\epsilon$ small enough and so $\phi(t/\epsilon^2-s_1-s)=0$. Therefore, $\tilde{M}_{t,x,B}^\epsilon(r)$ depends on the underlying noise $\xi(s,y)$ only for $s\in(r-\epsilon^{-a},r]$. This proves that $(\mathcal{X}_j(\hat{\beta}))_{j}$ are independent.}\par
Moreover, the following are true:

\begin{itemize}
    \item If $I_{\lambda}$ is the union of all long intervals then
    \begin{equation}\label{7.3}
        \int_{I_{\lambda}}\int_{\mathbb{R}^2}\mathbf{E}[|{Z}_{t}^{\epsilon}(r,y)-\Tilde{Z}_{t}^{\epsilon}(r,y)|^2]dydr\rightarrow0,
    \end{equation}
    
    as $\epsilon\rightarrow0$. Indeed, from \cite{Gu_2018} {(proof of Lemma $3.1$)} we have
    \begin{equation}\label{7.4}
        \int_{I_{\lambda}}\int_{\mathbb{R}^2}\mathbf{E}[|{Z}_{t}^{\epsilon}|^2](r,y)dydr=\int_0^t\int_{\mathbb{R}^2}\textbf{1}_{\{r/\epsilon^2\in I_{\lambda}\}}\Fcal_{\epsilon}(r,y,r/\epsilon^2,r/\epsilon^2)dydr,
    \end{equation}
    \begin{equation}\label{7.5}
        \int_{I_{\lambda}}\int_{\mathbb{R}^2}\mathbf{E}[|\Tilde{Z}_{t}^{\epsilon}|^2](r,y)dydr=\int_0^t\int_{\mathbb{R}^2}\textbf{1}_{\{r/\epsilon^2\in I_{\lambda}\}}\Fcal_{\epsilon}(r,y,1/2\epsilon^{a},1/2\epsilon^{a})dydr,
    \end{equation}
    \begin{equation}\label{7.6}
        \int_{I_{\lambda}}\int_{\mathbb{R}^2}\mathbf{E}[{Z}_{t}^{\epsilon}(r,y)\Tilde{Z}_{t}^{\epsilon}(r,y)]dydr=\int_0^t\int_{\mathbb{R}^2}\textbf{1}_{\{r/\epsilon^2\in I_{\lambda}\}}\Fcal_{\epsilon}(r,y,r/\epsilon^2,1/2\epsilon^{a})dydr.
    \end{equation}
     Recalling that $ a=2-\log\log\frac{1}{\epsilon}/\log\frac{1}{\epsilon}$, so that $\epsilon^{-a}=\epsilon^{-2}/\log\frac{1}{\epsilon}$, and using \textbf{Proposition \ref{thm:prop2.4}}, 
    we can  verify that $(\ref{7.4})$, $(\ref{7.5})$ and \eqref{7.6} have the same limit. This proves \eqref{7.3}.
    \item Similarly, if $I_{a}$ is the union of all short intervals
    \begin{equation}\label{7.7}
         \int_{I_{a}}\int_{\mathbb{R}^2}\mathbf{E}[|{Z}_{t}^{\epsilon}(r,y)|^2]dydr\rightarrow0,
    \end{equation}
    as $\epsilon\rightarrow0$. Indeed, from \cite{Gu_2018} {(proof of Lemma $3.2$)}
    $$\int_{I_{a}}\int_{\mathbb{R}^2}\mathbf{E}[|{Z}_{t}^{\epsilon}(r,y)|^2]dydr=\int_0^t\int_{\mathbb{R}^2}\textbf{1}_{\{r/\epsilon^2\in I_{a}\}}\Fcal_{\epsilon}(r,y,r/\epsilon^2,r/\epsilon^2)dydr.$$
        Note that here $\Fcal_{\epsilon}(r,y,r/\epsilon^2
        ,r/\epsilon^3)$ has the modifications described after \textbf{Proposition \ref{thm:prop2.3}}. Nevertheless, \textbf{Proposition \ref{thm:prop2.4}} still holds (as it is argued in \textbf{Corollary \ref{cor6.1}} for example). By the bound provided by \textbf{Proposition \ref{thm:prop2.4}} and  $|\{r\in[0,t]\textbf{ : }r/\epsilon^2\in I_{a}\}|\rightarrow0$ as $\epsilon\rightarrow0$ we see  that $(\ref{7.7})$ holds.

    \item Now we will prove that for all $\hat{\beta}<\hat{\beta}_c(R)$
    \begin{equation}\label{7.8}
        \hat{\beta}\sum_{j}\mathcal{X}_j^{\epsilon}(\hat{\beta})\Rightarrow\int_{\mathbb{R}^2}\mathcal{U}(t,x)g(x)dx.
    \end{equation}
    where $\Rightarrow$ means convergence in distribution.  Again from \cite{Gu_2018} {(proof of Lemma $3.3$)}, we have
    $$\hat{\beta}^2\sum_j\mathbf{Var}[\mathcal{X}_j^{\epsilon}(\hat{\beta})]=\beta^2\sum_j\int_0^t\int_{\mathbb{R}^2}\textbf{1}_{\{r/\epsilon^2\in I_{\lambda,j}\}}\Fcal_{\epsilon}(r,y,1/2\epsilon^{a},1/2\epsilon^{a})dydr.$$
    From  \textbf{Proposition \ref{thm:prop2.4}} this converges to $\hat{\beta}^2v^2_{eff}(\hat{\beta})\int_0^t\int_{\mathbb{R}^2}p_{t-r}\star g(y)^2dydr$, which is equal to the variance of $\int_{\mathbb{R}^2}\mathcal{U}(t,x)g(x)dx$. Now, since $\mathcal{X}_j^{\epsilon}$ are independent, to prove the full central limit theorem, we need to check Lindeberg's condition
    \begin{equation}\label{7.9}
            \sum_j\mathbf{E}[|\mathcal{X}_j^{\epsilon}(\hat{\beta})|^2\textbf{1}_{\{|\mathcal{X}_j^{\epsilon}(\hat{\beta})|>\delta\}}]\rightarrow0,
    \end{equation}
    for any $\delta>0$ as $\epsilon\rightarrow0$. Choose $p>1$, so that $(2p-1)\hat{\beta}<\hat{\beta}_c(R)$. From the Hölder and the Chebyshev inequality
    \begin{equation*}
       \sum_j\mathbf{E}[|\mathcal{X}_j^{\epsilon}(\hat{\beta})|^2\textbf{1}_{\{|\mathcal{X}_j^{\epsilon}(\hat{\beta})|>\delta\}}]\leq\frac{1}{\delta}\sum_j\mathbf{E}[|\mathcal{X}_j^{\epsilon}(\hat{\beta})|^{2p}]^{1/p}(\mathbf{E}[\mathcal{X}_j^{\epsilon}(\hat{\beta})|^2])^{1-1/p}.
    \end{equation*}
     Recall that $\lambda=2-\log\log\frac{1}{\epsilon}/2\log\frac{1}{\epsilon}$. From the bound in \textbf{Proposition \ref{thm:prop2.4}},  $\mathbf{E}[\mathcal{X}_j^{\epsilon}(\hat{\beta})|^2]\lesssim\epsilon^{2-\lambda}\lesssim1/\log\frac{1}{\epsilon}$, for all $j$. Moreover
    $$\sum_j\mathbf{E}[|\mathcal{X}_j^{\epsilon}(\hat{\beta})|^{2p}]^{1/p}=\sum_j||\mathcal{X}_j^{\epsilon}(\hat{\beta})||_{2p}^{2}\lesssim\sum_j||\mathcal{X}_j^{\epsilon}((2p-1)\hat{\beta})||_{2}^2\lesssim1,$$
    as $\epsilon\rightarrow0$, where we used \textbf{Lemma \ref{lemm7.1}} below and the fact that when $(2p-1)\hat{\beta}<\hat{\beta}_c(R)$, the sum in the right-most side of the above inequality converges as $\epsilon\rightarrow0$. These two observations prove $(\ref{7.9})$, which proves \eqref{7.8}. 
\end{itemize}

These three items prove the full central limit theorem appearing in \textbf{Theorem \ref{thm:Main}}. The only thing left to prove is the following:
\begin{mylem}\label{lemm7.1}
    For $p>2$ such that $(p-1)\hat{\beta}<\hat{\beta}_c(R)$ we have:
    $$||\mathcal{X}_j^{\epsilon}(\hat{\beta})||_p\lesssim||\mathcal{X}_j^{\epsilon}((p-1)\hat{\beta})||_2$$
\end{mylem}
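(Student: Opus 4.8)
The plan is to deduce \textbf{Lemma \ref{lemm7.1}} from Gaussian hypercontractivity, after noting that the coupling constant enters $\mathcal{X}_j^\epsilon(\hat\beta)$ exactly the way the Ornstein--Uhlenbeck semigroup acts on the Wiener chaos decomposition. The first step is to identify the $\hat\beta$-dependence chaos by chaos. Inspecting $(\ref{eq7.1})$--$(\ref{eq7.2})$, the kernel $\Phi^\epsilon_{t,x,B}(r,y)$ is a deterministic functional of the Brownian path $B$ alone (it does not involve the white noise $\xi$), so the entire $\hat\beta$-dependence of $\mathcal{X}_j^\epsilon(\hat\beta)$ sits inside the Wick exponential $\exp(\frac{\hat\beta}{\sqrt{\log\frac{1}{\epsilon}}}\tilde M^\epsilon_{t,x,B}(r)-\frac{\hat\beta^2}{2\log\frac{1}{\epsilon}}\langle\tilde M^\epsilon_{t,x,B}\rangle(r))$, whose projection onto the $k$-th Wiener chaos is $\frac{1}{k!}(\frac{\hat\beta}{\sqrt{\log\frac{1}{\epsilon}}})^{k}$ times a $\hat\beta$-independent multiple Wiener integral (built from the kernel of $\tilde M^\epsilon_{t,x,B}$, which carries no $\hat\beta$). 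Since $\tilde M^\epsilon_{t,x,B}(r)$ is $\Fcal_r$-measurable, the outer integral $\int_{I_{\lambda,j}}\int_{\mathbb{R}^2}(\cdot)\,d\xi(r,y)$ is an It\^o integral and raises the chaos order by exactly one; hence, writing $\mathcal{X}_j^\epsilon(\hat\beta)=\sum_{n\ge1}Y_n$ for its Wiener chaos expansion, the projection of $\mathcal{X}_j^\epsilon(a\hat\beta)$ onto the $n$-th chaos equals $a^{n-1}Y_n$ for every $a>0$.

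Next I would phrase this as a semigroup identity. Let $(T_\rho)_{\rho\in[0,1]}$ be the Ornstein--Uhlenbeck (second quantization) semigroup, which multiplies the $n$-th chaos by $\rho^n$. Taking $a=p-1$ above gives $\mathcal{X}_j^\epsilon((p-1)\hat\beta)=\sum_{n\ge1}(p-1)^{n-1}Y_n=\frac{1}{p-1}T_{p-1}\mathcal{X}_j^\epsilon(\hat\beta)$, equivalently
$$\mathcal{X}_j^\epsilon(\hat\beta)=(p-1)\,T_{1/(p-1)}\,\mathcal{X}_j^\epsilon((p-1)\hat\beta),$$
where $1/(p-1)\in(0,1)$ because $p>2$, so that $T_{1/(p-1)}$ is a genuine contraction and the inversion $T_{p-1}^{-1}=T_{1/(p-1)}$ is legitimate on the chaos decomposition. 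One must check that this identity holds in $L^2(\mathbf{P})$: since $p>2$ we have $\hat\beta<(p-1)\hat\beta<\hat\beta_c(R)$, so by the second-moment estimate of \textbf{Proposition \ref{thm:prop2.4}} both $\mathcal{X}_j^\epsilon(\hat\beta)$ and $\mathcal{X}_j^\epsilon((p-1)\hat\beta)$ lie in $L^2(\mathbf{P})$; hence both have convergent chaos expansions and the scaling of the previous step matches them term by term.

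Finally I would invoke Nelson's hypercontractivity: $\|T_\rho F\|_p\le\|F\|_2$ whenever $\rho^2(p-1)\le1$. With $\rho=1/(p-1)$ this reads $1/(p-1)\le1$, which holds for $p\ge2$. Combined with the identity above,
$$\|\mathcal{X}_j^\epsilon(\hat\beta)\|_p=(p-1)\,\|T_{1/(p-1)}\mathcal{X}_j^\epsilon((p-1)\hat\beta)\|_p\le(p-1)\,\|\mathcal{X}_j^\epsilon((p-1)\hat\beta)\|_2,$$
which is the claimed bound, with implicit constant $p-1$ --- uniform in $j$ and $\epsilon$, as is needed when this lemma is applied in the proof of \textbf{Theorem \ref{thm:Main}}.

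I do not anticipate a serious obstacle; the one point needing care is the term-by-term scaling identity of the first step, namely justifying that the chaos projection commutes with $\hat{\E}_{B}$, with the $x$-integration, and with the stochastic integral, but this is routine once one notes that $\Phi^\epsilon$ carries no noise and $\tilde M^\epsilon$ is $\Fcal_r$-adapted. If one prefers to avoid second quantization language, the same conclusion follows by an elementary computation: expand $\mathcal{X}_j^\epsilon(\hat\beta)=\sum_{n\ge1}J_n(h_n)$ with $\|h_n\|\propto\hat\beta^{n-1}$, bound $\|\mathcal{X}_j^\epsilon(\hat\beta)\|_p\le\sum_{n\ge1}\|J_n(h_n)\|_p\le\sum_{n\ge1}(p-1)^{n/2}\sqrt{n!}\,\|h_n\|$ using per-chaos hypercontractivity and the triangle inequality, and then apply the Cauchy--Schwarz inequality against $\sum_{n\ge1}(p-1)^{2(n-1)}n!\,\|h_n\|^2=\|\mathcal{X}_j^\epsilon((p-1)\hat\beta)\|_2^2$; the residual geometric series $\sum_{n\ge1}(p-1)^{2-n}$ converges precisely because $p>2$, which is exactly where the hypothesis is used.
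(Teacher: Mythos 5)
Your proposal is correct and follows essentially the same route as the paper: both rest on the observation that the $\hat\beta$-dependence of $\mathcal{X}_j^{\epsilon}(\hat\beta)$ enters the $n$-th Wiener chaos as $\hat\beta^{\,n-1}$ (via the Wick-exponential expansion of the $\mathcal{F}_r$-adapted Gaussian $\tilde M^{\epsilon}_{t,x,B}$, with the outer It\^o integral raising the chaos order by one), followed by Gaussian hypercontractivity. Your packaging via the Ornstein--Uhlenbeck semigroup identity $\mathcal{X}_j^{\epsilon}(\hat\beta)=(p-1)T_{1/(p-1)}\mathcal{X}_j^{\epsilon}((p-1)\hat\beta)$ is just the second-quantized form of the paper's chaos-by-chaos estimate, and the elementary computation you sketch at the end is precisely the paper's argument.
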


\begin{proof}
     {As we will see, this is just an instance of hypercontractivity for Wiener chaos.} Recall the definition of $\mathcal{X}_j(\hat{\beta})$:

    $$\mathcal{X}_{j}(\hat{\beta})=\int_{I_{\lambda,j}}\int_{\mathbb{R}^2}\Tilde{Z}^{\epsilon}_t(r,y)d\xi(r,y),$$
    where

    $$\Tilde{Z}_t^{\epsilon}(r,y)=\int_{\mathbb{R}^2}g(x)\Hat{\E}_{B,t}\biggl[\Phi^{\epsilon}_{t,x,B}(r,y)\exp\biggl({\frac{\hat{\beta}}{{\sqrt{\log\frac{1}{\epsilon}}}}\Tilde{M}^{\epsilon}_{t,x,B}(r)-\frac{\hat{\beta}^2}{{2{\log\frac{1}{\epsilon}}}}\langle\Tilde{M}^{\epsilon}_{t,x,B}\rangle(r)}\biggr)\biggr]dx.$$
    Observe that the random variable $\Tilde{M}_{t,x, B}^{\epsilon}$, for a fixed realization of the path B,  is an integral of a function that is deterministic with respect to the noise. Hence, it is a mean zero Gaussian random variable, and $\langle\Tilde{M}_{t,x, B}^{\epsilon}\rangle$ is its second moment. Therefore, the exponential in the above equation is a Wick exponential \cite{janson_1997}:
    $$\Tilde{Z}_t^{\epsilon}(r,y)=\int_{\mathbb{R}^2}g(x)\Hat{\E}_{B,t}\biggl[\Phi^{\epsilon}_{t,x,B}(r,y):\exp\biggl(\frac{\hat{\beta}}{{\sqrt{\log\frac{1}{\epsilon}}}}\Tilde{M}^{\epsilon}_{t,x,B}(r)\biggr):\biggr]dx,$$
    where $:\exp X:$ is the Wick exponential of the random variable $X$. We use this to find a chaos expansion for $\mathcal{X}_{j}^{\epsilon}(\hat{\beta})$. We have that:
    $$\Tilde{Z}_t^{\epsilon}(r,y)=$$
    $$\int_{\mathbb{R}^2}g(x)\Hat{\E}_{B,t}\biggl[\Phi^{\epsilon}_{t,x,B}(r,y)\biggr(1+\sum_{k=1}^{\infty}\frac{\hat{\beta}^k}{(\log\frac{1}{\epsilon})^kk!} \int\cdot\cdot\cdot\int_{\Delta_k\times\mathbb{R}^{2k}}\biggr[\prod_{j=1}^{k}\tilde{\Phi}^{\epsilon}_{t,x,B}(r_i,y_i)\biggl]\prod_{j=1}^{k}d\xi(r_i,y_i)\biggl)\biggr]dx,$$
   where $\Delta_k=\{-\infty<r_1<...<r_{k}<r\}$ and $\tilde{\Phi}^{\epsilon}(s,y)=\int_0^{r_{\epsilon}}\phi(t/\epsilon^2-s_1-s)\psi(x/\epsilon+B_{s_1}-y)ds_1$ with $r_{\epsilon}$ defined after \eqref{mod}. Plugging that in the expression for $\mathcal{X}_j^{\epsilon}(\beta)$  yields
    \begin{align*}
        \mathcal{X}_j^{\epsilon}(\hat{\beta})=\mathcal{W}_0^{\epsilon}+\sum_{k=1}^{\infty}\frac{\hat{\beta}^k}{(\log\frac{1}{\epsilon})^kk!}\mathcal{W}_k^{\epsilon},
    \end{align*}
    where    
    \begin{equation*}
        \mathcal{W}_k^{\epsilon}=\int\cdot\cdot\cdot\int_{\Delta_{k+1}'\times\mathbb{R}^{2(k+1)}}\biggr[\int_{\mathbb{R}^2}g(x)\Hat{\E}_{B,t}\biggr[\prod_{j=1}^{k}\tilde{\Phi}^{\epsilon}_{t,x,B}(r_i,y_i)\Phi^{\epsilon}_{t,x,B}(r_{k+1},y_{k+1})\biggl]dx\biggl]\prod_{j=1}^{k+1}d\xi(r_i,y_i),
    \end{equation*}
    with $\Delta_{k+1}'=\{-\infty<r_1<...<r_{k+1}<\infty\textbf{, }r_{k+1}\in I_{\lambda,j}\}$. Clearly $\mathcal{W}_k^{\epsilon}$  lies in the $k+1$ homogeneous Wiener chaos (again see \cite{janson_1997}) for all $k\in\mathbb{N}_0$.  By hypercontractivity for Wiener chaos \cite{janson_1997}, we get that:

    $$||\mathcal{X}_{j}^{\epsilon}(\hat{\beta})||_p\lesssim\biggl(\sum_{k=0}^{\infty}\frac{((p-1)\beta)^{k}}{(\log\frac{1}{\epsilon})^{k}k!}||\mathcal{W}_{k}^{\epsilon}||_2^2\biggr)^{1/2}=||\mathcal{X}_{j}^{\epsilon}((p-1)\hat{\beta})||_{2}.$$
\end{proof}

\section{The Markov Chain on $\Omega_1$}
\subsection{The construction}\label{sec:3}
As mentioned, to prove \textbf{Proposition \ref{thm:prop2.4}}, we have to be able to control expressions of the form:
$$\Hat{\E}_{B,t/\epsilon^2}[f(B)],$$
as $\epsilon\rightarrow0$. This is done by introducing a Markov chain on $\Omega_1$. We follow the steps of \cite{Gu_2018} to construct this Markov chain and detail its properties.\par
The goal is to express the measure $\hat{\P}^{(\epsilon)}_T$ as a transition probability of a well-chosen Markov chain. To achieve this, we make the following crucial observation:\par
Let $B$ be distributed according to the measure $\hat{\P}^{(\epsilon)}_T$ and recall the definition of this measure:
$$\hat{\P}^{(\epsilon)}_T(dB)=\exp\biggl(\frac{\beta^2}{2\log\frac{1}{\epsilon}}\int_{[0,T]^2}R(s-u, B(s)-B(u))dsdu-\zeta^{(\epsilon)}_T\biggr)W_T(dB).$$
Since $R(s,\cdot)=0$ for $|s|\geq1$, two points $B(s),B(u)$ interact only when $|s-u|<1$. This suggests that we can split the path $B$ into segments of length 1 and only neighboring segments will interact with each other. Now, heuristically, this interaction term can be seen as a transition probability. Therefore, if we choose the first segment according to an appropriate distribution, we can build a Markov chain using this transition probability. So we will get a sequence of paths in $\Omega_1$. Then, by gluing all of these paths together, we will get a path on $\Omega_T$ which will be distributed according to $\hat{\P}^{(\epsilon)}_T$.\par
Strictly speaking, the distribution of this path will not be exactly $\hat{\P}^{(\epsilon)}_T$ since we must account for some edge effects: The endpoint T may not be a natural number and for some technical reasons we may need to choose the first segment to be of length $\tau<1$. In practice, in all expectations that we will encounter, there will be an extra term accounting for the edge effects. This term can be ignored since, in $d\geq3$, it asymptotically decouples from all relevant random variables we are considering (see \textbf{Lemma A.1} in \cite{Gu_2018}), and in $d=2$,  it goes to $1$ uniformly, as $\epsilon\rightarrow0$.\newline\par

Let us be more specific. We consider any probability space $(\mathcal{X},\Fcal,\P)$ that is rich enough to carry the random variables that we are going to encounter and we write $\E$ for the corresponding expectation. As mentioned above we seek to build a continuous path over $[0, T]$ starting from 0 by gluing together paths from $\Omega_1$. The initial segment will be defined over $[0,\tau]$ where  $\tau\in(0,1]$. Then the final segment will be defined over $[0, T-\tau-N]$, where $N=[T-\tau]$. Every other segment will be defined over $[0,1]$. \par
To formalize this 'gluing' of the paths we set for $k=1,..., N$, $\tau_{k+1}=\tau_k+1$ with $\tau_0=0, \tau_1=\tau$ and $\tau_{N+2}=T$. Now choose paths $x_0\in\Omega_{\tau}$, $x_k\in\Omega_{1}$ for $k=1,...,N$ and $x_{N+1}\in\Omega_{T-\tau-N}$. We can patch these paths together over the intervals $(\tau_{k},\tau_{k+1})$ and make a path in $\Omega_T$ in the following way:
\begin{equation}\label{eq3.1}
    B(s)=\begin{cases}
        x_0(s) & \text{if $s\in[0,\tau]$},\\
        B(\tau+k-1)+x_k(s-\tau-k+1) & \text{if $s\in[\tau+k-1,\tau+k]$}, k=1,...,N,\\
        
        B(\tau+N)+x_k(s-\tau-N) & \text{if $s\in[\tau+N,T]$}.
    \end{cases}\newline
\end{equation}
It is easily checked that $B\in\Omega_T$. Again following \cite{Gu_2018} we will write
$$B=[x_0,...,x_{N+1}]$$
for a path in $\Omega_T$ built from  $(x_k)_{k=0,...,N+1}$.\par
With this terminology, we explain how to decompose the measure $\hat{\P}_T$. We write 
$$\int_{[0,T]^2}R(s-u,B(s)-B(u))dsdu=\sum_{k,m=1}^{N+1}\int_{[\tau_k,\tau_{k+1}]}\int_{[\tau_m,\tau_{m+1}]}R(s-u,B(s)-B(u))dsdu.$$
Since $R(s,\cdot)=0$ when $|s|\geq1$ the integral inside the sum is nonzero only when $|k-m|\leq1$ and so
     $$\int_{[0,T]^2}R(s-u,B(s)-B(u))dsdu=$$
     $$\sum_{k=0}^{N+1}\int_{[\tau_k,\tau_{k+1}]}\int_{[\tau_k,\tau_{k+1}]}R(s-u,B(s)-B(u))dsdu\\ +2\sum_{k=0}^{N}\int_{[\tau_k,\tau_{k+1}]}\int_{[\tau_{k+1},\tau_{k+2}]}R(s-u,B(s)-B(u))dsdu.$$
{For $B=[x_0,...,x_{N+1}]$ we have 
\begin{equation}\label{Markovprop1}
    B(\tau_k+s)=B(\tau_k)+x_k(s)\textit{, }s\in[0,1],
\end{equation}
and 
\begin{equation}\label{Markovprop2}
    B(\tau_{k+1}+s)=B(\tau_k)+x_k(1)+x_{k+1}(s)\textit{, }s\in[0,1].
\end{equation}
Therefore, from \eqref{Markovprop1}, and the change of variables $(s,u)\rightarrow(\tau_k+s,\tau_k+u)$
$$\int_{[\tau_k,\tau_{k+1}]}\int_{[\tau_k,\tau_{k+1}]}R(s-u,B(s)-B(u))dsdu=\int_{[0,1]^2}R(s-u,x_k(s)-x_k(u))dsdu,$$
for $k=1,...,N$. From \eqref{Markovprop2}, and the change of variables $(s,u)\rightarrow(\tau_{k+1}+s,\tau_{k}+u)$, we also get
$$\int_{[\tau_{k+1},\tau_{k+2}]}R(s-u,B(s)-B(u))dsdu=\int_{[0,1]^2}R(s-u,x_{k+1}(s)+x_k(1)-x_k(u))dsdu,$$
for $k=1,...,N$. Similar identities hold when $k=0$ and when $k=N+1$.} \par
For $\beta_{\epsilon}=\hat{\beta}/\sqrt{\log\frac{1}{\epsilon}}$ the above identities lead us to the following definitions:
$$D^{(\epsilon)}_{\tau}(x,y):=\beta_{\epsilon}^2\int_{[0,\tau]^2}R(s-u,y(s)-x(u))dsdu,\textbf{ }x,y\in\Omega_{\tau},$$
\begin{equation}\label{selfinteration}
    D^{(\epsilon)}(x,y):=\beta_{\epsilon}^2\int_{[0,1]^2}R(s-u,y(s)-x(u))dsdu, \textbf{ }x,y\in\Omega_{1},
\end{equation}
and
$$I^{(\epsilon)}_{0,1}(x,y):=\beta_{\epsilon}^2\int_{[0,\tau]}\int_{[0,1]}R(s-u,y(s)+x(1)-x(u))dsdu,\textbf{ }x\in\Omega_{\tau},\textbf{ }y\in\Omega_1,$$
\begin{equation}\label{Iedge}
    I^{(\epsilon)}_{N,N+1}(x,y):=\beta_{\epsilon}^2\int_{[0,1]}\int_{[0,T-\tau-N]}R(s-u,y(s)+x(1)-x(u))dsdu,\textbf{ }x\in\Omega_{1},\textbf{ }y\in\Omega_{T-\tau-N},
\end{equation}
\begin{equation}\label{iotepdef}
    I^{(\epsilon)}(x,y):=\beta_{\epsilon}^2\int_{[0,1]^2}R(s-u,y(s)+x(1)-x(u))dsdu, \textbf{ }x,y\in\Omega_{1}.
\end{equation}
The terms $D^{(\epsilon)}_{\tau},D^{(\epsilon)}$ capture the self-interactions of each segment and the terms $I^{(\epsilon)}_{0,1},I^{(\epsilon)},I^{(\epsilon)}_{N,N+1}$ capture the interactions between neighboring segments. By our previous observations, and by writing $B=[x_0,...,x_{N+1}]$, we have that
    $$\beta_{\epsilon}^2\int_{[0,T]^2}R(s-u,B(s)-B(u))dsdu=$$
    $$D^{(\epsilon)}_{\tau}(x_0,x_0)+\sum_{k=1}^{N+1}D^{(\epsilon)}(x_k,x_k)+2\biggr(I^{(\epsilon)}_{0,1}(x_0,x_1)+\sum_{k=1}^{N-1}I^{(\epsilon)}(x_k,x_{k+1})+I^{(\epsilon)}_{N,N+1}(x_N,x_{N+1})\biggl).$$
It is straightforward to check that $B=[x_0,...,x_{N+1}]$, where $x_0,...,x_{N+1}$ are distributed like standard Brownian motions, is distributed like a Brownian motion in $\Omega_T$. By  substituting the above formula to the definition of $\hat{\P}^{(\epsilon)}_T(dB)$, and using this observation the measure $\hat{\P}^{(\epsilon)}_T(dB)$ is proportional to
$$\hat{\P}^{(\epsilon)}_{\tau}(dx_0)e^{I^{(\epsilon)}_{0,1}(x_0,x_1)}\prod_{k=1}^{N-1}\hat{\P}^{(\epsilon)}_1(dx_k)e^{I^{(\epsilon)}(x_k,x_{k+1})}\hat{\P}_1(dx_N)e^{I^{(\epsilon)}_{T-\tau-N}(x_N,x_{N+1})}\hat{\P}^{(\epsilon)}_{T-\tau-N}(dx_{N+1}).$$
We want to interpret this as a transition probability of a  Markov chain. For this reason, we use the he Doob-Krein-Rutman theorem \cite{sneddon2012mathematical}, which implies that we can find  $\rho^{(\epsilon)}>0$ and $\Psi^{(\epsilon)}$ on $\Omega_1$ solving the following eigenvalue problem:
\begin{equation}\label{eigenv}
    \int_{\Omega_1}e^{I^{(\epsilon)}(x,y)}\Psi^{(\epsilon)}(y)\hat{\P}^{(\epsilon)}_1(dy)=\rho^{(\epsilon)}\Psi^{(\epsilon)}(x),
\end{equation}
so that  $\rho^{(\epsilon)}$ is the largest possible eigenvalue, $\Psi^{(\epsilon)}$  is bounded above and below by positive constants, is normalized to have total mass $1$ with respect to $\Hat{\P}^{(\epsilon)}_1$ and is the unique eigenvector associated with $\rho^{(\epsilon)}$. More specifically, we have the bound
\begin{align}\label{eq:eigenv_bound}
    e^{-||I^{(\epsilon)}||_{\infty}}\leq\rho^{(\epsilon)}\leq e^{||I^{(\epsilon)}||_{\infty}},
\end{align}
which we get by integrating the above eigenvalue equation over $y$. This implies that
\begin{equation}\label{eq:eigenvec_bound}
    e^{-2||I^{(\epsilon)}||_{\infty}}\leq\Psi^{(\epsilon)}(x)\leq e^{2||I^{(\epsilon)}||_{\infty}}.
\end{equation}
Now, we can define the transition probabilities
\begin{equation}\label{transition}
    \hat{\pi}^{(\epsilon)}(x,dy):=\frac{e^{I^{(\epsilon)}(x,y)}\Psi^{(\epsilon)}(y)\hat{\P}^{(\epsilon)}_1(dy)}{\rho^{(\epsilon)}\Psi^{(\epsilon)}(x)},
\end{equation}
\begin{equation}\label{finaledgetran}
    \hat{\pi}^{(\epsilon)}_{N,N+1}(x,dy):=\frac{e^{I^{(\epsilon)}_{N,N+1}(x,y)}\hat{\P}^{(\epsilon)}_{T-1-N}(dy)}{f^{(\epsilon)}_{N,N+1}(x)},
\end{equation}
\begin{equation}\label{firstedgetran}
   \hat{\pi}^{(\epsilon)}_{0,1}(x,dy):=\frac{e^{I^{(\epsilon)}_{0,1}(x,y)}\hat{\P}^{(\epsilon)}_{\tau}(dy)}{f^{(\epsilon)}_{0,1}(x)},
\end{equation}
where $f^{(\epsilon)}_{N,N+1}(x),f^{(\epsilon)}_{0,1}(x)$ are the normalization constants. With this notation the measure $\hat{\P}^{(\epsilon)}_{T}$ is proportional to
$$f^{(\epsilon)}_{0,1}(x_0)\hat{\P}^{(\epsilon)}_{\tau}(dx_0)\hat{\pi}^{(\epsilon)}_{0,1}(x_0,dx_1)\prod_{k=1}^{N-1}\hat{\pi}^{(\epsilon)}(x_k,dx_{k+1})\hat{\pi}^{(\epsilon)}_{N,N+1}(x_N,dx_{N+1})\frac{f^{(\epsilon)}_{N,N+1}(x_N)}{\Psi^{(\epsilon)}(x_N)}.$$
The Markov chain we are looking for is built from the probability kernels that appear above. In particular, we sample:
\begin{itemize}
    \item $X_0\in\Omega_{\tau}$ according to the distribution $(\Zcal_{\tau}^{(\epsilon)})^{-1}f^{(\epsilon)}_{0,1}(X_0)\hat{\P}^{(\epsilon)}_{\tau}(dX_0)$, where $\Zcal_{\tau}^{(\epsilon)}=\int_{\Omega_{\tau}}f^{(\epsilon)}_{0,1}(x)\hat{\P}^{(\epsilon)}_{\tau}(dx)$.
    \item $(X_1,...,X_N)$ according to $\hat{\pi}^{(\epsilon)}_{0,1}(X_0,dX_1)\biggl(\prod_{k=1}^{N-1}\hat{\pi}^{(\epsilon)}(X_k,dX_{k+1})\biggr)\hat{\pi}^{(\epsilon)}_{N,N+1}(X_N,dX_{N+1}).$
\end{itemize}
After sampling these points we can construct a path $B=[X_0,..., X_{N+1}]$ on $\Omega_{T}$ by gluing these paths together according to \eqref{eq3.1}. For any measurable function $F:\Omega_T\rightarrow\mathbb{R}$, we then have
$$\Hat{\E}_{B,T}[F(B)]=\E\biggl[F([X_0,...,X_{N+1}])c^{(\epsilon)}_{\tau,T}\frac{f^{(\epsilon)}_{N,N+1}(X_N)}{\Psi^{(\epsilon)}(X_N)}\biggr],$$
where $c^{(\epsilon)}_{\tau,T}$ is the appropriate normalization constant, {determined by the equation
\begin{equation}\label{normterm}
    \frac{1}{c_{\tau,T}}=\E\biggl[\frac{f^{(\epsilon)}_{N,N+1}(X_N)}{\Psi^{(\epsilon)}(X_N)}\biggr].
\end{equation}}
The term
$$c^{(\epsilon)}_{\tau,T}\frac{f^{(\epsilon)}_{N,N+1}(X_N)}{\Psi^{(\epsilon)}(X_N)}$$
is the term accounting for the edge effects born from splitting the interval $[0, T]$ into an interval of length $\tau$ and then intervals of length 1. As we will see, this term does not play a major role in our calculations.\par
We will mainly have to deal with expressions of the form
$$\hat{\E}_{B,T}[F(B^1,B^2)],$$
where $B^1,B^2$ are two independent paths sampled according to (\ref{expmeasure}). By following the same construction as above we can convert this expectation into the following expectation:
$$\E\biggr[F([X_0,...,X_{N+1}],[Y_0,...,Y_{N+1}])c_{\tau,T}^2\frac{f^{(\epsilon)}_{N,N+1}(X_N)}{\Psi^{(\epsilon)}(X_N)}\frac{f^{(\epsilon)}_{N,N+1}(Y_N)}{\Psi^{(\epsilon)}(Y_N)}\biggl],$$
where $[X_0,...,X_{N+1}]$ and $[Y_0,...,Y_{N+1}]$  are paths built from two independent trajectories $(X_i)_{i=1,...,N+1}$, $(Y_i)_{i=1,...,N+1}$ respectively, sampled as described above. Let
\begin{equation}\label{eqedge}
\Gcal_{\epsilon}(x,y):=c_{\tau,T}^2\frac{f^{(\epsilon)}_{N,N+1}(x)}{\Psi^{(\epsilon)}(x)}\frac{f^{(\epsilon)}_{N,N+1}(y)}{\Psi^{(\epsilon)}(y)}    
\end{equation}
be the term accounting for the 'edge effects'. Then, we have the identity:
\begin{equation}\label{Markovcon}
    \hat{\E}_{B,T}[F(B^1,B^2)]=\E\biggr[F([X_0,...,X_{N+1}],[Y_0,...,Y_{N+1}])\Gcal_{\epsilon}(X_N,Y_N)\biggl].
\end{equation}

\subsection{The coupling in $d=2$}\label{sec:4}
In the remainder of the paper the Markov chain on $\Omega_1\times\Omega_1$ with transition probability kernel $$\hat{\boldsymbol{{\pi}}}^{(\epsilon)}:=\hat{\pi}^{(\epsilon)}\times\hat{\pi}^{(\epsilon)}$$
plays an important role. We dedicate the next few sections to its properties. First, we will introduce a coupling of this Markov chain to a sequence of Bernoulli random variables based on Doeblin's inequality.\par
Recall the definition of the transition measure 
$$\hat{\pi}^{(\epsilon)}(x,dy)=\frac{e^{I^{(\epsilon)}(x,y)}\Psi^{(\epsilon)}(y)\hat{\P}^{(\epsilon)}_1(dy)}{\rho^{(\epsilon)}\Psi^{(\epsilon)}(x)},$$
and the definition of $\hat{\P}^{(\epsilon)}_1$
$$\hat{\P}^{(\epsilon)}_1(dx)=\exp\biggl(\frac{\hat{\beta}^2}{2\log \frac{1}{\epsilon}}\int_{[0,1]^2}R(s-u, x(s)-x(u))dsdu-\zeta^{(\epsilon)}_1\biggr)W_1(dx).$$
Observe that, as $\epsilon\rightarrow0$, the measure $\hat{\P}^{(\epsilon)}_1$ converges to $W_1$ in total variation. Moreover, recall the definition of $I^{(\epsilon)}(x,y)$:
$$I^{(\epsilon)}(x,y)=\beta_{\epsilon}^2\int_{[0,1]^2}R(s-u, x(1)+y(s)-x(u))dsdu,$$
We observe that as $\epsilon\rightarrow0$, $I^{(\epsilon)}(x,y)\rightarrow0$ uniformly over $x,y$. From \eqref{eq:eigenv_bound}, we get $\rho^{(\epsilon)}\rightarrow1$, and from \eqref{eq:eigenvec_bound} we get $\Psi^{(\epsilon)}(x)\rightarrow1$ uniformly in $x$. These observations imply that:
$$\frac{e^{I^{(\epsilon)}(x,y)}\Psi^{(\epsilon)}(y)}{\rho^{(\epsilon)}\Psi^{(\epsilon)}(x)}\rightarrow1,$$
uniformly over $x,y$. This proves the following proposition:

\begin{myprop}\label{thm:prop4.1}
    The transition probability kernel $\hat{\pi}^{(\epsilon)}(x,dy)$ converges in total variation to $W_1(dy)$ uniformly over $x$. In other words, we have:
    $$\sup_{x\in\Omega_1}d_{TV}(\hat{\pi}^{(\epsilon)}(x,dy),W_1(dy))\rightarrow0,$$
    as $\epsilon\rightarrow0$, where $d_{TV}$ is the total variation distance between two probability measures. Furthermore for any  $0<\gamma<1$, there is an $\epsilon_0=\epsilon_0(\gamma)$ such that for all $x\in\Omega_1$ and $A$ a Borel set in $\Omega_1$
    \begin{equation}\label{eq:4.1}
        \hat{\pi}^{(\epsilon)}(x,A)\geq\gamma W_1(A),
    \end{equation}
    for all $\epsilon<\epsilon_0$.
\end{myprop}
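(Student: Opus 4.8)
The plan is to compute the Radon--Nikodym derivative of $\hat{\pi}^{(\epsilon)}(x,\cdot)$ with respect to the Wiener measure $W_1$ and to show that it converges to the constant function $1$ uniformly in both the starting path $x$ and the terminal path variable. Once this is established, both assertions follow immediately: the total variation statement from the $L^1$ characterization of $d_{TV}$, and the Doeblin inequality from the elementary fact that a density which is uniformly within $1-\gamma$ of $1$ is bounded below by $\gamma$.

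Concretely, from \eqref{transition} and the defining formula for $\hat{\P}^{(\epsilon)}_1$ one can write
\[
\hat{\pi}^{(\epsilon)}(x,dy)=h_{\epsilon}(x,y)\,W_1(dy),\qquad
h_{\epsilon}(x,y):=\frac{e^{I^{(\epsilon)}(x,y)}\,\Psi^{(\epsilon)}(y)}{\rho^{(\epsilon)}\,\Psi^{(\epsilon)}(x)}\cdot\exp\!\Big(\tfrac{\hat{\beta}^2}{2\log\frac1\epsilon}\textstyle\int_{[0,1]^2}R(s-u,y(s)-y(u))\,ds\,du-\zeta^{(\epsilon)}_1\Big).
\]
I would then control each factor. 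Since $R$ is bounded with $R(s,\cdot)=0$ for $|s|\ge1$, the double integral appearing both in $I^{(\epsilon)}$ and in $d\hat{\P}^{(\epsilon)}_1/dW_1$ is bounded in absolute value by $\|R\|_\infty$, uniformly over all paths; multiplying by $\beta_\epsilon^2=\hat{\beta}^2/\log\frac1\epsilon$ shows $\|I^{(\epsilon)}\|_\infty\to0$ and also $\tfrac{\hat{\beta}^2}{2\log\frac1\epsilon}\int_{[0,1]^2}R(s-u,y(s)-y(u))\,ds\,du\to0$ uniformly in $y$, while the same estimate inside the definition \eqref{zeta} of $\zeta^{(\epsilon)}_1$ gives $\zeta^{(\epsilon)}_1\to0$. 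Combining $\|I^{(\epsilon)}\|_\infty\to0$ with the a priori two-sided bounds \eqref{eq:eigenv_bound} and \eqref{eq:eigenvec_bound} yields $\rho^{(\epsilon)}\to1$ and $\sup_{x\in\Omega_1}|\Psi^{(\epsilon)}(x)-1|\to0$. Putting these together gives $\sup_{x,y\in\Omega_1}|h_{\epsilon}(x,y)-1|\to0$ as $\epsilon\to0$; this is exactly the uniform convergence displayed informally just before the statement, now upgraded to a statement about densities against $W_1$.

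To conclude, one uses $d_{TV}(\hat{\pi}^{(\epsilon)}(x,\cdot),W_1)=\tfrac12\int_{\Omega_1}|h_{\epsilon}(x,y)-1|\,W_1(dy)\le\tfrac12\sup_{x,y}|h_{\epsilon}(x,y)-1|$, which gives the first claim. For the Doeblin bound, given $\gamma\in(0,1)$ choose $\epsilon_0=\epsilon_0(\gamma)$ so that $\sup_{x,y}|h_{\epsilon}(x,y)-1|\le 1-\gamma$ for all $\epsilon<\epsilon_0$; then $h_{\epsilon}(x,y)\ge\gamma$ for every $x,y\in\Omega_1$, hence $\hat{\pi}^{(\epsilon)}(x,A)=\int_A h_{\epsilon}(x,y)\,W_1(dy)\ge\gamma W_1(A)$ for every Borel $A\subseteq\Omega_1$. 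I do not expect a genuine obstacle here; the only point that needs care is the uniformity of all these limits, which rests on the fact that \eqref{eq:eigenvec_bound} prevents the Perron eigenfunction $\Psi^{(\epsilon)}$ from degenerating, together with the $1/\log\frac1\epsilon$ scaling of the coupling constant, which is precisely what makes $I^{(\epsilon)}$, $\zeta^{(\epsilon)}_1$ and the self-interaction exponent vanish uniformly rather than merely pointwise.
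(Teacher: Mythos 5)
Your proposal is correct and follows essentially the same route as the paper: the paper's argument is precisely the uniform convergence of the density $e^{I^{(\epsilon)}(x,y)}\Psi^{(\epsilon)}(y)/(\rho^{(\epsilon)}\Psi^{(\epsilon)}(x))$ (together with $\hat{\P}^{(\epsilon)}_1\to W_1$ in total variation) to $1$, obtained from $\|I^{(\epsilon)}\|_\infty\to0$ and the bounds \eqref{eq:eigenv_bound}--\eqref{eq:eigenvec_bound}. You merely make explicit the final deductions (the $L^1$ formula for $d_{TV}$ and the pointwise lower bound on the density giving the Doeblin inequality), which the paper leaves implicit.
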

{\begin{remark}\label{Remedge}
    Let $\tau_\epsilon\in(0,1)$, $T=T(\epsilon)\rightarrow\infty$, as $\epsilon\rightarrow0$ and $N_{\epsilon}=[T(\epsilon)-\tau_\epsilon]$. Similar observations as above prove that $\Gcal_{\epsilon}$, defined in \eqref{eqedge}, converges to $1$ uniformly as $\epsilon\rightarrow0$. Indeed, we see that $I_{N,N+1}^{(\epsilon)}\rightarrow0$ uniformly as $\epsilon\rightarrow0$, where $I_{N_\epsilon,N_\epsilon+1}^{(\epsilon)}$ is defined in \eqref{Iedge}. By looking at \eqref{finaledgetran} we see that $f_{N_\epsilon,N_\epsilon+1}^{(\epsilon)}\rightarrow1$ uniformly as $\epsilon\rightarrow0$. This, along with our previous observations, prove that $c_{\tau,T}^{(\epsilon)}\rightarrow1$ as $\epsilon\rightarrow0$. These observations imply that $\Gcal_{\epsilon}\rightarrow1$, uniformly as $\epsilon\rightarrow0$. 
\end{remark}}
Since $\hat{\boldsymbol{\pi}}^{(\epsilon)}=\hat{\pi}^{(\epsilon)}\times\hat{\pi}^{(\epsilon)}$, this proposition implies that for any $\gamma\in(0,1)$, there is an $\epsilon_0$ small enough such that for all Borel sets $A\subset\Omega_1^2$, for all $z\in\Omega_1\times\Omega_1$ and all $\epsilon<\epsilon_0$ 
\begin{equation}\label{eq4.9}
    \hat{\boldsymbol{{\pi}}}^{(\epsilon)}(z,A)\geq\gamma (W_1\times W_1)(A).
\end{equation}
We can choose $\gamma$ to be such that both (\ref{eq:4.1}) and (\ref{eq4.9}) are true. From here on we will fix $\gamma$ and $\epsilon_0$ and we will always assume that $\epsilon<\epsilon_0$.\par
This last observation gives us a Doeblin condition for the Markov chain. Therefore,  we can introduce a coupling of the Markov chain with a sequence of i.i.d. Bernoulli random variables $(\eta_{j})_{j\in\mathbb{N}}$. This coupling  goes as follows:\par
We can write
$$\hat{\boldsymbol{{\pi}}}^{(\epsilon)}(z,dy)=\gamma(W_1\times W_1)(dy)+(1-\gamma)\frac{\hat{\boldsymbol{{\pi}}}^{(\epsilon)}(z,dy)-\gamma(W_1\times W_1)(dy)}{1-\gamma}.$$
This process establishes a regenerative structure for the Markov chain. More specifically, whenever a "success" ($\eta_j=1$) occurs, the chain forgets all prior steps and begins anew. This regenerative property allows us to decompose the trajectory of the Markov chain into a sequence of independent cycles.\par 
{Now let $((X_j,Y_j))_{j\geq1}\subseteq\Omega_1\times\Omega_1$ be a trajectory of $\boldsymbol{\hat{\pi}}^{(\epsilon)}$  with an initial segment $(X_0,Y_0)\sim W_1\times W_1$. We consider  the pair of paths $(\omega_{X_0}, \omega_{Y_0})$ in $C([0,\infty))^2$, each constructed by gluing $(X_j)_{j\geq0}$, $(Y_j)_{j\geq0}$ respectively, according to \eqref{eq3.1}. }We define the notion of the total path increment between two regeneration times. First, define  the associated regeneration times:
\begin{equation}\label{eq:4.4}
    T_0:=0\textbf{, }T_{i}:=\inf\{j>T_{i-1}:\eta_j=1\}.
\end{equation}
The times between two regenerations, $T_{k+1}-T_k$, $k=0,1,...$ are i.i.d. and distributed as geometric random variables with parameter $\gamma$. The corresponding total path increment for $\omega_{X_0}$ and $\omega_{Y_0}$ between two regenerations is defined by the equations:
\begin{equation}\label{pathincr}
    \mathbf{X}^{(\epsilon)}_j:=\sum_{k=T_j}^{T_{j+1}-1}X_k(1)\textit{ and }\mathbf{Y}^{(\epsilon)}_j:=\sum_{k=T_j}^{T_{j+1}-1}Y_k(1).
\end{equation}
By construction, the random variables $((\mathbf{X}_j^{(\epsilon)},\mathbf{Y}_j^{(\epsilon)}))_{j\geq0}$ are i.i.d.. Also, from \eqref{eq3.1}
\begin{equation}\label{Randomwalks}
    \omega_{X_0}(T_j)=\sum_{k=0}^{j-1}\mathbf{X}^{(\epsilon)}_{k} \textit{, }\omega_{Y_0}(T_j)=\sum_{k=0}^{j-1}\mathbf{Y}^{(\epsilon)}_{k},
\end{equation}
and
\begin{equation}\label{cycles}
    \mathcal{C}_j:=\biggl(T_{j+1}-T_j,(\omega_{X_0}(T_j+s)-\omega_{X_0}(T_{j}),\omega_{Y_0}(T_j+s)-\omega_{Y_0}(T_{j}))_{s\leq T_{j+1}-T_j}\biggr)
\end{equation}
is equal in distribution to 
\begin{equation}\label{eq4.10}
    \biggl(\theta,(\Tilde{\omega}_{B^1}(s),\Tilde{\omega}_{B^2}(s))_{s\leq \theta}\biggr),
\end{equation}
where:
\begin{itemize}
    \item $\theta\sim Geo(\gamma)$.
    \item $(B^1,B^2)\sim W_1\times W_1$ independent from $\theta$.
    \item $(\Tilde{\omega}_{B^1},\Tilde{\omega}_{B^2})$ is a pair of paths, each built according to \eqref{eq3.1} using segments sampled from $\frac{\hat{\boldsymbol{{\pi}}}^{(\epsilon)}-\gamma W_1\times W_1}{1-\gamma}$ with $(B^1,B^2)$ as initial steps. Also, $(\Tilde{\omega}_{B^1},\Tilde{\omega}_{B^2})$ is independent from $\theta$.
\end{itemize}
All of these random variables mentioned above are also independent from $(\omega_{X_0}(s),\omega_{Y_0}(s))_{s\leq T_j}$.\par
{The random variables $\mathcal{C}_j$ are called cycles. From our previous observations, the collection of all cycles $(\mathcal{C}_j)_{j\geq1}$ is a collection of i.i.d. random variables with \eqref{eq4.10} as the common underlying distribution. This proves that the process $(\omega_{X_0}(s), \omega_{Y_0}(s))_{s\geq0}$ can be decomposed into i.i.d. cycles. In other words,  the process
 $(\omega_{X_0}(s), \omega_{Y_0}(s))_{s\geq0}$ is a regenerative process in the sense of \cite{Rege_pro_Tal}. }

\subsection{The Mixing property}
Here, we detail the mixing mechanism of the Markov chain and some of its basic consequences. This is based on the observation that there are constants ${A}_{\epsilon}, {B}_{\epsilon}$, which converge to 1 as $\epsilon\rightarrow0$, and such that for all Borel sets of $\Omega_1$ and $x\in\Omega_1$,
\begin{equation}\label{eq:4.2}
 {A}_{\epsilon}W_1(A)\leq\hat{\pi}^{(\epsilon)}(x,A)\leq {B}_{\epsilon}W_1(A).   
\end{equation}
{Indeed, recall that from \eqref{transition}, $\hat{\pi}^{(\epsilon)}$ is absolutely continuous with respect to the Wiener measure
$$\hat{\pi}^{(\epsilon)}(x,dy)=\frac{e^{I^{(\epsilon)}(x,y)}\Psi^{(\epsilon)}(y)}{\rho^{(\epsilon)}\Psi^{(\epsilon)}(x)}\cdot e^{D_1^{(\epsilon)}(y,y)}W_1(dy),$$
with $D^{(\epsilon)}$ as in \eqref{selfinteration} and $I^{(\epsilon)}$ as in \eqref{iotepdef}. Since
$$e^{-||I^{(\epsilon)}||_{\infty}}\leq\rho^{(\epsilon)}\leq e^{||I^{(\epsilon)}||_{\infty}}.$$
and
$$e^{-2||I^{(\epsilon)}||_{\infty}}\leq\Psi^{(\epsilon)}(x)\leq e^{2||I^{(\epsilon)}||_{\infty}},$$
we see that there is a constant $C>0$ such that
$$e^{-C/\log\frac{1}{\epsilon}}W_1(A)\leq\hat{\pi}^{(\epsilon)}(x,A)\leq e^{C/\log\frac{1}{\epsilon}}W_1(A),$$
for all Borel sets $A\subseteq\Omega_1$ and $x\in\Omega_1$.}\par
{A similar inequality is true for $\boldsymbol{\hat{\pi}}^{(\epsilon)}$ and $\frac{\hat{\boldsymbol{{\pi}}}^{(\epsilon)}-\gamma W_1\times W_1}{1-\gamma}$ (only with different constants $\mathcal{A}_{\epsilon}, \mathcal{B}_{\epsilon}$) since both measures are absolutely continuous with respect to the Wiener measure, with density converging to $1$ uniformly as $\epsilon\rightarrow0$. With this observation, we can prove the following proposition:}
\begin{mylem}\label{lemm4.2}\label{thm:lem4.1}
    Let F be a nonegative integrable function on $(\Omega_1\times\Omega_1)^{p+1}$ and let $((X_0,Y_0),...,(X_p,Y_p))$ be a sequence of random variables generated by $\boldsymbol{\hat{\pi}}^{(\epsilon)}$ or $\frac{\hat{\boldsymbol{{\pi}}}^{(\epsilon)}-\gamma W_1\times W_1}{1-\gamma}$ with $(X_0,Y_0)\sim W_1\times W_1$. Then there are constants $\mathcal{A}_{\epsilon},\mathcal{B}_{\epsilon}$ such that they converge to $1$ as $\epsilon\rightarrow0$ and such that for any $1\leq k\leq m$
        $$\mathcal{A}_{\epsilon}\E\biggl[F((X_0,Y_0),...,(\Tilde{X}_k,\tilde{Y}_k),...,(X_p,Y_p))\biggr]$$ 
        $$\leq\E\biggl[F((X_0,Y_0),...,(X_p,Y_p))\biggr]\leq$$
        \begin{equation}\label{eq:4.3}
            \mathcal{B}_{\epsilon}\E\biggl[F((X_0,Y_0),...,(\Tilde{X}_k,\tilde{Y}_k),...,(X_p,Y_p))\biggr],
        \end{equation}
     where $(X_0,Y_0),...,(\Tilde{X}_k,\tilde{Y}_k),...,(X_p,Y_p)$ is generated  as before only at step $k$ we sample, $(\Tilde{X}_k,\tilde{Y}_k)\sim W_1\times W_1$ independently from all previous steps.    
    \end{mylem}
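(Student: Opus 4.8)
The plan is to expand each expectation as an iterated integral against the one-step transition kernels and compare the two integrands factor by factor; the whole statement then falls out of the uniform two-sided density bound \eqref{eq:4.2} together with its product/residual analogues noted immediately after it. I would write $\mu:=W_1\times W_1$ and let $K$ denote either $\hat{\boldsymbol{{\pi}}}^{(\epsilon)}$ or $(\hat{\boldsymbol{{\pi}}}^{(\epsilon)}-\gamma\mu)/(1-\gamma)$, according to which chain is being used. The point recalled before Proposition \ref{thm:prop4.1} is that each such $K$ is absolutely continuous with respect to $\mu$ in its second variable, uniformly in the first: $K(z,dz')=h_{\epsilon}(z,z')\,\mu(dz')$ with $\mathcal{A}_{\epsilon}\le h_{\epsilon}(z,z')\le\mathcal{B}_{\epsilon}$ for all $z,z'\in\Omega_1\times\Omega_1$, where $\mathcal{A}_{\epsilon},\mathcal{B}_{\epsilon}\to1$ as $\epsilon\to0$; these are the constants appearing in \eqref{eq:4.3}.

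Next, writing $z_j$ for the dummy variable associated with $(X_j,Y_j)$, the joint law of $((X_0,Y_0),\dots,(X_p,Y_p))$ is $\mu(dz_0)\prod_{j=1}^{p}K(z_{j-1},dz_j)$, whereas resampling step $k$ amounts to replacing the single factor $K(z_{k-1},dz_k)$ by $\mu(dz_k)$ and leaving every other transition — in particular $K(z_k,dz_{k+1})$, which in both constructions is read off the new value of $z_k$ — unchanged. Factoring the density out of that one transition yields
\begin{align*}
\E\big[F((X_0,Y_0),\dots,(X_p,Y_p))\big]&=\int F(z_0,\dots,z_p)\,h_{\epsilon}(z_{k-1},z_k)\\
&\qquad\times\mu(dz_0)\prod_{j=1}^{k-1}K(z_{j-1},dz_j)\,\mu(dz_k)\prod_{j=k+1}^{p}K(z_{j-1},dz_j),
\end{align*}
and the measure multiplying $F\,h_{\epsilon}$ on the right is precisely the joint law of $((X_0,Y_0),\dots,(\tilde X_k,\tilde Y_k),\dots,(X_p,Y_p))$. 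Since $F\ge0$, replacing $h_{\epsilon}(z_{k-1},z_k)$ by its lower bound $\mathcal{A}_{\epsilon}$ and by its upper bound $\mathcal{B}_{\epsilon}$ inside the integral gives exactly \eqref{eq:4.3}; integrability of $F$ guarantees all quantities are finite, since the resampled law has density at most $1/\mathcal{A}_{\epsilon}$ with respect to the original one.

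There is essentially no obstacle in this lemma — it is a bookkeeping consequence of \eqref{eq:4.2}. The one point deserving care is simply that resampling the pair at step $k$ perturbs exactly one of the $p$ transition kernels, so the two trajectory laws coincide up to the single Radon--Nikodym factor $h_{\epsilon}(z_{k-1},z_k)$; everything else is just the uniform convergence $\mathcal{A}_{\epsilon},\mathcal{B}_{\epsilon}\to1$, which holds because $\beta_{\epsilon}^{2}=\hat{\beta}^{2}/\log\frac{1}{\epsilon}\to0$ forces $I^{(\epsilon)}\to0$, $\rho^{(\epsilon)}\to1$ and $\Psi^{(\epsilon)}\to1$ uniformly, hence the densities of $\hat{\pi}^{(\epsilon)}$, $\hat{\boldsymbol{{\pi}}}^{(\epsilon)}$ and $(\hat{\boldsymbol{{\pi}}}^{(\epsilon)}-\gamma\mu)/(1-\gamma)$ with respect to the corresponding Wiener measures tend to $1$ uniformly.
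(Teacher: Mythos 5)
Your proposal is correct and follows essentially the same route as the paper: both arguments rest on the uniform two-sided comparison of the one-step kernel with $W_1\times W_1$ (equation \eqref{eq:4.2} and its analogues for $\boldsymbol{\hat{\pi}}^{(\epsilon)}$ and $(\hat{\boldsymbol{\pi}}^{(\epsilon)}-\gamma W_1\times W_1)/(1-\gamma)$), and replace the single factor $K(z_{k-1},dz_k)$ in the iterated integral by $\mu(dz_k)$ at the cost of the constants $\mathcal{A}_\epsilon,\mathcal{B}_\epsilon\to1$. The only cosmetic difference is that you phrase the comparison via the Radon--Nikodym density $h_\epsilon$, which handles nonnegative integrable $F$ directly, whereas the paper first proves the inequality for bounded $F$ (extending \eqref{eq:4.2} from sets to simple functions to measurable functions) and then passes to general $F$ by truncation and monotone convergence.
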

\begin{proof}
    First, assume that $F$ is bounded. We write $Z_k=(X_k,Y_k)$ for $k\geq0$. We prove the proposition in the case where $(Z_k)_k$ is generated using $\boldsymbol{\hat{\pi}}^{(\epsilon)}$. We prove this by writing the expectation of $F$ under the Markov chain explicitly:
    $$\E\biggl[F(Z_0,..., Z_k)\biggr]=\int_{\Omega_1}W_1\times W_1(dz_0)...\int_{\Omega_1}\boldsymbol{\hat{\pi}}^{(\epsilon)}(z_{k-1},dz_k)...\int_{\Omega_1}\boldsymbol{\hat{\pi}}^{(\epsilon)}(z_{m-1},dz_m)F(z_0,...,z_m).$$
    We aim to replace $\boldsymbol{\hat{\pi}}^{(\epsilon)}(z_{k-1},dz_k)$ by $W_1(dz_k)$:
    From \eqref{eq:4.2} there are constants $\mathcal{A}_{\epsilon}'$  and $\mathcal{B}_{\epsilon}'$ converging to $1$ as $\epsilon\rightarrow0$, such that 
    $$\mathcal{A}_{\epsilon}'W_1(A)\leq\hat{\boldsymbol{\pi}}^{(\epsilon)}(x,A)\leq \mathcal{B}_{\epsilon}'W_1(A),$$ 
    for all Borel sets $A\subseteq\Omega_1\times\Omega_1$. This inequality extends to all simple positive simple functions. In particular, there are constants $\mathcal{A}_{\epsilon},\mathcal{B}_{\epsilon}$ converging to $1$ as $\epsilon\rightarrow0$
    such that
    \begin{equation}\label{premixing}
        \mathcal{A}_{\epsilon}W_1\times W_1(f)\leq\boldsymbol{\hat{\pi}}^{(\epsilon)}(x,f)\leq \mathcal{B}_{\epsilon}W_1\times W_1(f),
    \end{equation}
    where, for a measure $\mu$ and a function $f$, $\mu(f)$ is the integral of $f$ with respect to $\mu$. By the definition of the Lebesgue integral, \eqref{premixing} extends to all positive measurable functions $f$.\par
    Therefore, from \eqref{premixing} we get the inequality:
    $$\E\biggl[F(Z_0,...,Z_m)\biggr]\leq \mathcal{B}_{\epsilon}\int_{\Omega_1}W_1\times W_1(dz_0)...\int_{\Omega_1}W_1(dz_k)...\int_{\Omega_1}\boldsymbol{\hat{\pi}}^{(\epsilon)}(z_{m-1},dz_m)F(z_0,...,z_m),$$
    which is the right-hand side of \eqref{eq:4.3}. Similarly, we prove the left-hand side as well. When $(Z_k)_{k\in\mathbb{N}_0}$ is generated by $\frac{\hat{\boldsymbol{{\pi}}}^{(\epsilon)}-\gamma W_1\times W_1}{1-\gamma}$, \eqref{eq:4.3} is proved in the same way.\par
    Once we have \eqref{eq:4.3} for all bounded measurable positive $F$ we may argue via a cutoff argument and monotone convergence to get the full result. 
\end{proof}
So when we have a positive functional of the Markov chain and we want to calculate the limit $\epsilon\rightarrow0$ of its expectation, we may replace a finite number of steps by standard Brownian motions that are independent of all previous steps. As $\epsilon\rightarrow0$  this will not change the limiting expectation since both upper and lower bounds will match.\newline\par

\begin{remark}\label{Mixingremark}
    \begin{itemize}
    \item Using this result we can prove the same double inequality (with different constants $\mathcal{A}_{\epsilon},\mathcal{B}_{\epsilon}$, still converging to 1, as $\epsilon\rightarrow0$) for a general $F\in L^1$ by splitting $F$ into its positive and negative parts.
    \item This lemma can also be applied in the case where $F$ is a function of a finite number of independent trajectories of $\boldsymbol{\hat{\pi}}^{(\epsilon)}$  (or trajectories generated by  $\frac{\hat{\boldsymbol{{\pi}}}^{(\epsilon)}-\gamma W_1\times W_1}{1-\gamma}$ with $(X_0, Y_0)\sim W_1\times W_1$) and we want to replace parts of these trajectories by Brownian motions. In this case we will get  bounds similar to $(\ref{eq:4.3})$ (again with a different set of constants $\mathcal{A}_{\epsilon},\mathcal{B}_{\epsilon}$,  converging to $1$, as $\epsilon\rightarrow0$).
    \item  Finally, this lemma generalizes in the case where we consider the edge effects and we sample the first and the final steps of the Markov chain using $\hat{\pi}_{0,1}\times\hat{\pi}_{0,1}$ and $\hat{\pi}_{p-1,p}\times\hat{\pi}_{p-1,p}$ respectively. Indeed, it is easy to see that $\hat{\pi}_{0,1}$ and $\hat{\pi}_{p-1,p}$ satisfy a version of $(\ref{eq:4.2})$ which allow us to replace $(X_1,Y_1)$ or $(X_p,Y_p)$ by Brownian motions and get the bounds presented above. If we want to replace any other step, the proof of \textbf{Lemma \ref{thm:lem4.1}} works. A similar argument works in the case where we start the Markov chain by  $\hat{\P}^{(\epsilon)}_{\tau}(dx_0)$ for some $\tau<1$ or the (normalized) measure $f^{(\epsilon)}_{0,1}(x_0)\hat{\P}^{(\epsilon)}_{\tau}(dx_0)$ and we want to replace the starting point by a Brownian motion.
\end{itemize} 
\end{remark} 
By conditioning on the regeneration length $T_{k+1}-T_k$, \textbf{Lemma \ref{thm:lem4.1}} has the following consequence for functionals of the total path increments:
\begin{mycor}
    There are constants $\mathcal{A}_{\epsilon},\mathcal{B}_{\epsilon}$, converging to $1$, as $\epsilon\rightarrow0$, such that for any non-negative $F\in L^1(\mathbb{R}^{2p})$: 
    $$\sum_{N_1=1,...,N_{p}=1}^{\infty}\biggl(\prod_{i=1}^{p}\mathcal{A}_{\epsilon}^{N_i}\gamma(1-\gamma)^{N_i-1}\biggr)\E\biggr[F((\mathbf{X}^{(N_1)},\mathbf{Y}^{(N_1)}),...,(\mathbf{X}^{(N_p)},\mathbf{Y}^{(N_p)}))\biggl]$$
$$\leq\E\biggr[F((\mathbf{X}^{(\epsilon)}_1,\mathbf{Y}^{(\epsilon)}_1),...,(\mathbf{X}^{(\epsilon)}_p,\mathbf{Y}^{(\epsilon)}_p))\biggl]\leq$$        
\begin{equation}\label{eq4.11}
    \sum_{N_1=1,...,N_{p}=1}^{\infty}\biggl(\prod_{i=1}^{p}\mathcal{B}_{\epsilon}^{N_i}\gamma(1-\gamma)^{N_i-1}\biggr)\E\biggr[F((\mathbf{X}^{(N_1)},\mathbf{Y}^{(N_1)}),...,(\mathbf{X}^{(N_p)},\mathbf{Y}^{(N_p)}))\biggl],
\end{equation}
where $(\mathbf{X}^{N_i})_{i=1,...,p}, (\mathbf{Y}^{N_i})_{i=1,...,p}$ are two independent sets of independent mean zero Gaussian random variables with covariance matrix $N_iI_{2\times2}$ respectively.
\end{mycor}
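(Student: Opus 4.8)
The plan is to condition on the cycle lengths and then to invoke the mixing estimate of \textbf{Lemma \ref{thm:lem4.1}} separately inside each cycle. First I would record the following elementary consequence of the regenerative structure built in \textbf{Section \ref{sec:4}}: if $N_i$ denotes the length of the $i$-th cycle $\mathcal{C}_i$ (the number of $\boldsymbol{\hat{\pi}}^{(\epsilon)}$-segments composing it), then $N_1,\dots,N_p$ are i.i.d.\ $Geo(\gamma)$, i.e.\ $\P(N_i=N)=\gamma(1-\gamma)^{N-1}$, and, conditionally on $\{N_1=n_1,\dots,N_p=n_p\}$, the cycles $\mathcal{C}_1,\dots,\mathcal{C}_p$ are still independent, the $i$-th one being distributed as in \eqref{eq4.10} with $\theta=n_i$: its first segment is a $W_1\times W_1$ sample and its remaining $n_i-1$ segments are sampled from $\frac{\hat{\boldsymbol{{\pi}}}^{(\epsilon)}-\gamma W_1\times W_1}{1-\gamma}$. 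Writing $(\mathbf{U}^{(n)},\mathbf{V}^{(n)})$ for the total increment pair of a generic cycle of length $n$ built from those dynamics, and using that $F\geq0$ so that disintegration and Tonelli cause no trouble, this yields
$$\E\bigl[F((\mathbf{X}^{(\epsilon)}_1,\mathbf{Y}^{(\epsilon)}_1),\dots,(\mathbf{X}^{(\epsilon)}_p,\mathbf{Y}^{(\epsilon)}_p))\bigr]=\sum_{N_1,\dots,N_p=1}^{\infty}\Bigl(\prod_{i=1}^{p}\gamma(1-\gamma)^{N_i-1}\Bigr)\E\bigl[F((\mathbf{U}^{(N_1)},\mathbf{V}^{(N_1)}),\dots,(\mathbf{U}^{(N_p)},\mathbf{V}^{(N_p)}))\bigr].$$

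Next I would fix $N_1,\dots,N_p$ and apply \textbf{Lemma \ref{thm:lem4.1}}, in the multi-trajectory version of \textbf{Remark \ref{Mixingremark}}, one step at a time, to replace each of the $N_i-1$ residual-kernel segments of the $i$-th cycle by an independent $W_1\times W_1$ sample. This is legitimate because \textbf{Lemma \ref{thm:lem4.1}} also covers sequences generated by $\frac{\hat{\boldsymbol{{\pi}}}^{(\epsilon)}-\gamma W_1\times W_1}{1-\gamma}$, and because once a step has been replaced by a fresh segment independent of the past, the remaining segments of that cycle are still generated from it by the residual kernel, so the lemma may be applied again to the next step (processing, say, the steps of each cycle in a fixed order). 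Each replacement costs a multiplicative factor in $[\mathcal{A}_\epsilon,\mathcal{B}_\epsilon]$, and after all $\sum_i(N_i-1)$ of them the $i$-th increment pair has become a sum of $N_i$ i.i.d.\ standard two-dimensional Gaussian vectors — exactly the pair $(\mathbf{X}^{(N_i)},\mathbf{Y}^{(N_i)})$ of the statement, with the $\mathbf{X}$- and $\mathbf{Y}$-families mutually independent because each $W_1\times W_1$ segment has independent coordinates. So, for each fixed $N_1,\dots,N_p$,
$$\Bigl(\prod_{i=1}^{p}\mathcal{A}_\epsilon^{N_i-1}\Bigr)\E\bigl[F((\mathbf{X}^{(N_1)},\mathbf{Y}^{(N_1)}),\dots)\bigr]\leq\E\bigl[F((\mathbf{U}^{(N_1)},\mathbf{V}^{(N_1)}),\dots)\bigr]\leq\Bigl(\prod_{i=1}^{p}\mathcal{B}_\epsilon^{N_i-1}\Bigr)\E\bigl[F((\mathbf{X}^{(N_1)},\mathbf{Y}^{(N_1)}),\dots)\bigr].$$

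To finish, I would assume without loss of generality that $0<\mathcal{A}_\epsilon\leq1\leq\mathcal{B}_\epsilon$ (otherwise replace them by $\mathcal{A}_\epsilon\wedge1$ and $\mathcal{B}_\epsilon\vee1$, which still converge to $1$), so that $\mathcal{A}_\epsilon^{N_i-1}\geq\mathcal{A}_\epsilon^{N_i}$ and $\mathcal{B}_\epsilon^{N_i-1}\leq\mathcal{B}_\epsilon^{N_i}$; plugging the displayed two-sided bound into the decomposition from the first step and summing the non-negative terms over $N_1,\dots,N_p$ then gives \eqref{eq4.11}. Two routine points remain, to be handled exactly as in the proof of \textbf{Lemma \ref{thm:lem4.1}}: that lemma is phrased for bounded $F$, so for a general non-negative $F\in L^1(\mathbb{R}^{2p})$ one applies everything to $F\wedge M$ and lets $M\to\infty$ by monotone convergence; and the order in which the successive steps are replaced must be fixed so that at each stage the function to which \textbf{Lemma \ref{thm:lem4.1}} is applied is of the form required there. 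The step most prone to slips — though not conceptually hard — is the bookkeeping of exponents: one must not forget that the first segment of every cycle is already a $W_1\times W_1$ sample and hence carries no mixing constant, which is exactly what converts the $N_i-1$ replacements into the clean factor $\mathcal{A}_\epsilon^{N_i}$ (resp.\ $\mathcal{B}_\epsilon^{N_i}$) after the harmless estimate $\mathcal{A}_\epsilon^{N_i-1}\geq\mathcal{A}_\epsilon^{N_i}$.
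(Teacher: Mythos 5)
Your proposal is correct and follows essentially the same route as the paper: condition on the (i.i.d.\ geometric) cycle lengths, then use \textbf{Lemma \ref{thm:lem4.1}} and \textbf{Remark \ref{Mixingremark}} to replace the residual-kernel segments of each cycle by independent $W_1\times W_1$ samples, turning each total increment into a sum of $N_i$ i.i.d.\ standard Gaussians. The paper writes out only $p=1$ and invokes the i.i.d.\ structure of the cycles for general $p$, and it is slightly more cavalier about the $N_i$ versus $N_i-1$ exponent, which your normalization $\mathcal{A}_\epsilon\le1\le\mathcal{B}_\epsilon$ handles cleanly.
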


\begin{proof}
    {We prove this for $p=1$, with the general proof following the same lines, since $((\mathbf{X}^{(\epsilon)}_i,\mathbf{Y}^{(\epsilon)}_i))_{i\in\mathbb{N}}$ are i.i.d.. Recall the definition of $(\mathbf{X}^{(\epsilon)}_i,\mathbf{Y}^{(\epsilon)}_i)$ from \eqref{pathincr}. By construction, $(\mathbf{X}^{(\epsilon)}_i,\mathbf{Y}^{(\epsilon)}_i)$ is equal in distribution to 
    $$\biggl(\sum_{k=0}^{\theta-1}X_{k}(1),\sum_{k=0}^{\theta-1}Y_k(1)\biggr),$$
    where:
    \begin{itemize}
        \item $(X_0,Y_0)\sim W_1\times W_1(dx_0,dy_0)$ and $(X_{k+1},Y_{k+1})\sim\frac{\boldsymbol{\hat{\pi}}^{(\epsilon)}((X_k,Y_k),dx_{k+1},dy_{k+1})-\gamma W_1\times W_1(dx_{k+1},dy_{k+1})}{1-\gamma}$ for $k\in\mathbb{N}$.
        \item $\theta\sim Geo(\gamma)$, independent from $(X_{k},Y_k)_{k\in\mathbb{N}_0}$.
    \end{itemize}
    By conditioning on $\theta$, we have
    $$\E[F(\mathbf{X}^{(\epsilon)}_1,\mathbf{Y}^{(\epsilon)}_1)]=\sum_{N=1}\gamma(1-\gamma)^{N-1}\E\biggl[F\biggl(\sum_{k=0}^{N-1}X_{k}(1),\sum_{k=0}^{N-1}Y_k(1)\biggr)\biggr].$$
    From \eqref{eq:4.3}, and \textbf{Remark \ref{Mixingremark}}, applied to the function $\tilde{F}:(\Omega_1\times\Omega_1)^N\rightarrow\mathbb{R}$,
    $$\tilde{F}((x_1,y_1),...,(x_N,y_N))=F\biggl(\sum_{k=0}^{N-1}x_{k}(1),\sum_{k=0}^{N-1}y_k(1)\biggr),$$
     we have the upper bound
    $$\E[F(\mathbf{X}^{(\epsilon)}_1,\mathbf{Y}^{(\epsilon)}_1)]\leq\sum_{N=1}\gamma(1-\gamma)^{N-1}\mathcal{B}_{\epsilon}^N\E\biggl[F\biggl(\sum_{j=0}^{N-1}B_j^1(1),\sum_{j=0}^{N-1}B_j^2(1)\biggr)\biggr],$$
    where $(B_j^1,B_j^2)$ are pairs of i.i.d.  standard Brownian motions, and $\mathcal{B}_{\epsilon}\rightarrow1$ as $\epsilon\rightarrow0$. This proves the right-hand side of \eqref{eq4.11}. Similarly, we prove the left-hand side of \eqref{eq4.11} and this concludes the proof.}    
\end{proof}

\textbf{Proposition \ref{thm:lem4.1}} has the following corollary. It is the analog of \textbf{Lemma A.2} from \cite{Gu_2018}:

\begin{mylem}\label{thm:cor4.1}
    Let $(X_{k}, Y_k)_{k\in\mathbb{N}_0}$ be a sequence of random variables generated by the transition probability $\frac{\boldsymbol{\hat{\pi}}^{(\epsilon)}-\gamma W_1\times W_1}{1-\gamma}$, where $(X_{0}, Y_{0})\sim\hat{\P}_1^{(\epsilon)}(dx_0)\times\hat{\P}_1^{(\epsilon)}(dy_0)$. Then $\E[X_k(1)]=0$. Moreover, there is a $c_1>0$ such that
    \begin{equation}\label{subg}
        \P[\sup_{s\in[0,1]}|X_{k}(s)|>t]\lesssim e^{-c_1t^2}.
    \end{equation}
     Additionally, if $\theta\sim Geo(\gamma)$ is independent from $(X_{k})_{k\in\mathbb{N}}$, then for some $c_2>0$:
    \begin{equation}\label{subexp}
        \P\biggl[\sum_{k=1}^{\theta}\sup_{s\in[0,1]}|X_{k}(s)|>t\biggr]\lesssim e^{-c_2t}.
    \end{equation}
    Therefore, $\sum_{k=1}^{\theta}\sup_{s\in[0,1]}|X_{k}(s)|$ has exponential tails, uniformly in $\epsilon$. Finally
    \begin{equation}\label{anotherfestimate}
        \P[\sup_{s\leq\theta}|B(s)|>t]\lesssim e^{-c_3t},
    \end{equation}
    for some $c_3>0$, where $B\in C([0,\infty))$ is the path built from  $(X_{k}, Y_k)_{k\in\mathbb{N}_0}$.
\end{mylem}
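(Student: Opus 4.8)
The plan is to prove the four assertions in the order listed, deducing each from its predecessors, the compound–geometric estimate \eqref{subexp} being the only one with real content. I begin with $\E[X_k(1)]=0$, which is a symmetry statement. Because $\psi$ is symmetric, $\psi\star\psi$ and hence $R(s,\cdot)$ are even, so $W_1$ and $\hat{\P}_1^{(\epsilon)}$ are invariant under the path reflection $x\mapsto -x$, and a direct check gives $I^{(\epsilon)}(-x,-y)=I^{(\epsilon)}(x,y)$ and $I^{(\epsilon)}(x,-y)=I^{(\epsilon)}(-x,y)$. The second identity shows that the integral operator in \eqref{eigenv} commutes with $f\mapsto f(-\,\cdot\,)$, so by uniqueness of the Perron eigenvector $\Psi^{(\epsilon)}$ is even; substituting into \eqref{transition} gives $\hat{\pi}^{(\epsilon)}(-x,-A)=\hat{\pi}^{(\epsilon)}(x,A)$, hence the product kernel $\boldsymbol{\hat{\pi}}^{(\epsilon)}$ and the residual kernel $\frac{\boldsymbol{\hat{\pi}}^{(\epsilon)}-\gamma W_1\times W_1}{1-\gamma}$ are invariant under $(x,y)\mapsto(-x,-y)$. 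Since the initial law $\hat{\P}_1^{(\epsilon)}\times\hat{\P}_1^{(\epsilon)}$ is invariant too, the whole chain is equal in law to its global reflection, so $X_k(1)\stackrel{d}{=}-X_k(1)$; with the integrability furnished by \eqref{subg} this gives mean zero.

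For \eqref{subg} the point is that the marginal law of $X_k$ is absolutely continuous with respect to $W_1$ with density bounded by a constant uniform in $k$ and in $\epsilon$ small. Indeed $X_0\sim\hat{\P}_1^{(\epsilon)}$ has density $e^{O(1/\log\frac{1}{\epsilon})}$ with respect to $W_1$, and by \eqref{eq:4.2} (and its stated analog for the residual kernel) the $X$–marginal transition kernel has, for every starting point, density with respect to $W_1$ lying in an interval shrinking to $\{1\}$; conditioning on $X_{k-1}$ then gives $\P[X_k\in A]\le C\,W_1(A)$ for all Borel $A\subseteq\Omega_1$. Taking $A=\{\sup_{s\in[0,1]}|\omega(s)|>t\}$ and using the reflection–principle bound $W_1(\sup_{s\in[0,1]}|B_s|>t)\lesssim e^{-ct^2}$ for two-dimensional Brownian motion yields \eqref{subg}; applied to $X_0$ it shows that $\sup_{s\in[0,1]}|X_0(s)|$ has sub-Gaussian tails, uniformly in $\epsilon$.

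For \eqref{subexp}, write $S_k=\sup_{s\in[0,1]}|X_k(s)|$ and $m(\lambda)=\E_{W_1}[e^{\lambda\sup_{s\in[0,1]}|B_s|}]$, which is finite for every $\lambda$ by \eqref{subg} and satisfies $m(\lambda)\to1$ as $\lambda\to0$. The joint law of $(X_1,\dots,X_N)$ has density at most $\mathcal{B}_\epsilon^{\,N}$ with respect to $W_1^{\otimes N}$ — a product of one-step density bounds, the $X_0$-integral contributing a mass $1$ — where $\mathcal{B}_\epsilon\to1$, hence $\E[e^{\lambda\sum_{k=1}^N S_k}]\le(\mathcal{B}_\epsilon m(\lambda))^N$. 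Averaging over the geometric law of $\theta$ (independent of the chain),
$$\E\Big[e^{\lambda\sum_{k=1}^{\theta}S_k}\Big]=\sum_{N\ge1}\gamma(1-\gamma)^{N-1}\,\E\Big[e^{\lambda\sum_{k=1}^{N}S_k}\Big]\le\sum_{N\ge1}\gamma(1-\gamma)^{N-1}\big(\mathcal{B}_\epsilon\,m(\lambda)\big)^{N},$$
which is finite, and bounded uniformly in $\epsilon$ small, once $\lambda$ is fixed small enough that $(1-\gamma)m(\lambda)<1$ (so $(1-\gamma)\mathcal{B}_\epsilon m(\lambda)<1$ for $\epsilon$ small). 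A Chernoff bound gives \eqref{subexp} with $c_2=\lambda$; alternatively one may first replace the $X_k$ by i.i.d. Brownian motions using \textbf{Lemma \ref{thm:lem4.1}} and reach the same estimate. Finally, since all segments here have length $1$, the glued path obeys $\sup_{s\le\theta}|B(s)|\le\sum_{k=0}^{\theta-1}S_k\le S_0+\sum_{k=1}^{\theta}S_k$, where $S_0$ is sub-Gaussian and $\sum_{k=1}^{\theta}S_k$ is sub-exponential by \eqref{subexp}, both uniformly in $\epsilon$; a union bound yields \eqref{anotherfestimate}.

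The only delicate point is \eqref{subexp}: one has to make sure that the density constant $\mathcal{B}_\epsilon$ and the chosen exponent $\lambda$ can be taken so that $(1-\gamma)\mathcal{B}_\epsilon m(\lambda)<1$ in the limit $\epsilon\to0$, i.e. so that the compound-geometric generating function stays finite uniformly — this is precisely where the subcritical scaling $\beta_\epsilon=\hat{\beta}/\sqrt{\log\frac{1}{\epsilon}}$, which forces $\mathcal{B}_\epsilon\to1$, is used. The symmetry bookkeeping for the mean-zero claim is routine but must be run through the eigenvector $\Psi^{(\epsilon)}$.
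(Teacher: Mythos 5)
Your proof is correct and follows essentially the same route as the paper: the Doeblin-type comparison of the transition kernel with $W_1$ (Lemma \ref{thm:lem4.1} / inequality \eqref{eq:4.2}) reduces everything to i.i.d.\ Brownian segments, and a Chernoff bound on the resulting compound-geometric sum gives \eqref{subexp} and \eqref{anotherfestimate}, with the smallness of $\lambda$ (resp.\ $c$) chosen so that $(1-\gamma)\mathcal{B}_\epsilon m(\lambda)<1$. The only difference is that you spell out the reflection-symmetry argument for $\E[X_k(1)]=0$, which the paper outsources to Lemma A.2 of \cite{Gu_2018}; your symmetry bookkeeping through $\Psi^{(\epsilon)}$ is valid.
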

\begin{remark}\label{Remarkfornikos}
    Observe that the marginal law of $(X_k)_{k\in\mathbb{N}_0}$ is the same as the law of $(Z_k)_{\mathbb{N}_0}$, where this sequence is generated by  $\frac{{\hat{\pi}}^{(\epsilon)}-\gamma W_1}{1-\gamma}$ with $Z_0\sim\hat{\P}_1^{(\epsilon)}(dz_0)$.
\end{remark}
\begin{proof}
    The proof of $\E[X_k(1)]=0$ is the same as in \cite{Gu_2018} (look at the proof of Lemma A.2).\par
    Inequality \eqref{subg} is a straightforward application of \textbf{Lemma \ref{thm:lem4.1}} with $F(x)=\sup_{s\in[0,1]}|x(s)|$ and the corresponding bound
     $$\P[\sup_{s\in[0,1]}|B(s)|>t]\lesssim e^{-c_1t^2},$$
     where $B$ is a Brownian motion.\par
     For \eqref{subexp}, we condition on $\theta$ and  apply \textbf{Lemma \ref{thm:lem4.1}} to $F(x)=\sum_{k=1}^{N}\sup_{s\in[0,1]}|x_i(s)|$. If $B^1,...,B^{N}$ are independent Brownian motions then
     $$\P\biggr[\sum_{k=1}^{\theta}\sup_{s\in[0,1]}|X_{k}(s)|>t\biggl]\lesssim\sum_{N=1}^{\infty}\gamma(1-\gamma)^{N-1}\P\biggr[\sum_{k=1}^{N}\sup_{s\in[0,1]}|B^k(s)|>t\biggl].$$
     We have that
     $$\P\biggr[\sum_{k=1}^{N}\sup_{s\in[0,1]}|B^k(s)|>t\biggl]\leq e^{-ct}\E\biggr[\exp\biggr(c\sum_{k=1}^{N}\sup_{s\in[0,1]}|B^k(s)|\biggl)\biggl].$$
     Therefore,
     $$\P\biggr[\sum_{k=1}^{N}\sup_{s\in[0,1]}|X_{k}(s)|>t\biggl]\lesssim\sum_{N=1}^{\infty}\gamma(1-\gamma)^{N-1}e^{-ct} C^{N},$$
     for  $C=\E[\exp(c\sup_{s\in[0,1]}|B^1(s)|)]$. We can conclude by taking $c$ small enough.\par
     Finally, \eqref{anotherfestimate} is proved similarly.
\end{proof}

\subsection{Intersections of independent paths}\label{sec:5}
For the proof of \textbf{Proposition \ref{thm:prop2.3}} we will need to know the limit of
\begin{equation}
    \E\biggl[\exp\biggl(\frac{\hat{\beta}^2}{\log\frac{1}{\epsilon}}\int_{[0,M(\epsilon)/\epsilon^2]^2}R(s-u,x+\omega_{X_0}(s)-\omega_{Y_0}(u))dsdu\biggr)\biggr],
\end{equation}
where $R$ is the correlation function of the space-time mollified white noise, $\omega_{X_0},\omega_{Y_0}$ are two paths built from $\boldsymbol{\hat{\pi}}^{(\epsilon)}$, with $(X_0,Y_0)\sim W_1\times W_1$ as an initial segment,  and $M(\epsilon)$ is such that $\log M(\epsilon)/\log\frac{1}{\epsilon}\rightarrow0$, as $\epsilon\rightarrow0$. The main theorem is the following (recall that $\hat{\beta}_c(R)=\sqrt{2\pi/||R|||_1}$):

\begin{mythm}\label{thm:5.1}
    Let $X_0,Y_0\in\Omega_1$, $(\omega_{X_0},\omega_{Y_0})$ and $M(\epsilon)$  be as above. Then, as $\epsilon\rightarrow0$,
    $$ \E\biggl[\exp\biggl(\frac{\hat{\beta}^2}{\log\frac{1}{\epsilon}}\int_{[0,M(\epsilon)/\epsilon^2]^2}R(s-u,x+\omega_{X_0}(s)-\omega_{Y_0}(u))dsdu\biggr)\biggr]\rightarrow\biggr(1-\frac{\hat{\beta}^2}{\hat{\beta}_c(R)^2}\biggl)^{-1},$$
    for all $x\in\mathbb{R}^2$ and for all $\hat{\beta}<\hat{\beta}_c(R)$.
\end{mythm}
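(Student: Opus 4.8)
The strategy is the moment method combined with the regenerative decomposition of $(\omega_{X_0},\omega_{Y_0})$ from \textbf{Section \ref{sec:4}}. Write $A_\epsilon:=\int_{[0,M(\epsilon)/\epsilon^2]^2}R(s-u,x+\omega_{X_0}(s)-\omega_{Y_0}(u))\,ds\,du$ and expand the exponential moment as
\[
\E\Bigl[\exp\Bigl(\tfrac{\hat{\beta}^2}{\log\frac{1}{\epsilon}}A_\epsilon\Bigr)\Bigr]=\sum_{n\ge0}\frac{\hat{\beta}^{2n}}{n!\,(\log\frac{1}{\epsilon})^{n}}\,\E\bigl[A_\epsilon^{n}\bigr].
\]
I would prove two facts: (i) for each fixed $n$, $(\log\frac{1}{\epsilon})^{-n}\E[A_\epsilon^{n}]\to n!\,\hat{\beta}_c(R)^{-2n}$; and (ii) a uniform bound $(\log\frac{1}{\epsilon})^{-n}\E[A_\epsilon^{n}]\le K\,n!\,c^{n}$ for all small $\epsilon$, with $K$ finite and some $c<\hat{\beta}^{-2}$. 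Given (i) and (ii), dominated convergence for the series gives the limit $\sum_{n}(\hat{\beta}^2/\hat{\beta}_c(R)^2)^{n}=(1-\hat{\beta}^2/\hat{\beta}_c(R)^2)^{-1}$, which is finite precisely when $\hat{\beta}<\hat{\beta}_c(R)$; moreover (i) and (ii) together identify the distributional limit of $A_\epsilon/\log\frac{1}{\epsilon}$ as $\hat{\beta}_c(R)^{-2}$ times a mean-one exponential variable (its moments grow slowly, hence are determining). The dependence on $x$ is immaterial: translating the starting point only shifts the functionals below without changing $L^1$ norms.

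For (i) I would exploit the cycle structure. Splitting $[0,M(\epsilon)/\epsilon^2]$ at the regeneration times $T_0<T_1<\cdots$ and using that $R(s-u,\cdot)=0$ for $|s-u|\ge1$, the integral $A_\epsilon$ equals, up to boundary terms of lower order, a sum $\sum_j\Xi_j$ in which $\Xi_j$ depends only on the $j$-th and $(j+1)$-st cycles and on the displacement $S_j:=\omega_{X_0}(T_j)-\omega_{Y_0}(T_j)=\sum_{k<j}(\mathbf X^{(\epsilon)}_k-\mathbf Y^{(\epsilon)}_k)$. Since the cycles are i.i.d.\ and $(\mathbf X^{(\epsilon)}_j,\mathbf Y^{(\epsilon)}_j)$ is a function of the $j$-th cycle, expanding $\E[A_\epsilon^{n}]$ and conditioning on $(S_j)_j$ replaces each $\Xi_j$ by its conditional mean $\bar\Xi(S_j)$, the short-range dependence between consecutive $\Xi_j$ contributing only to lower-order terms; one checks $\|\bar\Xi\|_{L^1(\R^2)}=\gamma^{-1}\|R\|_1$, with the $z$-integral insensitive to within-cycle fluctuations and the boundary corrections of self- and cross-interactions cancelling. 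This reduces (i) to the $n$-th moment asymptotics of $\sum_{j\le N_\epsilon}\bar\Xi(S_j)$, where $N_\epsilon\sim\gamma M(\epsilon)/\epsilon^2$ is the number of complete cycles and $(S_j)$ is a mean-zero random walk in $\R^2$ with step covariance $\Sigma_\epsilon=\mathrm{Cov}(\mathbf X^{(\epsilon)}_1-\mathbf Y^{(\epsilon)}_1)$. By the non-directed discrete Kallianpur--Robbins law (\textbf{Lemma \ref{lemm4.3}}) these moments are asymptotic to $n!$ times the $n$-th power of $\bigl(\|\bar\Xi\|_1/2\pi\sqrt{\det\Sigma_\epsilon}\bigr)\log N_\epsilon$. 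Using $\Sigma_\epsilon\to2\gamma^{-1}I_{2\times2}$ (the tilted-remainder kernel tends to $W_1\times W_1$, decoupling the two paths, and $\E[\mathrm{Geo}(\gamma)]=\gamma^{-1}$ gives variance $2\gamma^{-1}$ per coordinate by Wald) together with $\log N_\epsilon\sim2\log\frac{1}{\epsilon}$ (since $\log M(\epsilon)/\log\frac{1}{\epsilon}\to0$), the prefactor converges to $2\cdot\frac{\gamma^{-1}\|R\|_1}{2\pi\cdot2\gamma^{-1}}=\frac{\|R\|_1}{2\pi}=\hat{\beta}_c(R)^{-2}$, exactly as required; note that $\gamma$ cancels, consistent with its being an arbitrary Doeblin constant.

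For (ii) I would show $\sup_\epsilon\E[\exp(\hat{\beta}'^{2}A_\epsilon/\log\frac{1}{\epsilon})]<\infty$ for some $\hat{\beta}<\hat{\beta}'<\hat{\beta}_c(R)$, by a sub-multiplicativity argument across a fixed number of regeneration blocks together with the uniform-in-$\epsilon$ Gaussian and exponential tail bounds of \textbf{Lemma \ref{thm:cor4.1}} (this is the purpose of the exponential-moment bound for additive functionals established in the appendix); then $(\log\frac{1}{\epsilon})^{-n}\E[A_\epsilon^n]\le n!\,\hat{\beta}'^{-2n}\sup_\epsilon\E[\exp(\hat{\beta}'^{2}A_\epsilon/\log\frac{1}{\epsilon})]$ gives (ii) with $c=\hat{\beta}'^{-2}<\hat{\beta}^{-2}$.

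The main obstacle, I expect, is (i), specifically the reduction to discrete Kallianpur--Robbins with all estimates uniform in $\epsilon$: the walk $(S_j)$ depends on $\epsilon$ through the tilted kernel $\boldsymbol{\hat{\pi}}^{(\epsilon)}$, is recurrent in two dimensions, and is run for $\asymp\epsilon^{-2}$ steps, so one needs local-CLT and Green's-function bounds for $\bar\Xi(S_\cdot)$ that hold uniformly as $\epsilon\to0$, while tracking the $\epsilon$-dependent covariance $\Sigma_\epsilon$ precisely enough that the limiting Kallianpur--Robbins constant is pinned to $\hat{\beta}_c(R)^{-2}$ and not merely a constant of that order (any slack would corrupt the explicit formula \eqref{eq:effvar}). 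Controlling the cross terms between neighbouring cycles, and verifying that replacing $\Xi_j$ by $\bar\Xi(S_j)$ and $\omega_{X_0}(s)-\omega_{Y_0}(u)$ by $S_j$ inside each moment costs only $o(\log\frac{1}{\epsilon})$, is the other delicate point.
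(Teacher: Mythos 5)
Your overall architecture is the same as the paper's: decompose $(\omega_{X_0},\omega_{Y_0})$ into i.i.d.\ cycles, reduce the moments of $A_\epsilon$ to a non-directed additive functional of the walk $S_j=\omega_{X_0}(T_j)-\omega_{Y_0}(T_j)$ handled by \textbf{Lemma \ref{lemm4.3}}, and control the exponential via a Khas'minskii-type bound seeded by a first-moment estimate (\textbf{Proposition \ref{kamsinskilemma}}). Your prefactor bookkeeping is also consistent with the paper's: the $\gamma$'s cancel and the constant lands on $\hat{\beta}_c(R)^{-2}$ exactly as in \textbf{Lemma \ref{techlemma}}.

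The one substantive issue is your step (i). You assert the \emph{exact} limit $(\log\frac{1}{\epsilon})^{-n}\E[A_\epsilon^n]\to n!\,\hat{\beta}_c(R)^{-2n}$ and then apply dominated convergence to the series, but the ingredients you invoke only deliver one side of this. The Kallianpur--Robbins reduction applied to the regeneration blocks (with the spacing restriction $k_{i+1}>k_i+3$ needed to decouple the cycles from the walk) gives a \emph{lower} bound on the $n$-th moment, since all terms in the expansion \eqref{eq5.6} are nonnegative; and your step (ii) gives only $(\log\frac{1}{\epsilon})^{-n}\E[A_\epsilon^n]\le n!\,\hat{\beta}'^{-2n}\cdot O(1)$, which does not shrink to $n!\,\hat{\beta}_c(R)^{-2n}$ as $\hat{\beta}'\uparrow\hat{\beta}_c(R)$ because the constant $(1-\hat{\beta}'^2/\hat{\beta}_c(R)^2)^{-1}$ blows up. To close (i) you would additionally have to show that the near-diagonal terms ($k_{i+1}\le k_i+3$), the error $\Ecal(n;\epsilon)$ from splitting at regeneration times, and the discrepancy between $[0,M(\epsilon)/\epsilon^2]$ and $[0,T_{N_\epsilon}]$ all contribute $o((\log\frac{1}{\epsilon})^n)$ to the \emph{upper} bound; this is doable with the $A_{i_1,\dots,i_r}$ machinery inside \textbf{Lemma \ref{lemm4.3}}, but it is extra work. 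The paper sidesteps it entirely: it proves only the $\liminf$ of the moments (via the restricted sum) to get $\liminf$ of the exponential $\ge(1-\hat{\beta}^2/\hat{\beta}_c(R)^2)^{-1}$, and obtains the matching $\limsup$ directly at the level of the exponential from the first-moment bound \eqref{elemboundtouse2} fed into \textbf{Proposition \ref{kamsinskilemma}}, then squeezes. I would recommend restructuring your argument this way rather than chasing exact moment asymptotics; otherwise your proof has a genuine (though repairable) gap at the matching upper bound for each moment.
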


The proof of \textbf{Theorem \ref{thm:5.1}} follows by establishing two intermediate estimates:
\begin{itemize}
    \item Using the fact that $(\omega_{X_0},\omega_{Y_0})$ is built by the Markov chain on $\Omega_1\times\Omega_1$, and  estimating the first moment of \eqref{eq:add_funct} we prove that
    \begin{equation}\label{eq:exponential_bound}
    \limsup_{\epsilon\rightarrow0}\E\biggl[\exp\biggl(\frac{\hat{\beta}^2}{\log\frac{1}{\epsilon}}\int_{[0,T_{[M(\epsilon)/\epsilon^2]}]^2}R(s-u,x+\omega_{X_0}(s)-\omega_{Y_0}(u))dsdu\biggr)\biggr]\leq\biggr(1-\frac{\hat{\beta}^2}{\hat{\beta}_c(R)^2}\biggl)^{-1},
    \end{equation}
    where we recall that $(T_{n})_{n\in\N}$ are the regeneration times of $(\omega_{X_0},\omega_{Y_0})$. This is done in \textbf{Proposition \ref{cor5.1}}.
    \item We use the regenerative structure of $(\omega_{X_0},\omega_{Y_0})$ and the previous exponential bound to prove 
    \begin{equation}\label{eq:mom_liminf}
        \liminf_{\epsilon\rightarrow0}\E\biggl[\biggl(\frac{\hat{\beta}^2}{\log\frac{1}{\epsilon}}\int_{[0,M(\epsilon)/\epsilon^2]^2}R(s-u,x+\omega_{X_0}(s)-\omega_{Y_0}(u))dsdu\biggr)^p\biggr]\geq\biggl(\frac{\hat{\beta}^2}{\hat{\beta}_c(R)^2}\biggr)^pp!,
    \end{equation}
    for all $p\in\N$.
\end{itemize}

\begin{proof}[Proof of \textbf{Theorem \ref{thm:5.1}}]
    Since for all $n\in\N$, $n\leq T_n$, the bound \eqref{eq:exponential_bound}, yields
    $$\limsup_{\epsilon\rightarrow0}\E\biggl[\exp\biggl(\frac{\hat{\beta}^2}{\log\frac{1}{\epsilon}}\int_{[0,M(\epsilon)/\epsilon^2]^2}R(s-u,x+\omega_{X_0}(s)-\omega_{Y_0}(u))dsdu\biggr)\biggr]\leq\biggr(1-\frac{\hat{\beta}^2}{\hat{\beta}_c(R)^2}\biggl)^{-1}.$$
    On the other hand, from \eqref{eq:mom_liminf}, we have
    $$\liminf_{\epsilon\rightarrow0}\E\biggl[\exp\biggl(\frac{\hat{\beta}^2}{\log\frac{1}{\epsilon}}\int_{[0,M(\epsilon)/\epsilon^2]^2}R(s-u,x+\omega_{X_0}(s)-\omega_{Y_0}(u))dsdu\biggr)\biggr]\geq\sum_{p=0}^M\biggl(\frac{\hat{\beta}^{2}}{\hat{\beta}_c(R)}\biggr)^p,$$
    for all $M\in\N$. These two observations prove the theorem.
\end{proof}

The rest of this section is dedicated to establishing \eqref{eq:exponential_bound} and \eqref{eq:mom_liminf}. To prove these two estimates, we need to control moments of 
\begin{equation}\label{eq:add_funct}
    \int_{[0,M(\epsilon)/\epsilon^2]^2}R(s-u,x+\omega_{X_0}(s)-\omega_{Y_0}(u))dsdu,
\end{equation}
and we do this using the regenerative structure of the paths. Let us explain the main idea. By splitting the integral in \eqref{eq:add_funct} over the regeneration times, and using the fact that $R(s,\cdot)=0$ for $|s|\geq1$, we write the pth moment of \eqref{eq:add_funct} as
$$\E\biggl[\biggl(\int_{[0,M(\epsilon)/\epsilon^2]^2}R(s-u,\omega_{X_0}(s)-\omega_{Y_0}(u))dsdu\biggr)^p\biggr]=\E\biggl[\biggl(\sum_{k=1}^{[M(\epsilon)/\epsilon^{2}]-1}d_{k,k-1}+d_{k,k}+d_{k,k+1}\biggr)^p\biggr]=$$
$$=p!\sum\limits_{1\leq k_1<k_2<...<k_p\leq [M(\epsilon)/\epsilon^{2}]-1}\E\biggl[\prod_{i=1}^p\biggl(d_{k_i,k_i-1}+d_{k_i,k_i}+d_{k_i,k_i+1}\biggr)\biggr]+\Ecal(p;\epsilon)=$$
\begin{equation}\label{eq5.6}
    =p!\sum_{\boldsymbol{\delta}\in\{-1,0,+1\}^p}\sum\limits_{\substack{1\leq k_1<k_2<...<k_p\leq [M(\epsilon)/\epsilon^{2}]-1}}\E\biggl[\prod_{i=1}^p{d}_{k_i,k_i+\delta_i}\biggr]+\Ecal(p;\epsilon),
\end{equation}
where $\Ecal(p;\epsilon)$ is an  error term, and for $\delta\in\{-1,0,1\}$, we defined
\begin{equation}\label{dkterms}
    d_{k,k+\delta}:=\int_{T_{k}}^{T_{k+1}}\int_{T_{k+\delta}}^{T_{k+\delta+1}}R(s-u,\textit{ }x+\omega_{X_0}(s)-\omega_{Y_0}(u))dsdu,
\end{equation}
with $d_{1,0}=d_{[M(\epsilon)/\epsilon^2],[M(\epsilon)/\epsilon^2]+1}=d_{[M(\epsilon)/\epsilon^2]-1,[M(\epsilon)/\epsilon^2]+1}=0$. By the change of variables $(s,u)\rightarrow(s+T_{k_i+\delta_i}, u+T_{k_i})$ and by adding and subtracting the terms $\omega_{X_0}(T_{k_i+\delta_i})$ and $\omega_{Y_0}(T_{k_i})$, $d_{k_i,k_i+\delta_i}$ is equal to:
\begin{align*}
    \int_{0}^{T_{k_i+1}-T_{k_i}}\int_{0}^{T_{k_i+\delta_i+1}-T_{k_i+\delta_i}}R\biggr(s-u-(T_{k_i}-T_{k_i+\delta_i}),\textit{ }x+\omega_{X_0}(T_{k_i+\delta_i})-\omega_{Y_0}(T_{k_i})+\\
    \omega_{X_0}(s+T_{k_i+\delta_i})-\omega_{X_0}(T_{k_i+\delta_i})-(\omega_{Y_0}(u+T_{k_i})-\omega_{Y_0}(T_{k_i}))\biggl)dsdu.
\end{align*}
We focus on  
\begin{equation}\label{firstdominantterm}
    \sum_{\boldsymbol{\delta}\in\{-1,0,+1\}^p}\sum\limits_{\substack{1\leq k_1<k_2<...<k_p\leq [M(\epsilon)/\epsilon^{2}]-1}}\E\biggl[\prod_{i=1}^p{d}_{k_i,k_i+\delta_i}\biggr].
\end{equation}
Recall that, from \textbf{Section \ref{sec:4}}, the process $(\omega_{X_0},\omega_{Y_0})$ (and therefore the processes $\omega_{X_0}$ and $\omega_{Y_0}$, considered separately) can be split into i.i.d. 'cycles'
\begin{equation}\label{thecycles}
    (\mathcal{C}_j)_{j\geq0}=\biggl(T_{j+1}-T_j,(\omega_{X_0}(T_j+s)-\omega_{X_0}(T_{j}),\omega_{Y_0}(T_j+s)-\omega_{Y_0}(T_{j}))_{s\leq T_{j+1}-T_j}\biggr)_{j\geq0}
\end{equation}
Observe that $d_{k_i,k_i}$ is a functional of the cycle $\mathcal{C}_{k_i}$ and of the point that the cycle initiates i.e. of $\omega_{X_0}(T_{k_i})$ and $\omega_{Y_0}(T_{k_i})$. From the regenerative structure of $(\omega_{X_0},\omega_{Y_0})$, we see that $\mathcal{C}_{k_i}$ is independent from
$$(\omega_{X_0}(T_{k_i}),\omega_{Y_0}(T_{k_i}))$$
Therefore, we can write the expectation of $d_{k_i,k_i}$ as
$$\frac{1}{\log\frac{1}{\epsilon}}\E[d_{k_i,k_i}]=\frac{1}{\log\frac{1}{\epsilon}}\E[\Gcal_{\epsilon}(\omega_{X_0}(T_{k_i})-\omega_{Y_0}(T_{k_i}))],$$
with 
$$\Gcal_\epsilon(z)=\E_{cycle\textbf{ }law}\biggl[\int_0^\theta\int_{0}^\theta R(s-u,\textit{ }x+z+\Tilde{\omega}_{B^1}(s)-\Tilde{\omega}_{B^2}(u))dsdu\biggr],$$
where $(\theta,(\Tilde{\omega}_{B^1}(s),\Tilde{\omega}_{B^2}(s))_{s\leq \theta})$ is as in \eqref{eq4.10}. So, in theory, if we want to control \eqref{firstdominantterm}, we can average over the law of the cycles first. This will give us a functional of $(\omega_{X_0}(T_{k_i})-\omega_{Y_0}(T_{k_i}))_{i=1,...,p}$. From \eqref{Randomwalks} these random variables are sums of i.i.d. random variables, which guarantees their 'good behavior' (see \textbf{Proposition \ref{prop4.4}}). Then the resulting additive functional of $(\omega_{X_0}(T_{k_i})-\omega_{Y_0}(T_{k_i}))_{i=1,...,p}$ can be handled by standard results from \cite{Kallianpur1954TheSO}.\par
{There are two obstacles to implementing this strategy:
\begin{enumerate}[label=\textbf{P.\arabic*}]
    \item\label{Problem1} First, it is obvious that we have to consider the terms $d_{k_i,k_i+\delta_i}$, for $\delta_i\in\{-1,0,1\}$. When $\delta_i\neq0$, the term $d_{k_i,k_i+\delta_i}$ compares the $k_i+\delta_i-$th cycle of $\omega_{X_0}$ to the $k_i-$th cycle of $\omega_{Y_0}$. So when we average over the law of these two cycles, we will get a function of $\omega_{X_0}(T_{k_i+\delta_i})-\omega_{Y_0}(T_{k_i})$. This will lead us to a 'non-directed' additive functional of $(\omega_{X_0}(T_{k_i})-\omega_{Y_0}(T_{k_i}))_{i=1,...,p}$ (see \eqref{nondirfun} below). Nevertheless, this can be handled with \textbf{Lemma \ref{lemm4.3}}.
    \item\label{Problem2} Secondly, we cannot average over all cycles in the product 
    \begin{equation}\label{productterm}
        \E\biggl[\prod_{i=1}^p{d}_{k_i,k_i+\delta_i}\biggr]
    \end{equation}
    to get an expectation of a function of $(\omega_{X_0}(T_{k_i+\delta_i})-\omega_{Y_0}(T_{k_i}))_{i=1,...,p}$. The reason for this becomes apparent even in the case of $p=2$, for the expectation
    \begin{equation*}
        \E[d_{k_1,k_1}d_{k_2,k_2}],
    \end{equation*}
    with $k_1<k_2$. As mentioned, $d_{k_i,k_i}$, is a function of $\mathcal{C}_{k_i}$
    and of $\omega_{X_0}(T_{k_i})-\omega_{Y_0}(T_{k_i}).$ 
    However, $\omega_{X_0}(T_{k_2})-\omega_{Y_0}(T_{k_2})$ is not independent from $\mathcal{C}_{k_1}$. Indeed, consider the paths $(\omega_{X_0}(T_{k_i}+s)-\omega_{X_0}(T_{k_i}),\omega_{Y_0}(T_{k_i}+u)-\omega_{Y_0}(T_{k_i}))_{s,u\in[0, T_{k_i+1}-T_{k_i}]}$. Then for $s=u=T_{k_i+1}-T_{k_i}$, these paths take the value:
    $$(\omega_{X_0}(T_{k_1+1})-\omega_{X_0}(T_{k_1}),\omega_{Y_0}(T_{k_1+1})-\omega_{Y_0}(T_{k_1})).$$
    From \eqref{Randomwalks}, this is equal to $(\mathbf{X}^{(\epsilon)}_{k_1},\mathbf{Y}^{(\epsilon)}_{k_1})$. Since $k_2>k_1$,
    $$\mathbf{X}^{(\epsilon)}_{k_1}-\mathbf{Y}^{(\epsilon)}_{k_1}$$ is one of the summands of $\omega_{X_0}(T_{k_2})-\omega_{Y_0}(T_{k_2})$.
    This problem becomes more complicated in the general case. In particular,  the product in  \eqref{productterm} is a function of 
    \begin{align}\label{boldzetaterm}
    \mathbf{Z}_{k_i}:=\biggr(T_{k_i+1}-T_{k_i},T_{k_i+\delta_i+1}-T_{k_i+\delta_i},(\omega_{X_0}(s+T_{k_i+\delta_i})-\omega_{X_0}(T_{k_i+\delta_i}))_{s\leq T_{k_i+\delta_i+1}-T_{k_i+\delta_i}},\nonumber \\
    (\omega_{Y_0}(u+T_{k_i})-\omega_{Y_0}(T_{k_i}))_{u\leq T_{k_i+1}-T_{k_i}}\biggl),
\end{align}
    for $i=1,...,p$ and of 
    \begin{equation}\label{eqforinvrw}
        (\omega_{X_0}(T_{k_1+\delta_1})-\omega_{Y_0}(T_{k_1}),...,\omega_{X_0}(T_{k_p+\delta_p})-\omega_{Y_0}(T_{k_p})).
    \end{equation}
    For the same reasons as in the previous example,  the cycles 
    $$\biggl(T_{k_i+\delta_i+1}-T_{k_i+\delta_i}, (\omega_{X_0}(s+T_{k_i+\delta_i})-\omega_{X_0}(T_{k_i+\delta_i}), \omega_{Y_0}(s+T_{k_i+\delta_i})-\omega_{Y_0}(T_{k_i+\delta_i}))_{s\in[0, T_{k_i+\delta_i+1}-T_{k_i+\delta_i}]}\biggr)$$ 
    and 
    $$\biggl(T_{k_i+1}-T_{k_i}, (\omega_{X_0}(s+T_{k_i})-\omega_{X_0}(T_{k_i}), \omega_{Y_0}(s+T_{k_i})-\omega_{Y_0}(T_{k_i}))_{s\in[0, T_{k_i+1}-T_{k_i}]}\biggr),$$
    are not independent from $\omega_{X_0}(T_{k_j+\delta_j})-\omega_{Y_0}(T_{k_j})$ for $i<j$. However, the latter depends on these cycles only through $\mathbf{X}^{(\epsilon)}_{k_i}-\mathbf{Y}^{(\epsilon)}_{k_i}$ and $\mathbf{X}^{(\epsilon)}_{k_i+\delta_i}-\mathbf{Y}^{(\epsilon)}_{k_i+\delta_i}$.
\end{enumerate}
Now, we explain how we deal with item \ref{Problem2}. The idea is to write \eqref{eqforinvrw} as $\mathcal{S}^{\epsilon}+\mathcal{Y}^\epsilon$, for appropriate, independent random variables, so that $\mathcal{S}^{\epsilon}$ is independent from the cycles \eqref{thecycles} and equal in distribution to a shifted version of \eqref{eqforinvrw}. This is accomplished by 'throwing out' the terms $\mathbf{X}^{(\epsilon)}_{k_i}-\mathbf{Y}^{(\epsilon)}_{k_i}$ and $\mathbf{X}^{(\epsilon)}_{k_i+\delta_1}-\mathbf{Y}^{(\epsilon)}_{k_i+\delta_i}$ from the sum \eqref{Randomwalks}, defining $\omega_{X_0}(T_{k_j+\delta_j})-\omega_{Y_0}(T_{k_j})$, $1\leq i\leq j\leq p$.  With such an expression in hand, and by averaging over the joint law of $\mathcal{Y}^\epsilon$ and of \eqref{boldzetaterm}, we can write  \eqref{productterm} as an expectation of a function of a shifted version of \eqref{eqforinvrw}. Then, as mentioned, the resulting additive functional can be handled with \textbf{Lemma \ref{lemm4.3}}. Because of this shifting, it is necessary to consider \eqref{firstdominantterm} with the restriction $k_{i+1}>k_i+3$. In particular, we have the following lemma, which is sufficient for our purposes:

\begin{mylem}\label{techlemma}
    Let $(X_0,Y_0)\sim W_1\times W_1$ and $(\omega_{X_0},\omega_{Y_0})$ be the path built from $\boldsymbol{\hat{\pi}}^{(\epsilon)}$ with $(X_0,Y_0)$ as the initial segment.  Then for any $M(\epsilon)$, as in \textbf{Theorem \ref{thm:5.1}} we have
    \begin{equation}\label{eq5.8}
    \biggl(\frac{2\pi}{||R||_1\log\frac{1}{\epsilon}}\biggr)^p\sum_{\mathbf{\delta}\in\{-1,0,+1\}^p}\sum\limits_{\substack{3\leq k_1<k_2<...<k_p\leq [M(\epsilon)/\epsilon^2]-1,\\ k_{i+1}>k_i+3}}\mathbb{E}\biggl[\prod_{i=1}^p{d}_{k_i,k_i+\delta_i}\biggr]\rightarrow1,
\end{equation}
for any $p\in\N$, as $\epsilon\rightarrow0$.
\end{mylem}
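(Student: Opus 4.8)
The plan is to turn the left-hand side of \eqref{eq5.8} into a $p$-th factorial-moment–type expression for an additive functional of the skeleton random walk $(\omega_{X_0}(T_n)-\omega_{Y_0}(T_n))_{n\ge0}$ — which by \eqref{Randomwalks} is an honest $\mathbb{R}^2$-valued random walk with i.i.d.\ increments $\xi_m:=\mathbf{X}^{(\epsilon)}_m-\mathbf{Y}^{(\epsilon)}_m$ — and then to evaluate it through a Kallianpur--Robbins estimate for recurrent $2d$ walks. The normalizing factor $\bigl(2\pi/(\|R\|_1\log\tfrac1\epsilon)\bigr)^p$ is exactly the one that makes the answer $1$: heuristically $\sum_k\sum_{\delta}d_{k,k+\delta}$ reconstructs $\int_{[0,N]^2}R$, whose $p$-th moment the classical recurrent $2d$ Kallianpur--Robbins law predicts to be $\sim p!\,(\tfrac{\|R\|_1}{2\pi}\log N)^p$ with $N=[M(\epsilon)/\epsilon^2]$, $\log N=2\log\tfrac1\epsilon+o(\log\tfrac1\epsilon)$.

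Fix $p$, $\boldsymbol\delta\in\{-1,0,+1\}^p$ and an admissible tuple $3\le k_1<\cdots<k_p\le[M(\epsilon)/\epsilon^2]-1$ with $k_{i+1}>k_i+3$. As in \eqref{boldzetaterm}, $\prod_i d_{k_i,k_i+\delta_i}$ is a function of the blocks $\mathbf{Z}_{k_i}$ and of the positions \eqref{eqforinvrw}. Write each position as $\mathcal{S}^{(\epsilon)}_i+\mathcal{Y}^{(\epsilon)}_i$, where $\mathcal{S}^{(\epsilon)}_i$ drops from the sums \eqref{Randomwalks} every increment $\xi_m$ with $m\in\mathcal{K}:=\{k_j,k_j+\delta_j:1\le j\le p\}$; using $k_{i+1}>k_i+3$ (which keeps the ``dangerous windows'' $\{k_j-1,k_j,k_j+1\}$ disjoint) one checks that $\mathcal{S}^{(\epsilon)}_i=\sum_{m<k_i,\,m\notin\mathcal{K}}\xi_m$ for every $\boldsymbol\delta$, so $\mathcal{S}^{(\epsilon)}:=(\mathcal{S}^{(\epsilon)}_1,\dots,\mathcal{S}^{(\epsilon)}_p)$ is independent of the i.i.d.\ family of blocks $(\mathcal{C}_m)_{m\in\mathcal{K}}$ and, up to deletion of at most $2p$ increments, is a copy of the skeleton walk observed at times $k_1<\cdots<k_p$, while $\mathcal{Y}^{(\epsilon)}_i=\sum_{m<k_i,\,m\in\mathcal{K}}\xi_m$ is a function of those blocks with exponential tails, uniformly in $\epsilon$ and in the tuple (Lemma \ref{thm:cor4.1}). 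Conditioning on $\mathcal{S}^{(\epsilon)}$ and integrating the blocks $(\mathcal{C}_m)_{m\in\mathcal{K}}$ against the cycle law \eqref{eq4.10} — each $d_{k_i,k_i+\delta_i}$ depending on a regeneration block only through the difference of the two sub-path endpoints — gives $\mathbb{E}[\prod_i d_{k_i,k_i+\delta_i}]=\mathbb{E}[h^{(\epsilon)}_{\boldsymbol\delta}(\mathcal{S}^{(\epsilon)})]$ for an explicit nonnegative kernel $h^{(\epsilon)}_{\boldsymbol\delta}$ on $\mathbb{R}^{2p}$, which (since $R$ is spatially compactly supported and the blocks and the $\mathcal{Y}^{(\epsilon)}_i$ have uniform exponential tails) is localized, with exponential decay, near the diagonal point $(-x,\dots,-x)$. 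The dependence of $h^{(\epsilon)}_{\boldsymbol\delta}$ on a neighbouring increment when $\delta_i\neq0$ is exactly what makes the functional the ``non-directed'' one alluded to around \eqref{nondirfun}.

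The exact constant comes out of the total mass $\int_{\mathbb{R}^{2p}}h^{(\epsilon)}_{\boldsymbol\delta}$: by Tonelli it factorizes over $i$, and integrating the space variable kills the dependence of each factor on the block path, leaving only the law of $\theta\sim Geo(\gamma)$. The $\delta_i=0$ factor reduces to $\mathbb{E}\bigl[\int_0^\theta\int_0^\theta\|R(s-u,\cdot)\|_1\,ds\,du\bigr]$ and the $\delta_i=\pm1$ factor to a boundary (corner) term, and one computes $\sum_{\delta_i}(\text{factor})=\tfrac1\gamma\|R\|_1$ — the $\delta_i=\pm1$ corner contributions exactly restoring the deficit in the $\delta_i=0$ self-interaction — whence $\sum_{\boldsymbol\delta}\int h^{(\epsilon)}_{\boldsymbol\delta}=(\|R\|_1/\gamma)^p$, free of $\epsilon$. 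By Lemma \ref{thm:lem4.1} and Proposition \ref{prop4.4}, the skeleton walk is a $2d$ random walk with mean-zero i.i.d.\ increments of covariance $\to\tfrac2\gamma I_2$ and exponential tails obeying a local CLT uniformly in $\epsilon$; feeding this and the localization of $H^{(\epsilon)}:=\sum_{\boldsymbol\delta}h^{(\epsilon)}_{\boldsymbol\delta}$ into the non-directed discrete Kallianpur--Robbins estimate of Lemma \ref{lemm4.3} gives $\mathbb{E}[H^{(\epsilon)}(\mathcal{S}^{(\epsilon)})]\sim\bigl(\prod_{i=1}^p\tfrac{\gamma}{4\pi(k_i-k_{i-1})}\bigr)\int H^{(\epsilon)}$ (with $k_0:=0$), uniformly enough to be summed over admissible tuples. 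Summing yields $\bigl(\tfrac{\|R\|_1}{4\pi}\bigr)^p\sum_{\text{admissible}}\prod_i\tfrac1{k_i-k_{i-1}}$, and since $\sum_{1\le k_1<\cdots<k_p\le N}\prod_i\tfrac1{k_i-k_{i-1}}=\sum_{m_i\ge1,\ \sum m_i\le N}\prod_i\tfrac1{m_i}\sim(\log N)^p$ (the spacing and end restrictions change only lower-order terms) with $\log N\sim 2\log\tfrac1\epsilon$, the left side of \eqref{eq5.8} tends to $\bigl(\tfrac{2\pi}{\|R\|_1\log\frac1\epsilon}\bigr)^p\bigl(\tfrac{\|R\|_1}{4\pi}\bigr)^p(2\log\tfrac1\epsilon)^p=1$.

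The technical heart, and the main obstacle, is making the ``average over the cycle law first'' step legitimate uniformly over the $\asymp(M(\epsilon)/\epsilon^2)^p$ index tuples: one must control the error in passing from $\mathbb{E}[\prod_i d_{k_i,k_i+\delta_i}]$ to $\mathbb{E}[h^{(\epsilon)}_{\boldsymbol\delta}(\mathcal{S}^{(\epsilon)})]$ and, more delicately, the distortion of $\mathcal{S}^{(\epsilon)}$ away from the true skeleton walk caused by deleting the bounded number of dangerous increments — this is precisely why $k_{i+1}>k_i+3$ is imposed and why the Kallianpur--Robbins input must be taken in the ``non-directed'' form of Lemma \ref{lemm4.3}. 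The secondary, quantitative, point is that the local CLT for the skeleton walk must hold with an error that is uniform in $\epsilon$ and summable against $1/k$ down to a large fixed $k$; this, together with the mass identity of the previous paragraph (where the $\delta=\pm1$ corner terms supply the correct normalization), is what pins the limit to $1$ rather than to some other constant.
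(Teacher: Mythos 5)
Your proposal is correct and follows essentially the same route as the paper: decompose the positions $\omega_{X_0}(T_{k_i+\delta_i})-\omega_{Y_0}(T_{k_i})$ by deleting the finitely many "dangerous" increments so that the remainder is independent of the relevant cycles, average over the cycle law to produce a nonnegative kernel, verify that its total mass sums over $\boldsymbol\delta$ to $(\|R\|_1/\gamma)^p$ (the $\delta=\pm1$ cross terms compensating the diagonal deficit, which is the identity $A(R)=\|R\|_1$ in the paper), and feed the result into the non-directed Kallianpur--Robbins estimate of \textbf{Lemma \ref{lemm4.3}}. The normalization bookkeeping ($\log N\sim2\log\tfrac1\epsilon$, factor $(\gamma/4\pi)^p$) matches the paper's.
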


\begin{proof}
We follow the outline explained above. In particular, for $i=2,...,p$, we write:
$$\omega_{Y_0}(T_{k_i})=\bar{\mathbf{S}}_{k_i}+\sum_{j=1}^{i-1}(\mathbf{Y}^{(\epsilon)}_{k_{j}}+\mathbf{Y}^{(\epsilon)}_{k_{j}+\delta_j})+\mathbf{Y}^{(\epsilon)}_{k_{i}-1}.$$
Here, we have 'thrown out' the variables $\mathbf{Y}^{(\epsilon)}_{k_{j}-1},\mathbf{Y}^{(\epsilon)}_{k_{j}}, \mathbf{Y}^{(\epsilon)}_{k_{j}+1}$ to account for every possible value of $\delta_j$.  We included $\mathbf{Y}^{(\epsilon)}_{k_{i}-1}$ as well since, in the case where $\delta_i=-1$, $\omega_{Y_0}(T_{k_i})$ also depends on $T_{k_i}-T_{k_i-1}$ (which appears in $d_{k_i,k_i-1}$).  Similarly, we write:
$$\omega_{X_0}(T_{k_i+\delta_i})={\mathbf{S}}_{k_i+\delta_i}+\sum_{j=1}^{i-1}(\mathbf{X}^{(\epsilon)}_{k_{j}}+\mathbf{X}^{(\epsilon)}_{k_{j}+\delta_j})+\mathbf{X}^{(\epsilon)}_{k_{i}-1},$$
where $\mathbf{S}_{k_i},\bar{\mathbf{S}}_{k_i}$ are defined by the equations above and are independent from $\mathbf{X}^{(\epsilon)}_{k_{j}-1}$, $\mathbf{X}^{(\epsilon)}_{k_{j}}$, $\mathbf{X}^{(\epsilon)}_{k_{j}+1}$, $\mathbf{Y}^{(\epsilon)}_{k_{j}-1}$, $\mathbf{Y}^{(\epsilon)}_{k_{j}}$, $\mathbf{Y}^{(\epsilon)}_{k_{j}+1}$, $j=0,...,i-1$ and from $\mathbf{X}^{(\epsilon)}_{k_{i}-1},\mathbf{Y}^{(\epsilon)}_{k_{i}-1}$. Observe that $\mathbf{S}_{k_i+\delta_i}-\bar{\mathbf{S}}_{k_i}$ is equal in distribution to\footnote{Since $k_{i+1}>k_{i}+3$ for all $i=1,...,p$, with $k_1\geq2$, we have $k_{i}>3(i-1)+1$.} 
$$\omega_{X_0}(T_{k_i+\delta_i-3(i-1)-1})-\omega_{Y_0}(T_{k_i-3(i-1)-1}),$$ 
and furthermore, the vector:
$$({\mathbf{S}}_{k_1+\delta_1}-\bar{\mathbf{S}}_{k_1},...,{\mathbf{S}}_{k_p+\delta_p}-\bar{\mathbf{S}}_{k_p}),$$
is independent from $(\mathbf{Z}_{k_l})_{l=1,...,p}$, defined in \eqref{boldzetaterm}, and it is equal in distribution to the vector:
$$(\omega_{X_0}(T_{k_1-1+\delta_1})-\omega_{Y_0}(T_{k_1-1}),\dots,\omega_{X_0}(T_{k_p+\delta_p-3(p-1)-1})-\omega_{Y_0}(T_{k_p-3(p-1)-1})).$$
These observations show that we can write
\begin{align}\label{pulledoutformula}
    d_{k_i,k_i+\delta_i}=\int_{0}^{T_{k_i+1}-T_{k_i}}\int_{0}^{T_{k_i+\delta_i+1}-T_{k_i+\delta_i}}R\biggr(s-u-(T_{k_i}-T_{k_i+\delta_i}),\quad \mathbf{S}_{k_i+\delta_i}-\bar{\mathbf{S}}_{k_i}+\mathbf{Y}(i)+
    \nonumber\\ +\omega_{X_0}(s+T_{k_i+\delta_i})-\omega_{X_0}(T_{k_i+\delta_i})-(\omega_{Y_0}(u+T_{k_i})-\omega
    _{Y_0}(T_{k_i}))\biggl)dsdu,
\end{align}
where 
\begin{equation}\label{wtf0}
    \mathbf{Y}(i)=\sum_{j=1}^{i-1}\sum_{\delta\in\{-1,0,1\}}(\mathbf{X}^{(\epsilon)}_{k_{j}+\delta}-\mathbf{Y}^{(\epsilon)}_{k_{j}+\delta})+\mathbf{X}^{(\epsilon)}_{k_{i}-1}-\mathbf{Y}^{(\epsilon)}_{k_{i}-1}.
\end{equation}
From \eqref{pulledoutformula}, we observe that
\[\prod_{i=1}^pd_{k_i,k_i+\delta_i}\]
is a function of 
\begin{equation}\label{variablevector}
    ({\mathbf{S}}_{k_1+\delta_1}-\bar{\mathbf{S}}_{k_1},...,{\mathbf{S}}_{k_p+\delta_p}-\bar{\mathbf{S}}_{k_p})
\end{equation}
and of 
\begin{equation}\label{eq5.9}
    (\mathbf{Z}_{k_i};\mathbf{Y}(i))_{i=1,...p}.
\end{equation}
We proved that \eqref{variablevector} and \eqref{eq5.9} are independent. Therefore, if we prove that the distribution of \eqref{eq5.9} is independent of $k_1,...,k_p$, in \eqref{eq5.8}, we can average over the law of \eqref{eq5.9} first and get a functional with respect to \eqref{variablevector}. Then, \textbf{Lemma \ref{lemm4.3}} will give us the limit of \eqref{eq5.8}.\par 
Looking at  \eqref{eq5.9}, we see that it depends on the cycles $((\mathcal{C}_{k_i-1},\mathcal{C}_{k_i},\mathcal{C}_{k_i+1}))_{i=1,...,p}$. Since we have the restriction of $k_{i+1}>k_i+3$, the triplets $(\mathcal{C}_{k_i-1},\mathcal{C}_{k_i},\mathcal{C}_{k_i+1})$ and $(\mathcal{C}_{k_j-1},\mathcal{C}_{k_j},\mathcal{C}_{k_j+1})$ are comprised by distinct cycles for $i\neq j$, and are therefore independent. The distribution of $(\mathcal{C}_{k_i-1},\mathcal{C}_{k_i},\mathcal{C}_{k_i+1})$ is equal to the distribution of 
$$\biggl(\biggl(\theta^i_{-1},(\Tilde{\omega}_{B^i_{-1}}(s),\Tilde{\omega}_{b^i_{-1}}(s))_{s\leq \theta^i_{-1}}\biggr),\biggl(\theta^i_{0},(\Tilde{\omega}_{B^i_{0}}(s),\Tilde{\omega}_{b^i_{0}}(s))_{s\leq \theta^i_{0}}\biggr),\biggl(\theta^i_{1},(\Tilde{\omega}_{B^i_{1}}(s),\Tilde{\omega}_{b^i_{1}}(s))_{s\leq \theta^i_{1}}\biggr)\biggr),$$
where:
\begin{itemize}
    \item $(B_{-1}^{i},b_{-1}^{i})_{i=1,...,p}$, $(B_{0}^{i},b_{0}^{i})_{i=1,...,p}$, $(B_{1}^{i},b_{1}^{i})_{i=1...,p}$ are independent collections of independent Brownian motions in $\Omega_1\times\Omega_1$.
    \item $(\theta_{-1}^{i})_{i=1,...,p}$, $(\theta_{0}^{i})_{i=1,...,p}$, $(\theta_{1}^{i})_{i=1,...,p}$  are independent collections of independent geometric random variables with parameter $\gamma$. 
    \item $((\Tilde{\omega}_{B_{-1}^i},\Tilde{\omega}_{b_{-1}^{i}}))_{i=1,...,p}$, $((\Tilde{\omega}_{B_{0}^i},\Tilde{\omega}_{b_{0}^{i}}))_{i=1,...,p}$, $((\Tilde{\omega}_{B_{1}^i},\Tilde{\omega}_{b_{1}^{i}}))_{i=1,...,p}$ are collections of continuous paths built from the transition probability kernel $\frac{\hat{\boldsymbol{\pi}}-\gamma W_1\times W_1}{1-\gamma}$ with $(B_{-1}^i$, $b_{-1}^{i})$, $(B_{0}^i$, $b_{0}^{i})$, $(B_{1}^i$, $b_{1}^{i})$  as initial steps, respectively. These three collections of paths are independent and each of these paths is also independent from $(\theta^i_{-1},\theta^i_{0},\theta^i_{1})_{i=1,...p}$.    
   \end{itemize}
Furthermore, we observe that 
$$\mathbf{X}^{(\epsilon)}_{k_i+\delta}\overset{def}{=}\omega_{X_0}(T_{k_i+\delta+1})-\omega_{X_0}(T_{k_i+\delta})\overset{d}{=}\tilde{\omega}_{B^i_\delta}(\theta^i_\delta),$$
where $i=1,...,p$ and the last equality is in distribution. Similarly $\mathbf{Y}^{(\epsilon)}_{k_i+\delta}\overset{d}{=}\tilde{\omega}_{b^i_\delta}(\theta^i_\delta)$.
This proves that \eqref{eq5.9} is equal in distribution to:
\begin{equation}\label{eq5.10}
    \Zcal^{(\epsilon)}:=\biggr(\theta_0^i,\theta_{\delta_i}^i,(\Tilde{\omega}_{B^{i}_{\delta_i}}(s))_{s\leq\theta_{i+\delta_i}},(\Tilde{\omega}_{b_0^{i}}(u))_{u\leq\theta_{i}};\mathcal{Y}^{(\epsilon)}_i\biggl)_{i=1,...p},
\end{equation}
where
$$\mathcal{Y}^{(\epsilon)}_i=\sum_{j=1}^{i-1}\sum_{\delta\in\{-1,0,1\}}(\Tilde{\omega}_{B_{\delta_j}^{j}}(\theta_{\delta_j}^j)-\Tilde{\omega}_{b_{\delta_j}^{j}}(\theta_{\delta_j}^j))+\Tilde{\omega}_{B_{-1}^{i}}(\theta_{-1}^{i})-\Tilde{\omega}_{b_{-1}^{i}}(\theta_{-1}^{i}).$$
Furthermore $\Zcal^{(\epsilon)}$ is independent from $(\mathbf{S}_{k_i}-\bar{\mathbf{S}}_{k_i})_{i=1,...,p}$. This implies that \eqref{eq5.8} is equal to
\begin{align}\label{nondirfun}
    \sum_{\boldsymbol{\delta}\in\{-1,0,+1\}^p}\sum\limits_{\substack{1\leq k_1<k_2<...<k_p\leq [M(\epsilon)/\epsilon^{2}]-1,\\ k_{i+1}>k_i+3}}\E\biggl[\Gcal_{\epsilon,\boldsymbol{\delta}}\biggl(\omega_{X_0}(T_{k_1+\delta_1-1})-\omega_{Y_0}(T_{k_1-1}),...,\omega_{X_0}(T_{k_p+\delta_p-3(p-1)-1})-\nonumber\\-\omega_{Y_0}(T_{k_p-3(p-1)-1})\biggr)\biggr],
\end{align}
where
$$\Gcal_{\epsilon,\boldsymbol{\delta}}(z_1,...,z_p)=$$
\begin{equation}\label{functionwnoep}
\E_{\Zcal^{(\epsilon)}}\biggr[\prod_{i=1}^p\int_{0}^{\theta_0^{i}}\int_{0}^{\theta_{\delta_i}^{i}}R\biggl(s-u+\textbf{1}_{\delta_i=1}\theta_0^{i}-\textbf{1}_{\delta_i=-1}\theta_{-1}^{i},\textit{ }x+z_i+\mathcal{Y}^{(\epsilon)}_i+\Tilde{\omega}_{B_{\delta_i}^i}(s)-\Tilde{\omega}_{b_{0}^i}(u)\biggr)dsdu\biggl].
\end{equation}
We change the variables $k_{i}-3(i-1)-1\rightarrow\tilde{k}_i$. Then, \eqref{nondirfun} is equal to 
\begin{align*}
    \sum_{\mathbf{\delta}\in\{-1,0,+1\}^p}\sum\limits_{\substack{1\leq \tilde{k}_1<\tilde{k}_2<...<\tilde{k}_p\leq [M(\epsilon)/\epsilon^2]-3(p-1)-2}}\mathbb{E}\biggl[\mathcal{G}_{\epsilon,\boldsymbol \delta}\biggl(\omega_{X_0}(T_{\tilde{k}_1+\delta_1})-\omega_{Y_0}(T_{\tilde{k}_1}),...,\omega_{X_0}(T_{\tilde{k}_p+\delta_p})-\omega_{Y_0}(T_{\tilde{k}_p})\biggr)\biggr],
\end{align*}
We want to apply \textbf{Lemma \ref{lemm4.3}} to \eqref{nondirfun}, so we seek to prove that $(\Gcal_{\epsilon,\boldsymbol{\delta}})_{\epsilon>0}$ satisfies the assumptions of the lemma, namely that:
\begin{enumerate}[label=\textbf{L.\arabic*}]
    \item\label{Lass1} The $L^1$ norm of $\Gcal_{\epsilon,\boldsymbol{\delta}}$ does not depend on $\epsilon$.
    \item\label{Lass2} We have
          \begin{equation*}
              \lim_{M\rightarrow\infty}\sup_{\epsilon\in(0,1)}\int_{|\mathbf{x}|\geq M}\Gcal_{\epsilon,\boldsymbol{\delta}}(\mathbf{x})d\mathbf{x}=0.
          \end{equation*}
    \item\label{Lass3} For any $J\subseteq\{1,\dots,p\}$ we have\footnote{The function $G^J:\mathbb{R}^{2(p-|J|)}\rightarrow\mathbb{R}$ is defined as 
                      $$G^J((x_i)_{i\in\{1,...,p\}\backslash J})=\sup_{x_i\in\mathbb{R}^2,i\in J}G(x_1,...,x_p),$$
                       and we adopt the convention that when $J=\emptyset$, $G^J=G$.}  \[G^J_\epsilon(x)\leq \Hcal_\epsilon(x),\]
                       where the collection of functions $(\Hcal_\epsilon)_{\epsilon\in\N}$ satisfies \ref{Lass1} and \ref{Lass2}.
\end{enumerate}
Observe that 
$$||\Gcal_{\epsilon,\boldsymbol{\delta}}||_1=||\psi\star\psi||_1\E\biggr[\prod_{i=1}^p\int_{0}^{\theta_0^{i}}\int_{0}^{\theta_{\delta_i}^{i}}\phi\star\tilde{\phi}(s-u+\textbf{1}_{\delta_i=1}\theta_0^{i}-\textbf{1}_{\delta_i=-1}\theta_{-1}^{i})dsdu\biggl],$$
which is indeed an $\epsilon-$independent constant, so \ref{Lass1} is true. To prove item \ref{Lass2} we observe that, from \textbf{Proposition \ref{thm:prop4.2}} and \textbf{Corollary \ref{thm:cor4.1}}, 
    $$\mathbb{P}[|k^i_\epsilon|\geq M_0]\rightarrow0,$$
    as $M_0\rightarrow\infty$, uniformly in $\epsilon$, where
$$k_{\epsilon}^i=|\mathcal{Y}^{(\epsilon)}_i|+\sup_{s\leq\theta_0^{i}}|\Tilde{\omega}_{B_0^i}(s)|+\sup_{s\leq\theta_{\delta_i}^{i}}|\Tilde{\omega}_{b_{\delta_i}^i}(s)|,$$
for $i=1,\dots,p$. Arguing similarly as in  \textbf{Remark \ref{Remofapplicability}}, $(\Gcal_{\epsilon,\boldsymbol{\delta}})_{\epsilon>0}$ satisfies \ref{Lass2}. 
For \ref{Lass3}, we have $\Gcal_{\epsilon,\mathbf{\delta}}^J\leq\tilde{\Gcal}_{\epsilon,\mathbf{\delta}}$, for any $J\subset\{1,...,p\}$, where
$$\tilde{\Gcal}_{\epsilon,\mathbf{\delta}}((z_i)_{i\in\{1,..,p\}\backslash J})=$$
\begin{align*}
||R||_{\infty}^{|J|}\mathbb{E}_{\mathcal{Z}^{(\epsilon)}}\biggr[\prod_{i\in\{1,..,p\}\backslash J}\int_{0}^{\theta_0^{i}}\int_{0}^{\theta_{\delta_i}^{i}}R\biggl(s-u+\textbf{1}_{\delta_i=1}\theta_0^{i}-\textbf{1}_{\delta_i=-1}\theta_{-1}^{i},\textit{ }x+z_i+\mathcal{Y}^{(\epsilon)}_i+\Tilde{\omega}_{B_{\delta_i}^i}(s)-\Tilde{\omega}_{b_{0}^i}(u)\biggr)dsdu\cdot\\\cdot\prod_{i\in J}\theta_0^{i}\theta_{\delta_i}^{i}\biggl].
\end{align*}
By arguing similarly to above, we get 
$$\lim_{M\rightarrow\infty}\sup_{\epsilon\in(0,1)}\int_{|x|\geq M}\tilde{\Gcal}_{\epsilon,\mathbf{\delta}}(x)dx=0.$$
Therefore, \textbf{Lemma \ref{lemm4.3}} implies that \eqref{nondirfun} is asymptotic to:
\begin{equation}\label{asym}
    \biggr((\gamma/2\pi)\log \frac{1}{\epsilon}\biggl)^p\sum_{\boldsymbol{\delta}\in\{-1,0,+1\}^p}||\Gcal_{\epsilon,\boldsymbol{\delta}}||_1,
\end{equation}
as $\epsilon\rightarrow0$.\par
We need to show that this expression is equal to $(||R||_1\log \frac{1}{\epsilon}/2\pi)^p$. Since $R(t,x)=\phi\star\Tilde{\phi}(t)\psi\star\psi(x)$, it suffices to show that
$$\sum_{\boldsymbol{\delta}\in\{-1,0,+1\}^p}\E\biggr[\prod_{i=1}^p\int_{0}^{\theta_0^{i}}\int_{0}^{\theta_{\delta_i}^{i}}\phi\star\tilde{\phi}(s-u+\textbf{1}_{\delta_i=1}\theta_0^{i}-\textbf{1}_{\delta_i=-1}\theta_{-1}^{i})dsdu\biggl]=||\phi\star\tilde\phi||_1=||\phi||^2_1$$
By writing the above expectation explicitly, we have to calculate
$$\sum_{\boldsymbol{\delta}\in\{-1,0,+1\}^p}\sum_{\textbf{N}\in\mathbb{N}^{3p}}^{\infty}\prod_{i=1}^p\gamma^3(1-\gamma)^{N_{-1}^i+N_{0}^i+N_{1}^i-3}\prod_{i=1}^p\int_{0}^{N_0^{i}}\int_{0}^{N_{\delta_i}^i}\phi\star\Tilde{\phi}(s-u-\textbf{1}_{\delta_i=1}N_0^{i}+\textbf{1}_{\delta_i=-1}N_{-1}^i)dsdu.$$
We then define:
$$L_0(N):=N\cdot\int_0^1\int_0^1\phi(s-u)dsdu+2\cdot(N-1)\int_0^1\int_0^1\phi(s+u)dsdu,$$
$$L_-=L_+:=\int_0^1\int_0^{1}\phi(s+u)dsdu.$$
Since $\phi$ is symmetric we have, for $\delta_i=-1,1$
\[\int_{0}^{N_0^{i}}\int_{0}^{N_{\delta_i}^i}\phi(s-u-\textbf{1}_{\delta_i=1}N_0^{i}+\textbf{1}_{\delta_i=-1}N_{-1}^i)dsdu=\int_0^1\int_0^{1}\phi(s+u)dsdu.\]
Similarly, since $\phi$ is symmetric and supported in $[-1,1]$ we have:
\[\int_{0}^{N_0^{i}}\int_{0}^{N_{0}^i}\phi(s-u)dsdu=\sum_{k=0}^{N_0^i-1}\int_k^{k+1}\int_k^{k+1}\phi(s-u)dsdu+2\sum_{k=0}^{N_0^i-2}\int_k^{k+1}\int_{k+1}^{k+2}\phi(s-u)dsdu=\]
\[=N_0^i\cdot\int_0^1\int_0^1\phi(s-u)dsdu+2\cdot(N_0^i-1)\int_0^1\int_0^1\phi(s+u)dsdu.\]
Thus, the above expression can be rewritten as
\begin{equation}\label{wtf}
    \sum_{\{A_0,A_-,A_+\}\vdash\{1,...,p\}}\sum_{\textbf{N}\in\mathbb{N}^{3p}}^{\infty}\prod_{i=1}^p\gamma^3(1-\gamma)^{N_{-1}^i+N_{0}^i+N_{1}^i-3}\prod_{i\in A_0}L_0(N_0^{i})L_+^{|A_+|}L_-^{|A_-|},
\end{equation}
where $\{A_0,A_-,A_+\}\vdash\{1,...,p\}$ means that the sets $A_0,A_-,A_+$ partition $\{1,...,p\}$, and we sum over all such partitions. By summing over all $\textbf{N}$, \eqref{wtf} simplifies to 
$$\sum_{\{A_0,A_-,A_+\}\vdash\{1,...,p\}}L_{\gamma}^{|A_0|}L_+^{|A_+|}L_-^{|A_-|},$$
where
$$L_{\gamma}:=\frac{1}{\gamma}\biggl(\int_0^1\int_0^1\phi\star\Tilde{\phi}(s-u)dsdu+2\int_0^1\int_0^1\phi\star\Tilde{\phi}(s+u)dsdu\biggr)-2\int_0^1\int_0^1\phi\star\Tilde{\phi}(s+u)dsdu.$$
Therefore, \eqref{wtf} is equal to
$$\biggl(L_{\gamma}+L_++L_-\biggr)^p=(A(R)/\gamma)^p,$$
where
$$A(R):=\biggl(\int_0^1\int_0^1\phi\star\Tilde{\phi}(s-u)dsdu+2\int_0^1\int_0^1\phi\star\Tilde{\phi}(s+u)dsdu\biggr)||\psi||_1^2.$$
This implies that \eqref{eq5.8} is asymptotic to 
$$\biggr((\gamma/2\pi)\log \frac{1}{\epsilon}\biggl)^p(A(R)/\gamma)^p=\biggr(\frac{A(R)}{2\pi\log \frac{1}{\epsilon}}\biggl)^p.$$
We will show that $A(R)=||R||_1$. Since, $||R||_1=||\phi||_1^2||\psi||_1^2$ it is sufficient to prove that
$$\int_0^1\int_0^1\phi\star\Tilde{\phi}(s-u)dsdu+2\int_0^1\int_0^1\phi\star\Tilde{\phi}(s+u)dsdu=||\phi||_1^2.$$
By a change of variables $s\rightarrow s+u$, the first term on the left-hand side is equal to
$$\int_{0}^1\int_{-u}^{1-u}\phi\star\Tilde{\phi}(s)dsdu,$$
while the second term is equal to
$$\int_0^1\int_u^1\phi\star\Tilde{\phi}(s)dsdu+\int_0^1\int_{1-u}^1\phi\star\Tilde{\phi}(s)dsdu,$$
where we also used the fact that $\phi\star\tilde\phi(s)=0$ for $s\geq1$.
By adding these terms we get
$$\int_{0}^1\biggr(\int_{-u}^{1-u}\phi\star\Tilde{\phi}(s)dsdu+\int_u^1\phi\star\Tilde{\phi}(s)dsdu+\int_{1-u}^1\phi\star\Tilde{\phi}(s)ds\biggl)du.$$
In the second term inside the integral, we use  the fact that $\phi\star\Tilde{\phi}$ is symmetric, and we get that the above integral is equal to
$$\int_{0}^1\biggr(\int_{-u}^{1-u}\phi\star\Tilde{\phi}(s)dsdu+\int_{-1}^{-u}\phi\star\Tilde{\phi}(s)dsdu+\int_{1-u}^1\phi\star\Tilde{\phi}(s)ds\biggl)du=\int_{0}^1\int_{-1}^1\phi\star\Tilde{\phi}(s)dsdu=||\phi||_1^2,$$
and therefore $A(R)=||R||_1$. This concludes the proof.
\end{proof}

Now, we move on to prove the bound \eqref{eq:exponential_bound}:

\begin{myprop}\label{cor5.1}
    Let $X_0,Y_0,R,M(\epsilon)$ be as before. Then for every $\hat{\beta}<\hat{\beta}_c(R)$:
    \begin{equation}\label{eq:exp_unif_bound}
        \sup_{x\in\mathbb{R}^2}{\mathbb{E}}\biggr[\exp\biggr(\frac{\hat{\beta}^2}{\log \frac{1}{\epsilon}}\int_{[0,T_{[M(\epsilon)/\epsilon^{2}]}]^2}R(s-u,x+\omega_{X_0}(s)-\omega_{Y_0}(u))dsdu\biggl)\biggl]\lesssim1,
    \end{equation}
    for all $\epsilon$ small enough. Furthermore
   \begin{equation}\label{expunifbound}
       \limsup_{\epsilon\rightarrow0}\sup_{x\in\mathbb{R}^2}{\mathbb{E}}\biggr[\exp\biggr(\frac{\hat{\beta}^2}{\log \frac{1}{\epsilon}}\int_{[0,T_{[M(\epsilon)/\epsilon^{2}]}]^2}R(s-u,x+\omega_{X_0}(s)-\omega_{Y_0}(u))dsdu\biggl)\biggl]\leq\biggl(1-\frac{\hat{\beta}^2}{\hat{\beta}_c(R)^2}\biggr)^{-1}.
   \end{equation}
    \end{myprop}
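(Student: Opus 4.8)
The plan is to prove both inequalities through the moment method. Set
\[\mathcal{I}_\epsilon:=\int_{[0,T_{[M(\epsilon)/\epsilon^2]}]^2}R\bigl(s-u,\,x+\omega_{X_0}(s)-\omega_{Y_0}(u)\bigr)\,ds\,du\ \ (\geq 0),\]
expand $\mathbb{E}\bigl[\exp(\tfrac{\hat\beta^2}{\log\frac1\epsilon}\mathcal{I}_\epsilon)\bigr]=\sum_{p\geq0}\tfrac1{p!}\bigl(\tfrac{\hat\beta^2}{\log\frac1\epsilon}\bigr)^p\mathbb{E}[\mathcal{I}_\epsilon^p]$, and reduce everything to the uniform moment bound
\[\sup_{x\in\mathbb{R}^2}\mathbb{E}[\mathcal{I}_\epsilon^p]\leq p!\,\bigl(\hat\beta_c(R)^{-2}+\delta_\epsilon\bigr)^p\,\bigl(\log\tfrac1\epsilon\bigr)^p,\qquad p\geq 1,\]
with $\delta_\epsilon\to 0$ as $\epsilon\to 0$, $\delta_\epsilon$ not depending on $p$. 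Since $\hat\beta<\hat\beta_c(R)$, we have $\hat\beta^2(\hat\beta_c(R)^{-2}+\delta_\epsilon)<1$ for small $\epsilon$, and summing the geometric series gives $\sup_x\mathbb{E}\bigl[\exp(\tfrac{\hat\beta^2}{\log\frac1\epsilon}\mathcal{I}_\epsilon)\bigr]\leq(1-\hat\beta^2(\hat\beta_c(R)^{-2}+\delta_\epsilon))^{-1}$; this is $\lesssim 1$ for small $\epsilon$, which is \eqref{eq:exp_unif_bound}, and its $\limsup_{\epsilon\to0}$ is at most $(1-\hat\beta^2/\hat\beta_c(R)^2)^{-1}$, which is \eqref{expunifbound}.

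To prove the moment bound I would use the regenerative structure from \textbf{Section \ref{sec:4}}. Since $R(s,\cdot)=0$ for $|s|\geq 1$ and the cycles have length $\geq 1$, write $\mathcal{I}_\epsilon=\sum_k\mathcal{D}_k$, where $\mathcal{D}_k\geq 0$ is the contribution of the $k$-th regeneration block (essentially $d_{k,k-1}+d_{k,k}+d_{k,k+1}$ from \eqref{dkterms}); changing variables to cycle-local coordinates shows $\mathcal{D}_k$ is a deterministic function of the cycle triple $(\mathcal{C}_{k-1},\mathcal{C}_k,\mathcal{C}_{k+1})$ and of the displacement $\Delta_{k-1}:=\omega_{X_0}(T_{k-1})-\omega_{Y_0}(T_{k-1})$, the $x$-dependence entering only as a translate of $\Delta_{k-1}$. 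Thus $\mathcal{I}_\epsilon$ is (up to $O(1)$ boundary effects) an additive functional $\sum_j f_\epsilon(\mathcal{S}_j)$ of the Markov chain $\mathcal{S}_j:=(\mathcal{C}_j,\mathcal{C}_{j+1},\mathcal{C}_{j+2},\Delta_j)$, which is translation covariant in $\Delta_j$ and whose cycle lengths have sub-exponential tails uniformly in $\epsilon$ by \textbf{Lemma \ref{thm:cor4.1}}. A Kac/Khas'minskii-type moment inequality for additive functionals then bounds $\mathbb{E}[\mathcal{I}_\epsilon^p]$ by $p!\,\bigl(\sup_\nu\mathbb{E}_\nu[\mathcal{I}_\epsilon]\bigr)^p$ plus lower-order corrections, the supremum over initial laws $\nu$ of $\mathcal{S}$ subsuming the supremum over $x$. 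Finally $\sup_\nu\mathbb{E}_\nu[\mathcal{I}_\epsilon]$ equals, up to the $O(1)$ contribution of the first two blocks, $\sum_{\boldsymbol\delta\in\{-1,0,1\}}\sum_k\mathbb{E}[d_{k,k+\delta}]$; by the $p=1$ case of \textbf{Lemma \ref{techlemma}}, together with the uniform local--CLT bound $\sup_z\mathbb{P}_\nu[x+\omega_{X_0}(T_{k+\delta})-\omega_{Y_0}(T_k)\in z+[0,1]^2]\lesssim 1/k$ for the two-path displacement random walk (\textbf{Proposition \ref{prop4.4}}), which handles uniformity in $x,\nu$ and the boundary blocks, this is $\leq(\hat\beta_c(R)^{-2}+o_\epsilon(1))\log\frac1\epsilon$.

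The main obstacle is to carry out the Kac inequality \emph{with the sharp constant and uniformly in $p$}, which is delicate because $f_\epsilon(\mathcal{S})$ is unbounded: one only has $f_\epsilon(\mathcal{S})\leq C\|R\|_\infty|\mathcal{C}_k|$ with $|\mathcal{C}_k|$ the middle cycle length. Peeling the blocks one at a time yields, besides the leading $p!\,\bigl(\sup_\nu\mathbb{E}_\nu[\mathcal{I}_\epsilon]\bigr)^p$, ``diagonal'' corrections governed by $C^{(m)}_\epsilon:=\sup_\nu\mathbb{E}_\nu[\sum_k f_\epsilon(\mathcal{S}_k)^m]$, $m\geq 2$; each is still only $O(\log\frac1\epsilon)$ — combine the sub-exponential cycle-length tails of \textbf{Lemma \ref{thm:cor4.1}}, the fact that $f_\epsilon(\mathcal{S})$ vanishes unless the two paths come within $O(1)$ during the block (which costs a factor $\lesssim 1/k$ by the local CLT), and summation over $k\leq[M(\epsilon)/\epsilon^2]$ — but carries a non-sharp constant, so the sum over multiplicity patterns (and likewise the coincidence error $\mathcal{E}(p;\epsilon)$ in \eqref{eq5.6} and the near-diagonal configurations $k_{i+1}\leq k_i+3$ that \textbf{Lemma \ref{techlemma}} excludes) must be organized so that all these ``clumping'' contributions stay $o((\log\frac1\epsilon)^p)$ relative to $p!\,(\hat\beta_c(R)^{-2}\log\frac1\epsilon)^p$, \emph{simultaneously for every $p$}. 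I expect this uniform-in-$p$ bookkeeping — rather than the identification of the leading constant, which is exactly \textbf{Lemma \ref{techlemma}} — to be the heart of the argument.
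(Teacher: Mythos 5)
Your proposal follows essentially the same route as the paper: a sharp uniform-in-$x$ first-moment bound obtained from the regenerative structure and the local limit theorem of \textbf{Proposition \ref{prop4.4}} (the $p=1$ content of \textbf{Lemma \ref{techlemma}}), upgraded to the exponential-moment bound by a Khas'minskii-type iteration, with the geometric series giving exactly $(1-\hat\beta^2/\hat\beta_c(R)^2)^{-1}$ in the limit. The one place where you diverge, and where you correctly sense the difficulty, is the choice of blocks in the Khas'minskii step. You block over regeneration cycles, so each summand $f_\epsilon(\mathcal{S}_k)$ is unbounded (controlled only by products of geometric cycle lengths), and you are then forced into a delicate uniform-in-$p$ bookkeeping of the clumping corrections $C^{(m)}_\epsilon$. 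The paper's \textbf{Proposition \ref{kamsinskilemma}} instead blocks over the \emph{unit time steps} of the Markov chain on $\Omega_1\times\Omega_1$: each term $p_{k,k-1}+p_{k,k}+p_{k,k+1}$ is an integral over a bounded time region and hence deterministically at most $3\lVert R\rVert_\infty$, so each factor $D_k$ is bounded by $e^{3\lVert R\rVert_\infty\hat\beta/\log\frac{1}{\epsilon}}-1=O(1/\log\frac{1}{\epsilon})$. The near-diagonal configurations ($k_p\leq k_{p-1}+2$) then contribute only a factor $\alpha=2(e^{3\lVert R\rVert_\infty\hat\beta/\log\frac{1}{\epsilon}}-1)\to0$ per level, and the induction $A_p\leq(\eta+\alpha)^p$ closes trivially and uniformly in $p$; the regenerative structure and the $1/k$ local-CLT decay are used only inside the one-block estimate $\eta<1$ (conditioned on the chain's state, which subsumes your supremum over initial laws). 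So what you flag as the heart of the argument is real if one blocks by cycles, but disappears with the paper's choice of blocking; if you pursue your version, you would still need to verify that the constants in $C^{(m)}_\epsilon$ do not grow too fast in $m$ to spoil the sum over multiplicity patterns.
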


\begin{proof}
     To prove this, we will argue similarly as in \textbf{Lemma \ref{techlemma}} and prove a uniform in $x\in\R^2$ bound for the first moment of \eqref{eq:add_funct}. Then, we will use a Khas’minskii-type lemma for Markov chains (\textbf{Proposition \ref{kamsinskilemma}}) to 'upgrade' this bound to \eqref{eq:exp_unif_bound} and \eqref{expunifbound}.\par
     First, we write  
    \[\frac{1}{\log\frac{1}{\epsilon}}\E\biggl[\int_{[T_3,T_{[M(\epsilon)/\epsilon^{2}]}]^2}R(s-u,x+\omega_{X_0}(s)-\omega_{Y_0}(u))dsdu\biggr]=\sum_{k=3}^{[M(\epsilon)/\epsilon^2]-1}\E[d_{k,k-1}+d_{k,k}+d_{k,k+1}]\]
    with $d_{k,k+\delta}$ as in \eqref{dkterms}.  By following the same arguments as in \textbf{Lemma \ref{techlemma}}, there exists  a non-negative integrable function $\Gcal_\delta$, $\delta\in\{-1,0,1\}$, such that $\sum_{\delta\in\{-1,0,1\}}||\Gcal_{\delta}||_1=||R||_1/\gamma$ and such that  
    \begin{align}\label{exampletermagain}
        \frac{1}{\log\frac{1}{\epsilon}}\sum_{k=3}^{[M(\epsilon)/\epsilon^2]-1}\E[d_{k,k-1}+d_{k,k}+d_{k,k+1}]=\frac{1}{\log\frac{1}{\epsilon}}\sum_{k=2}^{[M(\epsilon)/\epsilon^2]-2}\E[\Gcal_{\epsilon,\delta}(\omega_{X_0}(T_{k+\delta})-\omega_{Y_0}(T_{k}))].
    \end{align}
    Similar arguments to the proof of \textbf{Lemma \ref{lemm4.3}} show that this sum is equal to 
    \[\frac{1}{\log\frac{1}{\epsilon}}\sum_{k=1}^{[M(\epsilon)/\epsilon^2]-3}\E\biggl[\sum_{\delta\in\{-1,0,1\}}\tilde{\Gcal}_{\delta}(x+\omega_{X_0}(T_{k})-\omega_{Y_0}(T_k))\biggr],\]
    for a new non-negative integrable function $\tilde{\Gcal}_\delta$, such that $\sum_{\delta\in\{-1,0,1\}}||\tilde{\Gcal}_{\delta}||_1=||R||_1/\gamma$. Now we use the local limit theorem \eqref{local}, as well as the bound $p_{ak}(x)\leq\frac{1}{2\pi a k}$, where $p_{s}(x)$ is the two-dimensional heat kernel. We get that \eqref{exampletermagain} is bounded above by
    \[||R||_1\sum_{k=1}^{[M(\epsilon)/\epsilon^2]-3}\frac{1}{4\pi  k}+\sum_{k=1}^{[M(\epsilon)/\epsilon^2]-3}\frac{\sup_{x\in\R^2}|\Rcal_{\epsilon,k}(x)|}{ k}.\]
    Therefore, for $\hat{\beta}^2<2\pi/||R||_1=\hat{\beta}_c(R)^2$, and for every $\eta<1$ such that $\frac{\hat{\beta}^2}{\hat{\beta}_c(R)^2}+\eta<1$ we have 
    \[\frac{\hat{\beta}}{\log\frac{1}{\epsilon}}\E\biggl[\int_{[T_3,T_{[M(\epsilon)/\epsilon^{2}]}]^2}R(s-u,x+\omega_{X_0}(s)-\omega_{Y_0}(u))dsdu\biggr]\leq\frac{\hat{\beta}^2}{\hat{\beta}_c(R)^2}+\eta,\]
    for all $x\in\R^2$, and $\epsilon$ small enough. Since $T_3$ is the sum of three independent geometric random variables we get that, for all  $\hat{\beta}^2<\hat{\beta}_c(R)^2$, and all $\eta'<1$ such that $\frac{\hat{\beta}^2}{\hat{\beta}_c(R)^2}+\eta'<1$, we have
    \begin{equation}\label{elemboundtouse1}
        \frac{\hat{\beta}}{\log\frac{1}{\epsilon}}\E\biggl[\int_{[0,T_{[M(\epsilon)/\epsilon^{2}]}]^2}R(s-u,x+\omega_{X_0}(s)-\omega_{Y_0}(u))dsdu\biggr]\leq\frac{\hat{\beta}^2}{\hat{\beta}_c(R)^2}+\eta',
    \end{equation}
     for all $x\in\R^2$, and $\epsilon$ small enough. This inequality shows that for all  $\hat{\beta}^2<2\pi/||R||_1=\hat{\beta}_c(R)^2$, and all $\eta''<1$ such that $\frac{\hat{\beta}^2}{\hat{\beta}_c(R)^2}+\eta''<1$, we have
    \begin{equation}\label{elemboundtouse2}
        \frac{\hat{\beta}}{\log\frac{1}{\epsilon}}\E\biggl[\int_{[0,T_{[M(\epsilon)/\epsilon^{2}]}]^2}R(s-u,x+\omega_{X_0}(s)-\omega_{Y_0}(u))dsdu\biggl|(X_0,Y_0)\biggr]\leq\frac{\hat{\beta}^2}{\hat{\beta}_c(R)^2}+\eta'',
    \end{equation}
     for all $x\in\R^2$, $(X_0,Y_0)\in\Omega_1^2$, and $\epsilon$ small enough. Indeed, from the regeneration property 
     \begin{align*}
        \E\biggl[\int_{[T_3,T_{[M(\epsilon)/\epsilon^2]}]^2}R(s-u,x+\omega_{X_0}(s)-\omega_{Y_0}(u))\biggl|(X_0,Y_0)\biggr]\leq\E\biggl[\E_{(\bar{\omega}_{X_{T_3}},\bar{\omega}_{Y_{T_3}})}\biggl[\int_{[0,T_{[M(\epsilon)/\epsilon^2]}]^2}R(s-u,x+\\+\omega_{Y_0}(T_3)-\omega_{X_0}(T_3)+\bar{\omega}_{X_{T_3}}(s)-\bar{\omega}_{Y_{T_3}}(u))dsdu\biggr]\biggl|(X_0,Y_0)\biggr],
     \end{align*}
     where $(\bar{\omega}_{X_{T_3}},\bar{\omega}_{Y_{T_3}})$ is a new path built from the Markov chain, with $({X_{T_3}},{Y_{T_3}})\sim W_1\times W_1$ as an initial condition and is independent from $(\omega_{X_0}(s),\omega_{Y_0}(s))_{s\leq T_3}$. With this inequality, and \eqref{elemboundtouse1}, we get
     \[
        \frac{\hat{\beta}}{\log\frac{1}{\epsilon}}\E\biggl[\int_{[T_3,T_{[M(\epsilon)/\epsilon^{2}]}]^2}R(s-u,x+\omega_{X_0}(s)-\omega_{Y_0}(u))dsdu\biggl|(X_0,Y_0)\biggr]\leq\frac{\hat{\beta}^2}{\hat{\beta}_c(R)^2}+\eta',
     \]
     Again, since $R$ is bounded and $T_3$ has a finite mean, this bound implies \eqref{elemboundtouse2}. Now we apply \textbf{Proposition \ref{kamsinskilemma}}. We observe that there is a constant $c$ such that  
     $$e^{\frac{\hat{\beta}}{\log\frac{1}{\epsilon}}(p_{k,k-1}(x)+p_{k,k}(x)+p_{k,k+1}(x))}-1\leq\frac{\hat{\beta}}{\log\frac{1}{\epsilon}} e^{\frac{c}{\log\frac{1}{\epsilon}}}(p_{k,k-1}(x)+p_{k,k}(x)+p_{k,k+1},(x)),$$
      where $p_{k,k+\delta}$, $\delta\in\{-1,0,1\}$ as in \eqref{pkterm}. From this inequality and \eqref{elemboundtouse2}, we get that \eqref{assumptioninprop} is satisfied for $\hat{\beta}<\hat{\beta}_c(R)$.   Therefore,  \textbf{Proposition \ref{kamsinskilemma}} yields that, for all $x\in\R^2$ and $\epsilon$ small enough,
     \[{\mathbb{E}}\biggr[\exp\biggr(\frac{\hat{\beta}^2}{\log \frac{1}{\epsilon}}\int_{[0,T_{[M(\epsilon)/\epsilon^{2}]}]^2}R(s-u,x+\omega_{X_0}(s)-\omega_{Y_0}(u))dsdu\biggl)\biggl]\leq\biggl(1-\frac{\hat{\beta}^2}{\hat{\beta}_c(R)^2}-\eta''+\alpha\biggr)^{-1},\]
     where $\alpha=2(\exp(3||R||_{\infty}\hat{\beta}/\log\frac{1}{\epsilon})-1)$. This proves the first claim. Now sending $\epsilon$ to $0$ concludes the proof since $\eta''$ is arbitrary and $\alpha\rightarrow0$.
\end{proof}

Finally,  \eqref{eq:mom_liminf} follows easily from \textbf{Proposition \ref{cor5.1}} and \textbf{Lemma \ref{techlemma}}. Indeed, we set
    \[\Mcal(n)=\frac{\hat{\beta}^2}{\log\frac{1}{\epsilon}}\int_{[0,T_{n}]^2}R(s-u,x+\omega_{X_0}(s)-\omega_{Y_0}(u))dsdu.\]
    We also define 
     $$N_{\epsilon}:=\max\{i\textit{/ }T_i<M(\epsilon)/\epsilon^2\},$$
     and
     $$A_{\epsilon,\delta}=\{|N_{\epsilon}/([M(\epsilon)/\epsilon^{2}])-\gamma|\geq\delta\}.$$
     Observe that $T_i-T_{i-1}\sim Geo(\gamma)$ and are independent, which implies  that $T_{[M_2(\epsilon)/\epsilon^{2}]}/[M(\epsilon)/\epsilon^{2}]\rightarrow1/\gamma$ a.s.. This implies that $N_{\epsilon}/(M(\epsilon)/\epsilon^{2})\rightarrow\gamma$ in probability.\par
     We have:
    $$\E\biggl[\biggl(\frac{\hat{\beta}^2}{\log\frac{1}{\epsilon}}\int_{[0,M(\epsilon)/\epsilon^2]^2}R(s-u,x+\omega_{X_0}(s)-\omega_{Y_0}(u))dsdu\biggr)^p\biggr]\geq\E[\mathcal{M}(N_{\epsilon})^p]\geq\E[\textbf{1}_{A_{\epsilon,\delta}^c}\mathcal{M}(N_{\epsilon})^p].$$
    On $A_{\epsilon,\delta}^c$ we have $N_{\epsilon}\geq(\gamma-\delta)(M(\epsilon)/\epsilon^{2})$, and therefore
    $$\E[\mathcal{M}(N_{\epsilon})^p]\geq\E[\textbf{1}_{A_{\epsilon,\delta}^c}\mathcal{M}((\gamma-\delta)M(\epsilon)/\epsilon^{2})^p]=\E[(\textbf{1}_{A_{\epsilon,\delta}^c}-1)\mathcal{M}((\gamma-\delta)M(\epsilon)/\epsilon^{2})^p]+\E[\mathcal{M}((\gamma-\delta)M(\epsilon)/\epsilon^{2})^p].$$
    The first term on the right-hand side is estimated above by
    $$\E[(\textbf{1}_{A_{\epsilon,\delta}^c}-1)^2]^{1/2}\textbf{ }\E[\mathcal{M}((\gamma-\delta)M(\epsilon)/\epsilon^{2})^{2p}]^{1/2},$$
    which goes to $0$ as $\epsilon\rightarrow0$, since $\P(A_{\epsilon,\delta}^c)\rightarrow1$ and $\E[\mathcal{M}((\gamma-\delta)M(\epsilon)/\epsilon^{2})^{2p}]\lesssim1$, from \textbf{Proposition \ref{cor5.1}}. On the other hand, we observe that $\Mcal([(\gamma-\delta)M(\epsilon)/\epsilon^2])^p$ is bounded below by
    \[p!\biggl(\frac{\hat{\beta}}{\log\frac{1}{\epsilon}}\biggr)^p\sum_{\mathbf{\delta}\in\{-1,0,+1\}^p}\sum\limits_{\substack{3\leq k_1<k_2<...<k_p\leq [(\gamma-\delta)M(\epsilon)/\epsilon^2],\\ k_{i+1}>k_i+3}}\mathbb{E}\biggl[\prod_{i=1}^p{d}_{k_i,k_i+\delta_i}\biggr],\]
    where $d_{k,k+\delta}$ are as in \eqref{dkterms}. Now, \textbf{Lemma \ref{techlemma}} shows that this converges to $p!(\hat{\beta}^2/\hat{\beta}_c(R)^2)^p$. These two observations show \eqref{eq:mom_liminf}.

\begin{remark}\label{sanitycheck}
    It is not hard to show that \textbf{Theorem \ref{thm:5.1}} implies that
    $$\mathbf{E}[u_{\epsilon}(t,x)^2]\rightarrow\biggr(1-\frac{\hat{\beta}^2}{\hat{\beta}_c(R)^2}\biggl)^{-1},$$
    as $\epsilon\rightarrow0$.
\end{remark}

\section{Proof of Proposition \ref{thm:prop2.4}}\label{sec:6}

In this section, we will prove \textbf{Proposition \ref{thm:prop2.4}}. Recall the formula for $\Fcal_{\epsilon}(r,y,M_1(\epsilon),M_2(\epsilon))$:
    \begin{equation}\label{eq6.1}
        \Fcal_{\epsilon}(r,y,M_1(\epsilon),M_2(\epsilon))=\int_{\mathbb{R}^{4}}\int_{[0,1]^2}\Hat{\E}_{B,t/\epsilon^2}[I_{\epsilon}e^{J_{\epsilon}(M_1(\epsilon),M_2(\epsilon))}]\prod_{i=1}^2\phi(s_i)\psi(x_i)d\Bar{s}d\Bar{x},
    \end{equation}
    where
\begin{equation}\label{eqforI}
   I_{\epsilon}(x_1,x_2,y,s_1,s_2,r)=\prod_{i=1}^2g(\epsilon x_i-\epsilon B^i_{(t-r)/\epsilon^2-s_i}+y)
\end{equation}
    and

    \begin{align}\label{6.2}
    J_{\epsilon}(M_1(\epsilon),M_2(\epsilon))=\frac{\hat{\beta}^2}{{\log\frac{1}{\epsilon}}}\int_{-1}^{M_1(\epsilon)}\int_{-1}^{M_2(\epsilon)}R_{\phi}(u_1,u_2)\psi\star\psi(x_1-x_2+\Delta B^1_{(t-r)/\epsilon^2-s_1,(t-r)/\epsilon^2+u_1}-\nonumber \\
    -\Delta B^2_{(t-r)/\epsilon^2-s_2,(t-r)/\epsilon^2+u_2})du_1du_2,
\end{align}
    {with $R_{\phi}$ as in \eqref{Rphi}. Finally, recall that $M_1(\epsilon),M_2(\epsilon)$ satisfy the following assumptions:
   \begin{enumerate}[label=\textbf{A.\arabic*}]
       \item\label{Ass1} $0\leq M_1(\epsilon),M_2(\epsilon)\leq r$.
       \item\label{Ass2} For $i=1,2$, $\log M_i(\epsilon)/\log\frac{1}{\epsilon}\rightarrow0$, as $\epsilon\rightarrow0$.
       \item\label{Ass3} Either $M_1(\epsilon)=M_2(\epsilon)$ or for all $\epsilon$ small enough $M_1(\epsilon)-M_2(\epsilon)\geq c>0$, for some $c$.
   \end{enumerate}
    First, we argue that it is sufficient to prove \textbf{Proposition \ref{thm:prop2.4}} in the case $M_1(\epsilon)=M_2(\epsilon)$. Indeed, recall from \textbf{Remark \ref{Nikosremark}} that if $M_1(\epsilon)-M_2(\epsilon)\geq c>0$, then for all $\epsilon$ small enough
    $$\Fcal_{\epsilon}(r,y,M_1(\epsilon)/\epsilon^2,M_2(\epsilon)/\epsilon^2)=\Fcal_{\epsilon}(r,y,M_2(\epsilon)/\epsilon^2+1,M_2(\epsilon)/\epsilon^2).$$
    From \eqref{6.2},  we get that 
    \begin{equation}\label{errordecomp} J_\epsilon(M_2(\epsilon)/\epsilon^2+1,M_2(\epsilon)/\epsilon^2)=\mathcal{R}_{\epsilon}+J_\epsilon(M_2(\epsilon)/\epsilon^2,M_2(\epsilon)/\epsilon^2),
    \end{equation}
    with 
    \begin{align}\label{anothererrorterm}
        \mathcal{R}_{\epsilon}:=\frac{\hat{\beta}^2}{{\log\frac{1}{\epsilon}}}\int_{M_2(\epsilon)/\epsilon^2}^{M_2(\epsilon)/\epsilon^2+1}\int_{-1}^{M_2(\epsilon)/\epsilon^2}R_{\phi}(u_1,u_2)\psi\star\psi(x_1-x_2+\Delta B^1_{(t-r)/\epsilon^2-s_1,(t-r)/\epsilon^2+u_1}-\nonumber \\
    -\Delta B^2_{(t-r)/\epsilon^2-s_2,(t-r)/\epsilon^2+u_2})du_1du_2.
    \end{align}
    Since $\phi$ is supported on $[0,1]$,  $R_\phi(u_1,u_2)=0$ when $|u_1-u_2|\geq1$. Therefore, $\mathcal{R}_{\epsilon}$ is equal to
    \begin{align*}
        \frac{\hat{\beta}^2}{{\log\frac{1}{\epsilon}}}\int_{M_2(\epsilon)/\epsilon^2}^{M_2(\epsilon)/\epsilon^2+1}\int_{M_2(\epsilon)/\epsilon^2-1}^{M_2(\epsilon)/\epsilon^2}R_{\phi}(u_1,u_2)\psi\star\psi(x_1-x_2+\Delta B^1_{(t-r)/\epsilon^2-s_1,(t-r)/\epsilon^2+u_1}-\nonumber \\
    -\Delta B^2_{(t-r)/\epsilon^2-s_2,(t-r)/\epsilon^2+u_2})du_1du_2.
    \end{align*}
    Since $R_\phi$ and $\psi\star\psi$ are bounded, we conclude that this term (and therefore $\mathcal{R}_\epsilon$) converges to $0$ as $\epsilon\rightarrow0$, uniformly over the paths $B^1, B^2$ and over $r,y,s,x_1,x_2$. Now, assume for the moment that \textbf{Proposition \ref{thm:prop2.4}} is true for $\Fcal_{\epsilon}(r,y,M_2(\epsilon)/\epsilon^2,M_2(\epsilon)/\epsilon^2)$. Then, from our observations about $\mathcal{R}_{\epsilon}$,  \eqref{eq6.1}, \eqref{errordecomp} and from Hölder's inequality 
    $$\Fcal_{\epsilon}(r,y,M_1(\epsilon)/\epsilon^2,M_2(\epsilon)/\epsilon^2)-\Fcal_{\epsilon}(r,y,M_2(\epsilon)/\epsilon^2,M_2(\epsilon)/\epsilon^2)\rightarrow0,$$
    as $\epsilon\rightarrow0$ and 
    $$\Fcal_{\epsilon}(r,y,M_1(\epsilon)/\epsilon^2,M_2(\epsilon)/\epsilon^2)\lesssim\Fcal_{\epsilon}(r,y,M_2(\epsilon)/\epsilon^2,M_2(\epsilon)/\epsilon^2),$$
    for all $\epsilon$. Hence, \textbf{Proposition \ref{thm:prop2.4}} is true for $\Fcal_{\epsilon}(r,y,M_1(\epsilon)/\epsilon^2,M_2(\epsilon)/\epsilon^2)$ as well.}\par
    {Therefore, we will focus on 
    $$\Fcal_{\epsilon}(r,y,M(\epsilon)/\epsilon^2,M(\epsilon)/\epsilon^2),$$
    where $M(\epsilon)$ satisfies the assumptions \ref{Ass1} and \ref{Ass2} described above. For this case, we prove \textbf{Proposition \ref{thm:prop2.4}}  in \textbf{Lemma \ref{lem:bound}} and \textbf{Lemma \ref{lem:limit}}.} First, we will calculate the limit of $\Hat{\E}_{B,t/\epsilon^2}[I_{\epsilon}\exp(J_{\epsilon})]$. Our main strategy will be to try to decouple $I_{\epsilon}$ and $J_{\epsilon}$. This means that, asymptotically, we should have
    $$\Hat{\E}_{B,t/\epsilon^2}\biggl[I_{\epsilon}\exp(J_{\epsilon})\biggr]\approx\Hat{\E}_{B,t/\epsilon^2}[I_{\epsilon}]\Hat{\E}_{B,t/\epsilon^2}[\exp(J_{\epsilon})].$$
    Then the expectation
    $$\Hat{\E}_{B,t/\epsilon^2}[\exp(J_{\epsilon})]$$
    should converge to a constant, giving us the effective variance and
    $$\Hat{\E}_{B,t/\epsilon^2}[I_{\epsilon}]$$
    should  converge to $p_{t-r}\star g(y)^2$, where $p_t$ is the two-dimensional heat kernel. This heuristic is made formal in \textbf{Lemma \ref{lem:prep1}} and \textbf{Lemma \ref{lem:prep2}}.\par

    \begin{mylem}\label{lem:prep1}
        For all $\hat{\beta}<\hat{\beta}_c(R)$  we have that
        $$\Hat{\E}_{B,t/\epsilon^2}[I_{\epsilon}]\rightarrow p_{t-r}\star g(y)^2,$$
        and
        $$\Hat{\E}_{B,t/\epsilon^2}[\exp(J_{\epsilon}(M(\epsilon)/\epsilon^2,M(\epsilon)/\epsilon^2))]\rightarrow\biggl(1-\frac{\hat{\beta}^2}{\hat{\beta}_c(R)^2}\biggr)^{-1},$$
         as $\epsilon\rightarrow0$.
    \end{mylem}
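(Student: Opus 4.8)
The plan is to establish the two convergences separately. For the first, fix $r\in(0,t)$ and put $T:=(t-r)/\epsilon^{2}$. Since $I_{\epsilon}$ factorises as (a function of $B^{1}$)$\times$(a function of $B^{2}$) and $\hat{\E}_{B,t/\epsilon^{2}}$ is the expectation under $\hat{\P}^{(\epsilon)}_{t/\epsilon^{2}}\times\hat{\P}^{(\epsilon)}_{t/\epsilon^{2}}$, we have
\[
\hat{\E}_{B,t/\epsilon^{2}}[I_{\epsilon}]=\prod_{i=1}^{2}\hat{\E}_{B,t/\epsilon^{2}}\bigl[g(\epsilon x_i-\epsilon B(T-s_i)+y)\bigr],
\]
and by the one-path form of the Markov-chain representation of Section \ref{sec:3}, together with the fact that the edge factor tends to $1$ uniformly (Remark \ref{Remedge}), each factor equals $\E\bigl[g(\epsilon x_i-\epsilon\,\omega_{X_0}(T-s_i)+y)\bigr]+o(1)$, where $\omega_{X_0}$ is the glued Markov-chain path. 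It therefore suffices to prove the one-time-marginal central limit theorem $\epsilon\,\omega_{X_0}(T)\Rightarrow\mathcal{N}(0,(t-r)I_{2})$ as $\epsilon\to0$: the shifts $\epsilon x_i\to0$ and $\epsilon(\omega_{X_0}(T)-\omega_{X_0}(T-s_i))\to0$ in probability are harmless against the bounded, uniformly continuous $g$, and $\E[g(y-\sqrt{t-r}\,Z)]=(p_{t-r}\star g)(y)$ since $p_{t-r}$ is symmetric; taking the product gives $p_{t-r}\star g(y)^{2}$. This CLT comes from the regenerative decomposition of Section \ref{sec:4}: $\omega_{X_0}(T_k)=\sum_{j<k}\mathbf{X}^{(\epsilon)}_{j}$ is a mean-zero random walk whose i.i.d.\ increments have covariance converging to $\gamma^{-1}I_{2}$ (conditionally on a cycle of length $\theta$ the limiting Brownian increment has covariance $\theta I_{2}$, and $\E[\theta]=\gamma^{-1}$), the number of regenerations before time $T$ is asymptotic to $\gamma T$, and the overshoot of the last incomplete cycle is tight uniformly in $\epsilon$ by the exponential tails of Lemma \ref{thm:cor4.1}; rescaling by $\epsilon$ with $T=(t-r)/\epsilon^{2}$ produces the variance $t-r$ (an Anscombe-type argument handles the random number of summands). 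All these estimates are uniform in $(x_1,x_2,s_1,s_2)$ over the compact ranges appearing in \eqref{Fdefinition}, which supplies the domination needed downstream.

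For the second limit I would again use \eqref{Markovcon} to reduce to $\E[\exp(J_{\epsilon})\,\Gcal_{\epsilon}(X_N,Y_N)]$; here $J_{\epsilon}\geq0$ because $R\geq0$, so once $\sup_{\epsilon}\E[\exp J_{\epsilon}]<\infty$ is known (from Proposition \ref{cor5.1}, below) the factor $\Gcal_{\epsilon}$ contributes only a multiplicative $1+o(1)$. The key algebraic step is to recognise $J_{\epsilon}$ as the additive functional of Theorem \ref{thm:5.1}. Setting $\eta^{i}(v):=B^{i}(T+v)-B^{i}(T)$, the terms $B^{i}(T)$ cancel inside the argument of $\psi\star\psi$ in \eqref{6.2}, and by symmetry of $\psi\star\psi$ one gets
\[
J_{\epsilon}=\frac{\hat{\beta}^{2}}{\log\frac{1}{\epsilon}}\int_{[-1,\,M(\epsilon)/\epsilon^{2}]^{2}}R_{\phi}(u_1,u_2)\,\psi\star\psi\bigl(\mathsf{X}+\eta^{1}(u_1)-\eta^{2}(u_2)\bigr)\,du_1\,du_2,
\]
with $\mathsf{X}:=x_2-x_1-\eta^{1}(-s_1)+\eta^{2}(-s_2)$. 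Since $R_{\phi}(u_1,u_2)=0$ for $|u_1-u_2|\geq1$ and $R_{\phi},\psi\star\psi$ are bounded, replacing both lower limits $-1$ by $0$ (and likewise discarding the at-most-length-$1$ initial partial segment of each $\eta^{i}$) alters $J_{\epsilon}$ by a quantity of modulus $\lesssim(\log\frac{1}{\epsilon})^{-1}\to0$ uniformly over the paths and over $(x_1,x_2,s_1,s_2,r)$; and on $[0,\infty)^{2}$ one has $R_{\phi}(u_1,u_2)=\phi\star\tilde{\phi}(u_1-u_2)$, whence $R_{\phi}(u_1,u_2)\,\psi\star\psi(z)=R(u_1-u_2,z)$. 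Thus, up to a uniform $o(1)$,
\[
J_{\epsilon}=\frac{\hat{\beta}^{2}}{\log\frac{1}{\epsilon}}\int_{[0,\,M(\epsilon)/\epsilon^{2}]^{2}}R\bigl(u_1-u_2,\ \mathsf{X}+\eta^{1}(u_1)-\eta^{2}(u_2)\bigr)\,du_1\,du_2.
\]

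To conclude, condition on the $\sigma$-algebra $\mathcal{F}_{\leq T}$ generated by $(B^{1},B^{2})$ restricted to $[0,T]$: then $\mathsf{X}$ is a fixed vector of $\R^{2}$ (a.s.\ finite, with tails controlled by Lemma \ref{thm:cor4.1}), while $(\eta^{1},\eta^{2})|_{[0,\infty)}$ is a trajectory of $\boldsymbol{\hat{\pi}}^{(\epsilon)}$ with a fixed initial segment; re-randomising that first segment to a Brownian one via Lemma \ref{thm:lem4.1} and Remark \ref{Mixingremark} (multiplicative cost $\mathcal{A}_{\epsilon},\mathcal{B}_{\epsilon}\to1$) places us precisely in the setting of Theorem \ref{thm:5.1}, whose conclusion is uniform in the spatial argument $\mathsf{X}\in\R^{2}$. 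Hence $\E[\exp J_{\epsilon}\mid\mathcal{F}_{\leq T}]\to(1-\hat{\beta}^{2}/\hat{\beta}_c(R)^{2})^{-1}$, and since Proposition \ref{cor5.1} provides $\sup_{x\in\R^{2}}\E[\exp(\cdots)]\lesssim1$ uniformly in $\epsilon$, dominated convergence yields the unconditional limit; the same bound furnishes the a priori estimate $\sup_{\epsilon}\E[\exp J_{\epsilon}]<\infty$ used above. I expect the main obstacle to be exactly this transfer: getting Theorem \ref{thm:5.1} and the uniform estimate of Proposition \ref{cor5.1} to apply to the conditional law of the forward increments of $B^{1},B^{2}$ from the (generically non-integer) time $T$ — a genuine $\boldsymbol{\hat{\pi}}^{(\epsilon)}$-trajectory only after dropping an initial partial segment and re-randomising its first step — while absorbing the random but a.s.\ bounded shift $\mathsf{X}$ into the ``$x$'' of Theorem \ref{thm:5.1} and verifying that every error term ($\Gcal_{\epsilon}-1$, the $[-1,0]$ strips, the partial-segment truncation, the mixing constants) is $o(1)$ uniformly enough for the outer integral of \eqref{Fdefinition}.
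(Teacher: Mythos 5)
Your proposal is correct and follows essentially the same route as the paper: the Markov-chain representation with the edge factor tending to $1$, the mixing lemma to re-randomise the segments around time $(t-r)/\epsilon^2$, the regenerative CLT for $\epsilon\,\omega_{X_0}(T)$ for the first limit, and the reduction of $J_\epsilon$ (after discarding the $[-1,0]$ strips) to the additive functional of \textbf{Theorem \ref{thm:5.1}} with the backward Brownian increments absorbed into the spatial argument, dominated via \textbf{Proposition \ref{cor5.1}}. The only differences are presentational — you factorise $\hat{\E}_{B,t/\epsilon^2}[I_\epsilon]$ as a product over the two independent paths and phrase the second step as conditioning on $\mathcal{F}_{\leq T}$, whereas the paper establishes the independence of backward and forward increments explicitly and integrates over the Gaussian law of the backward part — but the substance is identical.
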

   
    \begin{proof}
      We calculate the limit of $\Hat{\E}_{B,t/\epsilon^2}[I_{\epsilon}]$ first.  Observe that $I_{\epsilon}$ is a functional of two independent paths 
      distributed according to the exponentially tilted measure (\ref{expmeasure}). We will emphasize this dependence by writing it as $I_{\epsilon}(B^1, B^2)$. We 
      replace the paths that appear in (\ref{eqforI}) with paths built from the Markov chain constructed in \textbf{Section \ref{sec:3}}. More specifically, set 
      $$T=t/\epsilon^2\textit{, } \tau=(t-r)/\epsilon^2-[(t-r)/\epsilon^2] \textit{ and } N_{\epsilon}=[t/\epsilon^2-\tau].$$ 
      We sample independently  two random vectors
      $(\tilde{X}_0^1,..., \tilde{X}^1_{N_{\epsilon}+1})$, $(\tilde{X}_0^2,..., \tilde{X}^2_{N_{\epsilon}+1})$ as described in \textbf{Section \ref{sec:3}},  with  $\tilde{X}_0^1, \tilde{X}_0^2\in\Omega_{\tau}$
      and $\tilde{X}^1_{N_{\epsilon}+1},\tilde{X}^2_{N_{\epsilon}+1}\in\Omega_{t/\epsilon^2-\tau-N_{\epsilon}}$. Then from \eqref{Markovcon}, we have that:
      $$\hat{\E}_{B,t/\epsilon^2}[I_{\epsilon}(B^1,B^2)]=\E\biggl[\Gcal_{\epsilon}(X^1_{N_{\epsilon}},X^2_{N_{\epsilon}})I_{\epsilon}
      ([\tilde{X}_0^1,...,\tilde{X}^1_{N_{\epsilon}+1}],[\tilde{X}_0^2,...,\tilde{X}^2_{N_{\epsilon}+1}])\biggr],$$
      with $\Gcal_{\epsilon}$ as in \eqref{eqedge}. From \textbf{Remark \ref{Remedge}} $\Gcal_{\epsilon}$ 
      converges to $1$, uniformly as $\epsilon\rightarrow0$. Therefore, it is sufficient to calculate:
     $$\E\biggl[I_{\epsilon}([\tilde{X}_0^1,...,\tilde{X}^1_{N_{\epsilon}+1}],[\tilde{X}_0^2,...,\tilde{X}^2_{N_{\epsilon}+1}])\biggr],$$
      as $\epsilon\rightarrow0$.\par
      We see that the term in the above expectation is a functional of $(\tilde{X}_0^1,..., \tilde{X}^1_{N_{\epsilon}+1})$ and $(\tilde{X}_0^2,..., \tilde{X}^2_{N_{\epsilon}+1})$. {We will apply \textbf{Lemma \ref{thm:lem4.1}} and replace the steps of these two trajectories of the Markov chain at $[(t-r)/\epsilon^2]$ and $[(t-r)/\epsilon^2]+1$ by independent Brownian motions that are also independent of the previous steps. More specifically, consider the sequences $({X}_0^1,...,{X}^1_{N_{\epsilon}+1})$ and $({X}_0^2,...,{X}^2_{N_{\epsilon}+1})$, which are generated in  the same way as $(\tilde{X}_0^1,..., \tilde{X}^1_{N_{\epsilon}+1})$ and $(\tilde{X}_0^2,..., \tilde{X}^2_{N_{\epsilon}+1})$ respectively, but at the steps $k=[(t-r)/\epsilon^2],[(t-r)/\epsilon^2]+1$ we sample
      $({X}^i_{k},{X}^i_{k})\sim W_1\times W_1$, $i=1,2$ independently from the previous steps (also $({X}^i_{k},{X}^i_{k}),
      ({X}^j_{k},{X}^j_{k})$ are independent for $i\neq j$). Let $\boldsymbol{\upomega}_{X_0^1}=[{X}_0^1,..., {X}^1_{N_{\epsilon}+1}]$
      and $\boldsymbol{\upomega}_{X_0^2}=[{X}_0^2,..., {X}^2_{N_{\epsilon}+1}]$ be the corresponding paths built according to \eqref{eq3.1}.} Then, from
      \textbf{Lemma \ref{thm:lem4.1}} and \textbf{Remark \ref{Mixingremark}}, there are constants $\mathcal{A}_{\epsilon},\mathcal{B}_{\epsilon}\rightarrow1$, as $\epsilon\rightarrow0$, such that
      $$\mathcal{A}_{\epsilon}\E\biggl[I_{\epsilon}(\boldsymbol{\upomega}_{X_0^1},\boldsymbol{\upomega}_{X_0^2})\biggr]\leq\E\biggl[I_{\epsilon}([\tilde{X}_0^1,...,\tilde{X}^1_{N_{\epsilon}+1}],[\tilde{X}_0^2,...,\tilde{X}^2_{N_{\epsilon}+1}])\biggr]\leq\mathcal{B}_{\epsilon}\E\biggl[I_{\epsilon}(\boldsymbol{\upomega}_{X_0^1},\boldsymbol{\upomega}_{X_0^2})\biggr].$$
      Therefore, it is sufficient to calculate the limit of:
      $$\E\biggl[I_{\epsilon}(\boldsymbol{\upomega}_{X_0^1},\boldsymbol{\upomega}_{X_0^2})\biggr].$$
      By adding and subtracting the term $\epsilon\boldsymbol{\upomega}_{X_0^i}([(t-r)/\epsilon^2]+\tau-1)$ in \eqref{eqforI} we have:
      $$I_{\epsilon}(\boldsymbol{\upomega}_{X_0^1},\boldsymbol{\upomega}_{X_0^2})=$$
      $$\prod_{i=1}^2g(\epsilon x_i-\epsilon\boldsymbol{\upomega}_{X_0^i}([(t-r)/\epsilon^2]+\tau-1)-\epsilon (\boldsymbol{\upomega}_{X_0^i}({[(t-r)/\epsilon^2]+\tau-s_i})-\boldsymbol{\upomega}_{X_0^i}([(t-r)/\epsilon^2]+\tau-1))+y_i).$$
    {Recall that $(\omega_X,\omega_Y)$ denotes a pair of paths built from $\boldsymbol{\hat{\pi}}^{(\epsilon)}=\hat{\pi}^{(\epsilon)}\times\hat{\pi}^{(\epsilon)}$, with $(X,Y)\sim W_1\times W_1$ as an initial step. For $s_i\in[0,1]$, from \eqref{eq3.1} and the fact that $(X^i_{[(t-r)/\epsilon^2]})_{i=1,2}$ are independent Brownian motions in $\Omega_1$, independent from the previous steps of the Markov chain, we get that}
    $$(\boldsymbol{\upomega}_{X_0^i}({[(t-r)/\epsilon^2]+\tau-s_i})-\boldsymbol{\upomega}_{X_0^i}([(t-r)/\epsilon^2]+\tau-1))_{i=1,2}=$$
    $$(\boldsymbol{\upomega}_{X_0^i}({[(t-r)/\epsilon^2]+\tau-1+1-s_i})-\boldsymbol{\upomega}_{X_0^i}([(t-r)/\epsilon^2]+\tau-1))_{i=1,2}\overset{d}{=}(\omega_{X^i_{[(t-r)/\epsilon^2]}}({1-s_i}))_{i=1,2},$$
    where the last equality is in distribution. The right hand side for $s_i\in[0,1]$ is equal to $(X^i_{[(t-r)/\epsilon^2]}(1-s_i))_{i=1,2}$  which are  independent from $\boldsymbol{{\omega}}_{X_0^i}({[(t-r)/\epsilon^2]+\tau-1})$, $i=1,2$. Since $(X^1_{[(t-r)/\epsilon^2]},X^2_{[(t-r)/\epsilon^2]})\sim W_1\times W_1$, $I_{\epsilon}(\boldsymbol{\upomega}_{X_0^1},\boldsymbol{\upomega}_{X_0^2})$ is equal in distribution  to:
    $$\prod_{i=1}^2g(\epsilon x_i-\epsilon\boldsymbol{\upomega}_{X^i_0}([(t-r)/\epsilon^2]+\tau-1)-\epsilon \tilde{B}^i_{(1-s_i)}+y_i)$$
    where $(\tilde{B}^i)_{i=1,2}$ are independent Brownian motions, independent from $(\boldsymbol{\upomega}_{X_0^i}({[(t-r)/\epsilon^2]+\tau-1}))_{i=1,2}$. By $\textbf{Lemma \ref{lemm6.1}}$,  proved below, and \textbf{Remark \ref{nikoswillcomplain}}, 
    $$(\epsilon\boldsymbol{\upomega}_{X_0^1}({[(t-r)/\epsilon^2]+\tau-1}),\epsilon\boldsymbol{\upomega}_{X_0^2}({[(t-r)/\epsilon^2]+\tau-1}))$$
    converges in distribution, as $\epsilon\rightarrow0$, to a centered Gaussian random variable with covariance $(t-r)I_{4\times4}$. Therefore 
    $$\E[I_{\epsilon}(\boldsymbol{\upomega}_{X_0^1},\boldsymbol{\upomega}_{X_0^2})]\rightarrow p_{t-r}\star g(y)^2,$$
    as $\epsilon\rightarrow0$ and therefore $\hat{\E}_{B,t/\epsilon^2}[I_{\epsilon}(B^1,B^2)]\rightarrow p_{t-r}\star g(y)^2,$ as $\epsilon\rightarrow0$.\par
    Now we look at $\hat{\E}_{B,t/\epsilon^2}[\exp(J_{\epsilon})]$.  Let $A_\epsilon=[-1,M(\epsilon)/\epsilon^2]^2\symbol{92}[0,[M(\epsilon)/\epsilon^2]-1]^2$. We write
    $$\hat{\E}_{B,t/\epsilon^2}[\exp(J_{\epsilon}(M(\epsilon)/\epsilon^2,M(\epsilon)/\epsilon^2)]=\hat{\E}_{B,t/\epsilon^2}[\exp(\Tilde{J}_{\epsilon}(B^1,B^2)+\mathcal{J}_{\epsilon}(B^1,B^2))]$$
    with
     \begin{align}\label{eq6.3}
    \Tilde{J}_{\epsilon}(B^1,B^2)=\frac{\hat{\beta}^2}{{\log\frac{1}{\epsilon}}}\int_{[0,[M(\epsilon)/\epsilon^2]-1]^2}R(u_1-u_2,x_1-x_2+ \Delta B^1_{(t-r)/\epsilon^1-s_1,(t-r)/\epsilon^2+u_1}-\nonumber \\ -\Delta B^2_{(t-r)/\epsilon^2-s_2,(t-r)/\epsilon^2+u_2})du_1du_2
\end{align}
and
\begin{align}\label{therestadditiveterm}
    \mathcal{J}_{\epsilon}(B^1,B^2)=\frac{\hat{\beta}^2}{{\log\frac{1}{\epsilon}}}\int_{A_\epsilon}R_\phi(u_1,u_2)\psi\star\psi(x_1-x_2+ \Delta B^1_{(t-r)/\epsilon^1-s_1,(t-r)/\epsilon^2+u_1}-\nonumber \\ \Delta B^2_{(t-r)/\epsilon^2-s_2,(t-r)/\epsilon^2+u_2})du_1du_2,
\end{align}
   where we also used the fact that $R_\phi(u_1,u_2)=\phi\star\tilde{\phi}(u_1-u_2)$ when $u_1,u_2\geq0$. By arguing similarly as we did for $\mathcal{R}_\epsilon$,
   defined in \eqref{anothererrorterm}, we can see that $\mathcal{J}_{\epsilon}(B^1,B^2)\rightarrow0$, as $\epsilon\rightarrow0$, uniformly over $B^1,B^2$. Therefore 
   to calculate $\hat{\E}_{B,t/\epsilon^2}[\exp(J_{\epsilon}(M(\epsilon)/\epsilon^2,M(\epsilon)/\epsilon^2)]$ as $\epsilon\rightarrow0$, it is sufficient to
   calculate 
   $$\hat{\E}_{B,t/\epsilon^2}[\exp(\Tilde{J}_{\epsilon}(B^1,B^2)],$$
   as $\epsilon\rightarrow0$. Using \textbf{Lemma \ref{thm:lem4.1}} and arguing similarly as we did in the calculation of $\Hat{\E}_{B,t/\epsilon^2}[I_{\epsilon}]$,  we see that it is enough to calculate the limit of:
   \begin{equation}\label{changeofpaths}
       {\E}[\exp(\Tilde{J}_{\epsilon}(\boldsymbol{\upomega}_{X_0^1},\boldsymbol{\upomega}_{X_0^2}))].
   \end{equation}
   By adding and subtracting $\boldsymbol{\upomega}_{X_0^1}({[(t-r)/\epsilon^2]+\tau})$ and $\boldsymbol{\upomega}_{X_0^2}({[(t-r)/\epsilon^2]+\tau})$ in  \eqref{eq6.3}, the paths in the spatial term inside the integral can be written as
    \begin{equation}\label{eq6.4}
        \sum_{i=1}^2 \biggl(\boldsymbol{\upomega}_{X_0^i}({(t-r)/\epsilon^2+u_1})-\boldsymbol{\upomega}_{X_0^i}({[(t-r)/\epsilon^2]+\tau})\biggr)+\biggr(\boldsymbol{\upomega}_{X_0^i}({[(t-r)/\epsilon^2]+\tau})-\boldsymbol{\upomega}_{X_0^i}({(t-r)/\epsilon^2-s_i})\biggl)
    \end{equation}
   {We deal with the two terms in the above sum, separately. Observe that for $i=1$ and for $i=2$ the corresponding summands are independent, by construction. We prove that
    \begin{enumerate}[label=\textbf{I.\arabic*}]
        \item\label{item1} For $i=1,2$ the term $(\boldsymbol{\upomega}_{X_0^i}({[(t-r)/\epsilon^2]+\tau})-\boldsymbol{\upomega}_{X_0^i}({(t-r)/\epsilon^2-s_i}))_{s_i\in[0,1]}$ is equal in distribution to a Brownian motion $(\tilde{B}^i_{s_i})_{s_i\in[0,1]}$.
        \item\label{item2} For $i=1,2$ the term $(\boldsymbol{\upomega}_{X_0^i}({(t-r)/\epsilon^2+u_1})-\boldsymbol{\upomega}_{X_0^i}({[(t-r)/\epsilon^2]+\tau}))_{u_i\in[0,[M(\epsilon)/\epsilon^2]-1]}$ is equal in distribution to $(\omega_{X^i_{[(t-r)/\epsilon^2]+1}}({u_i}))_{u_i\in[0,[M(\epsilon)/\epsilon^2]-1]}$. 
        \item\label{item3} The terms $$(\boldsymbol{\upomega}_{X_0^i}({[(t-r)/\epsilon^2]+\tau})-\boldsymbol{\upomega}_{X_0^i}({(t-r)/\epsilon^2-s_i}))_{s_i\in[0,1]}$$
        and 
        $$(\boldsymbol{\upomega}_{X_0^i}({(t-r)/\epsilon^2+u_1})-\boldsymbol{\upomega}_{X_0^i}({[(t-r)/\epsilon^2]+\tau}))_{u_i\in[0,[M(\epsilon)/\epsilon^2]-1]}$$ are independent. 
    \end{enumerate}
    Observe that these three items, along with \textbf{Theorem \ref{thm:5.1}} and \textbf{Proposition \ref{cor5.1}}, give us the desired limit for \eqref{changeofpaths}. Indeed,  $\Tilde{J}_{\epsilon}(\boldsymbol{\upomega}_{X_0^1},\boldsymbol{\upomega}_{X_0^2})$ is equal in distribution to
    $$\frac{\hat{\beta}^2}{{\log\frac{1}{\epsilon}}}\int_{[0,[M(\epsilon)/\epsilon^2]-1]^2}R\biggr(u_1-u_2, x_1-x_2+\sum_{i=1}^2 \tilde{B}^i_{s_i}+\omega_{X^1_{[(t-r)/\epsilon^2]+1}}({u_1})-\omega_{X^2_{[(t-r)/\epsilon^2]+1}}({u_2})\biggr)du_1du_2,$$
    where $(\tilde{B}^1_{s_1})_{s_1\in[0,1]}, (\tilde{B}^2_{s_2})_{s_1\in[0,1]}$ are two independent Brownian motions that are also independent from 
    $$(\omega_{X^1_{[(t-r)/\epsilon^2]+1}}({u_1}),\omega_{X^2_{[(t-r)/\epsilon^2]+1}}({u_2}))_{u_1,u_2\in[0,[M(\epsilon)/\epsilon^2]-1]}.$$
    Also, recall that $(X^1_{[(t-r)/\epsilon^2]+1},X^2_{[(t-r)/\epsilon^2]+1})\sim W_1\times W_1$. Now, if $b^1,b^2\in\Omega_1$ are two (generic) Brownian motions, this equality in distribution proves that
   \begin{align}\label{nikoswillcomplain}
         \nonumber\E\biggl[\exp(\tilde{J}_{\epsilon}(\boldsymbol{\upomega}_{X_0^1},\boldsymbol{\upomega}_{X_0^2}))\biggr]=\int_{\mathbb{R}^2}\int_{\mathbb{R}^2}\E\biggl[\exp\biggl(\frac{\hat{\beta}^2}{{\log\frac{1}{\epsilon}}}\int_{[0,[M(\epsilon)/\epsilon^2]-1]^2}R\biggr(u_1-u_2, 
         x_1-x_2+y_1+y_2+\\
         +\omega_{b^1}({u_1})-\omega_{b^2}({u_2})\biggl)du_1du_2\biggr)\biggr]p_{s_1}(y_1)p_{s_2}(y_2)dy_1dy_2.
     \end{align}
    Now we apply  \textbf{Theorem \ref{thm:5.1}} and \textbf{Proposition \ref{cor5.1}} and  get that:
     $$\E\biggl[\exp(\tilde{J}_{\epsilon}(\boldsymbol{\upomega}_{X_0^1},\boldsymbol{\upomega}_{X_0^2}))\biggr]\rightarrow\biggl(1-\frac{\hat{\beta}^2}{\hat{\beta}_c(R)^2}\biggr)^{-1},$$
    as $\epsilon\rightarrow0$. Finally, from our previous remarks, $\Hat{\E}_{B,t/\epsilon^2}[\exp(J_{\epsilon}(M(\epsilon)/\epsilon^2,M(\epsilon)/\epsilon^2))]$ has the same limit.}\par
    It is therefore sufficient to prove items \ref{item1}-\ref{item3}. To prove the item \ref{item1} we add and subtract $\boldsymbol{\upomega}_{X_0^i}([(t-r)/\epsilon^2]+\tau-1)$. We have that
    $$\boldsymbol{\upomega}_{X_0^i}({[(t-r)/\epsilon^2]+\tau})-\boldsymbol{\upomega}_{X_0^i}({[(t-r)/\epsilon^2]+\tau-s_i})=$$
    $$\biggr(\boldsymbol{\upomega}_{X_0^i}({[(t-r)/\epsilon^2]+\tau})-\boldsymbol{\upomega}_{X_0^i}([(t-r)/\epsilon^2]+\tau-1)\biggl)-\biggr(\boldsymbol{\upomega}_{X_0^i}({[(t-r)/\epsilon^2]+\tau-s_i})-\boldsymbol{\upomega}_{X_0^i}([(t-r)/\epsilon^2]+\tau-1)\biggl).$$ 
    Recall that $X^i_{[(t-r)/\epsilon^2]}$ is sampled from the Wiener measure $W_1$, independently from the previous steps. From this observation and \eqref{eq3.1}, the right-hand side, for $s_i\in[0,1]$, is equal in distribution to:
    \begin{equation}\label{eq6.5}
        \omega_{X^i_{[(t-r)/\epsilon^2]}}(1)-\omega_{X^i_{[(t-r)/\epsilon^2]}}(1-s_i).
    \end{equation}
    Now for $s_i\in[0,1]$ this is equal to $X^i_{[(t-r)/\epsilon^2]}(1)-{X^i_{[(t-r)/\epsilon^2]}}(1-s_i)$ which is distributed like a Brownian motion at $1-(1-s_i)=s_i$ independent from ${X^i_{[(t-r)/\epsilon^2]}}(1-s_i)$. This proves item \ref{item1}. \par
   {Now we focus on 
    \begin{equation}\label{mainpathcontr}
        (\boldsymbol{\upomega}_{X_0^i}({[(t-r)/\epsilon^2]+\tau+u_i})-\boldsymbol{\upomega}_{X_0^i}({[(t-r)/\epsilon^2]+\tau}))_{u_i\in[0,[M(\epsilon)/\epsilon^2]-1]}.
    \end{equation}
    Since $X^i_{[(t-r)/\epsilon^2]+1}$ is sampled from $W_1$, independently from the previous steps,  \eqref{mainpathcontr} is independent from $(\boldsymbol{\upomega}_{X_0^i}({[(t-r)/\epsilon^2]+\tau})-\boldsymbol{\upomega}_{X_0^i}({[(t-r)/\epsilon^2]+\tau-s_i}))_{s_i\in[0,1]}$. This proves item \ref{item3}.  Furthermore, since $M(\epsilon)\leq r$ we have
    $$\frac{t-r}{\epsilon^2}+[M(\epsilon)/\epsilon^2]-1\leq\tau+N_{\epsilon},$$
    This means that for $u_i\in[0,[M(\epsilon)/\epsilon^2]-1]$, the step $X_{N_{\epsilon}+1}^i$ (which is sampled using $\hat{\pi}^\epsilon_{N_{\epsilon},N_{\epsilon}+1}$, defined in \eqref{finaledgetran}) does not appear in the path 
    $$\boldsymbol{\upomega}_{X_0^i}({[(t-r)/\epsilon^2]+\tau+u_i})-\boldsymbol{\upomega}_{X_0^i}({[(t-r)/\epsilon^2]+\tau}))_{u_i\in[0,[M(\epsilon)/\epsilon^2]-1]}.$$
    Using \eqref{eq3.1}, this implies that \eqref{mainpathcontr} is equal in distribution to $(\omega_{X^i_{[(t-r)/\epsilon^2]+1}}({u_i}))_{u_i\in[0,[M(\epsilon)/\epsilon^2]-1]}$ (which is built using only $\hat{\pi}^{(\epsilon)}$). This proves item \ref{item2} and concludes the proof.}      
    \end{proof}

    The lemma that we used in the calculation of the limit of $\Hat{\E}_{B,t/\epsilon^2}[I_{\epsilon}]$ is the following:

\begin{mylem}\label{lemm6.1}
For $t>0$ and for $(X_0,Y_0)\sim W_1\times  W_1$ and $({\omega}_{X_0},\omega_{Y_0})$ built from $\boldsymbol{\hat{\pi}}^{(\epsilon)}$ we have that
$$\biggr(\epsilon\omega_{X_0}([t/\epsilon^2]),\epsilon\omega_{X_0}([t/\epsilon^2])\biggl)$$
converges in distribution to $(\mathcal{N}_1,\mathcal{N}_2)$ where $\mathcal{N}_i\in\mathbb{R}^2$ are two independent centered Gaussian random variables with covariance matrix $tI_{2\times2}$.
\end{mylem}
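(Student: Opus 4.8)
The plan is to reduce the statement to a central limit theorem for the random walks built from the total path increments $(\mathbf{X}^{(\epsilon)}_k,\mathbf{Y}^{(\epsilon)}_k)_{k\ge 0}$ of the regenerative process $(\omega_{X_0},\omega_{Y_0})$ of \textbf{Section \ref{sec:4}}, evaluated at a suitable random index, and then to identify the limiting covariance using \textbf{Proposition \ref{thm:prop4.1}}. Recall that $\gamma$ is the Doeblin constant, $(T_j)$ the regeneration times, $T_{j+1}-T_j$ are i.i.d.\ $Geo(\gamma)$, and by \eqref{Randomwalks} $\omega_{X_0}(T_j)=\sum_{k=0}^{j-1}\mathbf{X}^{(\epsilon)}_k$ and $\omega_{Y_0}(T_j)=\sum_{k=0}^{j-1}\mathbf{Y}^{(\epsilon)}_k$. \textbf{Step 1 (reduce to a regeneration time).} Set $N_\epsilon:=\max\{j:T_j\le[t/\epsilon^2]\}$; by the renewal law of large numbers $N_\epsilon/(t\epsilon^{-2})\to\gamma$ in probability, so $\P[N_\epsilon\le 2\gamma t\epsilon^{-2}]\to 1$. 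The discrepancy $\omega_{X_0}([t/\epsilon^2])-\omega_{X_0}(T_{N_\epsilon})$ is bounded in modulus by the oscillation of $\omega_{X_0}$ over the single cycle straddling $[t/\epsilon^2]$; since the cycles are i.i.d.\ and each cycle oscillation has an exponential tail uniform in $\epsilon$ by \eqref{anotherfestimate} of \textbf{Lemma \ref{thm:cor4.1}}, a union bound over the (at most $2\gamma t\epsilon^{-2}$) cycles shows that $\epsilon$ times this oscillation tends to $0$ in probability, and likewise for $\omega_{Y_0}$. Hence it suffices to prove $(\epsilon\,\omega_{X_0}(T_{N_\epsilon}),\epsilon\,\omega_{Y_0}(T_{N_\epsilon}))\Rightarrow(\mathcal N_1,\mathcal N_2)$.

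\textbf{Step 2 (de-randomize the index and set up the CLT).} Put $n_\epsilon:=\lfloor\gamma t\epsilon^{-2}\rfloor$. The pairs $(\mathbf{X}^{(\epsilon)}_k,\mathbf{Y}^{(\epsilon)}_k)$ are i.i.d., and by \textbf{Lemma \ref{thm:cor4.1}} (using \textbf{Lemma \ref{thm:lem4.1}} to transfer the tail bound to the $W_1\times W_1$ initial step) they satisfy $\sup_\epsilon\E|\mathbf{X}^{(\epsilon)}_0|^q<\infty$ for every $q$. Doob's maximal inequality then bounds the second moment of the maximal fluctuation of the walk between time $n_\epsilon$ and any $m$ with $|m-n_\epsilon|\le\delta\epsilon^{-2}$ by $\lesssim\delta\epsilon^{-2}$, so on the event $\{|N_\epsilon-n_\epsilon|\le\delta\epsilon^{-2}\}$ (probability $\to 1$) the quantity $\epsilon\,|\omega_{X_0}(T_{N_\epsilon})-\omega_{X_0}(T_{n_\epsilon})|$ has second moment $\lesssim\delta$; sending $\epsilon\to 0$ and then $\delta\downarrow 0$ it is enough to treat the deterministic index $n_\epsilon$. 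By independence, the joint characteristic function of $(\epsilon\sum_{k<n_\epsilon}\mathbf{X}^{(\epsilon)}_k,\epsilon\sum_{k<n_\epsilon}\mathbf{Y}^{(\epsilon)}_k)$ at $(\xi,\eta)$ equals $(\E[\exp(i\epsilon\langle\xi,\mathbf{X}^{(\epsilon)}_0\rangle+i\epsilon\langle\eta,\mathbf{Y}^{(\epsilon)}_0\rangle)])^{n_\epsilon}$; the pair $(\mathbf{X}^{(\epsilon)}_0,\mathbf{Y}^{(\epsilon)}_0)$ is symmetric (the spatial weight $\psi\star\psi$ is even), hence centered, and the uniform moments make the third-order Taylor remainder $O(\epsilon^3)$ uniformly, so the characteristic function equals $\exp(-\tfrac12\epsilon^2 n_\epsilon Q_\epsilon(\xi,\eta)+o(1))$, where $Q_\epsilon$ is the quadratic form of $\mathrm{Cov}(\mathbf{X}^{(\epsilon)}_0,\mathbf{Y}^{(\epsilon)}_0)$.

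\textbf{Step 3 (identify the covariance).} Since $\epsilon^2 n_\epsilon\to\gamma t$, the claim reduces to showing $\mathrm{Cov}(\mathbf{X}^{(\epsilon)}_0,\mathbf{Y}^{(\epsilon)}_0)\to\gamma^{-1}\mathrm{diag}(I_{2\times2},I_{2\times2})$. Here I would invoke the corollary following \textbf{Lemma \ref{thm:lem4.1}} (inequality \eqref{eq4.11} with $p=1$, applied to bounded test functions and extended by truncation using the uniform tails): it gives $(\mathbf{X}^{(\epsilon)}_0,\mathbf{Y}^{(\epsilon)}_0)\Rightarrow(\sum_{j=1}^\theta B^1_j(1),\sum_{j=1}^\theta B^2_j(1))$ with $\theta\sim Geo(\gamma)$ and $(B^1_j),(B^2_j)$ two independent collections of i.i.d.\ standard Brownian segments independent of $\theta$; combined with the uniform exponential moment bounds of \textbf{Lemma \ref{thm:cor4.1}} this upgrades to convergence of the covariance matrices. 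The limiting matrix is block diagonal, because conditionally on $\theta$ the two Brownian sums are independent and centered, so the off-diagonal block vanishes, while each diagonal block equals $\E[\theta]\,I_{2\times2}=\gamma^{-1}I_{2\times2}$. Therefore $\epsilon^2 n_\epsilon Q_\epsilon(\xi,\eta)\to t(|\xi|^2+|\eta|^2)$ and the characteristic function tends to $\exp(-\tfrac t2|\xi|^2)\exp(-\tfrac t2|\eta|^2)$, which is that of a pair of independent centered Gaussians with covariance $tI_{2\times2}$ each; this proves the lemma (the second coordinate in the statement being $\epsilon\,\omega_{Y_0}([t/\epsilon^2])$).

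The main difficulty is Step 3: one must show that a cycle increment, after rescaling, behaves exactly like a Brownian increment run over a $Geo(\gamma)$ number of unit intervals, with the \emph{isotropic} covariance $\gamma^{-1}I_{2\times2}$ and no residual effective diffusivity — for fixed $\epsilon$ the one-step kernel $\hat\pi^{(\epsilon)}$ is a genuine tilt of the Wiener measure and the cycle covariance need not be a multiple of the identity. This is exactly where the critical tuning $\beta_\epsilon=\hat\beta/\sqrt{\log\tfrac1\epsilon}$ is used: it is what forces $\hat\pi^{(\epsilon)}\to W_1$ in total variation uniformly (\textbf{Proposition \ref{thm:prop4.1}}), so the tilt disappears in the limit. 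The remaining ingredients — passing from $[t/\epsilon^2]$ to $T_{N_\epsilon}$, de-randomizing $N_\epsilon$, and the triangular-array Taylor estimate — all rest on the uniform-in-$\epsilon$ exponential tail bounds of \textbf{Lemma \ref{thm:cor4.1}}.
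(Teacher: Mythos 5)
Your proof is correct and follows essentially the same route as the paper's: reduce to the random walk of total path increments at the regeneration times, control the within-cycle discrepancy via the uniform exponential tail bounds of \textbf{Lemma \ref{thm:cor4.1}}, handle the random index by the renewal law of large numbers, and run a triangular-array characteristic-function CLT whose covariance is identified through \eqref{eq4.11} and \textbf{Proposition \ref{thm:prop4.2}}. The only (harmless) difference is that the paper first invokes the exact independence of $\omega_{X_0}$ and $\omega_{Y_0}$ to reduce to a single coordinate and cites the appendix propositions, whereas you treat the pair jointly and spell out the Anscombe-type de-randomization explicitly.
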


\begin{proof}
    Since $({\omega}_{X_0},\omega_{Y_0})$ is a pair of independent paths each built from $\hat{\pi}^{(\epsilon)}$ it is enough to prove this for $\omega_{X_0}$. Define:
    $$N_t^{\epsilon}=\max\{i/\textbf{ }T_i<[t/\epsilon^2]\}-1.$$
       From similar arguments as in $\textbf{Proposition \ref{thm:prop4.2}}$ and \textbf{\ref{prop4.4}}, the random variable 
    $$\epsilon^{}\sum_{k=0}^{[t/\epsilon^2]}\mathbf{X}_k^{(\epsilon)}$$
    converges in distribution, as $\epsilon\rightarrow0$, to a centered Gaussian random variable with covariance matrix $\frac{t}{\gamma}I_{2\times2}$. Furthermore, since $T_n/n\rightarrow1/\gamma$ a.s as $n\rightarrow\infty$ we see that:
    \begin{equation}\label{eq6.6}
        \epsilon^{}\sum_{k=0}^{N_t^{\epsilon}}\mathbf{X}_k^{(\epsilon)}
    \end{equation}
    converges in distribution to a centered Gaussian random variable with covariance matrix $tI_{2\times2}$. Finally,  from \textbf{Corollary \ref{thm:cor4.1}} it is easy to see that:
    \begin{equation}\label{eq6.7}
        \epsilon|\omega_{X_0}([t/\epsilon^2])-\sum_{k=0}^{N_t^{\epsilon}}\mathbf{X}_k^{(\epsilon)}|\lesssim\sup_{j=1,...,N_{t}^\epsilon}\sup_{s\in[T_{j-1},T_{j}]}\epsilon|\omega_{X_0}(s)-\omega_{X_0}(T_{j})|\rightarrow0,
    \end{equation}
    a.s. as $\epsilon\rightarrow0$. This completes the proof. 
\end{proof}

\begin{remark}
     Observe that this implies that $\epsilon\boldsymbol{\upomega}_{X_0}([t/\epsilon^2])$ (where $X_0\sim W_{\tau}$, $\tau<1$) converges in distribution to a centered Gaussian random variable with covariance $tI_{2\times2}$. Indeed,  observe that $\epsilon\boldsymbol{\upomega}_{X_0}(N_t^{\epsilon})$  is equal to $(\ref{eq6.6})$ where only $\mathbf{X}_0^{(\epsilon)}$ has a different distribution from $\mathbf{X}_k^{(\epsilon)}$, $k\geq1$. Then by following the same arguments as in the proof above, we see that $(\ref{eq6.7})$ is still valid for $\boldsymbol{\upomega}_{X_0}$. So it converges to a Gaussian random variable with covariance matrix $tI_{2\times2}$. Finally if $(\boldsymbol{\upomega}_{X_0},\boldsymbol{\upomega}_{Y_0})$ are as in the proof of \textbf{Lemma \ref{lem:prep1}} they are independent and therefore $(\epsilon\boldsymbol{\upomega}_{X_0}([t/\epsilon^2]),\epsilon\boldsymbol{\upomega}_{X_0}([t/\epsilon^2]))$ converges in distribution to  $(\mathcal{N}_1,\mathcal{N}_2)$.\par
\end{remark}

     Now we show that ${I}_{\epsilon}$ and $\exp({J}_{\epsilon})$ decouple asymptotically. This is done in the proof of the following lemma:

     \begin{mylem}\label{lem:prep2}
         For all $\hat{\beta}<\hat{\beta}_c(R)$  we have that
         $$\Hat{\E}_{B,t/\epsilon^2}\biggl[I_{\epsilon}\exp(J_{\epsilon})\biggr]\rightarrow\biggl(1-\frac{\hat{\beta}^2}{\hat{\beta}_c(R)^2}\biggr)^{-1}p_{t-r}\star g(y)^2,$$
         as $\epsilon\rightarrow0$.
     \end{mylem}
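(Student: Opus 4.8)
The plan is to combine the two limits from Lemma \ref{lem:prep1} with a decoupling argument that shows $I_\epsilon$ and $\exp(J_\epsilon)$ become asymptotically independent under $\hat{\E}_{B,t/\epsilon^2}$. The natural starting point is to repeat the construction in the proof of Lemma \ref{lem:prep1}: pass from the tilted measure to the Markov chain via \eqref{Markovcon}, absorb the edge term $\Gcal_\epsilon$ (which tends to $1$ uniformly by Remark \ref{Remedge}), and use Lemma \ref{thm:lem4.1} to replace the Markov-chain steps at indices $[(t-r)/\epsilon^2]$ and $[(t-r)/\epsilon^2]+1$ by fresh Brownian segments $(X^i_{[(t-r)/\epsilon^2]},X^i_{[(t-r)/\epsilon^2]+1})\sim W_1\times W_1$, independent of everything before. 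After this replacement both $I_\epsilon$ and $\tilde J_\epsilon$ (the main part of $J_\epsilon$, with the negligible boundary piece $\mathcal{J}_\epsilon\to 0$ uniformly stripped off as in Lemma \ref{lem:prep1}) depend on the path $\boldsymbol{\upomega}_{X_0^i}$ only through pieces that, by items \ref{item1}--\ref{item3} of that proof, split as: (a) the "macroscopic position" $\epsilon\,\boldsymbol{\upomega}_{X_0^i}([(t-r)/\epsilon^2]+\tau-1)$, which enters only $I_\epsilon$ and converges to a Gaussian with covariance $(t-r)I$; (b) a short Brownian bridge-type increment over a unit time interval (the $s_i$ variable), entering $I_\epsilon$ through $\tilde B^i_{1-s_i}$ and $\tilde J_\epsilon$ through $\tilde B^i_{s_i}$; and (c) the long forward increment $(\omega_{X^i_{[(t-r)/\epsilon^2]+1}}(u_i))_{u_i}$ that drives $\tilde J_\epsilon$ through Theorem \ref{thm:5.1}.

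The key structural point is that piece (a) is \emph{independent} of pieces (b) and (c) (it is measurable with respect to the Markov chain before step $[(t-r)/\epsilon^2]$, whereas (b), (c) come from the freshly resampled Wiener segments), and moreover (a) only appears in $I_\epsilon$. So, conditioning on $X^i_{[(t-r)/\epsilon^2]}$ and on the prefix of the chain, we may first integrate out (a): by the local CLT input (Lemma \ref{lemm6.1} and the Remark after it), for each fixed value of the short increment $(\tilde B^i_{1-s_i})_i$ the conditional expectation of $\prod_i g(\epsilon x_i - \epsilon \boldsymbol{\upomega}_{X_0^i}(\cdots) - \epsilon\tilde B^i_{1-s_i} + y_i)$ converges to $p_{t-r}\star g(y)^2$, and this convergence is uniform enough in the short increment (since $g$ is bounded and continuous with the conclusion only needing $\epsilon\tilde B^i_{1-s_i}\to 0$ a.s.) that, after integrating against $\prod_i \phi(s_i)\psi(x_i)\,d\bar s\,d\bar x$ and against the law of the short increments, the $I_\epsilon$ factor effectively decouples and contributes the limit $p_{t-r}\star g(y)^2$. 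What remains is exactly $\hat{\E}_{B,t/\epsilon^2}[\exp(\tilde J_\epsilon)]$, which by the computation in Lemma \ref{lem:prep1} (formula \eqref{nikoswillcomplain}, Theorem \ref{thm:5.1} and Proposition \ref{cor5.1}) converges to $\bigl(1-\hat\beta^2/\hat\beta_c(R)^2\bigr)^{-1}$.

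Concretely I would organize the argument as: (i) reduce, exactly as at the start of Lemma \ref{lem:prep1}, to computing $\E[I_\epsilon(\boldsymbol{\upomega}_{X_0^1},\boldsymbol{\upomega}_{X_0^2})\,\exp(\tilde J_\epsilon(\boldsymbol{\upomega}_{X_0^1},\boldsymbol{\upomega}_{X_0^2}))]$ (dropping $\Gcal_\epsilon$ and $\mathcal{J}_\epsilon$, which are $1+o(1)$ and $o(1)$ uniformly); (ii) write $I_\epsilon = p_{t-r}\star g(y)^2 + (I_\epsilon - p_{t-r}\star g(y)^2)$, so the target equals $p_{t-r}\star g(y)^2\,\E[\exp(\tilde J_\epsilon)] + \E[(I_\epsilon - p_{t-r}\star g(y)^2)\exp(\tilde J_\epsilon)]$; (iii) show the first term converges to the claimed value by Lemma \ref{lem:prep1}; (iv) show the error term vanishes. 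For (iv) the cleanest route is Cauchy--Schwarz: bound it by $\E[(I_\epsilon - p_{t-r}\star g(y)^2)^2]^{1/2}\,\E[\exp(2\tilde J_\epsilon)]^{1/2}$; the second factor is bounded uniformly in $\epsilon$ for $\hat\beta<\hat\beta_c(R)$ (apply Proposition \ref{cor5.1} with $\sqrt 2\hat\beta$, legitimate since we may first shrink to $\sqrt2\hat\beta<\hat\beta_c(R)$, or more robustly use the hypercontractivity bound of Lemma \ref{lemm7.1}-type to control all $L^p$ norms), while the first factor tends to $0$ because, by the analysis of pieces (a)--(b), $I_\epsilon$ converges in $L^2$ to the constant $p_{t-r}\star g(y)^2$ (the randomness in $I_\epsilon$ is only the macroscopic Gaussian position plus a vanishing $O(\epsilon)$ increment, so $\E[I_\epsilon^2]\to (p_{t-r}\star g(y)^2)^2$ as well by the same CLT applied to a pair of independent copies).

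The main obstacle is step (iv), specifically verifying that $\E[(I_\epsilon-p_{t-r}\star g(y)^2)\exp(\tilde J_\epsilon)]\to 0$ \emph{without} losing the optimal range of $\hat\beta$. The subtlety is that $I_\epsilon$ and $\tilde J_\epsilon$ are not literally independent — they share the short Brownian increment over the unit interval straddling $[(t-r)/\epsilon^2]$ — so one cannot factor the expectation exactly. The resolution is that this shared increment is of size $O(\epsilon)$ in the spatial argument of $g$ and drops out of $\tilde J_\epsilon$ in the limit (it shifts the argument of $R$ by $O(1)$ in the $u$-variables but this is a unit-length boundary contribution, negligible just like $\mathcal{J}_\epsilon$); so one should condition on the forward Markov-chain data driving $\tilde J_\epsilon$, use that $\E[I_\epsilon \mid \text{that data}] \to p_{t-r}\star g(y)^2$ uniformly (this is where the $|y|^{-k}$ type bound of Proposition \ref{thm:prop2.4} and boundedness of $g$ do the work), and then the Cauchy--Schwarz bound closes. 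The uniform-in-$\epsilon$ control of $\E[\exp(2\tilde J_\epsilon)]$ is the place to be careful: it is exactly the content of Proposition \ref{cor5.1} (together with items \ref{item1}--\ref{item3} reducing $\tilde J_\epsilon$ to an additive functional of $\omega_{X^i_{\cdot}}$ over $[0,[M(\epsilon)/\epsilon^2]-1]$, which is $\le M(\epsilon)/\epsilon^2 \le r/\epsilon^2$), valid whenever $\sqrt2\hat\beta<\hat\beta_c(R)$; to reach the full range $\hat\beta<\hat\beta_c(R)$ one instead applies Hölder with an exponent $p>1$ close to $1$ and uses the hypercontractivity estimate to bound $\|\exp(\tilde J_\epsilon)\|_p$ by a power of $\|\exp((2p-1)^{1/2}\hat\beta/\hat\beta\cdot\tilde J_\epsilon)\|$-type quantities that remain finite in the subcritical regime, exactly as in the proof of Theorem \ref{thm:Main} in Section \ref{sec:2)}.
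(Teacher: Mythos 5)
Your overall skeleton (reduce via \eqref{Markovcon}, absorb $\Gcal_\epsilon$ and the boundary piece $\mathcal{J}_\epsilon$, resample the two Markov-chain steps around $[(t-r)/\epsilon^2]$ using \textbf{Lemma \ref{thm:lem4.1}}, then decouple $I_\epsilon$ from $\exp(\tilde J_\epsilon)$ and invoke \textbf{Lemma \ref{lem:prep1}}) matches the paper. But the way you handle the decoupling contains a genuine gap, in two respects.

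First, your premise that $I_\epsilon$ and $\tilde J_\epsilon$ ``are not literally independent'' because they share the unit-length Brownian segment is wrong, and this is precisely the point the paper exploits. Inside $\hat{\E}_{B,t/\epsilon^2}$ the variables $s_1,s_2,x_1,x_2$ are fixed parameters (the integration over them sits outside the expectation in \eqref{Fdefinition}). After resampling, $I_\epsilon$ depends on the fresh segment $X^i_{[(t-r)/\epsilon^2]}$ only through its position at internal time $1-s_i$, while $\tilde J_\epsilon$ depends on it only through the increment over $[1-s_i,1]$; by independence of Brownian increments these are independent, and the forward portion driving $\tilde J_\epsilon$ is independent of everything entering $I_\epsilon$. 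So the expectation factors \emph{exactly}, $\E[I_\epsilon\exp(\tilde J_\epsilon)]=\E[I_\epsilon]\,\E[\exp(\tilde J_\epsilon)]$, and the lemma follows immediately from \textbf{Lemma \ref{lem:prep1}} with no loss in the range of $\hat{\beta}$ and no need for hypercontractivity.

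Second, your fallback step (iv) would fail as written. You claim $I_\epsilon\to p_{t-r}\star g(y)^2$ in $L^2$, i.e.\ $\E[(I_\epsilon-p_{t-r}\star g(y)^2)^2]\to0$. This is false: $\epsilon B^i_{(t-r)/\epsilon^2-s_i}$ converges in distribution to a nondegenerate Gaussian $\mathcal{N}_i$, so $I_\epsilon$ converges in distribution to the \emph{random} variable $\prod_{i=1}^2 g(y-\mathcal{N}_i)$, whose mean is $(p_{t-r}\star g(y))^2$ but whose variance is positive for generic $g$ (indeed $\E[I_\epsilon^2]\to (p_{t-r}\star g^2(y))^2\neq (p_{t-r}\star g(y))^4$ — squaring and convolution do not commute, so your ``pair of independent copies'' reasoning does not apply to the square of a single realization). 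Hence the Cauchy--Schwarz bound does not close. Your alternative suggestion — conditioning on the forward data and showing $\E[I_\epsilon\mid\text{data}]\to p_{t-r}\star g(y)^2$ — does work, but only because that conditional expectation equals the unconditional one, i.e.\ because of exactly the independence you set out to avoid relying on.
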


     \begin{proof}
         Using the same notation as in the proof of \textbf{Lemma \ref{lem:prep1}}, we have that
         $$\Hat{\E}_{B,t/\epsilon^2}\biggl[I_{\epsilon}\exp(J_{\epsilon})\biggr]=\Hat{\E}_{B,t/\epsilon^2}\biggl[I_{\epsilon}(B^1,B^2)\exp(\Tilde{J}_{\epsilon}(B^1,B^2)+\mathcal{J}_{\epsilon}(B^1,B^2))\biggr],$$
         where $\Tilde{J}_{\epsilon}(B^1,B^2)$ and $\mathcal{J}_{\epsilon}(B^1,B^2)$ are defined in \eqref{eq6.3} and \eqref{therestadditiveterm}, respectively. Since $\mathcal{J}_{\epsilon}(B^1,B^2)\rightarrow0$, as $\epsilon\rightarrow0$, uniformly over the paths $B^1,B^2$, it is sufficient to consider 
         $$\Hat{\E}_{B,t/\epsilon^2}\biggl[I_{\epsilon}(B^1,B^2)\exp(\Tilde{J}_{\epsilon}(B^1,B^2))\biggr],$$
         and calculate its limit as $\epsilon\rightarrow0$.\par       
         From \eqref{Markovcon} this expectation is equal to
         $${\E}\biggl[\Gcal_{\epsilon}(\tilde{X}^1_{N_{\epsilon}},\tilde{X}^2_{N_{\epsilon}})I_{\epsilon}([\tilde{X}_0^1,...,\tilde{X}^1_{N_{\epsilon}+1}],[\tilde{X}_0^2,...,\tilde{X}^2_{N_{\epsilon}+1}])\exp(\Tilde{J}_{\epsilon}([\tilde{X}_0^1,...,\tilde{X}^1_{N_{\epsilon}+1}],[\tilde{X}_0^2,...,\tilde{X}^2_{N_{\epsilon}+1}]))\biggr],$$
         where $(\tilde{X}_0^1,...,\tilde{X}^1_{N_{\epsilon}+1})$ and $(\tilde{X}_0^2,...,\tilde{X}^2_{N_{\epsilon}+1})$ are as in the proof of \textbf{Lemma \ref{lem:prep1}}. Combining the fact that  $\Gcal_{\epsilon}$ converges to $1$ uniformly and  \textbf{Lemma \ref{thm:lem4.1}} we see that there are constants $\mathcal{A}_{\epsilon},\mathcal{B}_{\epsilon}$ such that:
         $$\mathcal{A}_{\epsilon}{\E}\biggl[I_{\epsilon}(\boldsymbol{\upomega}_{X_0^1},\boldsymbol{\upomega}_{X_0^2})\exp(\Tilde{J}_{\epsilon}(\boldsymbol{\upomega}_{X_0^1},\boldsymbol{\upomega}_{X_0^2}))\biggr]$$
         $$\leq\Hat{\E}_{B,t/\epsilon^2}\biggl[I_{\epsilon}\exp(J_{\epsilon})\biggr]\leq$$
         \begin{equation}\label{inequality}
             \mathcal{B}_{\epsilon}{\E}\biggl[I_{\epsilon}(\boldsymbol{\upomega}_{X_0^1},\boldsymbol{\upomega}_{X_0^2})\exp(\Tilde{J}_{\epsilon}(\boldsymbol{\upomega}_{X_0^1},\boldsymbol{\upomega}_{X_0^2}))\biggr],
         \end{equation}
         where $\boldsymbol{\upomega}_{X_0^1},\boldsymbol{\upomega}_{X_0^2}$ are as in the proof of \textbf{Lemma \ref{lem:prep1}} and $\mathcal{A}_{\epsilon},\mathcal{B}_{\epsilon}\rightarrow1$, as $\epsilon\rightarrow0$.\par
         The terms $I_{\epsilon}(\boldsymbol{\upomega}_{X_0^1},\boldsymbol{\upomega}_{X_0^2})$ and $\Tilde{J}_{\epsilon}(\boldsymbol{\upomega}_{X_0^1},\boldsymbol{\upomega}_{X_0^2})$ are  independent. More specifically,  the term (\ref{eq6.4}), that appears in $\Tilde{J}_{\epsilon}(\boldsymbol{\upomega}_{X_0^1},\boldsymbol{\upomega}_{X_0^2})$, is independent from $I_{\epsilon}(\boldsymbol{\upomega}_{X_0^1},\boldsymbol{\upomega}_{X_0^2})$. Indeed, observe that $\boldsymbol{\upomega}_{X_0^i}({[(t-r)/\epsilon^2]+\tau+u_1})-\boldsymbol{\upomega}_{X_0^i}({[(t-r)/\epsilon^2]+\tau})$, $i=1,2$  is independent from the paths $\boldsymbol{\upomega}_{X_0^1},\boldsymbol{\upomega}_{X_0^2}$ at all times before $[(t-r)/\epsilon^2]+\tau$, and specifically from all times that appear in ${I}_{\epsilon}(\boldsymbol{\upomega}_{X_0^1},\boldsymbol{\upomega}_{X_0^2})$. The other terms $\boldsymbol{\upomega}_{X_0^i}({[(t-r)/\epsilon^2]+\tau})-\boldsymbol{\upomega}_{X_0^i}{([(t-r)/\epsilon^2]+\tau-s_i})$ as mentioned in the comments after $(\ref{eq6.5})$ are independent from $\boldsymbol{\upomega}_{X^i_0}([(t-r)/\epsilon^2]+\tau-1)$ and from $\boldsymbol{\upomega}_{X^i_0}({[(t-r)/\epsilon^2]+\tau-s_i})-\boldsymbol{\upomega}_{X^i_0}([(t-r)/\epsilon^2]+\tau-1)$ and therefore from $I_{\epsilon}(\boldsymbol{\upomega}_{X_0^1},\boldsymbol{\upomega}_{X_0^2})$ as well. Therefore, ${I}_{\epsilon}(\boldsymbol{\upomega}_{X_0^1},\boldsymbol{\upomega}_{X_0^2})$ and $\exp(\tilde{J}_{\epsilon}(\boldsymbol{\upomega}_{X_0^1},\boldsymbol{\upomega}_{X_0^2}))$ are indeed independent.\par
         Finally, from the proof of  \textbf{Lemma \ref{lem:prep1}} 
         $${\E}\biggl[I_{\epsilon}(\boldsymbol{\upomega}_{X_0^1},\boldsymbol{\upomega}_{X_0^2})\biggr]\rightarrow p_{t-r}\star g(y)^2$$
        and
        $${\E}\biggl[\exp(\Tilde{J}_{\epsilon}(\boldsymbol{\upomega}_{X_0^1},\boldsymbol{\upomega}_{X_0^2}))\biggr]\rightarrow\biggl(1-\frac{\hat{\beta}^2}{\hat{\beta}_c(R)^2}\biggr)^{-1},$$
         as $\epsilon\rightarrow0$. This, combined with \eqref{inequality}, concludes the proof.
     \end{proof}

     Now \textbf{Lemma \ref{lem:prep1}} and \textbf{Lemma \ref{lem:prep2}} combined gives us the following:

     \begin{mylem}\label{lem:limit}
     As $\epsilon\rightarrow0$ and for all $\hat{\beta}<\hat{\beta}_c(R)$:
         $$\Fcal_{\epsilon}(r,y,M(\epsilon)/\epsilon^2,M(\epsilon)/\epsilon^2)\rightarrow v^2_{eff}(\hat{\beta})p_{t-r}\star g(y)^2,$$
     where
     $$v_{eff}^2(\hat{\beta})=\biggl(1-\frac{\hat{\beta}^2}{\hat{\beta}_c(R)^2}\biggr)^{-1}\cdot||R||_1.$$
     \end{mylem}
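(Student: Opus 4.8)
The plan is to obtain Lemma \ref{lem:limit} from Lemma \ref{lem:prep2} by a dominated convergence argument in the variables $s_1,s_2,x_1,x_2$, integrated against the finite measure $\prod_{i=1}^2\phi(s_i)\psi(x_i)\,d\Bar{s}\,d\Bar{x}$. First I would record the pointwise input: for every fixed $r\in(0,t)$, $y\in\R^2$, $s_1,s_2\in[0,1]$ and $x_1,x_2\in\R^2$, Lemma \ref{lem:prep2} gives
$$\Hat{\E}_{B,t/\epsilon^2}\bigl[I_{\epsilon}\exp(J_{\epsilon})\bigr]\ \longrightarrow\ \Bigl(1-\tfrac{\hat\beta^2}{\hat\beta_c(R)^2}\Bigr)^{-1}p_{t-r}\star g(y)^2$$
as $\epsilon\to 0$, and the key observation is that the limit does not depend on $(s_1,s_2,x_1,x_2)$: the $\epsilon x_i$ shifts and the $\epsilon$-scaled Brownian increments over times of order one vanish, while the residual deterministic shift $x_1-x_2$ inside $J_\epsilon$ is absorbed by the uniform-in-$x$ statements of \textbf{Theorem \ref{thm:5.1}} and \textbf{Proposition \ref{cor5.1}} (this is exactly how the limit is computed in the proof of Lemma \ref{lem:prep2} via \eqref{nikoswillcomplain}). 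Since $\psi$ is supported in $\{|x|\le 1/2\}$ and the $s$-domain is $[0,1]^2$, the effective domain of integration in $(s_1,s_2,x_1,x_2)$ is compact and the measure $\prod_{i=1}^2\phi(s_i)\psi(x_i)\,d\Bar{s}\,d\Bar{x}$ has finite total mass $\bigl(\int_0^1\phi(s)ds\bigr)^2\bigl(\int_{\R^2}\psi(x)dx\bigr)^2=||\phi||_1^2\,||\psi||_1^2$, using $\phi,\psi\ge 0$ and $\operatorname{supp}\phi\subseteq[0,1]$.

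Next I would supply the domination. Since $|g|\le\|g\|_\infty$ we have $|I_\epsilon|\le\|g\|_\infty^2$ pointwise, and since $R\ge 0$ (hence $R_\phi\ge 0$ and $\psi\star\psi\ge 0$, so $J_\epsilon\ge 0$ and $e^{J_\epsilon}>0$), this gives $\bigl|\Hat{\E}_{B,t/\epsilon^2}[I_\epsilon e^{J_\epsilon}]\bigr|\le\|g\|_\infty^2\,\Hat{\E}_{B,t/\epsilon^2}[e^{J_\epsilon}]$. It then remains to note that $\Hat{\E}_{B,t/\epsilon^2}[e^{J_\epsilon}]$ is bounded uniformly in $\epsilon$ (small) and in $(s_1,s_2,x_1,x_2)$: this is produced by the same chain of reductions as in the proof of Lemma \ref{lem:prep1} — replacing the tilted paths by the Markov-chain paths through \eqref{Markovcon} with $\Gcal_\epsilon\to 1$ uniformly, replacing finitely many steps by Brownian motions through \textbf{Lemma \ref{thm:lem4.1}} with $\mathcal A_\epsilon,\mathcal B_\epsilon\to 1$, discarding the error $\mathcal J_\epsilon$ (which goes to $0$ uniformly), and finally using identity \eqref{nikoswillcomplain} together with the uniform-in-$x$ bound of \textbf{Proposition \ref{cor5.1}}. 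Thus the integrand is bounded by a constant on a finite measure space, and dominated convergence applies.

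Combining these ingredients,
$$\Fcal_{\epsilon}(r,y,M(\epsilon)/\epsilon^2,M(\epsilon)/\epsilon^2)\ \longrightarrow\ \Bigl(1-\tfrac{\hat\beta^2}{\hat\beta_c(R)^2}\Bigr)^{-1}p_{t-r}\star g(y)^2\cdot||\phi||_1^2\,||\psi||_1^2 = v_{eff}^2(\hat\beta)\,p_{t-r}\star g(y)^2 ,$$
where in the last equality I use $R(s,x)=\phi\star\tilde\phi(s)\,\psi\star\psi(x)$ and the elementary identities $\int f\star g=(\int f)(\int g)$, $||\tilde\phi||_1=||\phi||_1$, to get $||R||_1=||\phi||_1^2\,||\psi||_1^2$, matching the formula for $v_{eff}^2(\hat\beta)$ in the statement. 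The only step that requires genuine care — and the one I expect to be the main (if modest) obstacle — is the uniform-in-$(s_1,s_2,x_1,x_2)$ bound on $\Hat{\E}_{B,t/\epsilon^2}[e^{J_\epsilon}]$: one must verify that the uniformity in the initial points and in the spatial shift $x$ afforded by \textbf{Proposition \ref{cor5.1}} survives each path-surgery step. Everything else is bookkeeping and the integral identities above.
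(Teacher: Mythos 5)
Your proposal is correct and follows essentially the same route as the paper: the pointwise limit from Lemma \ref{lem:prep2}, a uniform-in-$(s_1,s_2,x_1,x_2)$ bound on $\Hat{\E}_{B,t/\epsilon^2}[I_\epsilon e^{J_\epsilon}]$ obtained by bounding $I_\epsilon$ by a constant and controlling $\Hat{\E}_{B,t/\epsilon^2}[e^{\tilde J_\epsilon}]$ via \eqref{inequality}, \eqref{nikoswillcomplain} and the uniform bound of \textbf{Proposition \ref{cor5.1}}, and then dominated convergence against the finite measure $\prod_i\phi(s_i)\psi(x_i)$, with the factor $\|\phi\|_1^2\|\psi\|_1^2=\|R\|_1$ producing $v_{eff}^2(\hat\beta)$. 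The step you flag as requiring care is precisely where the paper's proof concentrates its effort, and your treatment of it matches the paper's.
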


     \begin{proof}
     Recall the definition of $\Fcal_{\epsilon}(r,y, M(\epsilon)/\epsilon^2,M(\epsilon)/\epsilon^2)$ in \eqref{eq6.1} and of $J_{\epsilon}(M(\epsilon)/\epsilon^2,M(\epsilon)/\epsilon^2;x_1, x_2, s_1, s_2)$ in \eqref{6.2}, where we emphasize the dependence of $J_\epsilon$ on $x_1,x_2,s_1,s_2$. From \textbf{Lemma \ref{lem:prep2}}, we see that if 
     \begin{equation}\label{uniformbound}
         \Hat{\E}_{B,t/\epsilon^2}[I_{\epsilon}e^{J_{\epsilon}(M(\epsilon)/\epsilon^2,M(\epsilon)/\epsilon^2;x_1,x_2,s_1,s_2)}]\lesssim1,
     \end{equation}
     for all sufficiently small $\epsilon$ and uniformly in $x_1,x_2,s_1,s_2$ then, from the dominated convergence theorem\footnote{Recall that $\phi$ and $\psi$ are compactly supported.}, $\Fcal_{\epsilon}(r,y)\rightarrow v^2_{eff}(\hat{\beta})p_{t-r}\star g(y)^2$ as $\epsilon\rightarrow0$.\par
     {To prove \eqref{uniformbound}, we start from \eqref{inequality}, to obtain
     $$\Hat{\E}_{B,t/\epsilon^2}[I_{\epsilon}e^{J_{\epsilon}(M(\epsilon)/\epsilon^2,M(\epsilon)/\epsilon^2;x_1,x_2,s_1,s_2)}]\lesssim{\E}\biggl[I_{\epsilon}(\boldsymbol{\upomega}_{X_0^1},\boldsymbol{\upomega}_{X_0^2})\exp(\Tilde{J}_{\epsilon}(\boldsymbol{\upomega}_{X_0^1},\boldsymbol{\upomega}_{X_0^2};x_1,x_2,s_1,s_2))\biggr].$$
     Since $g$ has compact support we see that
     $$\E\biggl[\tilde{I}_{\epsilon}(\boldsymbol{\upomega}_{X_0^1},\boldsymbol{\upomega}_{X_0^2})\exp(\Tilde{J}_{\epsilon}(\boldsymbol{\upomega}_{X_0^1},\boldsymbol{\upomega}_{X_0^2};x_1,x_2,s_1,s_2))\biggr]\lesssim\E\biggl[\exp(\tilde{J}_{\epsilon}(\boldsymbol{\upomega}_{X_0^1},\boldsymbol{\upomega}_{X_0^2};x_1,x_2,s_1,s_2))\biggr].$$
     Recall that, from \eqref{nikoswillcomplain} 
    $$\E\biggl[\exp(\tilde{J}_{\epsilon}(\boldsymbol{\upomega}_{X_0^1},\boldsymbol{\upomega}_{X_0^2};x_1,x_2,s_1,s_2))\biggr]=$$
     \begin{align}\label{repeateq}
         \nonumber\int_{\mathbb{R}^2}\int_{\mathbb{R}^2}\E\biggl[\exp\biggl(\frac{\beta^2}{{\log\frac{1}{\epsilon}}}\int_{[0,[M(\epsilon)/\epsilon^2]-1]^2}R\biggr(u_1-u_2, 
         x_1-x_2+ y_1+y_2+ \omega_{b^1}({u_1})- \omega_{b^2}({u_2})\biggl)du_1du_2\biggr)\biggr]\cdot\\ \cdot p_{s_1}(y_1)p_{s_2}(y_2)dy_1dy_2,      
     \end{align}
     where $b^1,b^2$ are two independent Brownian motions in $\Omega_1$.}\par
     {Consider the term
     $$\E\biggl[\exp\biggl(\frac{\hat{\beta}^2}{{\log\frac{1}{\epsilon}}}\int_{[0,[M(\epsilon)/\epsilon^2]-1]^2}R\biggr(u_1-u_2, 
         x_1-x_2+y_1+y_2+\\
         \omega_{b^1}({u_1})-\omega_{b^2}({u_2})\biggl)du_1du_2\biggr)\biggr].$$
    Then, from \textbf{Proposition \ref{cor5.1}}, we have
    $$\E\biggl[\exp\biggl(\frac{\hat{\beta}^2}{{\log\frac{1}{\epsilon}}}\int_{[0,[M(\epsilon)/\epsilon^2]-1]^2}R\biggr(u_1-u_2, 
         x_1-x_2+y_1+y_2+\\
         \omega_{b^1}({u_1})-\omega_{b^2}({u_2})\biggl)du_1du_2\biggr)\biggr]\lesssim1,$$
   uniformly over $x_1,x_2,y_1,y_2$. From \eqref{repeateq} we get that for all $\epsilon$ small enough,
   $$\E[\exp(\tilde{J}_{\epsilon}(\boldsymbol{\upomega}_{X_0^1},\boldsymbol{\upomega}_{X_0^2};x_1,x_2,y_1,y_2,s_1,s_2))]\lesssim1,$$
   uniformly over $x_1,x_2,y_1,y_2,s_1,s_2$, which in turn proves the uniform bound \eqref{uniformbound}. This concludes the proof.}
     \end{proof}

     Finally, we prove the uniform bound (\ref{eq:2.12}):

     \begin{mylem}\label{lem:bound}
         Let $\hat{\beta}<\hat{\beta}_c(R)$ and $k>0$. Then 
         $$|\Fcal_{\epsilon}(r,y,M_1(\epsilon)/\epsilon^2,M_2(\epsilon)/\epsilon^2)|\lesssim(1\wedge|y|^{-k}),$$
         where the implied constant depends only on k.
     \end{mylem}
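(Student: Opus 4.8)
The plan is first to dispose of the case $M_1(\epsilon)\neq M_2(\epsilon)$ exactly as at the start of this section: by \eqref{errordecomp} and the uniform estimate $\mathcal{R}_\epsilon\lesssim(\log\tfrac1\epsilon)^{-1}$ following \eqref{anothererrorterm}, one has $\Fcal_\epsilon(r,y,M_1(\epsilon)/\epsilon^2,M_2(\epsilon)/\epsilon^2)\lesssim\Fcal_\epsilon(r,y,M(\epsilon)/\epsilon^2,M(\epsilon)/\epsilon^2)$ for all small $\epsilon$, so it is enough to treat $M_1=M_2=M(\epsilon)$. The bound by $1$ is immediate: $|I_\epsilon|\le\|g\|_\infty^2$ and $J_\epsilon\ge 0$ give $\hat{\E}_{B,t/\epsilon^2}[|I_\epsilon|e^{J_\epsilon}]\le\|g\|_\infty^2\,\hat{\E}_{B,t/\epsilon^2}[e^{J_\epsilon}]\lesssim 1$ uniformly in $x_1,x_2,s_1,s_2$ (and $y$), by the computation behind \textbf{Lemma \ref{lem:prep1}} together with \textbf{Proposition \ref{cor5.1}}; integrating against the compactly supported $\phi,\psi$ then gives $|\Fcal_\epsilon(r,y)|\lesssim 1$. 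Hence the only real content is the decay in $|y|$, and since $\phi,\psi$ are compactly supported it suffices to show $\hat{\E}_{B,t/\epsilon^2}[|I_\epsilon|e^{J_\epsilon}]\lesssim_k|y|^{-k}$ for $|y|$ large, uniformly in $x_1,x_2,s_1,s_2$ and in small $\epsilon$.

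To extract the decay I would separate $I_\epsilon$ from $e^{J_\epsilon}$. Fixing $q\in(1,\hat\beta_c(R)^2/\hat\beta^2)$ (possible since $\hat\beta<\hat\beta_c(R)$) and $p=q/(q-1)$, Hölder's inequality inside $\hat{\E}_{B,t/\epsilon^2}$ gives $\hat{\E}[|I_\epsilon|e^{J_\epsilon}]\le\hat{\E}[|I_\epsilon|^p]^{1/p}\hat{\E}[e^{qJ_\epsilon}]^{1/q}$. The factor $\hat{\E}[e^{qJ_\epsilon}]$ is $\hat{\E}[e^{J_\epsilon}]$ with $\hat\beta$ replaced by $\sqrt q\,\hat\beta<\hat\beta_c(R)$, so repeating the reduction in \textbf{Lemma \ref{lem:prep1}} (replace finitely many Markov-chain steps by Brownian motions via \textbf{Lemma \ref{thm:lem4.1}}, use \eqref{nikoswillcomplain}, invoke \textbf{Proposition \ref{cor5.1}}, which is uniform in the spatial shift) yields $\hat{\E}[e^{qJ_\epsilon}]\lesssim 1$ uniformly. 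For the remaining factor, since under $\hat{\P}^{(\epsilon)}_{t/\epsilon^2}\times\hat{\P}^{(\epsilon)}_{t/\epsilon^2}$ the paths $B^1,B^2$ are independent, $|I_\epsilon|^p$ factorizes: $\hat{\E}[|I_\epsilon|^p]=\prod_{i=1,2}\hat{\E}_B[|g(\epsilon x_i-\epsilon B_{(t-r)/\epsilon^2-s_i}+y)|^p]$. Taking $\rho$ with $\mathrm{supp}\,g\subseteq\{|z|\le\rho\}$ and using $|\epsilon x_i|\le 1$ on $\mathrm{supp}\,\psi$, for $|y|\ge 2(\rho+1)$ the argument of $g$ lies in $\mathrm{supp}\,g$ only if $\epsilon|B_{(t-r)/\epsilon^2-s_i}|\ge|y|/2$, so $\hat{\E}_B[|g(\cdots)|^p]\le\|g\|_\infty^p\,\hat{\P}^{(\epsilon)}_{t/\epsilon^2}(\epsilon|B_{(t-r)/\epsilon^2-s_i}|\ge|y|/2)$. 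Thus everything reduces to a uniform-in-$\epsilon$ (and in $s\in[0,1]$, $r\in(0,t)$) tail bound $\hat{\P}^{(\epsilon)}_{t/\epsilon^2}(\epsilon|B_{(t-r)/\epsilon^2-s}|\ge a)\lesssim e^{-ca^2/t}$.

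For that tail bound I would use the regenerative structure. By \textbf{Remark \ref{Remedge}}, $\hat{\E}_{B,t/\epsilon^2}[F(B)]$ is the expectation of $F$ along a path $\omega$ built from the Markov chain, weighted by a factor $\lesssim 1$, so it is enough to bound the probability for $\omega$. Writing $n=(t-r)/\epsilon^2-s$ and letting $T_\nu$ be the last regeneration time $\le n$, split $\omega(n)=\omega(T_\nu)+(\omega(n)-\omega(T_\nu))$. By \eqref{Randomwalks}, $\omega(T_\nu)=\sum_{k<\nu}\mathbf{X}_k^{(\epsilon)}$ is a sum of $\nu\le n\le t/\epsilon^2$ i.i.d. mean-zero increments with sub-exponential tails uniform in $\epsilon$ (\textbf{Corollary \ref{thm:cor4.1}}), so a maximal Bernstein estimate gives $\hat{\P}(\epsilon\max_{\nu\le t/\epsilon^2}|\sum_{k<\nu}\mathbf{X}_k^{(\epsilon)}|\ge a/2)\lesssim e^{-ca^2/t}+e^{-ca/\epsilon}\lesssim e^{-c'a^2/t}$, the Gaussian term dominating because $\epsilon\sqrt{t/\epsilon^2}=\sqrt t$ is bounded (the modified law of the first increment and of the $\tau$-segment affect only finitely many terms and are controlled by \textbf{Lemma \ref{thm:lem4.1}} and \textbf{Corollary \ref{thm:cor4.1}}). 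For the overshoot $\omega(n)-\omega(T_\nu)$, which lies entirely inside the single straddling cycle, I would sum over the straddling index, bounding the joint probability of ``cycle length exceeds the residual time'' and ``cycle oscillation $\ge a/(2\epsilon)$'' by the geometric mean of the two marginals; the geometric tail $\P(\theta>b)\lesssim(1-\gamma)^b$ makes the sum over indices converge, and \eqref{anotherfestimate} controls the oscillation, giving $\hat{\P}(\epsilon|\omega(n)-\omega(T_\nu)|\ge a/2)\lesssim e^{-ca/\epsilon}$ uniformly in $n$ and $\epsilon$. Combining the two pieces yields the desired tail, with constants independent of $r$ since $t-r<t$.

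Putting it together, $\hat{\E}[|I_\epsilon|^p]\lesssim e^{-c|y|^2/t}$, hence $\hat{\E}[|I_\epsilon|^p]^{1/p}\lesssim e^{-c|y|^2/(pt)}$, and with the Hölder bound $\hat{\E}_{B,t/\epsilon^2}[|I_\epsilon|e^{J_\epsilon}]\lesssim e^{-c''|y|^2}$ for $|y|$ large, uniformly in the remaining variables and in small $\epsilon$; integrating against $\prod_i\phi(s_i)\psi(x_i)$ and using $e^{-c''|y|^2}\lesssim_k|y|^{-k}$ gives the bound for $|y|$ large, which combined with $|\Fcal_\epsilon|\lesssim 1$ proves $|\Fcal_\epsilon(r,y,M_1(\epsilon)/\epsilon^2,M_2(\epsilon)/\epsilon^2)|\lesssim 1\wedge|y|^{-k}$. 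I expect the main obstacle to be the last step of the tail bound: controlling the straddling-cycle overshoot (whose length is size-biased, hence not simply geometric) and the edge-segment contributions without losing inverse powers of $\epsilon$, which a naive union bound over all $\sim\epsilon^{-2}$ cycles would do.
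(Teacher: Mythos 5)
Your proposal is correct and follows essentially the same route as the paper: reduce to $M_1=M_2$, apply H\"older's inequality to separate $I_\epsilon$ from $e^{J_\epsilon}$, bound $\Hat{\E}_{B,t/\epsilon^2}[e^{qJ_\epsilon}]$ uniformly using the subcriticality of $\sqrt{q}\,\hat{\beta}$ (via \textbf{Proposition \ref{cor5.1}}), and reduce the decay in $|y|$ to a tail estimate for $\epsilon B_{(t-r)/\epsilon^2-s}$ under the tilted measure. The only difference is that the paper imports that tail estimate from \cite{Gu_2018} (Lemma 5.3 there, with \textbf{Corollary \ref{thm:cor4.1}} replacing their Lemma A.2), whereas you reprove it from the regeneration structure — and note that for the straddling-cycle overshoot a crude union bound over the $O(\epsilon^{-2})$ cycles already suffices, since $\epsilon^{-2}e^{-ca/\epsilon}\lesssim_k a^{-k}$ uniformly in $\epsilon\leq\epsilon_0$, and only polynomial (not Gaussian) decay in $|y|$ is needed.
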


     \begin{proof}
      From the proof of  \textbf{Lemma \ref{lem:limit}} and the observations at the start of this section, it is easy to infer that for any $p>1$ such that $p\hat{\beta}<\hat{\beta}_{c}(R)$ 
      $$\sup_{\epsilon\in(0,1)}\Hat{\E}_{B,t/\epsilon^2}[e^{p J_{\epsilon}((M_1(\epsilon)/\epsilon^2,M_2(\epsilon)/\epsilon^2;x_1,x_2,s_1,s_2)}]\lesssim1,$$
      Hence, from Hölder's inequality
     $$\Fcal_{\epsilon}(r,y,M_1(\epsilon)/\epsilon^2,M_2(\epsilon)/\epsilon^2)\lesssim\int_{\mathbb{R}^{4}}\int_{[0,1]^2}\Hat{\E}_{B,t/\epsilon^2}[I_{\epsilon}^q]^{1/q}\prod_{i=1}^2\phi(s_i)\psi(x_i)d\Bar{s}d\Bar{x},$$
     where $1/p+1/q=1$. This implies that:
     $$\Fcal_{\epsilon}(r,y,M_1(\epsilon)/\epsilon^2,M_2(\epsilon)/\epsilon^2)\lesssim\int_{\mathbb{R}^{2}}\int_{[0,1]}\Hat{\E}_{B,t/\epsilon^2}[g(\epsilon x-\epsilon B_{(t-r)/\epsilon^2-s}+y)^q]^{1/q}\phi(s)\psi(x)d{s}d{x},$$
     since $g$ is compactly supported. Observe that for all $k$:
     $$\Hat{\E}_{B,t/\epsilon^2}[\textbf{1}_{|\epsilon B_{(t-r)/\epsilon^2-s}|>M}]\lesssim\frac{1}{M^{2k}},$$
     where the implied constant depends on $k$. The proof of this inequality is the same as in \cite{Gu_2018} (look at \textbf{Lemma 5.3} of the same paper) only we use \textbf{Corollary \ref{thm:cor4.1}} whenever they use $\textbf{Lemma A.2}$. With this inequality and since $g$ and compactly supported:
     $$\Hat{\E}_{B,t/\epsilon^2}[g(\epsilon x-\epsilon B_{(t-r)/\epsilon^2-s}+y)^q]^{1/q}\lesssim1\wedge\frac{1}{|y|^k},$$ 
     which  implies $(\ref{eq:2.12})$.
     \end{proof}


As mentioned, \textbf{Lemma \ref{lem:limit}} and \textbf{Lemma \ref{lem:bound}} prove \textbf{Proposition \ref{thm:prop2.4}}. We also have the following corollary:

\begin{mycor}\label{cor6.1}
    For all $\hat{\beta}<\hat{\beta}_c(R)$ we have that 
    $$\mathbf{Var}\biggl(\sqrt{\log\frac{1}{\epsilon}}\int_{\mathbb{R}^2}e^{-\zeta_{t/\epsilon^2}^{(\epsilon)}}(u_{\epsilon}(t,x)-\mathbf{E}[u_{\epsilon}(t,x)])g(x)dx\biggr)$$
    converges, as $\epsilon\rightarrow0$, to
    $$\mathbf{Var}\biggl(\int_{\mathbb{R}^2}\mathcal{U}(t,x)g(x)dx\biggr),$$
    where $\mathcal{U}$ is the solution to the Edwards-Wilkinson equation \eqref{eq:1.2}, with effective diffusivity equal to $I_{2\times2}$ and effective variance $v_{eff}$ defined as in \eqref{eq:effvar}.
\end{mycor}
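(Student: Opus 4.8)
The plan is to combine the stochastic–integral representation of the variance with \textbf{Proposition \ref{thm:prop2.4}} and dominated convergence, and then to identify the limit with the variance of the Edwards–Wilkinson field. By \textbf{Proposition \ref{thm:prop2.2}}, together with the fact that $Z^{\epsilon}_t(r,y)$ is supported on $r\in[-1,t/\epsilon^2]$,
$$\mathbf{Var}\biggl(\sqrt{\log\tfrac{1}{\epsilon}}\int_{\mathbb{R}^2}e^{-\zeta^{(\epsilon)}_{t/\epsilon^2}}(u_{\epsilon}(t,x)-\mathbf{E}[u_{\epsilon}(t,x)])g(x)dx\biggr)=\hat{\beta}^2\,\mathbf{Var}\biggl(\int_{-\infty}^{t/\epsilon^2}\int_{\mathbb{R}^2}Z^{\epsilon}_t(r,y)d\xi(r,y)\biggr).$$
Next I would invoke \textbf{Proposition \ref{thm:prop2.3}} in the form valid for the endpoint $t_1=t$ (with the integration domains of $s_1,s_2$ and of $u_1,u_2$ modified on the time slab $\{t-r\le\epsilon^2\}$, exactly as described after that proposition) and integrate out $\bar s,\bar x$; this rewrites the right-hand side as $\hat{\beta}^2\int_0^t\int_{\mathbb{R}^2}\Fcal^{\sharp}_{\epsilon}(r,y,r/\epsilon^2,r/\epsilon^2)\,dy\,dr$, where $\Fcal^{\sharp}_{\epsilon}$ coincides with $\Fcal_{\epsilon}$ from \eqref{Fdefinition} except on $\{t-r\le\epsilon^2\}$, where it carries the modified domains.

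For the pointwise limit of the integrand, fix $r\in(0,t)$ and $y\in\mathbb{R}^2$ and apply \textbf{Proposition \ref{thm:prop2.4}} (equivalently \textbf{Lemma \ref{lem:limit}}) with the \emph{constant} choice $M_1(\epsilon)=M_2(\epsilon)\equiv r$: since $\log r/\log\frac1\epsilon\to0$, the hypotheses \ref{Ass1}--\ref{Ass3} hold, so $\Fcal_{\epsilon}(r,y,r/\epsilon^2,r/\epsilon^2)\to v^2_{eff}(\hat{\beta})\,p_{t-r}\star g(y)^2$; and for every $\epsilon$ with $\epsilon^2<t-r$ one has $\Fcal^{\sharp}_{\epsilon}(r,y,r/\epsilon^2,r/\epsilon^2)=\Fcal_{\epsilon}(r,y,r/\epsilon^2,r/\epsilon^2)$, so the same limit holds for $\Fcal^{\sharp}_{\epsilon}$, for every $(r,y)\in(0,t)\times\mathbb{R}^2$. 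For domination, fix any $k>2$: the bound \eqref{eq:2.12} gives $|\Fcal_{\epsilon}(r,y,r/\epsilon^2,r/\epsilon^2)|\le C(1\wedge|y|^{-k})$ with $C$ independent of $r,y,\epsilon$, and the proof of \textbf{Lemma \ref{lem:bound}} applies verbatim to $\Fcal^{\sharp}_{\epsilon}$ on $\{t-r\le\epsilon^2\}$, since it uses only that the $\bar s$- and $u$-integrations run over subsets of $[0,1]^2$ and $[-1,r/\epsilon^2]^2$ together with the exponential–moment bound of \textbf{Proposition \ref{cor5.1}}, all of which survive the shrinking of the domains (recall $R_{\phi},\psi\star\psi\ge0$, so the modified exponent is pointwise dominated). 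Since $(r,y)\mapsto C(1\wedge|y|^{-k})\mathbf{1}_{(0,t)}(r)$ is integrable over $(0,t)\times\mathbb{R}^2$, dominated convergence yields
$$\hat{\beta}^2\int_0^t\int_{\mathbb{R}^2}\Fcal^{\sharp}_{\epsilon}(r,y,r/\epsilon^2,r/\epsilon^2)\,dy\,dr\longrightarrow\hat{\beta}^2\,v^2_{eff}(\hat{\beta})\int_0^t\int_{\mathbb{R}^2}p_{t-r}\star g(y)^2\,dy\,dr,$$
the right-hand side being finite because $\|p_s\star g\|_{L^2(\mathbb{R}^2)}\le\|g\|_{L^2(\mathbb{R}^2)}$ by Young's inequality and $g\in C_c(\mathbb{R}^2)\subset L^2(\mathbb{R}^2)$.

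Finally, I would identify the limit: as already observed after \textbf{Proposition \ref{thm:prop2.4}}, the expression $\hat{\beta}^2 v^2_{eff}(\hat{\beta})\int_0^t\int_{\mathbb{R}^2}p_{t-r}\star g(y)^2\,dy\,dr$ is exactly $\mathbf{Var}\bigl(\int_{\mathbb{R}^2}\mathcal{U}(t,x)g(x)dx\bigr)$, since for the additive equation \eqref{eq:1.2} with $a_{eff}=I_{2\times2}$ the Duhamel representation gives $\int_{\mathbb{R}^2}\mathcal{U}(t,x)g(x)dx=\hat{\beta}\,v_{eff}(\hat{\beta})\int_0^t\int_{\mathbb{R}^2}(p_{t-r}\star g)(y)\,d\xi(r,y)$ (using that $p$ is even), and It\^o's isometry produces precisely the displayed variance.

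The only genuinely delicate point is the reduction in the first paragraph: justifying \textbf{Proposition \ref{thm:prop2.3}} at the endpoint $t_1=t$ and checking that the domain modifications — which are active only on the slab $\{t-r\le\epsilon^2\}$ of width at most $\epsilon^2$ — neither spoil the uniform bound \eqref{eq:2.12} nor affect the limit. Everything downstream is a routine application of dominated convergence and of It\^o's isometry for the additive stochastic heat equation.
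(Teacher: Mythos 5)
Your proposal is correct and follows essentially the same route as the paper: rewrite the variance via the (endpoint-modified) formula of \textbf{Proposition \ref{thm:prop2.3}}, get the pointwise limit from \textbf{Proposition \ref{thm:prop2.4}} using that the domain modifications are only active on $\{t-r\le\epsilon^2\}$, dominate the modified functional by the argument of \textbf{Lemma \ref{lem:bound}} (the paper bounds the modified exponent by $J_\epsilon(t/\epsilon^2,t/\epsilon^2)$ rather than by the unmodified $J_\epsilon(r/\epsilon^2,r/\epsilon^2)$ as you do, but both rest on the positivity of $R_\phi$ and $\psi\star\psi$ and the exponential-moment bound of \textbf{Proposition \ref{cor5.1}}), and conclude by dominated convergence and It\^o's isometry for the Edwards--Wilkinson field.
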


\begin{proof}
   We have
    $$\hat{\beta}^2v^2_{eff}(\hat{\beta})\int_0^t\int_{\mathbb{R}^2}p_{t-r}\star g(y)^2dydr=\mathbf{Var}\biggl(\int_{\mathbb{R}^2}\mathcal{U}(t,x)g(x)dx\biggr).$$
    We want to apply \textbf{Proposition \ref{thm:prop2.4}} to the formula for the variance that we got from \textbf{Proposition \ref{thm:prop2.3}}. As mentioned in the remark after \textbf{Proposition \ref{thm:prop2.3}}, that formula has some modifications.   Recall that $s_1,s_2$ in \eqref{eq6.1} will lie in $[0,(t-r)/\epsilon^2]$ and $u_1,u_2$ will lie in $[-(t-r)/\epsilon^2,r/\epsilon^2]^2$.  Let $\boldsymbol{\Fcal}_{\epsilon}(r,y,r/\epsilon^2,r/\epsilon
    ^2)$ be defined by $(\ref{eq6.1})$ but with these modifications. Then we have that
    \begin{equation}\label{varianceeq}
        \mathbf{Var}\biggl(\sqrt{\log\frac{1}{\epsilon}}\int_{\mathbb{R}^2}e^{-\zeta_{t/\epsilon^2}^{(\epsilon)}}(u_{\epsilon}(t,x)-\mathbf{E}[u_{\epsilon}(t,x)])g(x)dx\biggr)=\hat{\beta}\int_{0}^t\int_{\mathbb{R}^{2}}\boldsymbol{\Fcal}_{\epsilon}(r,y,r/\epsilon^2,r/\epsilon^2)dydr.
    \end{equation}
    As argued in \textbf{Section \ref{sec:2}} the differences between $\boldsymbol{\Fcal}_{\epsilon}(r,y,r/\epsilon^2,r/\epsilon
    ^2)$ and ${\Fcal}_{\epsilon}(r,y,r/\epsilon^2,r/\epsilon
    ^2)$ come up only when $t-r\leq\epsilon^2$. Therefore, for fixed $r,y$
    $$\boldsymbol{\Fcal}_{\epsilon}(r,y,r/\epsilon^2,r/\epsilon^2)=\Fcal_{\epsilon}(r,y,r/\epsilon^2,r/\epsilon^2),$$
    for all $\epsilon$ small enough. This proves that  $\boldsymbol{\Fcal}_{\epsilon}(r,y,r/\epsilon^2,r/\epsilon^2)\rightarrow v^2_{eff}(\hat{\beta})p_{t-r}\star g(y)^2$, as $\epsilon\rightarrow0$. From this observation and \eqref{varianceeq}, we see that, to prove the corollary, we need to bound $\boldsymbol{\Fcal}_{\epsilon}(r,y,r/\epsilon^2,r/\epsilon^2)$ by an integrable function so we can apply the dominated convergence theorem.  \par 
    More specifically, we argue that $\boldsymbol{\Fcal}_{\epsilon}(r,y,r/\epsilon^2,r/\epsilon^2)$ satisfies the uniform bound \eqref{eq:2.12}. Indeed, observe that if $\textbf{J}_{\epsilon}(r/\epsilon^2,r/\epsilon^2;x_1,x_2,s_1,s_2)$ is defined by \eqref{6.2} but with the range of $u_1,u_2$ changed to $[-(t-r)/\epsilon^2,r/\epsilon^2]$ then, since $r\in(0,t)$ we have 
    $$\textbf{J}_{\epsilon}(r/\epsilon^2,r/\epsilon^2;x_1,x_2,s_1,s_2)\leq{J}'_{\epsilon}(t/\epsilon^2,t/\epsilon^2;x_1,x_2,s_1,s_2),$$
    for all $\epsilon$, where ${J}'_{\epsilon}(t/\epsilon^2,t/\epsilon^2;x_1,x_2,s_1,s_2)$ is also defined by \eqref{6.2} but with the range of $u_1,u_2$ changed to $[-t/\epsilon^2,t/\epsilon^2]$. For all $\epsilon$ small enough ${J}'_{\epsilon}(t/\epsilon^2,t/\epsilon^2;x_1,x_2,s_1,s_2)={J}_{\epsilon}(t/\epsilon^2,t/\epsilon^2;x_1,x_2,s_1,s_2)$, since $R_{\phi}(u_1,u_2)=0$ when $u_1\leq-1$ or $u_2\leq-1$. This proves that for $p>1$ such that $p\hat{\beta}<\hat{\beta}_c(R)$ we have
    $$\Hat{\E}_{B,t/\epsilon^2}[e^{p\textbf{J}_{\epsilon}(r/\epsilon^2,r/\epsilon^2;x_1,x_2,s_1,s_2)}]\lesssim\Hat{\E}_{B,t/\epsilon^2}[e^{p{J}_{\epsilon}(t/\epsilon^2,t/\epsilon^2;x_1,x_2,s_1,s_2)}]\lesssim1,$$
    uniformly over $x_1,x_2,s_1,s_2$ and $r$.  From this estimate, and since $g,\phi$ are compactly supported, we get that
    $$\boldsymbol{\Fcal}_{\epsilon}(r,y,r/\epsilon^2,r/\epsilon^2)\lesssim\int_{\mathbb{R}^{2}}\int_{[0,(t-r)/\epsilon^2]^2}\Hat{\E}_{B,t/\epsilon^2}[g(\epsilon x-\epsilon B^i_{(t-r)/\epsilon^2-s}+y)^q]^{1/q}\phi(s)\psi(x)d{s}d{x},$$
    where $1/p+1/q=1$. Now, by arguing in the same way as we did in the proof of \textbf{Lemma \ref{lem:bound}} we can prove that the uniform bound \eqref{eq:2.12} is satisfied by $\boldsymbol{\Fcal}_{\epsilon}(r,y,r/\epsilon^2,r/\epsilon^2)$, for all $\epsilon$ small enough. This concludes the proof.
    
\end{proof}

\appendix

\section{Estimates on the Total Path Increments}
In this appendix, we study the properties of the path increments between regeneration times. The main result is a local central limit theorem for sums of the total path increments that makes it possible to estimate the limiting distribution of additive functionals of $\omega_{X_0}(T_k)-\omega_{Y_0}(T_k)$. The latter is done in the next appendix.

\begin{myprop}\label{thm:prop4.2}
    The random variable $\mathbf{X}_1^{(\epsilon)}-\mathbf{Y}^{(\epsilon)}_1$ has zero mean. Moreover, as $\epsilon\rightarrow0$, $\mathbf{X}^{(\epsilon)}_1-\mathbf{Y}^{(\epsilon)}_1$ converges in total variation to $\sum_{j=0}^{\theta-1}\mathcal{N}_j$ where:
    \begin{itemize}
        \item $(\mathcal{N}_j)_{j\in\mathbb{N}_0}$ are i.i.d. Gaussian random variables with covariance matrix $2\cdot I_{2\times2}$.
        \item $\theta\sim Geo(\gamma)$ and is independent from the $(\mathcal{N}_j)_{j\in\mathbb{N}}$.
    \end{itemize}
     Moreover, $\mathbf{X}^{(\epsilon)}_1-\mathbf{Y}^{(\epsilon)}_1$ has exponential moments uniformly in $\epsilon$. More specifically,  we have 
     \begin{equation}\label{expbound}
         \limsup_{\epsilon\rightarrow0}\E[\exp(\langle\boldsymbol{\lambda},\mathbf{X}^{(\epsilon)}_1-\mathbf{Y}^{(\epsilon)}_1\rangle)]\lesssim1,
     \end{equation}
     for all $\boldsymbol{\lambda}\in\R^2$, small enough.  Finally, the covariance matrix of $\mathbf{X}^{(\epsilon)}_1-\mathbf{Y}^{(\epsilon)}_1$ is bounded in $\epsilon$ and  converges to $\frac{2}{\gamma}I_{2\times2}$ as $\epsilon\rightarrow0$.
\end{myprop}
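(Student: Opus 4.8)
The plan is to reduce everything to the cycle decomposition of \textbf{Section~\ref{sec:4}} and then push all four claims through the single fact that the residual transition kernel is asymptotically Wiener. By \eqref{pathincr}, \eqref{cycles} and the cycle law \eqref{eq4.10}, the vector $\mathbf{X}^{(\epsilon)}_1-\mathbf{Y}^{(\epsilon)}_1$ is equal in distribution to $\sum_{k=0}^{\theta-1}\bigl(x_k(1)-y_k(1)\bigr)$, where $(x_0,y_0)\sim W_1\times W_1$, the later pairs $(x_k,y_k)$, $k\ge1$, are generated by iterating $\frac{\boldsymbol{\hat{\pi}}^{(\epsilon)}-\gamma W_1\times W_1}{1-\gamma}$, and $\theta\sim Geo(\gamma)$ is independent of $(x_k,y_k)_{k\ge0}$. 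First I would record that
$$
\delta_\epsilon:=\sup_{z}\,d_{TV}\!\Bigl(\tfrac{\boldsymbol{\hat{\pi}}^{(\epsilon)}(z,\cdot)-\gamma W_1\times W_1}{1-\gamma},\,W_1\times W_1\Bigr)\xrightarrow[\epsilon\to0]{}0 ,
$$
which is immediate from \textbf{Proposition~\ref{thm:prop4.1}} and $d_{TV}\bigl(\tfrac{\mu-\gamma\nu}{1-\gamma},\nu\bigr)=\tfrac1{1-\gamma}d_{TV}(\mu,\nu)$, applied to $\boldsymbol{\hat{\pi}}^{(\epsilon)}=\hat{\pi}^{(\epsilon)}\times\hat{\pi}^{(\epsilon)}$.

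For the total variation convergence I would work conditionally on $\theta=N$. A step-by-step coupling of $(x_0,y_0),\dots,(x_{N-1},y_{N-1})$ with an i.i.d.\ family $(B^1_k,B^2_k)_{k<N}\sim W_1\times W_1$ — exact at step $0$ and, at each later step, a maximal coupling of the residual transition with $W_1\times W_1$ — fails with probability at most $(N-1)\delta_\epsilon$ by a union bound, so the $N$-step joint laws are within $(N-1)\delta_\epsilon$ in total variation. Conditionally on $\theta=N$, both $\mathbf{X}^{(\epsilon)}_1-\mathbf{Y}^{(\epsilon)}_1$ and $\sum_{j=0}^{N-1}\mathcal{N}_j$ (with $\mathcal{N}_j:=B^1_j(1)-B^2_j(1)\sim\mathcal{N}(0,2I_{2\times2})$, i.i.d.) are the image of these $N$ steps under the same map $(x_k,y_k)_{k<N}\mapsto\sum_{k<N}(x_k(1)-y_k(1))$; since total variation does not increase under a pushforward, averaging over $\theta$ gives
$$
d_{TV}\!\Bigl(\mathbf{X}^{(\epsilon)}_1-\mathbf{Y}^{(\epsilon)}_1,\ {\textstyle\sum_{j=0}^{\theta-1}\mathcal{N}_j}\Bigr)\ \le\ \sum_{N\ge1}\gamma(1-\gamma)^{N-1}(N-1)\,\delta_\epsilon\ =\ \tfrac{1-\gamma}{\gamma}\,\delta_\epsilon\ \to\ 0 ,
$$
the finiteness of $\E[\theta]$ being exactly what absorbs the factor $N$.

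Next I would prove \eqref{expbound}. Conditioning on $\theta=N$ and applying \textbf{Lemma~\ref{thm:lem4.1}} together with \textbf{Remark~\ref{Mixingremark}} to the nonnegative functional $(x_k,y_k)_{k<N}\mapsto\exp\!\bigl(\langle\boldsymbol{\lambda},\sum_{k<N}(x_k(1)-y_k(1))\rangle\bigr)$ — replacing every step by an independent Brownian motion — bounds the conditional expectation by $\mathcal{B}_\epsilon^{N}\,\E\bigl[\exp\langle\boldsymbol{\lambda},B^1(1)-B^2(1)\rangle\bigr]^{N}=(\mathcal{B}_\epsilon e^{|\boldsymbol{\lambda}|^2})^{N}$, using the Gaussian moment generating function of $\mathcal{N}(0,2I_{2\times2})$. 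Summing the geometric series over $\theta$ yields
$$
\E\bigl[\exp\langle\boldsymbol{\lambda},\mathbf{X}^{(\epsilon)}_1-\mathbf{Y}^{(\epsilon)}_1\rangle\bigr]\ \le\ \frac{\gamma\,\mathcal{B}_\epsilon e^{|\boldsymbol{\lambda}|^2}}{1-(1-\gamma)\mathcal{B}_\epsilon e^{|\boldsymbol{\lambda}|^2}} ,
$$
which is finite and uniformly bounded for $\epsilon$ small whenever $(1-\gamma)e^{|\boldsymbol{\lambda}|^2}<1$ (recall $\mathcal{B}_\epsilon\to1$); this is \eqref{expbound}, and in particular $\mathbf{X}^{(\epsilon)}_1-\mathbf{Y}^{(\epsilon)}_1$ is integrable uniformly in $\epsilon$. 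The mean is then $0$ by symmetry: $\boldsymbol{\hat{\pi}}^{(\epsilon)}$ and $W_1\times W_1$ are invariant under exchanging the two coordinates, hence so is the residual kernel, so the pair of paths in \eqref{eq4.10} is exchangeable in the two components; therefore $\mathbf{X}^{(\epsilon)}_1-\mathbf{Y}^{(\epsilon)}_1\overset{d}{=}-(\mathbf{X}^{(\epsilon)}_1-\mathbf{Y}^{(\epsilon)}_1)$.

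Finally, for the covariance: boundedness in $\epsilon$ follows from \eqref{expbound} via $|v|^2\lesssim e^{\delta|v|}$ and bounding $\E[e^{\delta|\mathbf{X}^{(\epsilon)}_1-\mathbf{Y}^{(\epsilon)}_1|}]$ by the sum of the exponential moments over the four directions $\boldsymbol{\lambda}=\pm\delta\mathbf{e}_1,\pm\delta\mathbf{e}_2$. For the limit, the uniform exponential moments give uniform integrability of $|\mathbf{X}^{(\epsilon)}_1-\mathbf{Y}^{(\epsilon)}_1|^2$, which together with the convergence in distribution already obtained upgrades to convergence of the covariance matrices towards $\mathrm{Cov}\bigl(\sum_{j=0}^{\theta-1}\mathcal{N}_j\bigr)$; since the $\mathcal{N}_j$ are i.i.d.\ centered with covariance $2I_{2\times2}$ and $\theta\sim Geo(\gamma)$ is independent, Wald's identity gives $\mathrm{Cov}\bigl(\sum_{j=0}^{\theta-1}\mathcal{N}_j\bigr)=\E[\theta]\cdot2I_{2\times2}=\tfrac{2}{\gamma}I_{2\times2}$. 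The only genuinely delicate point is the interplay between the $\epsilon\to0$ total-variation estimate and the sum over the random cycle length $\theta$, dealt with by the $\E[\theta]<\infty$ bookkeeping above; note also that, because \textbf{Lemma~\ref{thm:lem4.1}} is phrased for nonnegative functionals, the moment estimates must be routed through the exponential rather than taken directly.
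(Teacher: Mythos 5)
Your proposal is correct, and the only part where you genuinely deviate from the paper is the total variation convergence. The paper gets it from the sandwich inequality \eqref{eq4.11}: for every Borel set $A$ it bounds $\P(\mathbf{X}^{(\epsilon)}_1-\mathbf{Y}^{(\epsilon)}_1\in A)$ above and below by geometric sums weighted by $\mathcal{A}_\epsilon^N,\mathcal{B}_\epsilon^N$, and then observes that the resulting error $\sum_N\gamma(1-\gamma)^{N-1}\mathcal{C}_\epsilon(N)$ is uniform over $A$. You instead build an explicit step-by-step maximal coupling of the cycle with an i.i.d. Wiener family, pay $(N-1)\delta_\epsilon$ per cycle of length $N$, push forward, and average over $\theta$; the identity $d_{TV}\bigl(\tfrac{\mu-\gamma\nu}{1-\gamma},\nu\bigr)=\tfrac{1}{1-\gamma}d_{TV}(\mu,\nu)$ and $\E[\theta]<\infty$ do the bookkeeping. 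Both arguments rest on the same input (\textbf{Proposition \ref{thm:prop4.1}}, i.e. uniform closeness of the kernel to $W_1$), so the difference is one of packaging: your coupling gives a clean quantitative rate $\tfrac{1-\gamma}{\gamma}\delta_\epsilon$ without having to argue uniformity over sets separately, while the paper's sandwich recycles the same Lemma \ref{thm:lem4.1} machinery it uses everywhere else. Your treatment of the exponential moments, the zero mean by exchangeability, and the covariance (uniform integrability plus Wald) coincides with the paper's, which leaves the covariance step as "a similar calculation and tightness"; your version just spells it out.
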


\begin{proof}
     By symmetry, $\mathbf{X}^{(\epsilon)}_1-\mathbf{Y}^{(\epsilon)}_1$  has zero mean. In fact, it is easy to see that $\mathbf{X}^{(\epsilon)}_1-\mathbf{Y}^{(\epsilon)}_1$ is a symmetric random variable.\par
    { First, we show that $\mathbf{X}^{(\epsilon)}_1-\mathbf{Y}^{(\epsilon)}_1$ converges to a Gaussian. Inequality \eqref{eq4.11} implies that
    \begin{equation}\label{mixingest}
        \sum_{N=1}^{\infty}\gamma(1-\gamma)^{N-1}\mathcal{A}_{\epsilon}^N\P\biggl(\sum_{j=0}^{N-1}\mathcal{N}_j\in A\biggr)\leq\P(\mathbf{X}^{(\epsilon)}_1-\mathbf{Y}^{(\epsilon)}_1\in A)\leq\sum_{N=1}^{\infty}\gamma(1-\gamma)^{N-1}\mathcal{B}_{\epsilon}^N\P\biggl(\sum_{j=0}^{N-1}\mathcal{N}_j\in A\biggr),
    \end{equation}
    where $\mathcal{N}_j$ are i.i.d. Gaussian random variables with covariance matrix $2\cdot I_{2\times2}$, and where $\mathcal{A}_\epsilon\textit{, }\mathcal{B}_\epsilon\rightarrow1$ as $\epsilon\rightarrow0$.
     From dominated convergence
    $$\lim_{\epsilon\rightarrow0}\P\biggl(\mathbf{X}^{(\epsilon)}_1-\mathbf{Y}^{(\epsilon)}_1\in A\biggr)= \P\biggl(\sum_{j=0}^{\theta-1}\mathcal{N}_j\in A\biggr).$$
    We prove that this convergence happens uniformly over $A$. Observe that \eqref{mixingest} implies that for $\mathcal{C}_\epsilon(N)=(\mathcal{B}_\epsilon^N-1)\vee(1-\mathcal{A}_\epsilon^N)$\footnote{Observe that $\mathcal{A}_{\epsilon}\leq1\leq\mathcal{B}_\epsilon$.} and for a generic $\theta\sim Geo(\gamma)$, independent from $(\mathcal{N}_j)_{j\in\mathbb{N}}$, we have
    $$\biggl|\P(\mathbf{X}^{(\epsilon)}_1-\mathbf{Y}^{(\epsilon)}_1\in A)-\P\biggl(\sum_{j=0}^{\theta-1}\mathcal{N}_j\in A\biggr)\biggr|\leq\sum_{N=1}^{\infty}\gamma(1-\gamma)^{N-1}\mathcal{C}_\epsilon(N)\P\biggl(\sum_{j=0}^{N-1}\mathcal{N}_j\in A\biggr).$$
   For all Borel sets $A\subseteq\mathbb{R}^2$, the right-hand side of the above inequality is bounded above by
   $$\sum_{N=1}^{\infty}\gamma(1-\gamma)^{N-1}\mathcal{C}_\epsilon(N),$$
   which converges to $0$ as $\epsilon\rightarrow0$. Therefore, $\mathbf{X}^{(\epsilon)}_1-\mathbf{Y}^{(\epsilon)}_1$ converges in total variation to $\sum_{j=0}^{\theta-1}\mathcal{N}_j$.}\par
    We use \eqref{eq4.11} to show \eqref{expbound}. We choose $F(\mathbf{x})=\exp(\langle\boldsymbol{\lambda},\mathbf{x}\rangle/c)$. Then, from \eqref{eq4.11}, there is a constant $\mathcal{B}_{\epsilon}$:
    $$\E[F(\mathbf{X}^{(\epsilon)}_1-\mathbf{Y}^{(\epsilon)}_1)]\leq\sum_{N=1}^{\infty}\mathcal{B}_{\epsilon}^N\gamma(1-\gamma)^{N-1}\E_{\mathbf{X}^{(N)},\mathbf{Y}^{(N)}}[F(\mathbf{X}^{(N)}-\mathbf{Y}^{(N)})],$$
    where $(\mathbf{X}^{(N)},\mathbf{Y}^{(N)})$ are as in \eqref{eq4.11}. Since $\mathbf{X}^{(N)},\mathbf{Y}^{(N)}$ are Gaussian, we get:
    $$\E[F(\mathbf{X}^{(\epsilon)}_1-\mathbf{Y}^{(\epsilon)}_1)]\leq\sum_{N=1}^{\infty}\mathcal{B}_{\epsilon}^N\gamma(1-\gamma)^{N-1}e^{N|\boldsymbol{\lambda}|^2},$$
    which is finite for all $\epsilon>0$, for  $\boldsymbol{\lambda}$ small enough (recall that $\mathcal{B}_{\epsilon}\rightarrow1$ as $\epsilon\rightarrow0$).\par
    The last part of the proposition follows from a similar calculation as above, and tightness. 
\end{proof}

\begin{remark}\label{RemarkFour}
   Inequality \eqref{mixingest} implies that $\mathbf{X}^{(\epsilon)}_1-\mathbf{Y}^{(\epsilon)}_1$ has a density with respect to the Lebesgue measure and from the Lebesgue differentiation lemma, this density is in $L^{\infty}(\R^2)$. Now, we observe that for two measures $\mu,\nu$ that are absolutely continuous with respect to the Lebesgue measure, their total variation distance can be written as 
   \[d_{TV}(\mu,\nu)=\frac{1}{2}\int_{\R^2}\biggl|\frac{d\mu}{dx}-\frac{d\nu}{dx}\biggr|dx.\]
   This observation, combined with \textbf{Proposition \ref{thm:prop4.2}}, implies  that the density of $\mathbf{X}^{(\epsilon)}_1-\mathbf{Y}^{(\epsilon)}_1$ with respect to the Lebesgue measure converges in $L^1$ to the corresponding density of $\sum_{j=0}^{\theta-1}\mathcal{N}_j$. This implies that the characteristic function of $\mathbf{X}^{(\epsilon)}_1-\mathbf{Y}^{(\epsilon)}_1$ converges uniformly to the characteristic function of $\sum_{j=0}^{\theta-1}\mathcal{N}_j$. It is also easy to see that all mixed moments of $\mathbf{X}^{(\epsilon)}_1-\mathbf{Y}^{(\epsilon)}_1$ are bounded as $\epsilon\rightarrow0$.  
\end{remark}

The observations in \textbf{Remark \ref{RemarkFour}} lead us to the following central limit theorem (and its local version):\par

\begin{myprop}\label{prop4.4}
    Let $(X_0,Y_0)\sim W_1\times W_1$. Then as $k\rightarrow\infty$ and $\epsilon\rightarrow0$,  the random variable
    $$\frac{1}{\sqrt{k}}(\omega_{X_0}(T_k)-\omega_{Y_0}(T_k))$$
    converges in distribution to a normal random variable in $\mathbb{R}^2$ with covariance matrix $\frac{2}{\gamma}I_{2\times2}$. Furthermore, if  $F_k^{(\epsilon)}$ is the density  of $\omega_{X_0}(T_k)-\omega_{Y_0}(T_k)$ with respect to the Lebesque measure, then
    \begin{equation}\label{local}
        \sup_{\mathbf{X}\in\mathbb{R}^2}|kF_k^{(\epsilon)}(\mathbf{X})-\frac{\gamma}{4\pi}\exp(-\gamma|\mathbf{X}|^2/4k)|\rightarrow0,
    \end{equation}
        as $k\rightarrow\infty$ and $\epsilon\rightarrow0$.\newline\par
\end{myprop}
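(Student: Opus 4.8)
The plan is to run the classical Fourier argument for sums of i.i.d.\ random variables, taking care that every estimate is uniform in $\epsilon$ so that the joint limit $k\to\infty,\ \epsilon\to0$ goes through. The starting point is that, by \eqref{Randomwalks}, $\omega_{X_0}(T_k)-\omega_{Y_0}(T_k)=\sum_{j=0}^{k-1}\mathbf{W}^{(\epsilon)}_j$ with $\mathbf{W}^{(\epsilon)}_j:=\mathbf{X}^{(\epsilon)}_j-\mathbf{Y}^{(\epsilon)}_j$, and by the regenerative structure of \textbf{Section \ref{sec:4}} the $\mathbf{W}^{(\epsilon)}_j$ are i.i.d.\ copies of the variable $\mathbf{W}^{(\epsilon)}:=\mathbf{X}^{(\epsilon)}_1-\mathbf{Y}^{(\epsilon)}_1$ analyzed in \textbf{Proposition \ref{thm:prop4.2}}. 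I will write $\hat\mu_\epsilon(\boldsymbol\xi):=\E[e^{i\langle\boldsymbol\xi,\mathbf{W}^{(\epsilon)}\rangle}]$ for its characteristic function, $\Sigma_\epsilon$ for its covariance matrix and $f_\epsilon$ for its density, and will use from \textbf{Proposition \ref{thm:prop4.2}} and \textbf{Remark \ref{RemarkFour}} that $\E[\mathbf{W}^{(\epsilon)}]=0$, that $\Sigma_\epsilon\to\tfrac2\gamma I_{2\times2}$, that $\mathbf{W}^{(\epsilon)}$ has exponential moments uniformly in $\epsilon$ small and hence uniformly bounded moments of every order, that $\hat\mu_\epsilon\to\hat\mu_0$ uniformly on $\R^2$ (with $\hat\mu_0$ the characteristic function of $\sum_{j=0}^{\theta-1}\mathcal{N}_j$), and --- bounding each Gaussian density appearing in \eqref{eq4.11} by $\tfrac1{4\pi}$ --- that $\sup_{\epsilon\text{ small}}\|f_\epsilon\|_\infty<\infty$, so that $f_\epsilon\in L^1\cap L^\infty\subseteq L^2$ and $\sup_{\epsilon\text{ small}}\|\hat\mu_\epsilon\|_{L^2}<\infty$ by Plancherel.

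For the distributional statement I would Taylor-expand $\hat\mu_\epsilon$ at the origin: the zero-mean property together with the uniform bound on third moments gives a constant $C$ and a fixed neighbourhood of $0$ on which $|\hat\mu_\epsilon(\boldsymbol\zeta)-1+\tfrac12\boldsymbol\zeta^{*}\Sigma_\epsilon\boldsymbol\zeta|\le C|\boldsymbol\zeta|^3$ for all $\epsilon$ small. Since the characteristic function of $k^{-1/2}(\omega_{X_0}(T_k)-\omega_{Y_0}(T_k))$ is $\hat\mu_\epsilon(\boldsymbol\xi/\sqrt k)^k$, taking logarithms yields $k\log\hat\mu_\epsilon(\boldsymbol\xi/\sqrt k)=-\tfrac12\boldsymbol\xi^{*}\Sigma_\epsilon\boldsymbol\xi+O(|\boldsymbol\xi|^3k^{-1/2})$, which tends to $-\tfrac1\gamma|\boldsymbol\xi|^2$ as $k\to\infty,\ \epsilon\to0$ using $\Sigma_\epsilon\to\tfrac2\gamma I$; Lévy's continuity theorem then gives convergence in distribution to the centered Gaussian with covariance $\tfrac2\gamma I_{2\times2}$.

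For the local limit theorem \eqref{local} I would use Fourier inversion. As $|\hat\mu_\epsilon|\le1$ and $\hat\mu_\epsilon\in L^2$, one has $\hat\mu_\epsilon^{\,k}\in L^1$ for $k\ge2$, so $kF_k^{(\epsilon)}(\mathbf X)=\frac1{(2\pi)^2}\int_{\R^2}e^{-i\langle\boldsymbol\eta,\mathbf X\rangle/\sqrt k}\hat\mu_\epsilon(\boldsymbol\eta/\sqrt k)^k\,d\boldsymbol\eta$ after the substitution $\boldsymbol\xi=\boldsymbol\eta/\sqrt k$, and since $\tfrac\gamma{4\pi}e^{-\gamma|\mathbf X|^2/4k}$ admits the same representation with $\hat\mu_\epsilon(\boldsymbol\eta/\sqrt k)^k$ replaced by $e^{-|\boldsymbol\eta|^2/\gamma}$, the supremum in \eqref{local} is at most $\frac1{(2\pi)^2}\int_{\R^2}|\hat\mu_\epsilon(\boldsymbol\eta/\sqrt k)^k-e^{-|\boldsymbol\eta|^2/\gamma}|\,d\boldsymbol\eta$. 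I would split this at $|\boldsymbol\eta|=\delta\sqrt k$ for a small fixed $\delta$. On $|\boldsymbol\eta|\le\delta\sqrt k$ the Taylor bound together with $\Sigma_\epsilon\to\tfrac2\gamma I$ gives, for $\delta$ and $\epsilon$ small, $|\hat\mu_\epsilon(\boldsymbol\eta/\sqrt k)|\le 1-\tfrac1{4\gamma}|\boldsymbol\eta/\sqrt k|^2\le e^{-|\boldsymbol\eta|^2/4\gamma k}$, hence $|\hat\mu_\epsilon(\boldsymbol\eta/\sqrt k)^k|\le e^{-|\boldsymbol\eta|^2/4\gamma}$, a fixed integrable majorant; combined with the pointwise limit from the previous paragraph, dominated convergence kills this part. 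On $|\boldsymbol\eta|\ge\delta\sqrt k$ the contribution of $e^{-|\boldsymbol\eta|^2/\gamma}$ is a vanishing Gaussian tail, while reverting the substitution gives $\int_{|\boldsymbol\eta|\ge\delta\sqrt k}|\hat\mu_\epsilon(\boldsymbol\eta/\sqrt k)^k|\,d\boldsymbol\eta=k\int_{|\boldsymbol\xi|\ge\delta}|\hat\mu_\epsilon(\boldsymbol\xi)|^k\,d\boldsymbol\xi\le k\rho^{\,k-2}\sup_{\epsilon\text{ small}}\|\hat\mu_\epsilon\|_{L^2}^2$, where $\rho:=\sup\{|\hat\mu_\epsilon(\boldsymbol\xi)|:|\boldsymbol\xi|\ge\delta,\ \epsilon\text{ small}\}$.

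The main obstacle I anticipate is proving $\rho<1$, i.e.\ a strict, uniform-in-$\epsilon$ spectral-gap estimate away from the origin that also controls the tail $|\boldsymbol\xi|\to\infty$. The way I would obtain it is: $\hat\mu_0$ is explicit, $\hat\mu_0(\boldsymbol\xi)=\sum_{n\ge1}\gamma(1-\gamma)^{n-1}e^{-n|\boldsymbol\xi|^2}=\gamma e^{-|\boldsymbol\xi|^2}/(1-(1-\gamma)e^{-|\boldsymbol\xi|^2})$, so $0<\hat\mu_0(\boldsymbol\xi)<1$ for $\boldsymbol\xi\ne0$ and $\hat\mu_0(\boldsymbol\xi)\to0$ as $|\boldsymbol\xi|\to\infty$; the uniform convergence $\hat\mu_\epsilon\to\hat\mu_0$ from \textbf{Remark \ref{RemarkFour}} then upgrades this to $\sup_{|\boldsymbol\xi|\ge\delta,\ \epsilon<\epsilon_0}|\hat\mu_\epsilon(\boldsymbol\xi)|\le\rho<1$ (only small $\epsilon$ is needed, which suffices for the double limit). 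A minor secondary point is justifying the Fourier inversion, for which the uniform $L^\infty$ bound on $f_\epsilon$ coming from \eqref{eq4.11} is used; note also that the distributional statement can be read off \emph{a posteriori} from \eqref{local} by rescaling $\mathbf X\mapsto\sqrt k\,\mathbf x$, so in principle only the local version needs a separate proof.
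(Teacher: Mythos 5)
Your proposal is correct and follows essentially the same route as the paper: both parts rest on the i.i.d.\ structure of the increments $\mathbf{X}^{(\epsilon)}_j-\mathbf{Y}^{(\epsilon)}_j$, a Taylor expansion of the characteristic function with uniformly (in $\epsilon$) bounded moments for the CLT, and Fourier inversion for \eqref{local}, with the high-frequency regime controlled by the uniform-in-$\epsilon$ $L^2$ bound on the density from \eqref{eq4.11} together with the uniform convergence of $\hat\mu_\epsilon$ to the explicit geometric mixture of Gaussians. The only (cosmetic) difference is that you split the inversion integral once at $|\boldsymbol\eta|=\delta\sqrt k$ and supply the dominated-convergence argument with the Gaussian majorant explicitly, whereas the paper splits at $A\sqrt k$ and defers that step to the standard local-limit-theorem arguments in the cited reference.
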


\begin{proof}
     Recall that $\mathbf{X}^{(\epsilon)}_1-\mathbf{Y}^{(\epsilon)}_1$  has zero mean. The central limit theorem is proved via a standard calculation of the characteristic function. In particular, from \eqref{Randomwalks} we have
    $$\phi_{\frac{1}{\sqrt{k}}(\omega_{X_0}(T_k)-\omega_{Y_0}(T_k))}(\mathbf{\eta})=(\phi_{\mathbf{X}^{(\epsilon)}_1-\mathbf{Y}^{(\epsilon)}_1}(\mathbf{\eta}/\sqrt{k}))^k,$$
    where $\phi_X$ denotes the characteristic function of the random variable $X$.\par
    As we observed in \textbf{Remark \ref{RemarkFour}}, all mixed moments of the vector $\mathbf{X}_1^{(\epsilon)}-\mathbf{Y}^{(\epsilon)}_1$ are bounded as $\epsilon\rightarrow0$. Therefore, a Taylor expansion yields
    $$\phi_{\frac{1}{\sqrt{k}}(\omega_{X_0}(T_k)-\omega_{Y_0}(T_k))}(\mathbf{\eta})=\biggl(1-\frac{\langle\mathbf{\eta},\mathbb{E}[(\mathbf{X}_1^{(\epsilon)}-\mathbf{Y}_1^{(\epsilon)})\otimes(\mathbf{X}_1^{(\epsilon)}-\mathbf{Y}_1^{(\epsilon)})]\mathbf{\eta}\rangle}{2k}+\textbf{R}_{\epsilon,k}(\mathbf{\eta})\biggr)^k,$$
    where the error term $\textbf{R}_{\epsilon,k}(\mathbf{\eta})$ satisfies the bound
    $$\limsup_{\epsilon\rightarrow0}|\mathbf{R}_{\epsilon,k}(\mathbf{\eta})|=o(1/k)|\mathbf{\eta}|^2.$$
     From \textbf{Proposition \ref{thm:prop4.2}}
    $$\mathbb{E}[(\mathbf{X}_1^{(\epsilon)}-\mathbf{Y}_1^{(\epsilon)})\otimes(\mathbf{X}_1^{(\epsilon)}-\mathbf{Y}_1^{(\epsilon)})]\rightarrow2/\gamma I_{2\times2},$$
    as $\epsilon\rightarrow0$. Therefore:
    \begin{equation}\label{eq4.8}
        \phi_{\frac{1}{\sqrt{k}}(\omega_{X_0}(T_k)-\omega_{Y_0}(T_k))}(\mathbf{\eta})\rightarrow\exp(-|\mathbf{\eta}|^2/\gamma),
    \end{equation}
    as $k\rightarrow\infty$, $\epsilon\rightarrow0$.\par
    {From \textbf{Remark \ref{RemarkFour}}, the density of $\mathbf{X}_1^{(\epsilon)}-\mathbf{Y}^{(\epsilon)}_1$ is in $L^{\infty}(\R^2)$. From Plancherel's theorem, this implies that $\phi_{\omega_{X_0}(T_k)-\omega_{Y_0}(T_k)}$ is in $L^1(\R^2)$ for all $k\geq2$. Therefore, if $F_k^{(\epsilon)}(\mathbf{x})$ is the probability density of $\omega_{X_0}(T_k)-\omega_{Y_0}(T_k)$, then by Fourier inversion, 
    \begin{equation}\label{Fourierinv}
        F_k^{(\epsilon)}(\mathbf{x})=\frac{1}{(2\pi)^2}\int_{\mathbb{R}^2}\exp(-i\langle\mathbf{x},\mathbf{\eta}\rangle)\phi_{\omega_{X_0}(T_k)-\omega_{Y_0}(T_k)}(\mathbf{\eta})d\mathbf{\eta}.
    \end{equation}
    To prove \eqref{local}, we go through the following steps:
    \begin{enumerate}[label=\textbf{F.\arabic*}]
        \item $\mathbf{X}_1^{(\epsilon)}-\mathbf{Y}^{(\epsilon)}_1$ has a density  $f_{\mathbf{X}_1^{(\epsilon)}-\mathbf{Y}_1^{(\epsilon)}}\in L^2(\mathbb{R}^2)$. There is an $\epsilon_0'>0$, such that
        \begin{equation}\label{densitybound}
        \sup_{\epsilon\leq\epsilon_0'}\int_{\mathbb{R}^2}f_{\mathbf{X}_1^{(\epsilon)}-\mathbf{Y}_1^{(\epsilon)}}(\mathbf{x})^2d\mathbf{x}\lesssim1.
    \end{equation}
    \item There is an $\epsilon_0''$ and an $A$ such that
    \begin{equation}\label{firstred}
        F_k^{(\epsilon)}(\mathbf{x})=\frac{1}{(2\pi)^2k}\int_{|\mathbf{\eta}|\leq A\sqrt{k}}\exp(-i\langle\mathbf{x},\mathbf{\eta}\rangle/\sqrt{k})\phi_{\omega_{X_0}(T_k)-\omega_{Y_0}(T_k)}(\mathbf{\eta}/\sqrt{k})d\mathbf{\eta}+\mathcal{R}_{\epsilon,k},
    \end{equation}
    with the error term $\mathcal{R}_{\epsilon,k}$ satisfying the bound
    $$\sup_{\epsilon\leq\epsilon_0''}|\mathcal{R}_{\epsilon,k}|\lesssim2^{-k}.$$
    \item As $k\rightarrow\infty$ and $\epsilon\rightarrow0$
         \begin{equation}\label{secondred}
            \int_{|\mathbf{\eta}|\leq A\sqrt{k}}|\phi_{\omega_{X_0}(T_k)-\omega_{Y_0}(T_k)}(\mathbf{\eta}/\sqrt{k})-\exp(-|\mathbf{\eta}|^2/\gamma)|d\mathbf{\eta}\rightarrow0.
         \end{equation}
    \item As $k\rightarrow\infty$,
         \begin{equation}\label{Gaussianint}
             \frac{1}{(2\pi)^2}\int_{|\mathbf{\eta}|\leq A\sqrt{k}}\exp(-i\langle\mathbf{x},\mathbf{\eta}\rangle/\sqrt{k})\exp(-|\mathbf{\eta}|^2/\gamma)d\mathbf{\eta}=\frac{\gamma}{4\pi}\exp(-\gamma|\mathbf{x}|^2/4k)+o(1).
         \end{equation}      
    \end{enumerate}
    Equations \eqref{firstred}-\eqref{Gaussianint} prove that:
    $$F_k^{(\epsilon)}(\mathbf{x})=\frac{\gamma}{4\pi k}\exp(-\gamma|\mathbf{x}|^2/4k)+\mathcal{R}_{\epsilon,k}'(x)\cdot1/k,$$
    where the error term $\sup_{x\in\R^2}|\mathcal{R}_{\epsilon,k}'(x)|$ goes to $0$, as $k\rightarrow\infty$ and $\epsilon\rightarrow0$. This proves \eqref{local}.}\par  
    {To prove \eqref{densitybound}, recall that from \textbf{Remark \ref{RemarkFour}}, $\mathbf{X}_1^{(\epsilon)}-\mathbf{Y}_1^{(\epsilon)}$ has a density with respect to the Lebesgue measure, which we denote by $f_{\mathbf{X}_1^{(\epsilon)}-\mathbf{Y}_1^{(\epsilon)}}$. From \eqref{eq4.11}
    $$\int_{\mathbb{R}^2}f_{\mathbf{X}_1^{(\epsilon)}-\mathbf{Y}_1^{(\epsilon)}}(\mathbf{x})^2d\mathbf{x}=\mathbb{E}_{\mathbf{X}_1^{(\epsilon)},\mathbf{Y}_1^{(\epsilon)}}[f_{\mathbf{X}_1^{(\epsilon)}-\mathbf{Y}_1^{(\epsilon)}}(\mathbf{X}_1^{(\epsilon)}-\mathbf{Y}_1^{(\epsilon)})]\leq$$
    $$\sum_{N=1}^{\infty}B_{\epsilon}^N\gamma(1-\gamma)^{N-1}\mathbb{E}_{\mathbf{X}^{(N)},\mathbf{Y}^{(N)}}[f_{\mathbf{X}_1^{(\epsilon)}-\mathbf{Y}_1^{(\epsilon)}}(\mathbf{X}^{(N)}-\mathbf{Y}^{(N)})],$$
    where $(\mathbf{X}^{(N)},\mathbf{Y}^{(N)})$ are as in \eqref{eq4.11} and $\mathcal{B}_{\epsilon}\rightarrow1$ as $\epsilon\rightarrow0$. Since the random variables $\mathbf{X}^{(N)},\mathbf{Y}^{(N)}$ are Gaussian, we have:
    $$\int_{\mathbb{R}^2}f_{\mathbf{X}_1^{(\epsilon)}-\mathbf{Y}_1^{(\epsilon)}}(\mathbf{x})^2d\mathbf{x}\lesssim\sum_{N=1}^{\infty}\frac{1}{N}B_{\epsilon}^N\gamma(1-\gamma)^{N-1}\int_{\mathbb{R}^2}f_{\mathbf{X}_1^{(\epsilon)}-\mathbf{Y}_1^{(\epsilon)}}(\mathbf{x})d\mathbf{x}\lesssim1,$$
    for all $\epsilon\leq\epsilon_0'$, where $\epsilon_0'$ is such that
    $\sup_{\epsilon\leq\epsilon_0'}\mathcal{B}_{\epsilon}<\frac{1}{(1+\gamma)(1-\gamma)}$. This proves \eqref{densitybound}.}\par
    {To prove \eqref{firstred} we do a change of variables $\mathbf{\eta}\rightarrow\mathbf{\eta}/\sqrt{k}$ in the integral in \eqref{Fourierinv}. Then, we split the integral over the regions $|\mathbf{\eta}|> A\sqrt{k}$ and $|\mathbf{\eta}|\leq A\sqrt{k}$. The proof of \eqref{firstred} reduces to proving that there is an $A$ and an $\epsilon''_0$ such that:
    \begin{equation}\label{smallterm}
        \sup_{\epsilon\leq\epsilon_0''}\biggl|\frac{1}{k}\int_{|\mathbf{\eta}|> A\sqrt{k}}\phi_{\omega_{X_0}(T_k)-\omega_{Y_0}(T_k)}(\mathbf{\eta}/\sqrt{k})d\mathbf{\eta}\biggr|\lesssim2^{-k}.
    \end{equation}
    We have that:
    \begin{equation}\label{basicfourierbound}
        \frac{1}{k}\int_{|\mathbf{\eta}|> A\sqrt{k}}|\phi_{\omega_{X_0}(T_k)-\omega_{Y_0}(T_k)}(\mathbf{\eta}/\sqrt{k})|d\eta\leq\sup_{|\mathbf{\eta}|> A}|\phi_{\mathbf{X}_1^{(\epsilon)}-\mathbf{Y}_1^{(\epsilon)}}(\mathbf{\eta})|^{k-2}\frac{1}{k}\int_{\mathbb{R}^2}|\phi_{\mathbf{X}_1^{(\epsilon)}-\mathbf{Y}_1^{(\epsilon)}}(\mathbf{\eta}/\sqrt{k})|^{2}d\mathbf{\eta}.
    \end{equation}
    The second term in the right-hand side of \eqref{basicfourierbound}, after a change of variables $\mathbf{\eta}\rightarrow\sqrt{k}\mathbf{\eta}$ and Plancerel's theorem,  is equal to $(2\pi)^2\int_{\mathbb{R}^2}f_{\mathbf{X}_1^{(\epsilon)}-\mathbf{Y}_1^{(\epsilon)}}(\mathbf{x})^2d\mathbf{x}$ which, from \eqref{densitybound}, is bounded in $\epsilon\leq\epsilon_0'$.}\par
    {The first term on the right-hand side of \eqref{basicfourierbound} is exponentially small. Indeed,
    from \textbf{Proposition \ref{thm:prop4.2}}, for all $\mathbf{\eta}$
    $$\phi_{\mathbf{X}_1^{(\epsilon)}-\mathbf{Y}_1^{(\epsilon)}}(\mathbf{\eta})\rightarrow\phi_{\mathbf{Y}}(\mathbf{\eta}),$$ 
    where $\mathbf{Y}=\sum_{i=0}^{\theta-1}\mathcal{N}_i$, with $\theta$ and $(\mathcal{N}_{i})_{i\in\mathbb{N}_0}$ as in \textbf{Proposition \ref{thm:prop4.2}}. From \textbf{Remark \ref{RemarkFour}}, this convergence happens uniformly in $\mathbf{\eta}$.  A straightforward calculation shows that:
    $$\phi_{\mathbf{Y}}(\mathbf{\eta})=\frac{\gamma}{e^{|\mathbf{\eta}|^2/2}-(1-\gamma)}.$$
    Therefore, there is an $A$ such that
    $\sup_{|\mathbf{\eta}|> A}|\phi_{\mathbf{Y}}(\mathbf{\eta})|\leq1/4$. This implies that, there is an $\epsilon_0''\leq\epsilon_0'$ such that for all $\epsilon\leq\epsilon_0''$ 
    $$\sup_{|\mathbf{\eta}|\geq A}|\phi_{\mathbf{X}_1^{(\epsilon)}-\mathbf{Y}_1^{(\epsilon)}}(\mathbf{\eta})|\leq1/2,$$
    which proves \eqref{smallterm}.}\par    
     {The proof of \eqref{secondred} goes along the same lines as the proofs of local limit theorems found in \cite{lawler2010random}. We skip the details. Finally, Gaussian integration proves \eqref{Gaussianint} and this concludes the proof.}
\end{proof}

\section{Kallianpur-Robbins Estimates}\label{sec:app1}
{Let $(X_0,Y_0)\sim W_1\times W_1$ and $S_n=\omega_{X_0}(T_{n})-\omega_{Y_0}(T_{n})$. Recall that, from \textbf{Proposition \ref{prop4.4}},
 \begin{equation}\label{loaclest}
     F^{(\epsilon)}_n(x)=\frac{\gamma}{4\pi k}e^{-\gamma|x|^2/4k}+\mathcal{R}_{\epsilon,k}(x)\cdot1/k,
 \end{equation}
    for all $x\in\mathbb{R}^2$, where $\sup_{x\in\mathbb{R}^2}|\mathcal{R}_{\epsilon,k}(x)|\rightarrow0$ as $k\rightarrow\infty$, $\epsilon\rightarrow0$. These precise asymptotics for the density $F^{(\epsilon)}_n$ allow us to prove results on the occupation times of $S_n$.}\par
 We will make use of the following elementary estimates. Here, $M(\epsilon)$ is such that $\log M(\epsilon)/\log \frac{1}{\epsilon}\rightarrow0$ as $\epsilon\rightarrow0$. We have
\begin{equation}\label{elementary}
    \frac{1}{(\log \frac{1}{\epsilon})^p}\sum_{\substack{1\leq k_1<...<k_p\leq [M(\epsilon)/\epsilon^2]}}\frac{1}{k_1(k_2-k_1)...(k_p-k_{p-1})}\rightarrow2,
    \end{equation}
    as $\epsilon\rightarrow0$ and for $j=1,...,p$
\begin{equation}\label{elementary2}
    \frac{1}{(\log \frac{1}{\epsilon})^p}\sum_{\substack{1\leq k_1<...<k_p\leq [M(\epsilon)/\epsilon^2]}}\frac{|\mathcal{R}_{\epsilon, k_j-k_{j-1}}(x)|\prod_{m>j}^p(1+|\mathcal{R}_{\epsilon,k_m-k_{m-1}}(x)|)}{k_1(k_2-k_1)...(k_p-k_{p-1})}=o(1),
\end{equation}
as $\epsilon\rightarrow0$, where $\mathcal{R}_{\epsilon,k}$ is as in \eqref{loaclest}. Similarly, for $j=1,...,p$
\begin{equation}\label{elementary3}
    \frac{1}{(\log \frac{1}{\epsilon})^p}\sum_{\substack{1\leq k_1<...<k_p\leq [M(\epsilon)/\epsilon^2]}}\frac{(k_j-k_{j-1})^{-1}}{k_1(k_2-k_1)...(k_p-k_{p-1})}=o(1),
\end{equation}
as $\epsilon\rightarrow0$.\par
The proof of \textbf{Proposition \ref{propA.1}} is similar to the proofs of the results in \cite{Kallianpur1954TheSO}.

\begin{myprop}\label{propA.1}
Let $M(\epsilon)$ be such that $\log M(\epsilon)/\log \frac{1}{\epsilon}\rightarrow0$ as $\epsilon\rightarrow0$. Also let $(H_\epsilon)_{\epsilon>0}\subseteq L^1(\mathbb{R}^{2p})$ be a family of non-negative and bounded functions, such that for all $\epsilon$, $||H_\epsilon||_1=A$, for some constant $A$ and such that
\begin{equation}\label{fakeuniformint}
    \lim_{M\rightarrow\infty}\sup_{\epsilon\in(0,1)}\int_{|\mathbf{x}|\geq M}H_\epsilon(\mathbf{x})d\mathbf{x}\rightarrow0.
\end{equation}
For $S_n=\omega_{X_0}(T_{n})-\omega_{Y_0}(T_{n})$, 
    \begin{equation}\label{kalRob}
        \biggl(\frac{2\pi}{\gamma \log \frac{1}{\epsilon}}\biggr)^p\sum_{\substack{1\leq k_1<...<k_p\leq [M(\epsilon)/\epsilon^2]}}\E[H_{\epsilon}(S_{k_1},...,S_{k_p})]\rightarrow A,
    \end{equation}
    as $\epsilon\rightarrow0$. 
    \end{myprop}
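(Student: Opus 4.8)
The plan is to reduce the statement to a purely analytic estimate on sums involving the density $F^{(\epsilon)}_n$, using the local limit theorem \eqref{loaclest} as the main input and the elementary estimates \eqref{elementary}--\eqref{elementary3} to control the resulting sums. First I would write the expectation explicitly in terms of the joint density of $(S_{k_1},\dots,S_{k_p})$. By the Markov/regenerative structure, the increments $S_{k_{i}}-S_{k_{i-1}}$ (with $S_{k_0}:=0$) are independent, each distributed as $S_{k_i-k_{i-1}}$, so
$$\E[H_\epsilon(S_{k_1},\dots,S_{k_p})]=\int_{\mathbb{R}^{2p}}H_\epsilon(x_1,\dots,x_p)\prod_{i=1}^p F^{(\epsilon)}_{k_i-k_{i-1}}(x_i-x_{i-1})\,dx_1\cdots dx_p,$$
with $x_0:=0$. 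Substituting \eqref{loaclest}, $F^{(\epsilon)}_m(z)=\frac{\gamma}{4\pi m}e^{-\gamma|z|^2/4m}+\mathcal{R}_{\epsilon,m}(z)/m$, and expanding the product over the $p$ factors, the "main term" is the one where every factor contributes its Gaussian part, and all other terms contain at least one factor of $\mathcal{R}_{\epsilon,m}$.

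For the main term, after pulling out $\prod_i \frac{\gamma}{4\pi(k_i-k_{i-1})}$ and multiplying by $(2\pi/\gamma\log\frac1\epsilon)^p$, one is left with $\bigl(\tfrac{1}{2\log\frac1\epsilon}\bigr)^p\sum_{k_1<\dots<k_p}\frac{1}{\prod_i(k_i-k_{i-1})}\int_{\mathbb{R}^{2p}}H_\epsilon(x)\prod_i e^{-\gamma|x_i-x_{i-1}|^2/4(k_i-k_{i-1})}\,dx$. The key point is that, since $k_i-k_{i-1}$ is typically of order $1/\epsilon^2\to\infty$ while the relevant $x_i$ lie in a bounded region by \eqref{fakeuniformint}, the Gaussian factors $e^{-\gamma|x_i-x_{i-1}|^2/4(k_i-k_{i-1})}$ converge to $1$. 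Making this rigorous: fix $M$ large, split $H_\epsilon = H_\epsilon\mathbf{1}_{|x|\le M} + H_\epsilon\mathbf{1}_{|x|>M}$; on $|x|\le M$ bound the Gaussian factors between $e^{-CM^2\epsilon^2}$ and $1$, so they contribute $1+o(1)$ once we also restrict the sum to indices with $k_i-k_{i-1}\gtrsim$ a slowly growing cutoff (the contribution of the "small gap" indices is $o((\log\frac1\epsilon)^p)$ by an estimate in the spirit of \eqref{elementary3}); on $|x|>M$ use $\int H_\epsilon\le \sup_\epsilon\int_{|x|>M}H_\epsilon\to 0$ together with the bound $\prod_i e^{-\gamma|x_i-x_{i-1}|^2/4(k_i-k_{i-1})}\le 1$. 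Then \eqref{elementary}, which gives $(\log\frac1\epsilon)^{-p}\sum_{k_1<\dots<k_p}\prod_i(k_i-k_{i-1})^{-1}\to 2$, combined with $\|H_\epsilon\|_1=A$, yields that the main term converges to $2^{-p}\cdot 2\cdot A\cdot$(factor bookkeeping) $=A$; I would double-check the constant $2$ versus $2^p$ arithmetic carefully, but \eqref{elementary} is precisely calibrated so this works out to $A$.

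For the error terms: each such term has at least one factor $\mathcal{R}_{\epsilon,k_j-k_{j-1}}(x_j-x_{j-1})/(k_j-k_{j-1})$ and the remaining factors are each bounded by $\frac{\gamma}{4\pi(k_i-k_{i-1})}(1+|\mathcal{R}_{\epsilon,k_i-k_{i-1}}|)$ up to the obvious constants (using $e^{-\gamma|\cdot|^2/4m}\le 1$ and $F^{(\epsilon)}_m\ge 0$). Since $H_\epsilon$ is bounded and integrable with $\|H_\epsilon\|_1=A$, integrating out the $x$-variables and bounding $\sup_x|\mathcal{R}_{\epsilon,m}(x)|$, such a term is controlled by a constant times the left-hand side of \eqref{elementary2} (possibly after also invoking \eqref{elementary3} for the cross terms), hence is $o(1)$ as $\epsilon\to 0$. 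Summing the finitely many ($2^p-1$) error terms gives a total contribution that is $o(1)$.

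The main obstacle is the interchange of limits between the shrinking Gaussians (which want $k_i-k_{i-1}\to\infty$, i.e. large gaps) and the sum over index tuples (which is dominated, in \eqref{elementary}, by tuples with some small gaps $k_i-k_{i-1}=O(1)$): one cannot simply say "the Gaussians go to $1$" uniformly in the summation. The resolution is to show the small-gap part of the sum is negligible — that is, $(\log\frac1\epsilon)^{-p}\sum_{\exists i:\ k_i-k_{i-1}\le L}\prod_i(k_i-k_{i-1})^{-1}\to 0$ for fixed $L$, and then send $L\to\infty$ slowly — which follows from the structure of the harmonic-type sums and is exactly the content (or an easy variant) of \eqref{elementary3}. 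Once that is in hand, the uniform-integrability hypothesis \eqref{fakeuniformint} handles the tail in $x$, and the rest is bookkeeping with \eqref{elementary}. This is the discrete, "non-directed" Kallianpur--Robbins estimate referred to in the introduction, and the argument closely parallels \cite{Kallianpur1954TheSO}.
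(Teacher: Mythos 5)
Your proposal is correct and follows essentially the same route as the paper: expand the joint density of the increments via the local limit theorem \eqref{loaclest}, control all terms containing an $\mathcal{R}_{\epsilon,m}$ factor with \eqref{elementary2}, use \eqref{fakeuniformint} for the $x$-tail, and reduce the main term to the harmonic-type sum \eqref{elementary}. The only (cosmetic) differences are that the paper replaces your gap-cutoff argument by the pointwise bound $|e^{-Q}-1|\lesssim Q\lesssim\sum_j(k_j-k_{j-1})^{-1}$ on the bounded region, which feeds directly into \eqref{elementary3}, and your suspicion about the constant is warranted: the limit in \eqref{elementary} should be $2^p$ (each of the $p$ telescoping factors contributes $\log(M(\epsilon)/\epsilon^2)\sim 2\log\frac{1}{\epsilon}$), which is exactly what makes the bookkeeping close to $A$.
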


\begin{proof}
   {To prove \eqref{kalRob} we write the expectation inside the sum explicitly. We have
    $$\sum_{\substack{1\leq k_1<...<k_p\leq [M(\epsilon)/\epsilon^2]}}\E[H_\epsilon(S_{k_1},...,S_{k_p})]=$$
    \begin{equation}\label{dircal}
        \sum_{\substack{1\leq k_1<...<k_p\leq [M(\epsilon)/\epsilon^2]}}\int_{\mathbb{R}^{2p}}H_\epsilon(x_1,x_2,...,x_p)F^{(\epsilon)}_{k_1}(x_1)F^{(\epsilon)}_{k_2-k_1}(x_2-x_1)...F^{(\epsilon)}_{k_{p}-k_{p-1}}(x_p-x_{p-1})dx_1...dx_p.
    \end{equation}
    We successively replace  $F^{(\epsilon)}_{k_{i}-k_{i-1}}$, $i=1,...,p$, by its  asymptotic expansion \eqref{loaclest}. Adopting the notation in \cite{Kallianpur1954TheSO}, \eqref{dircal} is equal to
    \begin{equation}\label{asexpansion}
        \biggl(\frac{\gamma}{4\pi}\biggr)^p\sum_{\substack{1\leq k_1<...<k_p\leq M(\epsilon)/\epsilon,\\ k_{i+1}>k_{i}+2}}\frac{\int_{\mathbb{R}^{2p}}H_\epsilon(x_1,x_2,...,x_p)e^{-Q_{k_1,...,k_p}(x_1,...,x_p)}dx_1...dx_p}{k_1(k_2-k_1)...(k_p-k_{p-1})}+\mathcal{E}_{\epsilon},
    \end{equation}
    where
    \begin{equation}\label{exponent}
        Q_{k_1,...,k_p}(x_1,...,x_p):=\gamma\sum_{i=1}^p\frac{|x_{i}-x_{i-1}|^2}{4(k_i-k_{i-1})},
    \end{equation}
    with $x_0=0$, $k_0=0$. The error term $\mathcal{E}_{\epsilon}$ is equal to
    \begin{align}\label{errorterm}
        \sum_{j=1}^p\biggl(\frac{\gamma}{4\pi}\biggr)^j\sum_{\substack{1\leq k_1<...<k_p\leq [M(\epsilon)/\epsilon^2]}}\mathcal{R}_{\epsilon,k_j-k_{j-1}}(x_j-x_{j-1})\int_{\mathbb{R}^{2p}}H_\epsilon(x_1,x_2,...,x_p)\frac{e^{-Q_{k_1,...,k_j}(x_1,...,x_j)}}{\prod_{i=1}^j(k_i-k_{i-1})}\cdot\nonumber\\\cdot\biggl(\prod_{m>j}^pF^{(\epsilon)}_{k_{m}-k_{m-1}}(x_m-x_{m-1})\biggr)d\mathbf{x}.
    \end{align}}
    {We need to estimate both terms in \eqref{asexpansion}. First, we deal with the error term  \eqref{errorterm}. Observe that \eqref{loaclest} implies that $F^{(\epsilon)}_n(x)\leq(1+\mathcal{R}_{\epsilon,n})/n$. Using this bound,  we see that the error term is
    \begin{align*}
        \lesssim\sum_{j=1}^p\sum_{\substack{1\leq k_1<...<k_p\leq [M(\epsilon)/\epsilon^2]}}\int_{\mathbb{R}^{2p}}\mathcal{R}_{\epsilon,k_j-k_{j-1}}(x_j-x_{j-1})\frac{H_\epsilon(x_1,x_2...,x_p)}{k_1(k_2-k_1)...(k_p-k_{p-1})}\prod_{m>j}^p(1+\\+\mathcal{R}_{\epsilon,k_m-k_{m-1}}(x_m-x_{m-1}))dx_1...dx_p.
    \end{align*}
    Since for all $\epsilon$, $||H_\epsilon||_1=A$, this sum is bounded above by
    $$A\sum_{j=1}^p\sum_{\substack{1\leq k_1<...<k_p\leq [M(\epsilon)/\epsilon^2]}}\frac{\sup_{x\in\mathbb{R}^2}|\mathcal{R}_{\epsilon,k_j-k_{j-1}}(x)|\prod_{m>j}^p(1+\sup_{x\in\mathbb{R}^2}|\mathcal{R}_{\epsilon,k_m-k_{m-1}}(x)|)}{k_1(k_2-k_1)...(k_p-k_{p-1})}.$$
    From \eqref{elementary2},  this term is $o((\log \frac{1}{\epsilon})^p)$ as $\epsilon\rightarrow0$. Therefore, $\mathcal{E}_{\epsilon}=o((\log \frac{1}{\epsilon})^p)$.}\par
    {Now, we focus on the first term in \eqref{asexpansion}:
    \begin{equation}\label{dominant}
        \biggl(\frac{\gamma}{4\pi}\biggr)^p\sum_{\substack{1\leq k_1<...<k_p\leq [M(\epsilon)/\epsilon^2]}}\frac{\int_{\mathbb{R}^{2p}}H_\epsilon(x_1,x_2,...,x_p)e^{-Q_{k_1,...,k_p}(x_1,...,x_p)}dx_1...dx_p}{k_1(k_2-k_1)...(k_p-k_{p-1})}.
    \end{equation}}
    {We write \eqref{dominant} as
    $${J}_{p,\epsilon}+ A\biggl(\frac{\gamma }{4\pi}\biggr)^p\sum_{\substack{1\leq k_1<...<k_p\leq [M(\epsilon)/\epsilon^2]}}\frac{1}{k_1(k_2-k_1)...(k_p-k_{p-1})},$$
    where
    \begin{equation}\label{Jterm}
        J_{p,\epsilon}= \biggl(\frac{\gamma}{4\pi}\biggr)^p\sum_{\substack{1\leq k_1<...<k_p\leq [M(\epsilon)/\epsilon^2]}}\frac{\int_{\mathbb{R}^{2p}}H_\epsilon(x_1,x_2,...,x_p)(e^{-Q_{k_1,...,k_p}(x_1,...,x_p)}-1)dx_1...dx_p}{k_1(k_2-k_1)...(k_p-k_{p-1})}.
    \end{equation}
    We aim to prove that $(\log\frac{1}{\epsilon})^{-p}J_{p,\epsilon}\rightarrow0$, as $\epsilon\rightarrow0$. This will prove that \eqref{dominant} is asymptotic to $A({\gamma }/{2\pi})^p$ as $\epsilon\rightarrow0$, which proves \eqref{kalRob}.}\par
    {From our assumption on \eqref{fakeuniformint}, for any $\delta$, there is a $M_0$ large enough  such that
    $$\sup_{\epsilon>0}\int_{|\mathbf{x}|\geq M_0}H_\epsilon(x_1,x_2,...,x_p)d\mathbf{x}\leq\delta.$$
    Therefore,
    \begin{equation}\label{unifintbound}
        \biggl(\frac{\gamma}{4\pi}\biggr)^p\sum_{\substack{1\leq k_1<...<k_p\leq [M(\epsilon)/\epsilon^2]}}\frac{\int_{|\mathbf{x}|\geq M_0}H_\epsilon(x_1,x_2,...,x_p)|e^{-Q_{k_1,...,k_p}(x_1,...,x_p)}-1|d\mathbf{x}}{k_1(k_2-k_1)...(k_p-k_{p-1})}
    \end{equation}
    is bounded above by
    $$2\delta\biggl(\frac{\gamma}{4\pi}\biggr)^p\sum_{1\leq k_1<...<k_p\leq [M(\epsilon)/\epsilon^2]}\frac{1}{k_1(k_2-k_1)...(k_p-k_{p-1})}.$$
    By dividing this term by $(\log\frac{1}{\epsilon})^p$ and sending $\epsilon\rightarrow0$, we see that it converges to $4\delta(\gamma/4\pi)^p$. Since $\delta$ is a arbitrary, \eqref{unifintbound} is negligible and the main contribution to \eqref{Jterm} comes from 
    $$J_{p,\epsilon}'':=\biggl(\frac{\gamma}{4\pi}\biggr)^p\sum_{\substack{1\leq k_1<...<k_p\leq [M(\epsilon)/\epsilon^2]}}\frac{\int_{|\mathbf{x}|\leq M_0}H_\epsilon(x_1,x_2,...,x_p)(e^{-Q_{k_1,...,k_p}(x_1,...,x_p)}-1)dx_1...dx_p}{k_1(k_2-k_1)...(k_p-k_{p-1})}.$$
    Observe that for all $|\mathbf{x}|\leq M_0$, we have
    \begin{equation*}\label{Gexpestimate}
        |e^{-Q_{k_1,...,k_p}(x_1,...,x_p)}-1|\lesssim Q_{k_1,...,k_p}(x_1,...,x_p)\lesssim\sum_{j=1}^p\frac{1}{k_{j}-k_{j-1}}.
    \end{equation*}
    Therefore,
    $$J_{p,\epsilon}''\lesssim A\biggl(\frac{\gamma}{4\pi}\biggr)^p\sum_{\substack{1\leq k_1<...<k_p\leq [M(\epsilon)/\epsilon^2]}}\sum_{j=1}^p\frac{1}{k_1(k_2-k_1)...(k_p-k_{p-1})}\cdot
    \frac{1}{k_{j}-k_{j-1}}.$$
    From \eqref{elementary3}, the sum in the right-hand side  is $o((\log\frac{1}{\epsilon})^p)$ as $\epsilon\rightarrow0$, and therefore $J_{p,\epsilon}=o((\log\frac{1}{\epsilon})^p)$. This concludes the proof of \eqref{kalRob}.}\par
    
\end{proof}

\begin{remark}\label{Remofapplicability}
    Let  $(G_\epsilon)_{\epsilon\in(0,1)}\subseteq L^1(\mathbb{R}^{2p})$ satisfy the assumptions of \textbf{Proposition \ref{propA.1}} and define  
    $$\Gcal_\epsilon(\mathbf{z})=\E_{Z^{(\epsilon)}}[G_\epsilon(\mathbf{z}+Z^{(\epsilon)})],$$
    where $(Z^{(\epsilon)})_{\epsilon\in(0,1)}$ is a tight collection of random variables in $\mathbb{R}^{2}$. The family $(\Gcal_\epsilon)_{\epsilon\in(0,1)}$ satisfies the assumptions of \textbf{Proposition \ref{propA.1}}. To prove this we only need to prove \eqref{fakeuniformint}. Since $(Z^{(\epsilon)})_{\epsilon\in(0,1)}$ is tight we have
    \begin{equation}\label{densityfakeunif}
        \sup_{\epsilon\in(0,1)}\P[{|Z^{(\epsilon)}}|\geq M]\rightarrow0,
    \end{equation}
    as $M\rightarrow\infty$. We also have
    $$\int_{|z|\geq  M}\Gcal_\epsilon(\mathbf{z})d\mathbf{z}\leq\E\biggr[\textbf{1}_{\{|Z^{(\epsilon)}|\geq M/2\}}\int_{|\mathbf{z}|\geq M-|Z^{(\epsilon)}|}{G}_\epsilon(\mathbf{z})d\mathbf{z}\biggl]+\E\biggr[\textbf{1}_{\{|Z^{(\epsilon)}|\leq M/2\}}\int_{|\mathbf{z}|\geq M-|Z^{(\epsilon)}|}{G}_\epsilon(\mathbf{z})d\mathbf{z}\biggl].$$
    Therefore, since $(G_\epsilon)_{\epsilon>0}$ satisfies the assumptions of \textbf{Proposition \ref{propA.1}} and from \eqref{densityfakeunif}
    $$\int_{|z|\geq M}\Gcal_\epsilon(\mathbf{z})d\mathbf{z}\leq\P[{|Z^{(\epsilon)}}|\geq M/2]\int_{\mathbb{R}^{2p}}{G}_\epsilon(\mathbf{z})d\mathbf{z}+\int_{|z|\geq M/2}{G}_\epsilon(\mathbf{z})d\mathbf{z}\rightarrow0,$$
    as $M\rightarrow\infty$, uniformly in $\epsilon$.
\end{remark}

We need a 'non-directed' version of \textbf{Proposition \ref{propA.1}}. First, we introduce the following notation: For any non-negative function $G:\mathbb{R}^{pd}\rightarrow\mathbb{R}$ and a nonempty subset $J\subset\{1,...,p\}$ we define the function $G^J:\mathbb{R}^{(p-|J|)d}\rightarrow\mathbb{R}$ as 
$$G^J((x_i)_{i\in\{1,...,p\}\backslash J})=\sup_{x_i\in\mathbb{R}^d,i\in J}G(x_1,...,x_p).$$
When $J=\emptyset$ we set $G^J=G$.\par 
With this notation, we now state the following lemma:\newline\par

\begin{mylem}\label{lemm4.3}
    Let $(G_\epsilon)_{\epsilon\in(0,1)}$ be a collection of functions as in \textbf{Proposition \ref{propA.1}}, satisfying the following additional condition: For any $J\subset\{1,...,p\}$, we have:
    \[G^J_\epsilon(x)\leq \Hcal_\epsilon(x),\]
    where the collection of functions $(\Hcal_\epsilon)_{\epsilon\in(0,1)}$ satisfies the conditions in \textbf{Proposition \ref{propA.1}}.\par
    Now, let $\boldsymbol{\delta}\in\{+1,-1,0\}^p$ and $M(\epsilon)$ be such that $\log M(\epsilon)/\log{\frac{1}{\epsilon}}\rightarrow0$ as $\epsilon\rightarrow0$. Then the mean of   
    \begin{equation}\label{nondirectedsum}
        \sum_{\substack{1\leq k_1<...<k_p\leq [M(\epsilon)/\epsilon^2]}}G_\epsilon(\omega_{X_0}(T_{k_1+\delta_1})-\omega_{Y_0}(T_{k_1}),...,\omega_{X_0}(T_{k_p+\delta_p})-\omega_{Y_0}(T_{k_p}))
    \end{equation}
    is asymptotic to
    $$\biggr((\gamma/2\pi)\log \frac{1}{\epsilon}\biggl)^pA,$$
    as $\epsilon\rightarrow0$, where $||G_\epsilon||_1=A$.\newline\par
\end{mylem}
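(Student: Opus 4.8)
The plan is to reduce Lemma \ref{lemm4.3} to the "directed" statement of Proposition \ref{propA.1} by splitting the time indices $k_1<\dots<k_p$ into two groups: those that are well-separated (say $k_{i+1}>k_i+L$ for a fixed large constant $L$, or just $k_{i+1}>k_i+2$, which is enough because $\delta_i\in\{-1,0,1\}$) and those that are close together. For the close-together indices I would argue that the number of such tuples, weighted appropriately, is of lower order: since $\log M(\epsilon)/\log\frac1\epsilon\to0$, the bound $\E[G_\epsilon^J(\dots)]\le \|\Hcal_\epsilon\|_1 \cdot (\text{local density bounds})$ together with the counting estimates \eqref{elementary}--\eqref{elementary3} shows that every tuple in which at least one gap is $O(1)$ contributes $o((\log\frac1\epsilon)^p)$. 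This is where the hypothesis on $G^J_\epsilon$ is used: when two consecutive indices collide we must "sup out" the corresponding coordinate, and the domination by $\Hcal_\epsilon$ guarantees the resulting expression is still integrable with controlled $L^1$ norm, so the Kallianpur–Robbins-type counting applies.

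Next, for the well-separated tuples, I would exploit the regenerative structure exactly as in the proof of Lemma \ref{techlemma}. Because $k_{i+1}>k_i+2$, the triples of cycles $(\mathcal{C}_{k_i+\delta_i},\mathcal{C}_{k_i})$ indexed by different $i$ involve disjoint, hence independent, cycles; and the increment $\omega_{X_0}(T_{k_i+\delta_i})-\omega_{Y_0}(T_{k_i})$ differs from $\omega_{X_0}(T_{k_i})-\omega_{Y_0}(T_{k_i}) = S_{k_i}$ only by finitely many total path increments $\mathbf{X}^{(\epsilon)}_{k_i-1},\mathbf{Y}^{(\epsilon)}_{k_i-1}$ (and similarly at $k_i$), which can be "thrown out" of the random walk $S_{k_i}=\sum_{j<k_i}(\mathbf{X}^{(\epsilon)}_j-\mathbf{Y}^{(\epsilon)}_j)$ and absorbed into an auxiliary variable. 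Conditioning on those discarded increments and on the cycles $\mathcal C_{k_i\pm1}$, I average first over their joint law: this replaces $G_\epsilon$ evaluated at the shifted increments by a new function $\widetilde G_\epsilon$ (an expectation of $G_\epsilon$ over an $\epsilon$-dependent but $k$-independent shift) evaluated at a genuine random walk $S_{\tilde k_1},\dots,S_{\tilde k_p}$, after the reindexing $\tilde k_i = k_i - c(i)$ for an appropriate linear shift $c(i)$. By Remark \ref{Remofapplicability}, $\widetilde G_\epsilon=\E_{Z^{(\epsilon)}}[G_\epsilon(\cdot+Z^{(\epsilon)})]$ still satisfies the hypotheses of Proposition \ref{propA.1} (tightness of the discarded-increment sums is Proposition \ref{thm:prop4.2} / Corollary \ref{thm:cor4.1}), with $\|\widetilde G_\epsilon\|_1=A$.

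Then Proposition \ref{propA.1} applied to $\widetilde G_\epsilon$ gives that the sum over well-separated tuples is asymptotic to $((\gamma/2\pi)\log\frac1\epsilon)^pA$; the reindexing shifts the upper limit from $[M(\epsilon)/\epsilon^2]$ to $[M(\epsilon)/\epsilon^2]-O(p)$, which changes nothing in the limit, and the separation constraint $k_{i+1}>k_i+2$ removes only $o((\log\frac1\epsilon)^p)$ worth of tuples by the same counting estimates \eqref{elementary}--\eqref{elementary3}. Collecting the two pieces gives the claimed asymptotics. The mild technical point worth spelling out is that $G_\epsilon$ need not be itself of the form covered by Remark \ref{Remofapplicability} before the averaging — but after averaging over the cycles and discarded increments it is, and the domination hypothesis $G^J_\epsilon\le\Hcal_\epsilon$ is precisely what makes the error terms from collided indices tractable.

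The main obstacle I expect is the bookkeeping in the "throwing out" step: one must identify exactly which total path increments $\mathbf{X}^{(\epsilon)}_m-\mathbf{Y}^{(\epsilon)}_m$ the quantity $\omega_{X_0}(T_{k_i+\delta_i})-\omega_{Y_0}(T_{k_i})$ depends on through the cycles $\mathcal C_{k_j\pm1}$ with $j\le i$, remove all of them (there are at most $3(i-1)+2$ per index, which forces the separation $k_{i+1}>k_i+3$ as in Lemma \ref{techlemma}), and verify that what remains, $\mathbf S_{k_i}$, is independent of all the retained cycles and is a shifted copy of $S_{\tilde k_i}$ with a shift vector independent of $(\tilde k_1,\dots,\tilde k_p)$. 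This is essentially the computation already carried out in the proof of Lemma \ref{techlemma}, so I would present it by reference, emphasizing only the two new ingredients: the reduction to $k_{i+1}>k_i+3$ via the counting lemmas, and the verification (via Remark \ref{Remofapplicability}) that the averaged function $\widetilde G_{\epsilon,\boldsymbol\delta}$ inherits the hypotheses of Proposition \ref{propA.1}, including the $G^J$ domination which one checks by pulling the $\sup$ through the expectation.
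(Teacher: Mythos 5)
Your proposal is correct and follows essentially the same route as the paper: restrict to well-separated tuples, decompose $\omega_{X_0}(T_{k_i+\delta_i})-\omega_{Y_0}(T_{k_i})$ into a shifted copy of the random walk plus a $k$-independent discarded-increment vector, average over the latter to produce $\widetilde G_\epsilon=\E_{Z^{(\epsilon)}}[G_\epsilon(\cdot+Z^{(\epsilon)})]$, invoke Remark \ref{Remofapplicability} and Proposition \ref{propA.1} after reindexing, and control the collided-index tuples via the $G^J_\epsilon\leq\Hcal_\epsilon$ domination together with the restricted asymptotic. The only cosmetic difference is that the paper's proof of this lemma needs only the separation $k_{i+1}>k_i+2$ (two discarded increments per index rather than the three cycles of Lemma \ref{techlemma}), whereas you impose $k_{i+1}>k_i+3$; this is harmless since the extra tuples are still absorbed into the $o((\log\frac{1}{\epsilon})^p)$ error.
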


\begin{proof}
    First, we will calculate the limit of the mean of
    \begin{equation}\label{eq4.12}
        \sum_{\substack{2\leq k_1<...<k_p\leq [M(\epsilon)/\epsilon^2],\\ k_{i+1}>k_i+2}}G_\epsilon(\omega_{X_0}(T_{k_1+\delta_1})-\omega_{Y_0}(T_{k_1}),...,\omega_{X_0}(T_{k_p+\delta_p})-\omega_{Y_0}(T_{k_p})),
    \end{equation}
    for any collection  $(G_\epsilon)_{\epsilon\in(0,1)}$ that satisfies the conditions of \textbf{Proposition \ref{propA.1}}.\par
    We aim to use \textbf{Proposition \ref{propA.1}} to calculate the asymptotic of \eqref{eq4.12}.  The challenge is the appearance of $\boldsymbol{\delta}$. We try to bring the functional in \eqref{eq4.12} to a functional involving only the random walk $\omega_{X_0}(T_k)-\omega_{Y_0}(T_k)$.  We will exploit the fact that both $\omega_{X_0}(T_k)$ and $\omega_{Y_0}(T_k)$, as defined in \eqref{Randomwalks}, are sums of independent random variables. We can write the random vector appearing in $(\ref{eq4.12})$ as
    \begin{equation}\label{eq4.14}
        (\omega_{X_0}(T_{k_1+\delta_1})-\omega_{Y_0}(T_{k_1}),...,\omega_{X_0}(T_{k_p+\delta_p})-\omega_{Y_0}(T_{k_p}))=(\mathbf{S}^{(\epsilon)}_{k_1},...,\mathbf{S}^{(\epsilon)}_{k_p})+\mathbf{Z}^{(\epsilon)},
    \end{equation}
    where the vectors $(\mathbf{S}^{(\epsilon)}_{k_1},...,\mathbf{S}^{(\epsilon)}_{k_p})$ and $\mathbf{Z}^{(\epsilon)}$ are independent. The distribution of the first vector will "resemble" the distribution of $(\omega_{X_0}(T_{k_1})-\omega_{Y_0}(T_{k_1}),...,\omega_{X_0}(T_{k_p})-\omega_{Y_0}(T_{k_p}))$. Moreover, the distribution of $\mathbf{Z}^{(\epsilon)}$ will be independent from $k_1,...,k_p$. Averaging with respect to $\mathbf{Z}^{(\epsilon)}$ we obtain 
    $$\sum_{\substack{1\leq k_1<...<k_p\leq [M(\epsilon)/\epsilon^2],\\ k_{i+1}>k_{i}+2}}\E[H(\mathbf{S}^{(\epsilon)}_{k_1},...,\mathbf{S}^{(\epsilon)}_{k_p})],$$
    for an appropriate $H$. Then, we can apply $\textbf{Proposition \ref{propA.1}}$ and conclude.\par
    Let us make this more precise. From \eqref{Randomwalks},  we express the increment for each $i=1,..,p$: 
    $$\omega_{X_0}(T_{k_i+\delta_i})-\omega_{Y_0}(T_{k_i})=\mathbf{S}^{}_{k_i}+\sum_{1\leq j<i}(\mathbf{X}_{k_j-1}^{(\epsilon)}-\mathbf{Y}_{k_j-1}^{(\epsilon)}+\mathbf{X}_{k_{j}}^{(\epsilon)}-\mathbf{Y}_{k_{j}}^{(\epsilon)})-\mathbf{Y}_{k_i-1}^{(\epsilon)}+\sum_{j=k_i-1}^{k_i+\delta_i-1}\mathbf{X}_j^{(\epsilon)},$$
    where we make the convention that the last sum is $0$ when $\delta_i=-1$. From \eqref{Randomwalks}, $\mathbf{S}^{(\epsilon)}_{k_i}$ is a sum of independent random variables. Moreover, $\mathbf{S}^{}_{k_i}$ is equal in distribution to $\omega_{X_0}(T_{k_i-2i+1})-\omega_{Y_0}(T_{k_i-2i+1})$ (observe that since $k_{i+1}>k_i+1$ we have $k_i>2i-1$). Furthermore, the vector
    $$(\mathbf{S}^{}_{k_1},...,\mathbf{S}^{}_{k_p})$$
    is equal in distribution to
    $$\biggr(\omega_{X_0}(T_{k_1-1})-\omega_{Y_0}(T_{k_1-1 }),\omega_{X_0}(T_{k_2-3})-\omega_{Y_0}(T_{k_2-3 }),...,\omega_{X_0}(T_{k_p-2p+1})-\omega_{Y_0}(T_{k_p-2p+1 })\biggl)$$
    and is independent of:
    $$(\mathbf{X}_{k_1-1}^{(\epsilon)}-\mathbf{Y}_{k_1-1}^{(\epsilon)},\mathbf{X}_{k_1}^{(\epsilon)}-\mathbf{Y}_{k_1}^{(\epsilon)},...,\mathbf{X}_{k_p-1}^{(\epsilon)}-\mathbf{Y}_{k_p-1}^{(\epsilon)},\mathbf{X}_{k_p}^{(\epsilon)}-\mathbf{Y}_{k_p}^{(\epsilon)}),$$
   since these random variables do not appear inside the sum of any of the $\mathbf{S}^{}_{k_i}$. \par
    Now, define the vector $\mathbf{Z}^{(\epsilon)}\in\mathbb{R}^{2p}$, where the components $\mathbf{Z}^{(\epsilon)}(i)\in\mathbb{R}^{2}$ are given by:
    $$\mathbf{Z}^{(\epsilon)}(i):=\sum_{1\leq j<i}(\mathbf{X}_{k_j-1}^{(\epsilon)}-\mathbf{Y}_{k_j-1}^{(\epsilon)}+\mathbf{X}_{k_{j}}^{(\epsilon)}-\mathbf{Y}_{k_{j}}^{(\epsilon)})-\mathbf{Y}_{k_i-1}^{(\epsilon)}+\sum_{j=k_i-1}^{k_i+\delta_i-1}\mathbf{X}_j^{(\epsilon)}.$$
    This defines the contribution from the terms not included in $\mathbf{S}_{k_i}$. Importantly, the vector $\mathbf{Z}^{(\epsilon)}_p$ captures the dependence introduced by the shifts $\delta_i$ and is independent from $(\mathbf{S}_{k_1},...,\mathbf{S}_{k_p})$.\par
    To prove that the distribution of $\mathbf{Z}^{(\epsilon)}=(\mathbf{Z}^{(\epsilon)}(1),...,\mathbf{Z}^{(\epsilon)}(p))$ is independent of $k_1,...,k_p$ recall that the random variables $((\mathbf{X}_{k_i-1}^{(\epsilon)},\mathbf{Y}_{k_i-1}^{(\epsilon)}),(\mathbf{X}_{k_i}^{(\epsilon)},\mathbf{Y}_{k_i}^{(\epsilon)}))_{i=1,...,p}$ are i.i.d.\footnote{Here, we made use of the restriction $k_i>k_{i-1}+1$, so that $k_j<k_i-1$ for all $j<i$
    .}. Therefore, the vector
     $$\biggr((\mathbf{X}_{k_1-1}^{(\epsilon)},\mathbf{Y}_{k_1-1}^{(\epsilon)}),(\mathbf{X}_{k_1}^{(\epsilon)},\mathbf{Y}_{k_1}^{(\epsilon)}),...,(\mathbf{X}_{k_p-1}^{(\epsilon)},\mathbf{Y}_{k_p-1}^{(\epsilon)}),(\mathbf{X}_{k_p}^{(\epsilon)},\mathbf{Y}_{k_p}^{(\epsilon)})\biggl)$$
    is equal in distribution to
    $$\biggr((\mathbf{X}_{1}^{(\epsilon)},\mathbf{Y}_{1}^{(\epsilon)}),(\mathbf{X}_{2}^{(\epsilon)},\mathbf{Y}_{2}^{(\epsilon)}),...,(\mathbf{X}_{2p-1}^{(\epsilon)},\mathbf{Y}_{2p-1}^{(\epsilon)}),(\mathbf{X}_{2p}^{(\epsilon)},\mathbf{Y}_{2p}^{(\epsilon)})\biggl).$$
    This means that the distribution of $\mathbf{Z}^{(\epsilon)}$ is equal to the  distribution of $Z^{(\epsilon)}=(Z_{1}^{(\epsilon)},...,Z_{p}^{(\epsilon)})$ where
    $$Z_{i}^{(\epsilon)}:=\sum_{1\leq j<i}(\mathbf{X}_{2j-1}^{(\epsilon)}-\mathbf{Y}_{2j-1}^{(\epsilon)}+\mathbf{X}_{2{j}}^{(\epsilon)}-\mathbf{Y}_{2{j}}^{(\epsilon)})-\mathbf{Y}_{2i-1}^{(\epsilon)}+\sum_{j=2i-1}^{2i+\delta_i-1}\mathbf{X}_j^{(\epsilon)}.$$
    Therefore, the distribution of $\mathbf{Z}^{(\epsilon)}$ is independent from $k_1,...,k_p$. These observations prove \eqref{eq4.14}. In particular, \eqref{eq4.12} is equal to
    \begin{equation}\label{averocctime}
        \sum_{\substack{1\leq k_1<...<k_p\leq [M(\epsilon)/\epsilon^2],\\ k_{i+1}>k_i+2}}\E\biggl[\Gcal^{(\epsilon)}\biggr(\omega_{X_0}(T_{k_1-1})-\omega_{Y_0}(T_{k_1-1 }),...,\omega_{X_0}(T_{k_p-2p+1})-\omega_{Y_0}(T_{k_p-2p+1 })\biggl)\biggr],
    \end{equation}
    where $\Gcal^{(\epsilon)}:\mathbb{R}^{2p}\rightarrow\mathbb{R}$ is given by:
    \begin{equation}\label{dirfunction}
        \Gcal^{(\epsilon)}(\mathbf{z}):=\E_{Z^{(\epsilon)}}\biggr[G_\epsilon(\mathbf{z}+Z^{(\epsilon)})\biggl].
    \end{equation}
    By changing the variables $k_i-2i+1=\tilde{k}_i$, \eqref{averocctime} becomes 
    \[
        \sum_{\substack{1\leq \tilde{k}_1<...<\tilde k_p\leq [M(\epsilon)/\epsilon^2]-2p+1}}\E\biggl[\Gcal^{(\epsilon)}\biggr(\omega_{X_0}(T_{\tilde k_1})-\omega_{Y_0}(T_{\tilde{k}_1}),...,\omega_{X_0}(T_{\tilde k_p})-\omega_{Y_0}(T_{\tilde k_p})\biggl)\biggr],
    \]
    From \textbf{Proposition \ref{thm:prop4.2}} and \textbf{Remark \ref{Remofapplicability}}, $(\Gcal^{(\epsilon)})_{\epsilon\in(0,1)}$ satisfies the assumptions of \textbf{Proposition \ref{propA.1}} with $||\Gcal^{(\epsilon)}||_1=A$.  Thus, we deduce that \eqref{averocctime} is  asymptotic to 
    $$\biggr((\gamma/2\pi)\log \frac{1}{\epsilon}\biggl)^pA,$$
    as $\epsilon\rightarrow0$. Since \eqref{eq4.12} and \eqref{averocctime} are equal, we get 
    \begin{equation}\label{firstasymptotic}
        \sum_{\substack{1\leq k_1<...<k_p\leq [M(\epsilon)/\epsilon^2],\\ k_{i+1}>k_{i}+2}}\E[G_\epsilon(\omega_{X_0}(T_{k_1+\delta_1})-\omega_{Y_0}(T_{k_1}),...,\omega_{X_0}(T_{k_p+\delta_p})-\omega_{Y_0}(T_{k_p}))]\sim A(\gamma/2\pi)^p(\log \frac{1}{\epsilon})^p,
    \end{equation}
    as $\epsilon\rightarrow0$, for any collection  $(G_\epsilon)_{\epsilon\in(0,1)}$ that satisfies the conditions of \textbf{Proposition \ref{propA.1}}.\newline\par
   
    Finally, we argue that \eqref{eq4.12} is the main contribution to \eqref{nondirectedsum}. For $r=1,...,p$ and for $1\leq i_1<i_2<...<i_r\leq p$,  we define $A_{i_1,...,i_r}$ to be the following set:
    \begin{equation}\label{setsoffail}
        \{(k_i)_{i\in\{1,...,p\}}\in\mathbb{N}^p/\textit{ }1\leq k_1<...<k_p\leq [M(\epsilon)/\epsilon^2],\textbf{  } k_{i_j}\leq k_{i_j-1}+2, \textbf{  } k_i>k_{i-1}+2, i\neq i_1,...,i_r\}.
    \end{equation}
    The set $A_{i_1,...,i_r}$ specifies where the restriction $k_{i+1}>k_{i}+2$ fails. We now consider the sum
    \begin{equation}\label{residue}
        \sum\limits_{k_1,...,k_p\in A_{i_1,...,i_r}}\mathbb{E}\biggl[G_\epsilon(\omega_{X_0}(T_{k_1+\delta_1})-\omega_{Y_0}(T_{k_1}),...,\omega_{X_0}(T_{k_p+\delta_p})-\omega_{Y_0}(T_{k_p}))\biggr].
    \end{equation}
    Using \eqref{firstasymptotic}, we can show that this sum is $O((\log\frac{1}{\epsilon})^{p-r})=o((\log\frac{1}{\epsilon})^p)$. Indeed, we write $\{1\leq j_1<j_2<....<j_{p-r}\leq[M(\epsilon)/\epsilon^2]\}=\{1,...,p\}\backslash\{i_1,i_2,...,i_r\}$ for the complement set. Then \eqref{residue} is bounded above by
    \[\sum\limits_{k_1,...,k_p\in A_{i_1,...,i_r}}\mathbb{E}\biggl[G^{\{i_1,...,i_r\}}_\epsilon(\omega_{X_0}(T_{k_{j_1}+\delta_{j_1}})-\omega_{Y_0}(T_{k_{j_1}}),...,\omega_{X_0}(T_{k_{j_{p-r}}+\delta_{p-r}})-\omega_{Y_0}(T_{k_{p-r}}))\biggr].\]
    Observe that $k_{i_j}\leq k_{i_j-1}+2$ implies $k_{i_j}=k_{i_{j}-1}+1$ or $k_{i_j}=k_{i_{j}-1}+2$.  Moreover, note that for $(k_i)_{i\in\{1,...,p\}}\in A_{i_1,...,i_r}$ we have $k_{j_m}>k_{j_{m-1}}+2$.  Therefore, the last term is
    \[\lesssim\sum_{\substack{1\leq k_{j_1}<k_{j_2}<...<k_{j_{p-r}}\leq\alpha M\\ k_{j_m}>k_{j_{m-1}}+2}}\mathbb{E}\biggl[G^{\{i_1,...,i_r\}}_\epsilon(\omega_{X_0}(T_{k_{j_1}+\delta_{j_1}})-\omega_{Y_0}(T_{k_{j_1}}),...,\omega_{X_0}(T_{k_{j_{p-r}}+\delta_{p-r}})-\omega_{Y_0}(T_{k_{p-r}}))\biggr].\]
    By assumption, $G^{\{i_1,...,i_r\}}_\epsilon\leq\Hcal_\epsilon^{\{i_1,...,i_r\}}$, where $(\Hcal_\epsilon^{\{i_1,...,i_r\}})_{\epsilon\in(0,1)}$ satisfies the assumptions of \textbf{Proposition \ref{propA.1}}. From \eqref{firstasymptotic} this sum is $O((\log\frac{1}{\epsilon})^{p-r})$, which concludes the proof.
\end{proof}

\section{A  Khas’minskii-type Lemma}\label{App:pre_exp_bound}

Here, we prove an exponential bound for additive functionals of $(\omega_{X_0},\omega_{Y_0})$, that we needed in  \textbf{Section \ref{sec:5}}. We prove it using the fact that the path $(\omega_{X_0},\omega_{Y_0})$ is built from the Markov chain in $\Omega_1\times\Omega_1$.  A similar theorem for finite-state Markov chains can be found in \cite{cosco2023moments}, Appendix C, and is based on the Khas'minskii lemma found in \cite{sznitman2013brownian}. See also Portenko's lemma in  \cite{Port}.

\begin{mylem}\label{kamsinskilemma}
    Recall that we denote by $(\omega_{X_0},\omega_{Y_0})$ the path built from the transition probability kernel $\hat{\boldsymbol{\pi}}$, with $(X_0,Y_0)$ as the initial condition. We set $n_\epsilon=[M(\epsilon)/\epsilon^2]$, where $M(\epsilon)$, is as in \textbf{Theorem \ref{thm:5.1}}.  For $\delta\in\{-1,0,1\}$, we define
    \begin{equation}\label{pkterm}
        p_{k,k+\delta}(x):=\int_k^{k+1}\int_{k+\delta}^{k+\delta+1}R(s-u, x+\omega_{X_0}(s)-\omega_{Y_0}(u))dsdu,
    \end{equation}
    for $k=1,...,n_\epsilon-1$. Additionally, we set $p_{1,0}(x)=p_{n_\epsilon-1,n_\epsilon}(x)=p_{n_\epsilon-1,n_\epsilon+1}(x)=0$. Assume that there exists  a constant $\hat{\beta}>0$, such that
    \begin{equation}\label{assumptioninprop}
        \eta:=\sup_{x\in\R^d,(X_0,Y_0)\in\Omega_1^2}\E\biggl[\sum_{k=1}^{n_\epsilon-1}e^{\frac{\hat{\beta}}{\log\frac{1}{\epsilon}}(p_{k,k-1}(x)+p_{k,k}(x)+p_{k,k+1}(x))}-1\biggr|X_0,Y_0\biggr]<1,
    \end{equation}
    and such that $\eta+2(\exp(3||R||_{\infty}\hat{\beta}/\log\frac{1}{\epsilon})-1)<1$. Then, we have: 
    $$\sup_{x\in\R^d,(X_0,Y_0)\in\Omega_1^2}\E\biggl[\exp\biggl(\frac{\hat{\beta}}{\log\frac{1}{\epsilon}}\int_{[1,n_\epsilon]^2}R(s-u, x+\omega_{X_0}(s)-\omega_{Y_0}(u))dsdu\biggr)\biggr|(X_0,Y_0)\biggr]\leq\frac{1}{1-\eta+\alpha},$$
    where $\alpha:=2(\exp(3||R||_{\infty}\hat{\beta}/\log\frac{1}{\epsilon})-1)$.
\end{mylem}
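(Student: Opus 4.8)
The plan is to run a Khas'minskii-type expansion, based on the block decomposition of the double integral along unit time-intervals together with the Markov property of the chain $(Z_k)_{k\ge0}=(X_k,Y_k)_{k\ge0}$ generated by $\hat{\boldsymbol{\pi}}$ with $Z_0=(X_0,Y_0)$; throughout write $\mathcal{F}_j:=\sigma(Z_0,\dots,Z_j)$. First I would decompose the integral: since $R(s,\cdot)=0$ for $|s|\ge1$, splitting $[1,n_\epsilon]^2$ into unit strips in the $s$-variable gives, with the boundary conventions in the statement,
\[
\int_{[1,n_\epsilon]^2}R(s-u,\,x+\omega_{X_0}(s)-\omega_{Y_0}(u))\,ds\,du=\sum_{k=1}^{n_\epsilon-1}\bigl(p_{k,k-1}(x)+p_{k,k}(x)+p_{k,k+1}(x)\bigr).
\]
Set $W_k:=\frac{\hat\beta}{\log(1/\epsilon)}\bigl(p_{k,k-1}(x)+p_{k,k}(x)+p_{k,k+1}(x)\bigr)\ge0$. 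Since each $p_{k,k+\delta}$ is an integral of $R$ over a region of area $1$, one has $W_k\le 3\hat\beta\|R\|_\infty/\log(1/\epsilon)$ and hence the deterministic bound $e^{W_k}-1\le\tfrac12\alpha$; moreover $W_k$ is a functional of the three consecutive segments $Z_{k-1},Z_k,Z_{k+1}$ (and of the parameter $x$), hence $\mathcal{F}_{k+1}$-measurable.

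Next I would expand the exponential. Using $e^{\sum_kW_k}=\prod_ke^{W_k}=\prod_k\bigl(1+(e^{W_k}-1)\bigr)$ and multiplying out, the conditional expectation $\E\bigl[\exp(\tfrac{\hat\beta}{\log(1/\epsilon)}\int_{[1,n_\epsilon]^2}R)\mid Z_0\bigr]$ equals
\[
\sum_{m\ge0}\ \sum_{1\le k_1<\dots<k_m\le n_\epsilon-1}\E\Bigl[\,\prod_{j=1}^m\bigl(e^{W_{k_j}}-1\bigr)\ \Big|\ Z_0\Bigr],
\]
so it is enough to bound this expression uniformly in $x$ and in $(X_0,Y_0)$. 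To do so I would, for each fixed $m$, rewrite the inner finite sum as the nested sum $\sum_{k_1}(e^{W_{k_1}}-1)\sum_{k_2>k_1}(e^{W_{k_2}}-1)\cdots\sum_{k_m>k_{m-1}}(e^{W_{k_m}}-1)$ and estimate $\E[\,\cdot\mid Z_0]$ of it by peeling from the inside. For the innermost layer I would condition on $\mathcal{F}_{k_{m-1}+1}$, which makes the factors carrying $k_1,\dots,k_{m-1}$ measurable, and split $\sum_{k>k_{m-1}}(e^{W_k}-1)$ into the near term $k=k_{m-1}+1$, bounded by $\tfrac12\alpha$ deterministically, and the far tail $\sum_{k\ge k_{m-1}+2}(e^{W_k}-1)$. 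By the Markov property the conditional expectation of the far tail given $\mathcal{F}_{k_{m-1}+1}$ is a function of $Z_{k_{m-1}+1}$ alone, and it equals the corresponding quantity for a chain freshly restarted at $Z_{k_{m-1}+1}$ — with the $\mathcal{F}_{k_{m-1}+1}$-measurable accumulated displacement $\omega_{X_0}(k_{m-1}+1)-\omega_{Y_0}(k_{m-1}+1)$ folded into the free shift $x$, which is legitimate because $R$ depends only on position differences — hence is $\le\eta$ by \eqref{assumptioninprop} (the tail being a sum of fewer than $n_\epsilon$ nonnegative terms). So the innermost peel costs a factor $\le\eta+\alpha$, uniformly in $k_{m-1}$; iterating over the layers $k_{m-1},\dots,k_1$ and applying \eqref{assumptioninprop} once more at the last layer yields $\E\bigl[\sum_{1\le k_1<\dots<k_m\le n_\epsilon-1}\prod_{j=1}^m(e^{W_{k_j}}-1)\mid Z_0\bigr]\le(\eta+\alpha)^m$. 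Summing the geometric series over $m\ge0$ then produces the asserted bound.

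The main obstacle is the bookkeeping in this peeling step. Because the blocks $W_k,W_{k+1},W_{k+2}$ share segments, the Markov property cannot separate consecutive ones, so each peel must quarantine the handful of near-neighbour indices with the crude bound $e^{W_k}-1\le\tfrac12\alpha$ while still matching every far peel to the uniform-in-initial-condition hypothesis \eqref{assumptioninprop}; getting the geometric ratio — and in particular the numerical factor in $\alpha$ — to come out exactly as in the statement, and verifying carefully that the displacement acquired upon conditioning genuinely lies within the supremum over $x$ in \eqref{assumptioninprop}, are the delicate points. An essentially equivalent route would be to invoke the discrete Khas'minskii lemma for Markov chains of \cite{cosco2023moments} with $V=\sum_k(e^{W_k}-1)$, the bound $e^{W_k}-1\le\tfrac12\alpha$ supplying the required uniform ($L^\infty$) control.
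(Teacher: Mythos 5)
Your strategy coincides with the paper's: decompose the double integral into the blocks $p_{k,k-1}+p_{k,k}+p_{k,k+1}$ using the support of $R$, expand $\prod_k(1+D_k)$ with $D_k=e^{W_k}-1$, and run a recursion (your inside-out peeling) in which each new index costs a factor $\eta+\alpha$, near-neighbour indices being absorbed by the crude bound $D_k\le e^{3||R||_{\infty}\hat\beta/\log\frac{1}{\epsilon}}-1$ and far indices by the Markov property together with \eqref{assumptioninprop}. The one step that fails as written is your choice of cut-off between near and far: you quarantine only $k=k_{m-1}+1$ and assert that $\E[\sum_{k\ge k_{m-1}+2}D_k\mid\Fcal_{k_{m-1}+1}]\le\eta$ by \eqref{assumptioninprop}. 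That is not what the hypothesis gives. After restarting the chain at time $k_{m-1}+1$, the term $k=k_{m-1}+2$ becomes the index-$1$ block of the restarted chain, and its exponent contains the \emph{genuine} integral $p'_{1,0}$ over $[1,2]\times[0,1]$ (the interaction of the restarted chain's first new segment with the conditioning segment $Z_{k_{m-1}+1}$), whereas the quantity defining $\eta$ sets $p_{1,0}:=0$ by convention. So the tail starting at $k_{m-1}+2$ is not dominated by the supremum in \eqref{assumptioninprop}; only the tail starting at $k_{m-1}+3$ is, since for $k\ge k_{m-1}+3$ the block $D_k$ corresponds to an index $k'\ge2$ of the restarted chain, where no boundary convention intervenes and all three $p'$-terms match.

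The repair is exactly what produces the factor $2$ in $\alpha$, and is what the paper does: quarantine \emph{both} $k_p=k_{p-1}+1$ and $k_p=k_{p-1}+2$ with the deterministic bound $D_k\le e^{3||R||_{\infty}\hat\beta/\log\frac{1}{\epsilon}}-1=\alpha/2$, contributing $\alpha A_{p-1}$ in total, and invoke the Markov property and \eqref{assumptioninprop} only from $k_p=k_{p-1}+3$ onward, contributing $\eta A_{p-1}$. (Alternatively you could keep your single cut and absorb the spurious $p'_{1,0}$ via $e^{a+b}-1\le e^{b}(e^{a}-1)+e^{b}-1$, but the geometric ratio then becomes $\alpha/2+e^{c}\eta+e^{c}-1$ with $c=||R||_{\infty}\hat\beta/\log\frac{1}{\epsilon}$, and you would need an extra check that this is still at most $\eta+\alpha$.) The rest of your argument — folding the accumulated displacement into the free shift $x$, taking the supremum over the random restart segment, discarding the shortened horizon by nonnegativity, and summing the geometric series — matches the paper; note that both routes yield $\sum_{p}(\eta+\alpha)^{p}=(1-\eta-\alpha)^{-1}$, i.e., the denominator in the statement should be read as $1-(\eta+\alpha)$.
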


\begin{proof}
    We set $D_k(x)=e^{\frac{\hat{\beta}}{\log\frac{1}{\epsilon}}(p_{k,k-1}(x)+p_{k,k}(x)+p_{k,k+1}(x))}-1$. Observe that
    $$\int_{[1,n_\epsilon]^2}R(s-u, x+\omega_{X_0}(s)-\omega_{Y_0}(u))dsdu=\sum_{k=1}^{n_\epsilon-1}p_{k,k-1}(x)+p_{k,k}(x)+p_{k,k+1}(x),$$
    and therefore we get
    \[\E\biggl[\exp\biggl(\frac{\hat{\beta}}{\log\frac{1}{\epsilon}}\int_{[1,n_\epsilon]^2}R(s-u, x+\omega_{X_0}(s)-\omega_{Y_0}(u))dsdu\biggr)\biggr]=\E\biggl[\prod_{k=1}^{n_\epsilon-1}\biggl(1+D_k\biggr)\biggr].\]
    The right-hand side is equal to
    \begin{equation}\label{productexp}
        \sum_{p=0}^{\infty}\sum_{1\leq k_1<...<k_p\leq n_\epsilon-1}\E\biggl[\prod_{i=1}^pD_{k_i}(x)\biggr].
    \end{equation}
    We set
    \begin{equation}\label{Apterm}
        A_p:=\sum_{1\leq k_1<...<k_p\leq n_\epsilon-1}\E\biggl[\prod_{i=1}^pD_{k_i}(x)\biggr],
    \end{equation}
    and we will show that $A_p\leq(\eta+2(\exp(3||R||_{\infty}\hat{\beta}/\log\frac{1}{\epsilon})-1))^p$. To prove this, we consider three possibilities for $k_p$ relative to $k_{p-1}$:
    \begin{enumerate}
        \item $k_p>k_{p-1}+2$,
        \item $k_p=k_{p-1}+1$,
        \item $k_{p}=k_{p-1}+2$.
    \end{enumerate}
    Let $A^{(1)}_p$, $A^{(2)}_p$, $A^{(3)}_p$ be defined by the right-hand side of \eqref{Apterm}, but with these respective restrictions. Using the bound $D_k(x)\leq \exp(3||R||_{\infty}\hat{\beta}/\log\frac{1}{\epsilon})-1$, we see that $A^{(2)}_p+A^{(3)}_p\leq2(\exp(3||R||_{\infty}\hat{\beta}/\log\frac{1}{\epsilon})-1)A_{p-1}$. For the $A^{(1)}_p$ term we argue as follows:
    $$A_p^{(1)}=\sum_{1\leq k_1<...<k_{p-1}\leq n_\epsilon-4}\E\biggl[\prod_{i=1}^{p-1}D_{k_i}(x)\sum_{k_p=k_{p-1}+3}^{n_\epsilon-1}D_{k_p}(x)\biggr].$$
    Conditioning on $(X_{k_{p-1}},Y_{k_{p-1}})$ yields
    $$A_p^{(1)}=\sum_{1\leq k_1<...<k_{p-1}\leq n_\epsilon-4}\E\biggl[\prod_{i=1}^{p-1}D_{k_i}(x)\sum_{k_p=k_{p-1}+3}^{n_\epsilon-1}\E\biggl[D_{k_p}(x)\biggl|(X_{k_{p-1}},Y_{k_{p-1}})\biggr]\biggr].$$
    The Markov property shows that
    \[\sum_{k_p=k_{p-1}+3}^{n_\epsilon-1}\E\biggl[D_{k_p}(x)\biggl|(X_{k_{p-1}},Y_{k_{p-1}})\biggr]=\sum_{k_p=3}^{n_\epsilon-k_{p-1}-1}\E\biggl[D_{k_p}(x+\omega_{Y_0}(k_{p-1})-\omega_{Y_0}(k_{p-1}))\biggl|(X_{k_{p-1}},Y_{k_{p-1}})\biggr].\]
    From \eqref{assumptioninprop} this term is $\leq\eta$. Therefore $A_p^{(1)}\leq\eta A_{p-1}$. These observations show that
    \[A_{p}\leq(\eta+2(\exp(3||R||_{\infty}\hat{\beta}/\log\frac{1}{\epsilon})-1))A_{p-1}.\]
    Since $A_1\leq\eta\leq\eta+2(\exp(3||R||_{\infty}\hat{\beta}/\log\frac{1}{\epsilon})-1)$, by assumption, this yields the claimed bound $A_p\leq(\eta+2(\exp(3||R||_{\infty}\hat{\beta}/\log\frac{1}{\epsilon})-1))^p$. Plugging this into \eqref{productexp} concludes the proof.
\end{proof}

\section{The normalization constant $\zeta^{(\epsilon)}_{t/\epsilon^2}$}\label{sec:app2}
In this appendix, we study the asymptotic behavior of $\zeta^{(\epsilon)}_{t/\epsilon^2}$.  This is given by the following lemma.
\begin{mylem}\label{lemmB.1}
    There is a constant $C_1>0$ such that 
    \begin{equation}\label{B.1}
        \zeta^{(\epsilon)}_{t/\epsilon^2}=(C_1+o(1))\frac{t}{\epsilon^2\log \frac{1}{\epsilon}}+o(1),
    \end{equation}
    as $\epsilon\rightarrow0$. The error terms depend only on $R$.
\end{mylem}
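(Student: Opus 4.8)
The plan is to extract the leading behaviour of $\zeta^{(\epsilon)}_{t/\epsilon^2}$ from the block decomposition underlying the Markov chain of \textbf{Section \ref{sec:3}}. Write $T=t/\epsilon^2$, fix $\tau\in(0,1]$, set $N=[T-\tau]$, and decompose $B=[x_0,x_1,\dots,x_N,x_{N+1}]$ with $x_0\in\Omega_\tau$, $x_1,\dots,x_N\in\Omega_1$, $x_{N+1}\in\Omega_{T-\tau-N}$. Since $\beta_\epsilon^2=\hat\beta^2/\log\frac1\epsilon$, halving the block identity of \textbf{Section \ref{sec:3}} gives
$$\frac{\hat\beta^2}{2\log\frac1\epsilon}\int_{[0,T]^2}\!\!R(s-u,B(s)-B(u))\,ds\,du=\tfrac12 D^{(\epsilon)}_\tau(x_0,x_0)+\tfrac12\!\sum_{k=1}^{N+1}\!D^{(\epsilon)}(x_k,x_k)+I^{(\epsilon)}_{0,1}(x_0,x_1)+\sum_{k=1}^{N-1}I^{(\epsilon)}(x_k,x_{k+1})+I^{(\epsilon)}_{N,N+1}(x_N,x_{N+1}),$$
where for $k=N+1$ the self-interaction is understood on $\Omega_{T-\tau-N}$. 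Combining $e^{\zeta^{(\epsilon)}_T}=\int_{\Omega_T}\exp\big(\tfrac{\hat\beta^2}{2\log\frac1\epsilon}\int\int R\big)W_T(dB)$ from \eqref{zeta}, the fact that $[x_0,\dots,x_{N+1}]$ is Brownian when the pieces are, and $e^{\frac12 D^{(\epsilon)}(x,x)}W_1(dx)=e^{\zeta^{(\epsilon)}_1}\hat\P^{(\epsilon)}_1(dx)$ together with its $\Omega_\tau$, $\Omega_{T-\tau-N}$ analogues, one obtains
$$e^{\zeta^{(\epsilon)}_T}=e^{\zeta^{(\epsilon)}_\tau+N\zeta^{(\epsilon)}_1+\zeta^{(\epsilon)}_{T-\tau-N}}\int e^{I^{(\epsilon)}_{0,1}(x_0,x_1)}\Big(\prod_{k=1}^{N-1}e^{I^{(\epsilon)}(x_k,x_{k+1})}\Big)e^{I^{(\epsilon)}_{N,N+1}(x_N,x_{N+1})}\,\hat\P^{(\epsilon)}_\tau(dx_0)\prod_{k=1}^N\hat\P^{(\epsilon)}_1(dx_k)\,\hat\P^{(\epsilon)}_{T-\tau-N}(dx_{N+1}).$$

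Next I would evaluate the remaining integral with the eigenfunction $\Psi^{(\epsilon)}$ of \eqref{eigenv}. Integrating from the innermost variable outward, using $\int\Psi^{(\epsilon)}\,d\hat\P^{(\epsilon)}_1=1$ together with the bounds \eqref{eq:eigenv_bound}--\eqref{eq:eigenvec_bound}, yields
$$\int\prod_{k=1}^{N-1}e^{I^{(\epsilon)}(x_k,x_{k+1})}\prod_{k=1}^N\hat\P^{(\epsilon)}_1(dx_k)=(\rho^{(\epsilon)})^{N-1}\,\Theta^{(\epsilon)},\qquad\log\Theta^{(\epsilon)}=O(||I^{(\epsilon)}||_\infty),$$
and the two edge factors $e^{I^{(\epsilon)}_{0,1}}$, $e^{I^{(\epsilon)}_{N,N+1}}$ and the integrations over $\Omega_\tau$, $\Omega_{T-\tau-N}$ contribute a further multiplicative error of size $e^{O(||I^{(\epsilon)}||_\infty)}$. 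Because $D^{(\epsilon)}$, $I^{(\epsilon)}$ carry a prefactor $\beta_\epsilon^2$ against bounded kernels (cf.\ \eqref{selfinteration}, \eqref{iotepdef}) we have $||I^{(\epsilon)}||_\infty=O(1/\log\frac1\epsilon)$, and for the same reason $\zeta^{(\epsilon)}_\tau,\zeta^{(\epsilon)}_{T-\tau-N}=O(1/\log\frac1\epsilon)$; taking logarithms and using $N=t/\epsilon^2+O(1)$ gives
$$\zeta^{(\epsilon)}_{t/\epsilon^2}=N\big(\zeta^{(\epsilon)}_1+\log\rho^{(\epsilon)}\big)+O\!\Big(\tfrac1{\log\frac1\epsilon}\Big)=\frac t{\epsilon^2}\big(\zeta^{(\epsilon)}_1+\log\rho^{(\epsilon)}\big)+o(1).$$

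Finally I would show $\log\frac1\epsilon\,\big(\zeta^{(\epsilon)}_1+\log\rho^{(\epsilon)}\big)$ converges to a constant $C_1>0$. For $\zeta^{(\epsilon)}_1$ this is immediate: the exponent $\tfrac{\hat\beta^2}{2\log\frac1\epsilon}\int_{[0,1]^2}R(s-u,B(s)-B(u))\,ds\,du$ is nonnegative and bounded by $\tfrac{\hat\beta^2||R||_\infty}{2\log\frac1\epsilon}$, so a first-order Taylor expansion of $\log\big(\int e^{\cdot}W_1\big)$ gives $\log\frac1\epsilon\,\zeta^{(\epsilon)}_1\to\tfrac{\hat\beta^2}2\int_{\Omega_1}\!\int_{[0,1]^2}R(s-u,B(s)-B(u))\,ds\,du\,W_1(dB)$, which is strictly positive since $R\ge0$ and $R\not\equiv0$. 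For $\rho^{(\epsilon)}$, viewing $f\mapsto\int e^{I^{(\epsilon)}(\cdot,y)}f(y)\hat\P^{(\epsilon)}_1(dy)$ as the rank-one projection $f\mapsto\int f\,d\hat\P^{(\epsilon)}_1$ plus an operator of sup-norm $O(||I^{(\epsilon)}||_\infty)$ --- equivalently, bootstrapping \eqref{eq:eigenv_bound}--\eqref{eq:eigenvec_bound} to the quantitative rate $\Psi^{(\epsilon)}=1+O(||I^{(\epsilon)}||_\infty)$ uniformly --- first-order perturbation of the simple top eigenvalue $1$ gives $\rho^{(\epsilon)}=1+\int_{\Omega_1^2}I^{(\epsilon)}(x,y)\,\hat\P^{(\epsilon)}_1(dx)\hat\P^{(\epsilon)}_1(dy)+O(||I^{(\epsilon)}||_\infty^2)$; since $\hat\P^{(\epsilon)}_1\to W_1$ in total variation (\textbf{Section \ref{sec:4}}) against the bounded kernel defining $I^{(\epsilon)}$, $\log\frac1\epsilon\,\log\rho^{(\epsilon)}$ converges, necessarily to a nonnegative limit ($\rho^{(\epsilon)}\ge1$ because $R\ge0$, by integrating \eqref{eigenv} against $\hat\P^{(\epsilon)}_1$). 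Taking $C_1:=\lim_{\epsilon\to0}\log\frac1\epsilon\,\big(\zeta^{(\epsilon)}_1+\log\rho^{(\epsilon)}\big)>0$, and noting every error above is explicit and depends only on $R$ (together with the fixed parameters $\hat\beta,t$), yields \eqref{B.1}. The one genuinely delicate point is the first-order expansion of $\rho^{(\epsilon)}$: the crude bounds in \textbf{Section \ref{sec:3}} give only $\log\rho^{(\epsilon)}=O(1/\log\frac1\epsilon)$, and one must upgrade the convergence $\Psi^{(\epsilon)}\to1$ to the rate $O(||I^{(\epsilon)}||_\infty)$ in order to kill the cross term and make $\log\frac1\epsilon\,\log\rho^{(\epsilon)}$ actually converge; the rest is routine bookkeeping with estimates already recorded in the paper.
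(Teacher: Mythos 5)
Your proposal is correct and follows essentially the same route as the paper: decompose the partition function into unit blocks via the Markov-chain construction, telescope the eigenvalue equation to reduce to $\zeta^{(\epsilon)}_{t/\epsilon^2}=N(\zeta^{(\epsilon)}_1+\log\rho^{(\epsilon)})+o(1)$, and then extract the constants from a first-order Taylor expansion of $\zeta^{(\epsilon)}_1$ and of the eigenvalue equation for $\rho^{(\epsilon)}$, using that $\hat{\P}^{(\epsilon)}_1\rightarrow W_1$ in total variation and $\Psi^{(\epsilon)}=1+O(\|I^{(\epsilon)}\|_{\infty})$ (which is exactly what \eqref{eq:eigenvec_bound} supplies, so the "delicate point" you flag is already covered). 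The only cosmetic difference is that the paper extracts the normalization by observing that the decomposed measure is a probability measure, whereas you integrate the interaction kernels outward against the eigenfunction; the two computations are identical.
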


\begin{proof}
    First, we assume that $t/\epsilon^2=N_{\epsilon}\in\mathbb{N}$ and we set $\tau=1$. Repeating the construction of the Markov chain, as in \textbf{Section \ref{sec:3}}, while also keeping track of the normalization constants, yields
    $$\hat{\P}^{(\epsilon)}_{N_{\epsilon}}(d\omega)=\Psi^{(\epsilon)}_{}(x_0)\hat{\P}^{(\epsilon)}_{1}(dx_0)\prod_{k=0}^{N_{\epsilon}-2}\hat{\pi}^{(\epsilon)}(x_k,dx_{k+1})\Psi^{(\epsilon)}_{}(x_{N_{\epsilon}-1})^{-1}(\rho^{(\epsilon)})^{N_{\epsilon}-1}e^{N_{\epsilon}\zeta^{(\epsilon)}_1-\zeta^{(\epsilon)}_{N_{\epsilon}}}$$
    with $\Psi^{(\epsilon)}$ and $\rho^{(\epsilon)}$ defined by the eigenvalue problem \eqref{eigenv}. Recall the normalization $\int_{\Omega_1}\Psi^{(\epsilon)}(x)\hat{\P}^{(\epsilon)}(dx)=1$. Since $\hat{\P}^{(\epsilon)}_{N_{\epsilon}}$ is a probability measure
        $$\E[\Psi^{(\epsilon)}_{}(X_{N_{\epsilon}-1})^{-1}]=(\rho^{(\epsilon)})^{1-N_{\epsilon}}e^{-N_{\epsilon}\zeta^{(\epsilon)}_1+\zeta_{N_{\epsilon}}}.$$
    From the observations at the start of \textbf{Section \ref{sec:4}}, $\Psi^{(\epsilon)}_{}(x)\rightarrow1$ uniformly in $x$, as $\epsilon\rightarrow0$. Therefore,
    \begin{equation}\label{B.2}
       \zeta_{N_{\epsilon}}=N_{\epsilon}\zeta^{(\epsilon)}_1-(N_{\epsilon}-1)\log\rho^{(\epsilon)}+o(1).
    \end{equation}
    We estimate the terms $\zeta^{(\epsilon)}_1$ and $\log\rho^{(\epsilon)}$.
       For $\zeta^{(\epsilon)}_1$, recall its definition in \eqref{zeta}: 
       $$\zeta^{(\epsilon)}_1=\log\int_{\Omega_1}\exp\biggl(\frac{\hat{\beta}^2}{2\log\frac{1}{\epsilon}}\int_{[0,1]^2}R(s-u, B(s)-B(u))dsdu\biggr)W_1(dB).$$
       By a Taylor expansion, we see that as $\epsilon\rightarrow0$
    \begin{equation}\label{B.3}
        \zeta^{(\epsilon)}_1=\frac{C}{\log\frac{1}{\epsilon}}+o(\frac{1}{\log\frac{1}{\epsilon}}),
    \end{equation}
    where
    $$C=\frac{\hat{\beta
    }^2}{2}\E_{B}\biggr[\int_{[0,1]^2}R(s-u,B(s)-B(u))dsdu\biggl].$$
    Now for $\log\rho^{(\epsilon)}$, we look at  the eigenvalue equation \eqref{eigenv}
    $$\int_{\Omega_1}e^{I^{(\epsilon)}(x,y)}\Psi^{(\epsilon)}(y)\hat{\P}^{(\epsilon)}_1(dy)=\rho^{(\epsilon)}\Psi^{(\epsilon)}(x),$$
     with $I^{(\epsilon)}$ defined in \eqref{iotepdef}. From the observations at the start of \textbf{Section \ref{sec:4}}, $\hat{\P}^{(\epsilon)}_1$ converges to the Wiener measure in total variation, as $\epsilon\rightarrow0$. Moreover, $I^{(\epsilon)}(x,y)\rightarrow0$ as $\epsilon\rightarrow0$, uniformly in $x,y\in\Omega_1$. These two observations, combined with a Taylor expansion of the above exponential, show that
         $$\rho^{(\epsilon)}-1=\frac{\tilde{C}}{\log\frac{1}{\epsilon}}+o(\frac{1}{\log\frac{1}{\epsilon}}),$$
    as $\epsilon\rightarrow0$, where
    \begin{equation}
    \tilde{C}=\int_{\Omega_1}\int_{\Omega_1}I(x,y)W_1(dx)W_1(dy),
    \end{equation}
    with
    $$I(x,y):=\hat{\beta}^2\int_{[0,1]^2}R(s-u,y(s)+x(1)-x(u))dsdu, \textbf{ }x,y\in\Omega_{1}.$$
    This proves that
    \begin{equation}\label{B.4}
        \log\rho^{(\epsilon)}=\frac{\tilde{C}}{\log\frac{1}{\epsilon}}+o(\frac{1}{\log\frac{1}{\epsilon}}),
    \end{equation}
    Now, plugging $(\ref{B.4})$ and $(\ref{B.3})$ to $(\ref{B.2})$ yields $(\ref{B.1})$ with:
    $$C_1=\hat{\beta^2}\E_{B}\biggl[\int_{[0,1]^2}R(s-u,B(s)-B(u))dsdu\biggr]+\hat{\beta^2}\E_{B^1}\E_{B^2}\biggl[\int_{[0,1]^2}R(s-u,B^2(s)+B^1(1)-B^1(u))dsdu\biggr].$$
    When $t/\epsilon^2\in\mathbb{R}_{>0}$ we have the same asymptotic, since
    $$\zeta^{(\epsilon)}_{t/\epsilon^2}=\zeta^{(\epsilon)}_{[t/\epsilon^2]}+o(1)$$
    and $[t/\epsilon^2]/(t/\epsilon^2)\rightarrow1$ as $\epsilon\rightarrow0$.
\end{proof}

\begin{myackn}
The author thanks their supervisor, Nikos Zygouras, for suggesting the problem and for many helpful discussions and suggestions. The author was supported by the Warwick Mathematics Institute Centre for Doctoral Training and gratefully acknowledges funding from the University of Warwick.  
\end{myackn}

\printbibliography

@misc{duch2022flow,
      title={Flow equation approach to singular stochastic PDEs}, 
      author={Paweł Duch},
      year={2022},
      eprint={2109.11380},
      archivePrefix={arXiv},
      
}

@article{Gu_2018,
	doi = {10.1007/s00220-018-3202-0},
  

  
	year = 2018,
	month = {jul},
  
	publisher = {Springer Science and Business Media {LLC}
},
  
	volume = {363},
  
	number = {2},
  
	pages = {351--388},
  
	author = {Yu Gu and Lenya Ryzhik and Ofer Zeitouni},
  
	title = {The Edwards{\textendash}Wilkinson Limit of the Random Heat Equation in Dimensions Three and Higher},
  
	journal = {Communications in Mathematical Physics}
}

@article{dunlap2021random,
  title={The random heat equation in dimensions three and higher: the homogenization viewpoint},
  author={Dunlap, Alexander and Gu, Yu and Ryzhik, Lenya and Zeitouni, Ofer},
  journal={Archive for Rational Mechanics and Analysis},
  volume={242},
  number={2},
  pages={827--873},
  year={2021},
  publisher={Springer}
}

@article{10.1214/19-AOP1383,
author = {Francesco Caravenna and Rongfeng Sun and Nikos Zygouras},
title = {{The two-dimensional KPZ equation in the entire subcritical regime}},
volume = {48},
journal = {The Annals of Probability},
number = {3},
publisher = {Institute of Mathematical Statistics},
pages = {1086 -- 1127},
keywords = {continuum limit, directed polymer model, Edwardsâ€“Wilkinson fluctuations, KPZ equation, renormalization, Stochastic heat equation, White noise},
year = {2020},
doi = {10.1214/19-AOP1383}
}

@article{a2c66a1e-c941-34a5-824d-2b7be2b94dd1,
 ISSN = {10505164},
 
 abstract = {We consider disordered systems of a directed polymer type, for which disorder is so-called marginally relevant. These include the usual (short-range) directed polymer model in dimension (2 + 1), the long-range directed polymer model with Cauchy tails in dimension (1 + 1) and the disordered pinning model with tail exponent 1/2. We show that in a suitable weak disorder and continuum limit, the partition functions of these different models converge to a universal limit: a log-normal random field with a multi-scale correlation structure, which undergoes a phase transition as the disorder strength varies. As a by-product, we show that the solution of the two-dimensional stochastic heat equation, suitably regularized, converges to the same limit. The proof, which uses the celebrated fourth moment theorem, reveals an interesting chaos structure shared by all models in the above class.},
 author = {Francesco Caravenna and Rongfeng Sun and Nikos Zygouras},
 journal = {The Annals of Applied Probability},
 number = {5},
 pages = {3050--3112},
 publisher = {Institute of Mathematical Statistics},
 title = {Universality in marginally relevant disordered systems},
 urldate = {},
 volume = {27},
 year = {2017}
}

@article{c28fc3fc-a79f-3fe7-abb0-cce0ee5be155,
 ISSN = {0003486X},
 
 abstract = {We introduce a new concept of solution to the KPZ equation which is shown to extend the classical Cole-Hopf solution. This notion provides a factorisation of the Cole-Hopf solution map into a "universal" measurable map from the probability space into an explicitly described auxiliary metric space, composed with a new solution map that has very good continuity properties. The advantage of such a formulation is that it essentially provides a pathwise notion of a solution, together with a very detailed approximation theory. In particular, our construction completely bypasses the Cole-Hopf transform, thus laying the groundwork for proving that the KPZ equation describes the fluctuations of systems in the KPZ universality class. As a corollary of our construction, we obtain very detailed new regularity results about the solution, as well as its derivative with respect to the initial condition. Other byproducts of the proof include an explicit approximation to the stationary solution of the KPZ equation, a well-posedness result for the Fokker-Planck equation associated to a particle diffusing in a rough space-time dependent potential, and a new periodic homogenisation result for the heat equation with a space-time periodic potential. One ingredient in our construction is an example of a non-Gaussian rough path such that the area process of its natural approximations needs to be renormalised by a diverging term for the approximations to converge.},
 author = {Martin Hairer},
 journal = {Annals of Mathematics},
 number = {2},
 pages = {559--664},
 publisher = {Annals of Mathematics},
 title = {Solving the KPZ equation},
 urldate = {},
 volume = {178},
 year = {2013}
}

@article{Hairer_2014,
	doi = {10.1007/s00222-014-0505-4},
  
	
  
	year = 2014,
	month = {mar},
  
	publisher = {Springer Science and Business Media {LLC}
},
  
	volume = {198},
  
	number = {2},
  
	pages = {269--504},
  
	author = {M. Hairer},
  
	title = {A theory of regularity structures},
  
	journal = {Inventiones mathematicae}
}

@article{Gubinelli_2016,
	doi = {10.1007/s00220-016-2788-3},
  
	url = {https://doi.org/10.1007%2Fs00220-016-2788-3},
  
	year = 2016,
	month = {nov},
  
	publisher = {Springer Science and Business Media {LLC}
},
  
	volume = {349},
  
	number = {1},
  
	pages = {165--269},
  
	author = {Massimiliano Gubinelli and Nicolas Perkowski},
  
	title = {{KPZ} Reloaded},
  
	journal = {Communications in Mathematical Physics}
}

@article{article,
author = {Mukherjee, Chiranjib and Shamov, Alexander and Zeitouni, Ofer},
year = {2016},
month = {01},
pages = {},
title = {Weak and Strong disorder for the stochastic heat equation and the continuous directed polymer in $d\geq 3$},
volume = {21},
journal = {Electronic Communications in Probability},
doi = {10.1214/16-ECP18}
}

@book{Dalang2008AMO,
  title={A minicourse on stochastic partial differential equations},
  author={Dalang, Robert},
  year={2009},
  publisher={Springer}
}

@article{Kallianpur1954TheSO,
  title={The sequence of sums of independent random variables},
  author={Gopinath Kallianpur and Herbert E. Robbins},
  journal={Duke Mathematical Journal},
  year={1954},
  volume={21},
  pages={285-307}
}

@book{janson_1997, place={Cambridge}, series={Cambridge Tracts in Mathematics}, title={Gaussian Hilbert Spaces}, DOI={10.1017/CBO9780511526169}, publisher={Cambridge University Press}, author={Janson, Svante}, year={1997}, collection={Cambridge Tracts in Mathematics}}

@book{sneddon2012mathematical,
  title={Mathematical Analysis and Numerical Methods for Science and Technology: Volume 1 Physical Origins and Classical Methods},
  author={Sneddon, I.N. and Dautray, R. and Benilan, P. and Lions, J.L. and Cessenat, M. and Gervat, A. and Kavenoky, A. and Lanchon, H.},
  isbn={9783642615276},
  url={https://books.google.gr/books?id=mUzuCAAAQBAJ},
  year={2012},
  publisher={Springer Berlin Heidelberg}
}

@article{AFST_2017_6_26_4_847_0,
     author = {Chandra, Ajay and Weber, Hendrik},
     title = {Stochastic {PDEs,} {Regularity} structures, and interacting particle systems},
     journal = {Annales de la Facult\'e des sciences de Toulouse : Math\'ematiques},
     pages = {847--909},
     publisher = {Universit\'e Paul Sabatier, Toulouse},
     volume = {Ser. 6, 26},
     number = {4},
     year = {2017},
     doi = {10.5802/afst.1555}
}

@article{10.1214/21-AIHP1173,
author = {Dimitris Lygkonis and Nikos Zygouras},
title = {{Edwards-Wilkinson  fluctuations for the directed polymer in the full ${L^{2}}$-regime for dimensions $d\geq 3$}},
volume = {58},
journal = {Annales de l'Institut Henri Poincaré , Probabilités et statistiques},
number = {1},
publisher = {Institut Henri PoincarÃ©},
pages = {65 -- 104},
keywords = {directed polymer model, Edwardsâ€“Wilkinson fluctuations, Kardarâ€“Parisiâ€“Zhang universality},
year = {2022},
doi = {10.1214/21-AIHP1173}
}

@article{NakajimaShuta2022Loln,
title = {Law of large numbers and fluctuations in the sub-critical and L2 regions for SHE and KPZ equation in dimension $d\geq 3$},
abstract = {There have been recently several works studying the regularized stochastic heat equation (SHE) and Kardar–Parisi–Zhang (KPZ) equation in dimension d≥3 as the smoothing parameter is switched off, but most of the results did not hold in the full temperature regions where they should. Inspired by martingale techniques coming from the directed polymers literature, we first extend the law of large numbers for SHE obtained in Mukherjee et al. (2016) to the full weak disorder region of the associated polymer model and to more general initial conditions. We further extend the Edwards–Wilkinson regime of the SHE and KPZ equation studied in Gu et al. (2018), Magnen and Unterberger (2018), Dunlap et al. (2020) to the full L2-region, along with multidimensional convergence and general initial conditions for the KPZ equation (and SHE), which were not proven before. To do so, we rely on a martingale CLT combined with a refinement of the local limit theorem for polymers.},
address = {},
author = {Cosco, Clément and Nakajima, Shuta and Nakashima, Makoto},
issn = {0304-4149},
journal = {Stochastic processes and their applications},
keywords = {Directed polymers;Edwards–Wilkinson equation;KPZ;Local limit theorem for polymers;SHE;Weak disorder},
language = {eng},
pages = {127 - 173},
publisher = {Elsevier B.V},
volume = {151},
year = {2022},
}

@article{c0a84c4c-9371-3683-aa99-89bc719e4a36,
 ISSN = {10505164, 21688737},
 URL = {https://www.jstor.org/stable/26846324},
 abstract = {For the heat equation driven by a smooth, Gaussian random potential: ∂ t u ε = 1 2 Δ u ε + u ε ( ξ ε − c ε ) , t > 0, x ∈ ℝ, where ξε converges to a spacetime white noise, and cε is a diverging constant chosen properly, we prove that uε converges in Ln to the solution of the stochastic heat equation for any n ≥ 1. Our proof is probabilistic, hence provides another perspective of the general result of Hairer and Pardoux (J. Math. Soc. Japan 67 (2015) 1551–1604), for the special case of the stochastic heat equation.We also discuss the transition from homogenization to stochasticity.},
 author = {Yu Gu and Li-Cheng Tsai},
 journal = {The Annals of Applied Probability},
 number = {5},
 pages = {pp. 3037--3061},
 publisher = {Institute of Mathematical Statistics},
 title = {Another look into the Wong-Zakai Theorem for Stochastic Heat Equation},
 urldate = {},
 volume = {29},
 year = {2019}
}

@article{Wonkzakai,
author = {Hairer, Martin and Pardoux, Etienne},
year = {2014},
month = {09},
pages = {},
title = {A Wong-Zakai theorem for stochastic PDEs},
volume = {67},
journal = {Journal of the Mathematical Society of Japan},
doi = {10.2969/jmsj/06741551}
}

@article{Gu2019FluctuationsOA,
  title={Fluctuations of a nonlinear stochastic heat equation in dimensions three and higher},
  author={Gu, Yu and Li, Jiawei},
  journal={SIAM Journal on Mathematical Analysis},
  volume={52},
  number={6},
  pages={5422--5440},
  year={2020},
  publisher={SIAM}
}

@book{nualart2009malliavin,
  title={Malliavin calculus and its applications},
  author={Nualart, David},
  number={110},
  year={2009},
  publisher={American Mathematical Soc.}
}

@book{lawler2010random,
  title={Random Walk: A Modern Introduction},
  author={Lawler, G.F. and Limic, V.},
  isbn={9781139488761},
  series={Cambridge Studies in Advanced Mathematics},
  year={2010},
  publisher={Cambridge University Press}
}

@article{renormalization,
author = {Kupiainen, Antti},
year = {2014},
month = {10},
pages = {},
title = {Renormalization Group and Stochastic PDEs},
volume = {17},
journal = {Annales Henri Poincaré},
doi = {10.1007/s00023-015-0408-y}
}

@article{Kal-Rob,
 ISSN = {00278424},
 URL = {http://www.jstor.org/stable/88704},
 author = {G. Kallianpur and H. Robbins},
 journal = {Proceedings of the National Academy of Sciences of the United States of America},
 number = {6},
 pages = {525--533},
 publisher = {National Academy of Sciences},
 title = {Ergodic Property of the Brownian Motion Process},
 urldate = {},
 volume = {39},
 year = {1953}
}

@article{Port,
author = {Portenko, N. I.},
title = {Diffusion Processes with Unbounded Drift Coefficient},
journal = {Theory of Probability \& Its Applications},
volume = {20},
number = {1},
pages = {27-37},
year = {1975},
doi = {10.1137/1120003}
}

@article{SHF,
  title={The critical 2d Stochastic Heat Flow},
  author={Francesco Caravenna and Rongfeng Sun and Nikos Zygouras},
  journal={Inventiones mathematicae},
  year={2021},
  volume={233},
  pages={325-460},
  url={},
  doi={}
}

@article{Gu2018GaussianFF,
  title={Gaussian fluctuations from the 2D KPZ equation},
  author={Gu, Yu},
  journal={Stochastics and Partial Differential Equations: Analysis and Computations},
  volume={8},
  pages={150--185},
  year={2020},
  publisher={Springer}
}

@article{SHFmoments,
author = {Gu, Yu and Quastel, Jeremy and Tsai, Li-Cheng},
year = {2021},
month = {03},
pages = {179-219},
title = {Moments of the 2D SHE at criticality},
volume = {2},
journal = {Probability and Mathematical Physics},
doi = {10.2140/pmp.2021.2.179}
}

@article{Caravenna_2023,
   title={The critical 2d stochastic heat flow is not a Gaussian multiplicative chaos},
   volume={51},
   ISSN={0091-1798},
   url={},
   DOI={10.1214/23-aop1648},
   number={6},
   journal={The Annals of Probability},
   publisher={Institute of Mathematical Statistics},
   author={Caravenna, Francesco and Sun, Rongfeng and Zygouras, Nikos},
   year={2023},
   }

@article{Mukherjee2017CentralLT,
  title={Central limit theorem for Gibbs measures on path spaces including long range and singular interactions and homogenization of the stochastic heat equation},
  author={Chiranjib Mukherjee},
  journal={The Annals of Applied Probability},
  year={2017}
}

@article{Weak_Strong_Mukh_Sha_Zei,
author = {Chiranjib Mukherjee and Alexander Shamov and Ofer Zeitouni},
title = {{Weak and strong disorder for the stochastic heat equation and continuous directed polymers in $d\geq 3$}},
volume = {21},
journal = {Electronic Communications in Probability},
publisher = {Institute of Mathematical Statistics and Bernoulli Society},
pages = {1 -- 12},
keywords = {directed polymer in continuum, Kardar-Parisi-Zhang equation, Stochastic heat equation},
year = {2016},
doi = {10.1214/16-ECP18}
}

@article{Rege_pro_Tal,
author = {Tal Orenshtein},
title = {{Rough invariance principle for delayed regenerative processes}},
volume = {26},
journal = {Electronic Communications in Probability},
publisher = {Institute of Mathematical Statistics and Bernoulli Society},
pages = {1 -- 13},
keywords = {area anomaly, invariance principle, key renewal theorem, p-variation, random walks in random environment, Regenerative process, Rough paths},
year = {2021},
doi = {10.1214/21-ECP406}
}

@article{cosco2023moments,
  title={Moments of partition functions of 2d gaussian polymers in the weak disorder regime-I},
  author={Cosco, Cl{\'e}ment and Zeitouni, Ofer},
  journal={Communications in Mathematical Physics},
  volume={403},
  number={1},
  pages={417--450},
  year={2023},
  publisher={Springer}
}

@book{sznitman2013brownian,
  title={Brownian motion, obstacles and random media},
  author={Sznitman, Alain-Sol},
  year={2013},
  publisher={Springer Science \& Business Media}
}
\end{document}